\newtheorem{defi}{Definition}[section]
\newtheorem{lem}[defi]{Lemma}
\newtheorem{theo}[defi]{Theorem}
\newtheorem{cor}[defi]{Corollary}
\newtheorem{pro}[defi]{Proposition}
\newtheorem{rem}[defi]{Remark}
\DeclareMathOperator{\N}{\mathbb{N}}
\DeclareMathOperator{\R}{\mathbb{R}}
\DeclareMathOperator{\C}{\mathbb{C}}
\DeclareMathOperator{\T}{\mathbb{T}}
\DeclareMathOperator{\Z}{\mathbb{Z}}
\title[]{Self-similar spirals for the generalized surface quasi-geostrophic equations}
\author[C. Garc\'ia]{Claudia Garc\'ia}
\address{Departament de Matem\`atiques i Inform\`atica, Universitat de Barcelona, Gran Via de les Corts Catalanes 585, 08007 Barcelona, Spain}
\email{claudiagarcia@ub.edu}
\author[J. G\'omez-Serrano]{Javier G\'omez-Serrano}
\address{Department of Mathematics, Brown University, Kassar House, 151 Thayer St., Providence, RI 02912, USA  \vskip 0.1cm
Departament de Matem\`atiques i Inform\`atica, Universitat de Barcelona, Gran Via de les Corts Catalanes 585, 08007 Barcelona, Spain \vskip 0.1cm
Centre de Recerca Matem\`atica, Edifici C, Campus Bellaterra, 08193 Bellaterra, Spain
}
\email{javier\_gomez\_serrano@brown.edu, jgomezserrano@ub.edu}
\thanks{C.G. and J.G.-S. have been partially supported by the European Research Council ERC-StG-852741 (CAPA) and the MICINN (Spain) research grant number PID2021--125021NA--I00. C. G. has also been partially supported by the MINECO--Feder (Spain) research grant number RTI2018--098850--B--I00, and the Junta de Andaluc\'ia (Spain) Project FQM 954. This material is based upon work supported by the National Science Foundation under Grant No. DMS--1929284 while C.G. and J.G.-S. were in residence at the Institute for Computational and Experimental Research in Mathematics in Providence, RI, during the program ``Hamiltonian Methods in Dispersive and Wave Evolution Equations''. This work is supported by the Spanish
State Research Agency, through the Severo Ochoa and Mar\'ia de Maeztu Program for Centers and Units of Excellence in R\&D (CEX2020-001084-M)}}
\begin{document}

\date{\today}

\begin{abstract}
In this paper we construct a large class of non-trivial (non-radial) self-similar solutions of the generalized surface quasi-geostrophic equation (gSQG). To the best of our knowledge, this is the first rigorous construction of any self-similar solution for these equations. The solutions are of spiral type, locally integrable, and may have mixed sign. Moreover, they bear some resemblance with the finite time singularity scenario numerically proposed by Scott and Dritschel \cite{Scott-Dritschel:self-similar-sqg-patch} in the SQG patch setting.
\end{abstract}

\maketitle

\tableofcontents

\section{Introduction}

The generalized surface quasi-geostrophic equations describing the evolution of the potential temperature $\theta$ read as

\begin{equation}\label{gSQG}
\left\{
\begin{array}{ll}
	\theta_t+(u\cdot\nabla)\theta=0, & \textnormal{in } [0,+\infty)\times\R^2,\\
	u=-\nabla^\perp(-\Delta)^{-1+\frac{\gamma}{2}}\theta, &\textnormal{in } [0,+\infty)\times \R^2,\\
	\theta(0,x)=\theta_0(x), &\textnormal{with } x\in\R^2.
\end{array}
\right.
\end{equation}
In this system $u$ refers to the velocity field, $\nabla^\perp=(-\partial_2,\partial_1)$ and $\gamma\in[0,2)$. The velocity field is linked to the potential temperature $\theta$ via the operator $-(-\Delta)^{-1+\frac{\gamma}{2}}$ agreeing with $u=\nabla^\perp\psi$ and
\begin{equation}\label{psi-theta}
\psi(t,x)=-(-\Delta)^{-1+\frac{\gamma}{2}}\theta(x)=\int_{\R^2}K_\gamma(x-y)\theta(t,y)dy,
\end{equation}
where
$$
K_\gamma(x)=\left\{\begin{array}{ll}
\frac{1}{2\pi}\log|x|, &\gamma=0,\\
-\frac{C_\gamma}{2\pi}\frac{1}{|x|^\gamma},& \gamma\in(0,2),
\end{array}\right.
$$
and $C_\gamma=\frac{\Gamma(\gamma/2)}{2^{1-\gamma}\Gamma\left(\frac{1-\gamma}{2}\right)}$.
The case $\gamma=0$ corresponds to the Euler equations and $\gamma=1$ to the surface quasi-geostrophic (SQG) model. In this work we will be interested in the case $\gamma\in(0,1)$. 

The SQG equation $(\gamma = 1)$ models the evolution of the temperature from a general quasi-geostrophic system for atmospheric and oceanic flows (see  \cite{Constantin-Majda-Tabak:formation-fronts-qg,Held-Pierrehumbert-Garner-Swanson:sqg-dynamics,Pedlosky:geophysical,Majda-Bertozzi:vorticity-incompressible-flow} for more details).
Constantin--Majda--Tabak pointed out the analogy between the SQG equation and the incompressible 3D Euler equations \cite{Constantin-Majda-Tabak:formation-fronts-qg} and carried out the first numerical and analytical study of the equation. The gSQG or (SQG)$_\gamma$  model \eqref{gSQG} was proposed by C\'ordoba--Fontelos--Mancho--Rodrigo in \cite{Cordoba-Fontelos-Mancho-Rodrigo:evidence-singularities-contour-dynamics} as an interpolation between Euler and surface quasi--geostrophic equations. Yet, almost 30 years after \cite{Constantin-Majda-Tabak:formation-fronts-qg}, the global existence vs finite time singularity question remains open, even in the case $\gamma > 0$. As a first step towards the finite time singularity direction, this paper aims to construct self-similar solutions, which will have infinite energy. We leave for future work the transformation of the aforementioned self-similar solutions into finite energy ones that may develop a finite time singularity.

\subsection{Main result}

Our main task is to construct spiral solutions that are perturbations of a stationary solution. Indeed, we will perturb appropriately the following radial one
\begin{equation}\label{trivial-radial}
\theta(t,x)=\theta_0(x)=|x|^{-\frac{1}{\mu}},
\end{equation}
for $\mu>0$. The singularity of the stationary solution \eqref{trivial-radial} depends on the parameter $\mu$. However, we need some condition on $\mu$ to ensure the existence of the associated stream function, that is, the integrability of the singular operator \eqref{psi-theta}. In particular, assuming that $\theta$ agrees with \eqref{trivial-radial}, one needs that 
\begin{equation}\label{mu-condition}
\frac12<\mu<\frac{1}{2-\gamma},
\end{equation}
to have that $\psi(x)=C_0|x|^{2-\gamma-\frac{1}{\mu}}$, where $C_0$ is defined in \eqref{C0}.

In this work, we will be interested in self-similar solutions. That means
\begin{equation}\label{self-similar-0-intro}
\theta(t,x)=\lambda^{1+\alpha-\gamma}\theta(\lambda^{1+\alpha}t,\lambda x), \quad \forall\lambda>0.
\end{equation}
By taking $\lambda=1/t^\frac{1}{1+\alpha}$, the problem reduces to find a self-similar profile $\hat{\theta}$ such that
\begin{equation}\label{theta-self-similar-intro}
\theta(t,x)=\frac{1}{t^{\frac{1+\alpha-\gamma}{1+\alpha}}}\hat{\theta}\left(\frac{x}{t^{\frac{1}{1+\alpha}}}\right).
\end{equation}
Moreover, in order to have that the trivial singular radial solution \eqref{trivial-radial} is self-similar, one needs the following condition on $\mu$ and $\alpha$:
\begin{equation}\label{mu-intro}
\frac{1}{\mu}=1+\alpha-\gamma,
\end{equation}
which, together with \eqref{mu-condition}, implies that $\alpha\in(1,1+\gamma)$. 

Our main theorem reads as follows.
\begin{theo}\label{th-intro}
Let $\gamma\in(0,1)$ and $\alpha\in(1,1+\gamma)$. Then, there exists $\varepsilon>0$ such that for any $\frac{2\pi}{m}$-periodic $\tilde{\Omega}\in B_{L^p([0,2\pi])}(1,\varepsilon)$ and $p>\frac{1}{1-\gamma}$, the initial condition
$$
\theta_0(r e^{i\vartheta})=r^{-(1+\alpha-\gamma)}\tilde{\Omega}(\vartheta),
$$
describes a self-similar solution of the type \eqref{theta-self-similar-intro} for $\textnormal{(SQG)}_\gamma$, for any $m\geq 1$.
\end{theo}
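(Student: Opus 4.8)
Abbreviate $a:=1+\alpha-\gamma$ and $\nu:=\tfrac{1}{1+\alpha}$, so that $\alpha\in(1,1+\gamma)$ is equivalent to $\tfrac12<\mu=\tfrac1a<\tfrac{1}{2-\gamma}$ as in \eqref{mu-condition}. The plan is to turn the self-similar ansatz into a fixed-point problem for the profile $\hat\theta$ and to solve it by perturbing the explicit radial solution $\hat\theta_\ast(\xi):=|\xi|^{-a}$.

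\emph{Step 1 (profile equation).} Inserting \eqref{theta-self-similar-intro} into \eqref{gSQG} and using that $(-\Delta)^{-1+\gamma/2}$ raises homogeneity by $2-\gamma$ gives $\psi(t,x)=t^{\frac{1-\alpha}{1+\alpha}}\hat\psi(xt^{-\nu})$ and $u(t,x)=t^{-\frac{\alpha}{1+\alpha}}\hat u(xt^{-\nu})$, with $\hat\psi=-(-\Delta)^{-1+\gamma/2}\hat\theta$ and $\hat u=\nabla^\perp\hat\psi$; the profile must then solve the stationary transport equation
\[
a\,\hat\theta+\xi\cdot\nabla\hat\theta=(1+\alpha)\,\hat u\cdot\nabla\hat\theta,
\qquad\text{equivalently}\qquad
V\cdot\nabla\hat\theta=a\nu\,\hat\theta,\quad V:=\hat u-\nu\,\xi .
\]
The radial profile $\hat\theta_\ast$ is a solution because $\hat\psi_\ast=C_0|\xi|^{1-\alpha}$ is radial, so $\hat u_\ast\cdot\nabla\hat\theta_\ast=0$; and if $\hat\theta$ is asymptotically homogeneous of degree $-a$ at spatial infinity with angular profile $\tilde\Omega$, then $\theta(t,r e^{i\vartheta})\to r^{-a}\tilde\Omega(\vartheta)$ as $t\to0^+$, which is precisely the prescribed datum. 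Thus the theorem reduces to: \emph{for $\tilde\Omega$ close to $1$ in $L^p(\T)$, find a solution of the stationary equation that is asymptotically homogeneous of degree $-a$ at infinity with angular profile $\tilde\Omega$}, and which is $\tfrac{2\pi}{m}$-periodic whenever $\tilde\Omega$ is.

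\emph{Step 2 (unwinding the spiral).} In logarithmic self-similar time $\tau=\log t$ the characteristics $\tfrac{d}{d\tau}\Xi=V(\Xi)$ of the background ($\hat u=\hat u_\ast$) are $\dot r=-\nu r$, $\dot\vartheta=C_0(1-\alpha)r^{-1-\alpha}$: every particle spirals into the origin, winding infinitely. This winding is exactly straightened by the angular change of variables $\phi:=\vartheta-C_0(1-\alpha)r^{-1-\alpha}$, in which $V_\ast\cdot\nabla=-\nu r\,\partial_r|_\phi$ and the Poisson bracket keeps its polar form. Writing $\hat\theta=r^{-a}\Omega(r,\phi)$ and $\hat\psi_1:=\hat\psi-\hat\psi_\ast=-(-\Delta)^{-1+\gamma/2}\!\big(r^{-a}(\Omega-1)\big)$, the stationary equation becomes
\[
\nu r\,\partial_r\Omega=r^{a}\,\{\hat\psi_1,\,r^{-a}\Omega\},
\]
whose only contribution in the radial background is $\nu r\partial_r\Omega=0$, i.e. $\Omega=\Omega(\phi)$ — the homogeneous profiles, which carry the datum. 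Integrating $\tfrac{d}{d\tau}\Xi=V(\Xi)$ backwards from $\tau=-\infty$, where $r\to\infty$ and $\phi$ tends to the argument of $\tilde\Omega$, and using $\tfrac{d}{d\tau}\hat\theta(\Xi(\tau))=a\nu\,\hat\theta(\Xi(\tau))$ along each trajectory, one obtains the closed representation
\[
\hat\theta(\xi)=|\xi|^{-a}\,\mathcal A[\hat u](\xi)\;\tilde\Omega\!\big(\Phi_\infty[\hat u](\xi)\big),\qquad
\mathcal A[\hat u](\xi):=\exp\!\Big(a\!\int_{-\infty}^{0}\tfrac{(\hat u)_r}{r}\,d\tau\Big),
\]
where $\Phi_\infty[\hat u]$ is the ``angle at infinity'' map attached to $V$, and where for $\hat u=\hat u_\ast$ one has $\Phi_\infty=\phi$, $\mathcal A\equiv 1$, recovering $\hat\theta_\ast$. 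Together with $\hat u=\nabla^\perp(-(-\Delta)^{-1+\gamma/2})\hat\theta=:\mathcal K\hat\theta$ this is a fixed-point equation $\hat\theta=\mathcal G[\hat\theta,\tilde\Omega]$ to be solved near $(\hat\theta_\ast,1)$.

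\emph{Step 3 (where $p>\tfrac{1}{1-\gamma}$ enters).} On profiles homogeneous of degree $-a$, a polar computation reduces $\mathcal K$, up to a constant, to $\Omega\mapsto\nabla^\perp\!\big(r^{1-\alpha}\,\Omega\ast\mathcal M\big)$ with $\mathcal M(\delta)=\int_0^\infty(1+t^2-2t\cos\delta)^{-\gamma/2}\,t^{1-a}\,dt$. For $\gamma\in(0,1)$ one checks $\mathcal M\in L^\infty(\T)$ while $\mathcal M'(\delta)\sim|\delta|^{-\gamma}$ near $\delta=0$, so that $\mathcal M'\in L^{p'}(\T)$ \emph{precisely} when $p'\gamma<1$, i.e. $p>\tfrac{1}{1-\gamma}$; by Hölder this gives $\partial_\vartheta(\Omega\ast\mathcal M)\in L^\infty$ — in fact $\Omega\ast\mathcal M\in C^{0,1/p'}$ — for $\Omega\in L^p(\T)$, hence a \emph{bounded} velocity $\hat u=r^{-\alpha}\!\times\! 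L^\infty_\vartheta$. This is exactly the regularity that makes the drift $V$, the flow-out map $\Phi_\infty$, and the amplitude $\mathcal A$ well-defined and Lipschitz as functionals of $\hat\theta$ on the scaling-invariant spaces one works in; and every operator involved commutes with rotation by $\tfrac{2\pi}{m}$, so the symmetry passes through the whole construction.

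\emph{Step 4 (perturbative resolution, and the main obstacle).} Linearising at $(\hat\theta_\ast,1)$ in the $(r,\phi)$ variables, with unknown $\omega:=\Omega-1$ subject to $\omega\to\tilde\Omega-1$ as $r\to\infty$: the background part of the linearised operator is simply $\omega\mapsto r^{1-a}\partial_r\omega$, whose kernel consists of the $r$-independent profiles $\omega=\omega(\phi)$ — parametrised by the datum — while the remainder is the nonlocal term $r^{a-1}\{\hat\psi_1[\omega],\hat\theta_\ast\}$. Inverting thus amounts to solving the Volterra-type equation
\[
\omega(r,\phi)=\big(\tilde\Omega(\phi)-1\big)-\int_{r}^{\infty}\rho^{a-1}\Big(\text{nonlocal}[\omega]+\text{nonlinear remainder}\Big)(\rho,\phi)\,d\rho,
\]
which is a contraction in a suitable weighted space once the bilinear estimates coming from Step 3 are in place; a Banach fixed-point argument — equivalently, the implicit function theorem — then produces, for every $\tilde\Omega$ in a small enough $L^p$-ball around $1$, a unique nearby $\hat\theta$, and undoing the self-similar scaling yields the solution of $(\mathrm{SQG})_\gamma$ claimed in the theorem, with the stated symmetry and locally integrable since $a<2$. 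The crux is the behaviour near the origin, where the winding is infinite and the perturbation of the drift $V-V_\ast$ is $O(r^{-\alpha})$, hence \emph{not} small pointwise: the point is that along each characteristic the angle $\vartheta$ sweeps arbitrarily fast as $r\to0$, so the radial component of $\hat u$ (which has mean zero in $\vartheta$) is averaged out, and neither the inward spiralling, nor the convergence of $\int_{-\infty}^0\tfrac{(\hat u)_r}{r}\,d\tau$, nor the Volterra contraction is destroyed; proving these estimates uniformly down to $r=0$ — where both the unwinding variable $\phi$ and the exponent condition $p>\tfrac{1}{1-\gamma}$ are used in an essential way — is the technical heart of the argument.
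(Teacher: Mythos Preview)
Your outline captures the right heuristics (the spiral unwinding, the role of $p>\tfrac{1}{1-\gamma}$, the symmetry persistence), but Step~4 hides a genuine gap that is the heart of the paper.

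First, the integral equation you write is \emph{not} Volterra. The ``nonlocal$[\omega]$'' term is built from $(-\Delta)^{-1+\gamma/2}(r^{-a}\omega)$, which couples $\omega$ across \emph{all} radii, not only $\rho>r$; after the $\int_r^\infty$ you therefore obtain a Fredholm equation of the second kind, not a Volterra one, and no amount of smallness of $\tilde\Omega-1$ makes the linear part contractive. In the paper's formulation the linearised operator is $\tfrac{1+\alpha}{2}\partial_\beta(\beta h)+\text{(nonlocal integral)}$, where the nonlocal piece is of the \emph{same order} as the local one --- it cannot be iterated away. The paper proves invertibility by first showing the operator is Fredholm of index zero (the nonlocal part is \emph{compact} in the chosen weighted H\"older spaces, which already requires work), and then showing the kernel is trivial mode-by-mode via the Mellin transform: each Fourier coefficient satisfies a recurrence $G_k(s+1)=\tfrac1k F_k(s)G_k(s)$ whose structure, combined with a moment-problem/Laplace-transform argument, forces $G_k\equiv0$. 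None of this is automatic, and your proposal provides no substitute.

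Second, you use the \emph{unperturbed} unwinding $\phi=\vartheta-C_0(1-\alpha)r^{-1-\alpha}$. The paper's change of variables is solution-dependent, $r=(-(1+\alpha)\Psi_\beta)^{1/2}$, and this is not cosmetic: with the nonlinear choice the transport equation collapses to a $\partial_\beta$-only equation with explicit solution $\Theta=(\Psi_\varphi)^{-1/2\mu}\Omega(\phi)$, and the entire problem becomes the single nonlocal constraint $\tilde F(f,\tilde\Omega)=0$ on the stream function, with no $\partial_\phi$ derivatives appearing. With the fixed unwinding both derivatives survive, and --- as the paper notes explicitly --- the linearisation in physical variables fails to be Fredholm because the radial and angular parts have incompatible singularities at the origin. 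Your own remark that $V-V_\ast=O(r^{-\alpha})$ is ``not small pointwise'' near $r=0$ is precisely this obstruction; the averaging you invoke is what the adapted coordinates make rigorous, and without them the contraction you claim does not close.
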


We refer to Figure \ref{Fig-1} to illustrate the kind of self-similar profiles obtained in Theorem \ref{th-intro}. Let us remark that comparing to the Euler case studied in \cite{Bressan-Murray:self-similar-euler, Elling:algebraic-spiral-solutions-2d-euler, Elling:self-similar-euler-mixed-sign-vorticity}, condition \eqref{mu-condition} gives us that $\theta_0\in L^1_{\textnormal{loc}}$. More precisely, we can prove that the initial vorticity $\theta_0$ belongs to $L^q_{\textnormal{loc}}$, with $q<\min\{p,2\mu\}$, provided that $\tilde{\Omega}\in L^p$.

\begin{center}
\begin{figure}[htbp]
\centering
\def\svgwidth{1\textwidth}
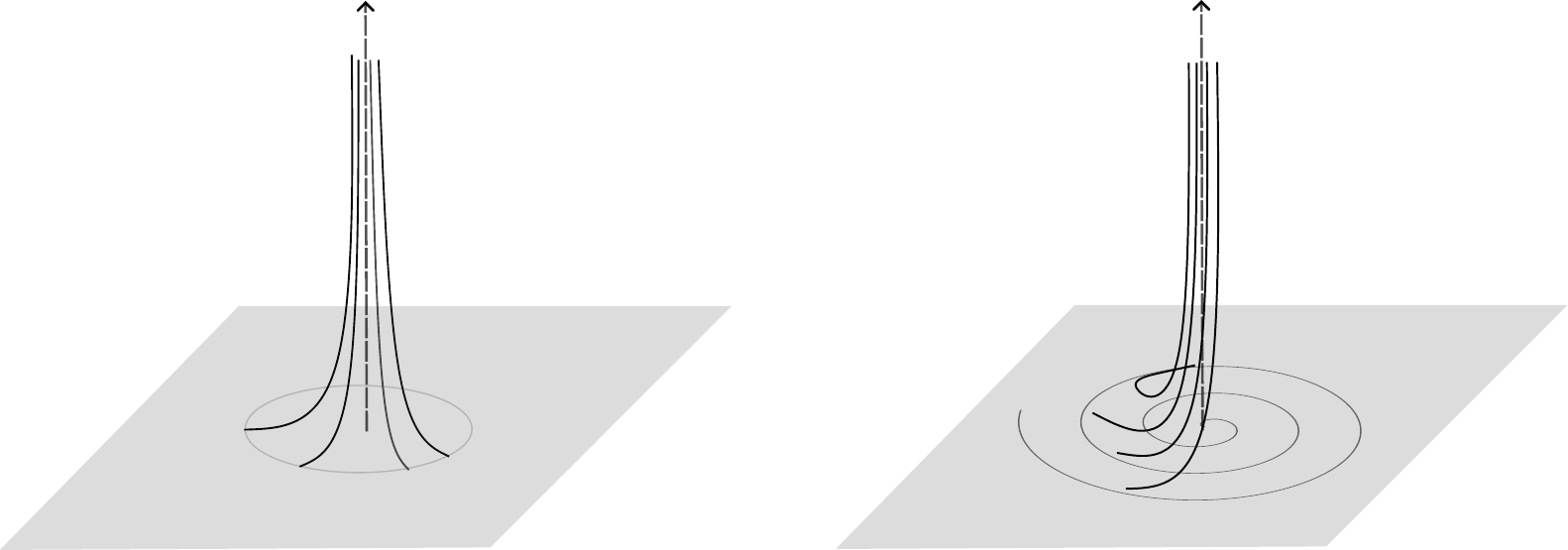
\caption{Left: the trivial self-similar radial solution \eqref{trivial-radial}.
Right: the perturbed solution of Theorem \ref{th-intro}. }
\label{Fig-1}
\end{figure}
\end{center}


We now summarize the previous literature related to this problem. We will first discuss the 2D Euler equation, then discuss the SQG and gSQG equations.

\subsection{Spiral solutions for 2D Euler}

The study of spiraling solutions for the 2D Euler equation has a long history due to its connections to turbulence and non-uniqueness, which starts from the seminal work of Kaden \cite{Kaden:spiral} and Pullin \cite{Pullin:spiral-vortex-sheet} in the context of vortex sheets. These vortex spiral solutions exhibit non-uniqueness phenomena. Elling constructed in  \cite{Elling:algebraic-spiral-solutions-2d-euler} spiral solutions close to constant in a high symmetry class, and later upgraded them to sign-changing solutions \cite{Elling:self-similar-euler-mixed-sign-vorticity}. Lately, Cieslak--Kokocki--Ozanski \cite{Cieslak-Kokocki-Ozanski:well-posedness-logarithmic-spiral-vortex-sheets,Cieslak-Kokocki-Ozanski:nonsymmetric-logarithmic-spiral-vortex-sheets} have proved local existence for logarithmic spiral vortex sheets. See also \cite{Fontelos-delaHoz:singularities-water-waves} where spirals are reported in the context of water waves.

Bressan, in a series of papers together with Shen and Murray \cite{Bressan-Murray:self-similar-euler,Bressan-Shen:a-posteriori-error-self-similar-euler} has laid out a very ambitious program to prove non-uniqueness by constructing an initial data that can be continued as either one spiral or two spirals. The idea developed in these works amounts to proving some a priori estimates and establishing the existence of some of the building blocks of the program, as well as providing compelling numerical evidence of it.

We remark that in our case, in contrast to the previous literature, we are able to achieve solutions in any symmetry class due to the study of the full linear operator, as opposed to: 1. Either a Neumann series in the symmetry class, where the contribution of one of the pieces of the linear operator dominates over the other whenever the solution is sufficiently symmetric \cite{Elling:algebraic-spiral-solutions-2d-euler,Elling:self-similar-euler-mixed-sign-vorticity}, or 2. The case of an inner domain where the problem has less constraints \cite{Bressan-Murray:self-similar-euler}). Moreover, let us emphasize one fundamental difference between the Euler case and the gSQG equations: in \cite{Bressan-Murray:self-similar-euler,Elling:algebraic-spiral-solutions-2d-euler, Elling:self-similar-euler-mixed-sign-vorticity}  the authors really exploit the fact that the relation between the stream function and the vorticity is local, which does not hold for the gSQG equations, and hence new techniques must be developed here.


In \cite{Elgindi-Jeong:singular-vortex-patches-i}, Elgindi-Jeong obtained global well-posedness for a class of patches with boundary singular at the origin which includes spirals. Later, in \cite{Elgindi-Jeong:singular-vortex-patches-ii}, the authors constructed a patch solution whose boundary forms filaments which spirals around the center of the patch, as time goes to infinity, which was suggested numerically by \cite{Dritschel:repeated-filamentation-2d-vorticity}.

\subsection{Self-similar solutions, non-uniqueness and finite time singularities for SQG and gSQG}

Resnick, in his thesis \cite{Resnick:phd-thesis-sqg-chicago}, showed global existence of weak solutions in $L^{2}$ using  the oddness of the Riesz transform to obtain an extra cancellation. Marchand \cite{Marchand:existence-regularity-weak-solutions-sqg} extended Resnick's result to the class of initial data belonging to $L^{p}$ with $p > 4/3$. 

The question of non-uniqueness for weak solutions of SQG is still a very challenging problem which has received a lot of attention lately. Starting from \cite{Azzam-Bedrossian:bmo-uniqueness-active-scalar-equations,Isett-Vicol:holder-continuous-active-scalar} up to the breakthrough by Buckmaster--Shkoller--Vicol \cite{Buckmaster-Shkoller-Vicol:nonuniqueness-sqg}, alternative newer proofs by Isett and Ma \cite{Isett-Ma:non-uniqueness-sqg} and the study of the stationary problem by Cheng--Kwon--Li \cite{Cheng-Kwon-Li:non-uniqueness-steady-state-sqg}. It is likely that the scenario crafted by Bressan and his collaborators for 2D Euler also holds in the gSQG case.

The problem of whether the SQG and gSQG system presents finite time singularities or there is global existence is one of the most outstanding problems in mathematical fluid mechanics. Kiselev and Nazarov \cite{Kiselev-Nazarov:simple-energy-pump-sqg} constructed solutions for which there was norm inflation, and Friedlander and Shvydkoy \cite{Friedlander-Shvydkoy:unstable-spectrum-sqg} showed the existence of unstable eigenvalues of the spectrum. We also refer to the construction of Castro and C\'ordoba of singular solutions with infinite energy in \cite{Castro-Cordoba:infinite-energy-sqg}.

Aside from the analytical work, there have been many numerical results in this framework. Constantin--Majda--Tabak \cite{Constantin-Majda-Tabak:formation-fronts-qg} reported a possible singularity in the form of a hyperbolic saddle closing in finite time. In contrast, Ohkitani and Yamada \cite{Ohkitani-Yamada:inviscid-limit-sqg} and Constantin--Nie--Schorghofer \cite{Constantin-Nie-Schorghofer:nonsingular-sqg-flow} suggested that the growth was double exponential instead. C\'ordoba, in his thesis \cite{Cordoba:nonexistence-hyperbolic-blowup-qg} bounded the growth by a quadruple exponential (see also later work by C\'ordoba and Fefferman on a double exponential bound \cite{Cordoba-Fefferman:growth-solutions-qg-2d-euler} and numerical simulations by Deng--Hou--Li--Yu \cite{Deng-Hou-Li-Yu:non-blowup-2d-sqg}), thus settling the question. 20 years later, using improved algorithms and more computational resources, Constantin et al. \cite{Constantin-Lai-Sharma-Tseng-Wu:new-numerics-sqg} recalculated the hyperbolic saddle scenario, finding no evidence of blowup. Scott, in \cite{Scott:scenario-singularity-quasigeostrophic}, starting from elliptical configurations, proposed a candidate that develops filamentation and after a few cascades, blowup of $\nabla \theta$. This is the only scenario that still stands up to date in the smooth setting.

A simpler setting for the SQG and gSQG problem is the so-called {\it patch setting}, where the scalar takes the form of a characteristic function of a time-dependent domain with smooth boundary. Rodrigo (in a $C^\infty$ space) \cite{Rodrigo:evolution-sharp-fronts-qg} and Gancedo \cite{Gancedo:existence-alpha-patch-sobolev} and Constantin et al \cite{Chae-Constantin-Cordoba-Gancedo-Wu:gsqg-singular-velocities} (in a Sobolev space) proved local existence for the case $0 < \gamma \leq 1$ and $1 < \gamma$ respectively.
Some 20 years ago, C\'ordoba--Fontelos--Mancho--Rodrigo \cite{Cordoba-Fontelos-Mancho-Rodrigo:evidence-singularities-contour-dynamics} found numerically strong evidence of finite time singularity formation in the form of a self-intersecting patch where at the point of intersection the curvature blows up (the curve has to lose regularity at the self-intersection point due to Gancedo--Strain \cite{Gancedo-Strain:absence-splash-muskat-SQG}). They conducted experiments for different values of $\gamma \in (0,1]$ and reported blowup for all of them, see also \cite{Mancho:numerical-studies-self-similar-alpha-patch}. Moreover, they numerically checked that the singularity was of asymptotically self-similar type. After them, Scott--Dritschel performed a very careful study using ellipses as initial condition: On the one hand, in \cite{Scott-Dritschel:self-similar-sqg}, after taking an ellipse with aspect ratio $a = 0.16$, a self-similar cascade of filamentation is reported, where the filament width collapses to zero in a finite time. On the other hand, in \cite{Scott-Dritschel:self-similar-sqg-patch}, a more detailed analysis is done (for initial aspect ratios $a = 0.094, 0.132, 0.180$), and the boundary is split into three scales: an inner (faster) scale that develops a self-similar corner, a middle scale that develops a self-similar spiral, and the outermost region. We note that there is some resemblance between the middle scale of this analysis and the solutions constructed in this paper which are also self-similar spirals.

Finally, we also mention the formation of singularities in other geometries (in the presence of a boundary) by Kiselev--Ryzhik--Yao--Zlato\v{s} \cite{Kiselev-Ryzhik-Yao-Zlatos:singularity-alpha-patch-boundary} (for the case $0 < \gamma < \frac{1}{12}$) and Gancedo--Patel \cite{Gancedo-Patel:local-existence-blowup-gsqg} (for the case $0 < \gamma < \frac13$). We refer also the reader to \cite{Ao-Davila-delPino-Musso-Wei:travelling-rotating-solutions-gsqg,Castro-Cordoba-GomezSerrano:existence-regularity-vstates-gsqg,Castro-Cordoba-GomezSerrano:global-smooth-solutions-sqg,Cordoba-GomezSerrano-Ionescu:global-generalized-sqg-patch,delaHoz-Hassainia-Hmidi:doubly-connected-vstates-gsqg,Garcia:Karman-vortex-street,Garcia:vortex-patch-choreography,GomezSerrano:stationary-patches,Gravejat-Smets:travelling-waves-smooth-sqg,Hassainia-Hmidi:v-states-generalized-sqg,Hassainia-Hmidi-Masmoudi:kam-gsqg,Hmidi-Mateu:existence-corotating-counter-rotating,Hunter-Shu-Zhang:global-model-front-sqg,Hunter-Shu-Zhang:global-gsqg,Nahmod-Pavlovic-Staffilani-Totz:global-invariant-measures-gsqg,Renault:relative-equlibria-holes-sqg} for other results concerning global solutions for the gSQG equations.

Regarding negative results of self-similar solutions (i.e. whenever the only possible solution is the trivial one) Cannone--Xue \cite{Cannone-Xue:self-similar-solutions-sqg} and Chae \cite{Chae:nonexistence-selfsimilar-euler-3d-sqg} proved that there are no locally self-similar solutions under suitable technical assumptions, one of them being integrable. Although our solutions decay at infinity, they do not decay sufficiently fast to be in $L^q$ with $q>2$ as needed in \cite{Cannone-Xue:self-similar-solutions-sqg}.

\subsection{Main ideas of the proof}

Let us give the main ideas of the proof. By inserting the ansatz \eqref{theta-self-similar-intro} in the gSQG equations, we find the equivalent equation for $\hat{\theta}$:
\begin{equation}\label{eq-self-intro}
\left\{\frac{1+\alpha-\gamma}{1+\alpha}-\frac{2}{1+\alpha}\right\}\hat{\theta}(z)-\nabla\cdot\left\{\left(\hat{u}(z)-\frac{z}{1+\alpha}\right)\hat{\theta}(z)\right\}=0,
\end{equation}
where the vector field
\begin{equation}\label{pseudo-velocity-intro}
q(z):=\hat{u}(z)-\frac{z}{1+\alpha}=\nabla^\perp \hat{\psi}(z)-\frac{z}{1+\alpha},
\end{equation}
is called the pseudo-velocity, and its integral curves are the pseudo-streamlines. Hence, in order to find nontrivial self-similar solutions one needs to solve \eqref{self-similar-0-intro}.

Motivated by numerous works in the literature about the existence of nontrivial steady vortex patches in different active scalar equations \cite{Burbea:motions-vortex-patches, Davila-DelPino-Musso-Wei:travelling-helices-vortex-filament,delaHoz-Hassainia-Hmidi:doubly-connected-vstates-gsqg,Garcia:Karman-vortex-street,Garcia:vortex-patch-choreography, Garcia-Hmidi-Mateu:time-periodic-doubly-3d-qg,Garcia-Hmidi-Mateu:time-periodic-3d-qg,Garcia-Hmidi-Soler:non-uniform-vstates-euler, GomezSerrano:stationary-patches,Hassainia-Hmidi:v-states-generalized-sqg,Hmidi-Mateu:existence-corotating-counter-rotating, Hmidi-Mateu-Verdera:rotating-vortex-patch, Renault:relative-equlibria-holes-sqg}, we will solve \eqref{eq-self-intro} via perturbative arguments by means of the infinite dimensional Implicit Function theorem. That means that we will look for nontrivial self-similar solutions which are {\it close} to the trivial stationary solution \eqref{trivial-radial}. However, perturbative arguments can not be directly applied to \eqref{eq-self-intro} since the linearized operator at a trivial radial solution \eqref{trivial-radial} is not Fredholm. That comes from the fact that the solution is singular at $0$. However the kind of singularity at the linear level of the terms with the radial and angular derivatives is different, which complicates the choice of the appropriate weighted space. That problem also appeared when perturbing radial functions in the Euler and gSQG equations \cite{Castro-Cordoba-GomezSerrano:uniformly-rotating-smooth-euler, Castro-Cordoba-GomezSerrano:analytic-vstates-ellipses, Garcia-Hmidi-Soler:non-uniform-vstates-euler}. The authors overcame such issue by either performing an appropriate change of variables by means of the level sets or by working in a suitable class of solutions.

In this work, we will tackle the lack of Fredholmness of the linarized operator when working directly with \eqref{eq-self-intro} in the physical variables, by using the {\it adapted coordiantes} introduced by Elling \cite{Elling:algebraic-spiral-solutions-2d-euler}. Hence, we will transform the polar coordiantes $(r,\vartheta)$ into some new ones $(\beta,\phi)$. Such transformation is a nonlinear change of coordinates which is implicit in the sense that it depends on the solution itself. Here $\beta\in(0,\infty)$ is the angle along a particular pseudo-streamline. Moreover $\beta=0$ will correspond to the spatial infinity while $\beta=+\infty$ approaches the spiral center. On the other hand, $\phi$ is the polar angle at $r\rightarrow +\infty$.  In order to define $(\beta,\phi)$ we assume that $\partial_\beta$ is tangential to its pseudo-streamlines. Define the angle $\phi$ as follows
$$
\vartheta=\beta+\phi, \quad \phi\in\T.
$$
We will fix $\beta$ such that the pseudo-velocity \eqref{pseudo-velocity-intro} has vanishing $\phi$-component. Defining
$$
\Theta(\beta,\phi):=\hat{\theta}(z(\beta,\phi)), \quad \Psi(\beta,\phi):=\hat{\psi}(z(\beta,\phi)),
$$
we achieve that 
\begin{align*}
\left(\nabla^\perp \hat{\psi}-\frac{z}{1+\alpha}\right)\cdot \nabla_z\hat{\theta}=&\left((\beta_{z_2}\phi_{z_1}-\beta_{z_1}\phi_{z_2})\Psi_\phi-\frac{1}{1+\alpha}(z_1\beta_{z_1}+z_2\beta_{z_2}),\right.\\
&\left.(\beta_{z_1}\phi_{z_2}-\beta_{z_2}\phi_{z_1})\Psi_\beta-\frac{1}{1+\alpha}(z_1\phi_{z_1}+z_2\phi_{z_2}) \right)\cdot \nabla_{\beta,\phi}\Theta.
\end{align*}
Hence, we fix $\beta$ such that $\phi$-component of the pseudo-velocity is 0, that is:
\begin{equation}\label{beta-1-intro}
(\beta_{z_1}\phi_{z_2}-\beta_{z_2}\phi_{z_1})\Psi_\beta-\frac{1}{1+\alpha}(z_1\phi_{z_1}+z_2\phi_{z_2})=0,
\end{equation}
which, after some computations agrees with
$$
r=(-(1+\alpha)\Psi_\beta)^\frac12.
$$
\begin{center}
\begin{figure}[htbp]
\centering
\def\svgwidth{0.7\textwidth}
\begingroup%
  \makeatletter%
  \providecommand\color[2][]{%
    \errmessage{(Inkscape) Color is used for the text in Inkscape, but the package 'color.sty' is not loaded}%
    \renewcommand\color[2][]{}%
  }%
  \providecommand\transparent[1]{%
    \errmessage{(Inkscape) Transparency is used (non-zero) for the text in Inkscape, but the package 'transparent.sty' is not loaded}%
    \renewcommand\transparent[1]{}%
  }%
  \providecommand\rotatebox[2]{#2}%
  \newcommand*\fsize{\dimexpr\f@size pt\relax}%
  \newcommand*\lineheight[1]{\fontsize{\fsize}{#1\fsize}\selectfont}%
  \ifx\svgwidth\undefined%
    \setlength{\unitlength}{314.53202044bp}%
    \ifx\svgscale\undefined%
      \relax%
    \else%
      \setlength{\unitlength}{\unitlength * \real{\svgscale}}%
    \fi%
  \else%
    \setlength{\unitlength}{\svgwidth}%
  \fi%
  \global\let\svgwidth\undefined%
  \global\let\svgscale\undefined%
  \makeatother%
  \begin{picture}(1,0.45741636)%
    \lineheight{1}%
    \setlength\tabcolsep{0pt}%
    \put(0,0){\includegraphics[width=\unitlength,page=1]{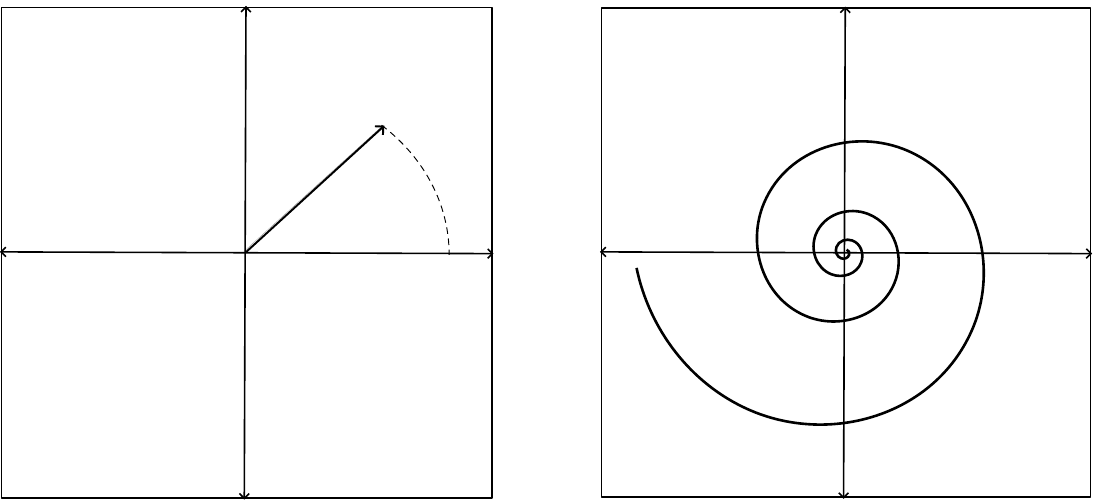}}%
    \put(0.24596154,0.30914828){\color[rgb]{0.01568627,0.01568627,0.01568627}\transparent{0.97647101}\makebox(0,0)[lt]{\lineheight{1.25}\smash{\begin{tabular}[t]{l}$r$\end{tabular}}}}%
    \put(0.01449349,0.42436342){\color[rgb]{0.01568627,0.01568627,0.01568627}\transparent{0.97647101}\makebox(0,0)[lt]{\lineheight{1.25}\smash{\begin{tabular}[t]{l}$(z_1,z_2)$\end{tabular}}}}%
    \put(0.56450747,0.42214408){\color[rgb]{0.01568627,0.01568627,0.01568627}\transparent{0.97647101}\makebox(0,0)[lt]{\lineheight{1.25}\smash{\begin{tabular}[t]{l}$(z_1,z_2)$\end{tabular}}}}%
    \put(0.39900823,0.30008661){\color[rgb]{0.01568627,0.01568627,0.01568627}\transparent{0.97647101}\makebox(0,0)[lt]{\lineheight{1.25}\smash{\begin{tabular}[t]{l}$\vartheta$\end{tabular}}}}%
  \end{picture}%
\endgroup%

\caption{Comparison between the polar coordinates in the $(z_1,z_2)$ plane with the $(\beta,\phi)$ adapted coordinates.}
\label{Fig-2}
\end{figure}
\end{center}

We refer to Figure \ref{Fig-2} to illustrate the  adapted coordinates. {\it Miraculously}, when assuming \eqref{beta-1-intro} we have that 
\begin{align}\label{vorticity-eq-intro}
\left(\nabla^\perp\hat{\psi}-\frac{z}{1+\alpha}\right)&\cdot\nabla_z\hat{\theta}=\left\{(\beta_{z_2}\phi_{z_1}-\beta_{z_1}\phi_{z_2})\Psi_\phi-\frac{1}{1+\alpha}(z_1\beta_{z_1}+z_2\beta_{z_2})\right\}\Theta_\beta,
\end{align}
where only $\partial_\beta$ survives there. That makes that the vorticity \eqref{eq-self-intro} written in the adapted coordinates depend only on the $\beta$-derivative of $\Theta$. Thanks to that, a trivial solution to \eqref{eq-self-intro} is found as
\begin{equation}\label{trivial-sol-1-intro}
\Theta(\beta,\phi)=(\Psi_\varphi)^{-\frac{1+\alpha-\gamma}{2}}\Omega(\phi)=(\Psi_\varphi)^{-\frac{1/\mu}{2}}\Omega(\phi),
\end{equation}
for any $\Omega$. However, $\Theta$ depends on the stream function $\Psi$ and it remains to check the elliptic relation \eqref{psi-theta} between these two functions. Written in the adapted coordinates and using
$$
\Psi(\beta,\phi)=-C_0^{\frac{2}{1+\alpha}}(\alpha-1)^{\frac{1-\alpha}{1+\alpha}}\beta^\frac{\alpha-1}{1+\alpha}(1+f(\beta,\phi)), \quad \Omega(\phi)=(1+\alpha)^{-\frac{1}{2\mu}}\tilde{\Omega}(\phi),
$$
the relation amounts to
$$
\tilde{F}(f,\tilde{\Omega})(\beta,\phi)=0, \quad (\beta,\phi)\in[0,\infty)\times\T,
$$
where
\begin{align}\label{Ftilde-intro}
\nonumber&\tilde{F}(f,\tilde{\Omega})(\beta,\phi)=1+f(\beta,\phi)\\
\nonumber&-(-1)^\frac{-1}{2\mu}\frac{C_\gamma (\alpha-1)^\frac{\alpha-1}{2}}{4\pi C_0(1+\alpha)^{\frac{\alpha-1}{2}}}\beta^{-\frac{\alpha-1}{1+\alpha}}\\
&\times \int_0^{2\pi}\int_0^\infty\frac{b^{-\frac{\gamma}{1+\alpha}}b^{\frac{-2}{1+\alpha}}\left\{-\frac{\alpha-1}{1+\alpha}+\partial_{\overline{\varphi}}f(b,\Phi)\right\}^{-\frac{1}{2\mu}}\left\{\frac{2(\alpha-1)}{(1+\alpha)^2}+\partial_{\overline{\beta\varphi}}f(b,\Phi)\right\}\tilde{\Omega}(\Phi)dbd\Phi}{D(f)(\beta,\phi,b,\Phi)^\frac{\gamma}{2}},
\end{align}
and
\begin{align}\label{denominator-0-intro}
D(f)(\beta,\phi,b,\Phi):=&\tilde{\Psi}_\beta+\tilde{\Psi}_b-2\tilde{\Psi}_\beta^\frac12\tilde{\Psi}_b^\frac12\cos(\phi+\beta-\Phi-b)\\
=&\beta^{\frac{-2}{1+\alpha}}\left\{\frac{\alpha-1}{1+\alpha}+\partial_{\overline{\beta}}f(\beta,\phi)\right\}+b^{\frac{-2}{1+\alpha}}\left\{\frac{\alpha-1}{1+\alpha}+\partial_{\overline{b}}f(b,\Phi)\right\}\nonumber\\
&-2\beta^{\frac{-1}{1+\alpha}}b^{\frac{-1}{1+\alpha}}\left\{\frac{\alpha-1}{1+\alpha}+\partial_{\overline{\beta}}f(\beta,\phi)\right\}^\frac12\left\{\frac{\alpha-1}{1+\alpha}+\partial_{\overline{b}}f(b,\Phi)\right\}^\frac12\cos(\phi+\beta-\Phi-b)\nonumber\\
=&\left(\beta^{\frac{-1}{1+\alpha}}\left\{\frac{\alpha-1}{1+\alpha}+\partial_{\overline{\beta}}f(\beta,\phi)\right\}^\frac12-b^{\frac{-1}{1+\alpha}}\left\{\frac{\alpha-1}{1+\alpha}+\partial_{\overline{b}}f(b,\Phi)\right\}^\frac12\right)^2\nonumber\\
&+2\beta^{\frac{-1}{1+\alpha}}b^{\frac{-1}{1+\alpha}}\left\{\frac{\alpha-1}{1+\alpha}+\partial_{\overline{\beta}}f(\beta,\phi)\right\}^\frac12\left\{\frac{\alpha-1}{1+\alpha}+\partial_{\overline{b}}f(b,\Phi)\right\}^\frac12\nonumber\\
&\times (1-\cos(\phi+\beta-\Phi-b)).\nonumber
\end{align}
There, $\partial_\varphi:=\partial_\phi-\partial_\beta$ and notice that the previous expression does not depend on $\partial_\phi$ which is crucial for our analysis. Indeed, the choice of the adapted coordinates is the key point of this work due to two facts: first we can solve the vorticity equation with \eqref{trivial-sol-1-intro}, and second the nonlinear equation only depends on one derivative as opposite to \eqref{eq-self-intro}.

Then, the proof of the main theorem relies on the application of the infinite dimensional Implicit Function theorem to $\tilde{F}$ in some appropriate spaces. The choice of the function spaces is a key point in this work since we should add some appropriate decay for $f$ as $\beta$ approaches $0$ and $\infty$. Let us point out that the funcional $\tilde{F}$ differs to the formulation used in \cite{Bressan-Murray:self-similar-euler, Elling:algebraic-spiral-solutions-2d-euler, Elling:self-similar-euler-mixed-sign-vorticity} when working with the Euler equations since there the relation between the vorticity and the streamfunction is local, as opposite to the gSQG equations. 

In order to apply the Implicit Function theorem to $\tilde{F}$, we should check that the linear operator around the trivial solution is an isomorphism. In particular, we shall check that its kernel is trivial. To that aim, we apply the Mellin transform to the kernel equation as it was motivated by some works on coagulation and fragmentation models (see \cite{Breschi-Fontelos:self-similar-fragmentation,Escobedo-Velazquez:fundamental-solution-coagulation}) to obtain a recurrence equation with singular coefficients. Later, we relate such recurrence equation with the moment problem concluding that the function in the kernel is singular and indeed does not belong to our function spaces. Finally, after showing that the Implicit Function theorem can be applied to the formulation with the adapted coordiantes, we shall prove the invertibility of the adapted coordinates when we are {\it close} to the trivial solution to come back to the physical variables.

\subsection{Organization of the paper}
The paper is organized in the following way: Section \ref{sec-selfsimilar} sets the problem up and writes the equations in the adapted coordinates. Section \ref{sec-functional} defines the functional spaces in which we will work on and performs estimates at the nonlinear level. On the contrary, in Section \ref{sec-linearized} we perform regularity estimates of the linear operator. Section \ref{sec-spectral} is devoted to the spectral study and in particular the invertibility of the linear operator. Finally Sections \ref{sec-mfold} and \ref{sec-recovering} comprise the reduction to $m$-fold symmetry classes and the translation of Theorem \ref{theo-1} into standard coordinates. We also include two Appendices: Appendix \ref{ap-radial} concerns basic results on radial solutions of the fractional Laplacian, whereas Appendix \ref{ap-special} deals with basic properties of some special functions needed throughout the text.

\section{Self-similar solutions}\label{sec-selfsimilar}
In this section, we aim to provide the definition of a self-similar solution to the gSQG equations together with the formulation that characterizes them. Note that solutions to the gSQG equations \eqref{gSQG} are invariant under the following scaling: for $\alpha>-1$,
$$
\theta(t,x)\mapsto \theta_\lambda(t,x)=\lambda^{1+\alpha-\gamma}\theta(\lambda^{1+\alpha}t,\lambda x); \quad u(t,x)\mapsto u_\lambda (t,x)=\lambda^\alpha u(\lambda^{1+\alpha}t,\lambda x), \quad \forall\lambda>0.
$$
We will say that $\theta$ solution of \eqref{gSQG} is self-similar if $\theta=\theta_\lambda$, for any $\lambda>0$. That is,
\begin{equation}\label{self-similar-0}
\theta(t,x)=\lambda^{1+\alpha-\gamma}\theta(\lambda^{1+\alpha}t,\lambda x), \quad \forall\lambda>0.
\end{equation}
Hence, in order to have that the trivial solution \eqref{trivial-radial} is self-similar in the above way, we have the following condition for the singularity parameter $\mu$ depending on $\alpha$ and $\gamma$:
\begin{equation}\label{mu}
\frac{1}{\mu}=1+\alpha-\gamma.
\end{equation}
By \eqref{mu-condition} and the relation \eqref{mu} we find that 
\begin{equation}\label{alpha-condition}
1<\alpha<1+\gamma.
\end{equation}
Now, by taking $\lambda=1/t^{\frac{1}{1+\alpha}}$ in \eqref{self-similar-0}, our aim is to look for self-similar solutions of the form:
\begin{equation}\label{theta-self-similar}
\theta(t,x)=\frac{1}{t^{\frac{1+\alpha-\gamma}{1+\alpha}}}\hat{\theta}\left(\frac{x}{t^{\frac{1}{1+\alpha}}}\right),
\end{equation}
for some self-similar profile $\hat{\theta}$. 

In the following proposition, we show the expression of $u$ with the self-similar ansatz.
\begin{pro}
If $\theta$ takes the form \eqref{theta-self-similar}, then
$$
u(t,x)=\frac{1}{t^{\frac{\alpha}{1+\alpha}}}\hat{u}\left(\frac{x}{t^{\frac{1}{1+\alpha}}}\right),
$$
where $\hat{u}=-\nabla^\perp(-\Delta)^{-1+\frac{\gamma}{2}}\hat{\theta}$. Moreover, \eqref{gSQG} is equivalent to
\begin{equation}\label{eq-self-sim}
\frac{1+\alpha-\gamma}{1+\alpha}\hat{\theta}(z)+\frac{1}{1+\alpha}z\cdot \nabla \hat{\theta}(z)-\hat{u}\cdot\nabla\hat{\theta}(z)=0,
\end{equation}
where $z=\frac{x}{t^{\frac{1}{1+\alpha}}}$. In a similar way, let us define $\hat{\psi}$ as 
$$
\hat{\psi}(z)=-(-\Delta)^{-1+\frac{\gamma}{2}}\hat{\theta}(z).
$$
\end{pro}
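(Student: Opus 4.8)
The plan is to verify directly that the self-similar ansatz \eqref{theta-self-similar} is compatible with the Biot--Savart law, and then substitute into the transport equation in \eqref{gSQG}. Throughout write $a:=\frac{1+\alpha-\gamma}{1+\alpha}$ and $b:=\frac{1}{1+\alpha}$, so that \eqref{theta-self-similar} reads $\theta(t,x)=t^{-a}\hat{\theta}(xt^{-b})$, and set $z=xt^{-b}$. First I would compute the scaling of the stream function from the representation \eqref{psi-theta}. Since $\gamma\in(0,1)\subset(0,2)$, the kernel $K_\gamma$ is homogeneous of degree $-\gamma$, i.e.\ $K_\gamma(\lambda x)=\lambda^{-\gamma}K_\gamma(x)$ for $\lambda>0$. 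Inserting the ansatz and changing variables $y=t^{b}w$ (so $dy=t^{2b}\,dw$) yields
\begin{align*}
\psi(t,x)&=\int_{\R^2}K_\gamma(x-y)\,t^{-a}\hat{\theta}(yt^{-b})\,dy=t^{-a+2b}\int_{\R^2}K_\gamma\big(t^{b}(z-w)\big)\hat{\theta}(w)\,dw\\
&=t^{-a+2b-\gamma b}\int_{\R^2}K_\gamma(z-w)\hat{\theta}(w)\,dw,
\end{align*}
and the exponent is $-a+2b-\gamma b=\frac{-(1+\alpha-\gamma)+2-\gamma}{1+\alpha}=\frac{1-\alpha}{1+\alpha}$. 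Hence $\psi(t,x)=t^{\frac{1-\alpha}{1+\alpha}}\hat{\psi}(z)$ with $\hat{\psi}(z)=\int_{\R^2}K_\gamma(z-w)\hat{\theta}(w)\,dw=-(-\Delta)^{-1+\frac{\gamma}{2}}\hat{\theta}(z)$, consistent with the definition of $\hat{\psi}$ in the statement.

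Next I would differentiate in $x$. By the chain rule $\nabla^\perp_x[\hat{\psi}(xt^{-b})]=t^{-b}(\nabla^\perp\hat{\psi})(z)$, so that
$$
u(t,x)=\nabla^\perp_x\psi(t,x)=t^{\frac{1-\alpha}{1+\alpha}-b}(\nabla^\perp\hat{\psi})(z)=t^{-\frac{\alpha}{1+\alpha}}\hat{u}(z),
$$
because $\frac{1-\alpha}{1+\alpha}-\frac{1}{1+\alpha}=-\frac{\alpha}{1+\alpha}$, where $\hat{u}:=\nabla^\perp\hat{\psi}$. Since differentiation commutes with the convolution in \eqref{psi-theta}, $\hat{u}=\nabla^\perp\hat{\psi}=-\nabla^\perp(-\Delta)^{-1+\frac{\gamma}{2}}\hat{\theta}$, which is the claimed formula.

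It then remains to plug $\theta(t,x)=t^{-a}\hat{\theta}(z)$ and $u(t,x)=t^{-\frac{\alpha}{1+\alpha}}\hat{u}(z)$ into $\theta_t+(u\cdot\nabla)\theta=0$. By the chain rule (using $\partial_t(xt^{-b})=-b\,t^{-1}z$),
$$
\theta_t(t,x)=-t^{-a-1}\big(a\,\hat{\theta}(z)+b\,z\cdot\nabla\hat{\theta}(z)\big),\qquad \nabla_x\theta(t,x)=t^{-a-b}\,\nabla\hat{\theta}(z),
$$
hence $(u\cdot\nabla)\theta(t,x)=t^{-\frac{\alpha}{1+\alpha}-a-b}\,\hat{u}(z)\cdot\nabla\hat{\theta}(z)=t^{-a-1}\,\hat{u}(z)\cdot\nabla\hat{\theta}(z)$, the last step because $\frac{\alpha}{1+\alpha}+a+b=\frac{2+2\alpha-\gamma}{1+\alpha}=a+1$. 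Since every equality above is an identity, dividing $\theta_t+(u\cdot\nabla)\theta=0$ by $t^{-a-1}\neq 0$ shows it is equivalent to $a\,\hat{\theta}(z)+b\,z\cdot\nabla\hat{\theta}(z)-\hat{u}(z)\cdot\nabla\hat{\theta}(z)=0$, i.e.\ to \eqref{eq-self-sim}. The only point requiring care is that these pointwise identities presuppose that the convolution in \eqref{psi-theta} converges absolutely and that one may differentiate under the integral sign; under \eqref{mu-condition} (equivalently \eqref{mu}) this holds for the profiles at hand — for the radial profile \eqref{trivial-radial} one gets the explicit $\psi=C_0|x|^{2-\gamma-1/\mu}$ of Appendix \ref{ap-radial}, and the same growth/decay bounds persist for the perturbed profiles constructed later — so no real obstacle arises. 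Note that we use only $\gamma\in(0,1)$, so that $K_\gamma$ is homogeneous; the borderline case $\gamma=0$ (logarithmic kernel) is excluded.
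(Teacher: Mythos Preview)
Your proof is correct and follows essentially the same route as the paper: you insert the self-similar ansatz into the Biot--Savart law, exploit the homogeneity of the kernel $K_\gamma$ via a change of variables, and then substitute into the transport equation to read off \eqref{eq-self-sim}. The only cosmetic difference is that you compute the scaling of $\psi$ first and then differentiate to get $u$, whereas the paper writes the integral for $u$ directly; the arithmetic and the logic are the same.
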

\begin{proof}
First, let us compute $u$ assuming \eqref{theta-self-similar}:
\begin{align*}
u(t,x)=&-\nabla^\perp(-\Delta)^{-1+\frac{\gamma}{2}}\theta(t,x)\\
=&-\frac{C_\gamma \gamma}{4\pi}\int_{\R^2}\frac{(x-y)^\perp}{|x-y|^{\gamma+2}}\theta(t,y)dA(y)\\
=&\frac{C_\gamma \gamma}{4\pi}\frac{1}{t^\frac{1+\alpha-\gamma}{1+\alpha}}\int_{\R^2}\frac{(x-y)^\perp}{|x-y|^{\gamma+2}}\hat{\theta}\left(\frac{y}{t^{\frac{1}{1+\alpha}}}\right)dA(y)\\
=&\frac{C_\gamma \gamma}{4\pi}\frac{1}{t^\frac{1+\alpha-\gamma}{1+\alpha}}\frac{t^{\frac{1}{1+\alpha}}}{t^{\frac{\gamma+2}{1+\alpha}}}\int_{\R^2}\frac{\left(\frac{x}{t^{\frac{1}{1+\alpha}}}-\frac{y}{t^{\frac{1}{1+\alpha}}}\right)^\perp}{\left|\frac{x}{t^{\frac{1}{1+\alpha}}}-\frac{y}{t^{\frac{1}{1+\alpha}}}\right|^{\gamma+2}}\hat{\theta}\left(\frac{y}{t^{\frac{1}{1+\alpha}}}\right)dA(y)\\
=&\frac{C_\gamma \gamma}{4\pi}\frac{1}{t^\frac{\alpha}{1+\alpha}}\int_{\R^2}\frac{\left(\frac{x}{t^{\frac{1}{1+\alpha}}}-y\right)^\perp}{\left|\frac{x}{t^{\frac{1}{1+\alpha}}}-y\right|^{\gamma+2}}\hat{\theta}(y)dA(y)\\
=&\frac{1}{t^{\frac{\alpha}{1+\alpha}}}\hat{u}\left(\frac{x}{t^{\frac{1}{1+\alpha}}}\right),
\end{align*}
where $\hat{u}=-\nabla^\perp(-\Delta)^{-1+\frac{\gamma}{2}}\hat{\theta}$.

Let us now check \eqref{eq-self-sim}. Note that
\begin{align*}
\partial_t \theta(t,x)=&-{\frac{1+\alpha-\gamma}{1+\alpha}}\frac{1}{t^{\frac{1+\alpha-\gamma}{1+\alpha}+1}}\hat{\theta}\left(\frac{x}{t^{\frac{1}{1+\alpha}}}\right)-\frac{1}{1+\alpha}\frac{1}{t^{\frac{1+\alpha-\gamma}{1+\alpha}+1}}\frac{x}{t^{\frac{1}{1+\alpha}}}\cdot (\nabla \hat{\theta})\left(\frac{x}{t^{\frac{1}{1+\alpha}}}\right)\\
=&-\frac{1}{t^{\frac{1+\alpha-\gamma}{1+\alpha}+1}}\left\{\frac{1+\alpha-\gamma}{1+\alpha}\hat{\theta}(z)+\frac{1}{1+\alpha}z\cdot\nabla\hat{\theta}(z)\right\},\\
\nabla\theta(t,x)=&\frac{1}{t^{\frac{1+\alpha-\gamma}{1+\alpha}}}\frac{1}{t^\frac{1}{1+\alpha}}(\nabla\hat{\theta})\left(\frac{x}{t^{\frac{1}{1+\alpha}}}\right)\\
=&\frac{1}{t^{\frac{1+\alpha-\gamma}{1+\alpha}}}\frac{1}{t^\frac{1}{1+\alpha}}\nabla\hat{\theta}(z).
\end{align*}
Then,
\begin{align*}
\partial_t \theta(t,x)+(u(t,x)\cdot \nabla)\theta(t,x)=-\frac{1}{t^{\frac{1+\alpha-\gamma}{1+\alpha}+1}}\left\{\frac{1+\alpha-\gamma}{1+\alpha}\hat{\theta}(z)+\frac{1}{1+\alpha}z\cdot \nabla\hat{\theta}(z)-\hat{u}(z)\cdot \nabla \hat{\theta}(z)\right\},
\end{align*}
yielding to the equivalent equation \eqref{eq-self-sim}:
$$
\frac{1+\alpha-\gamma}{1+\alpha}\hat{\theta}(z)+\frac{1}{1+\alpha}z\cdot \nabla\hat{\theta}(z)-\hat{u}(z)\cdot \nabla \hat{\theta}(z)=0.
$$
Note that since $\nabla\cdot \hat{u}=0$, the previous equation can be written as
$$
\left\{\frac{1+\alpha-\gamma}{1+\alpha}-\frac{2}{1+\alpha}\right\}\hat{\theta}(z)-\nabla\cdot\left\{\left(\hat{u}(z)-\frac{z}{1+\alpha}\right)\hat{\theta}(z)\right\}=0.
$$
\end{proof}
The vector field 
\begin{equation}\label{pseudo-velocity}
q(z):=\hat{u}(z)-\frac{z}{1+\alpha}=\nabla^\perp \hat{\psi}(z)-\frac{z}{1+\alpha},
\end{equation}
is called the pseudo-velocity, and its integral curves are the pseudo-streamlines.

\subsection{Adapted coordinates}\label{sec-adaptive-coord}
Our aim is to perturb the stationary radial solution \eqref{trivial-radial} to find non trivial self similar solutions with a locally integrable singularity at the origin. However, perturbation techniques, for instance the Implicit Function theorem or Crandall-Rabinowitz theorem, of radial solutions to gSQG or Euler equations seem to be difficult due to the very special structure of the equation. Roughly speaking, the nonlinear operator and the linear one behave very differently in terms of the singularity at the origin. That makes difficult the choice of the appropriate function spaces and leads to main difficulties in the Fredholm structure of the linearized operator. A similar issue appears when dealing with uniformly rotating solutions, which could be solved in \cite{Castro-Cordoba-GomezSerrano:analytic-vstates-ellipses,Castro-Cordoba-GomezSerrano:uniformly-rotating-smooth-euler, Garcia-Hmidi-Soler:non-uniform-vstates-euler} by restricting the class of solutions or doing an appropriate change of variables by using the level sets of the vorticity.

Although here we are not dealing with rotating solutions, our equation \eqref{eq-self-sim} has a very similar structure close to the radial solution \eqref{trivial-radial}. Moreover, we will have to deal with extra difficulties due to the singularity of \eqref{trivial-radial} at $0$ and its behavior at $+\infty$. More precisely, we are now working in an unbounded domain and we need to use some appropriate weights at $0$ and $+\infty$. In order to overcome the problematic with the Fredholm structure of the linearized operator, working with some adapted coordinates instead of the usual polar coordinates seems to be more convenient in this case.

Let $(r,\vartheta)$ be the polar coordinates with $z=re^{i\vartheta}$. Here, we would like to find some adapted coordinates $(\beta,\phi)$ such that the pseudo-velocity \eqref{pseudo-velocity} has vanishing $\phi$-component.

First, define $\phi$ as
\begin{equation}
\vartheta=\beta+\phi, \quad\phi\in\T.
\end{equation}
In the following proposition we find the nonlinear expression of $\beta$ depending on $(r,\vartheta)$ in order to have vanishing $\phi$-component of the pseudo-velocity. We shall see that such nonlinear expression is indeed an implicit function that depends on the solution itself. Denote
$$
\Theta(\beta,\phi):=\hat{\theta}(z(\beta,\phi)), \quad \Psi(\beta,\phi):=\hat{\psi}(z(\beta,\phi)).
$$

\begin{pro}\label{prop-coordinates}
If 
$
r=(-(1+\alpha)\Psi_\beta)^\frac12$ and  $\vartheta=\beta+\phi
$, then the $\phi$-component of the pseudo-velocity, that is, $\nabla^\perp\hat{\psi}-\frac{z}{1+\alpha}$, vanishes.
\end{pro}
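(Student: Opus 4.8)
The plan is to write the $\phi$-component of the pseudo-velocity in the adapted coordinates explicitly in terms of $\Psi_\beta$, $r$ and $\beta$, and then to observe that the stated relation is precisely what makes it vanish. Writing $\hat\psi(z)=\Psi(\beta(z),\phi(z))$ and $\hat\theta(z)=\Theta(\beta(z),\phi(z))$, the chain rule gives
\[
\left(\nabla^\perp\hat\psi-\tfrac{z}{1+\alpha}\right)\cdot\nabla_z\hat\theta
=(\cdots)\,\Theta_\beta+\Big[(\beta_{z_1}\phi_{z_2}-\beta_{z_2}\phi_{z_1})\Psi_\beta-\tfrac{1}{1+\alpha}(z_1\phi_{z_1}+z_2\phi_{z_2})\Big]\Theta_\phi,
\]
so the assertion ``the $\phi$-component of the pseudo-velocity vanishes'' amounts to the bracketed coefficient of $\Theta_\phi$ being identically zero.

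First I would compute the first-order derivatives of $\beta$ and $\phi$ with respect to $z_1,z_2$. From $z=re^{i\vartheta}$ one has $r_{z_1}=z_1/r$, $r_{z_2}=z_2/r$, $\vartheta_{z_1}=-z_2/r^2$, $\vartheta_{z_2}=z_1/r^2$; regarding $\beta$ as a function of $(r,\vartheta)$ and using $\phi=\vartheta-\beta$, the chain rule then expresses $\beta_{z_j}$ and $\phi_{z_j}$ through $\beta_r$ and $\beta_\vartheta$. Two elementary simplifications finish the job: on one hand $z_1\phi_{z_1}+z_2\phi_{z_2}=z\cdot\nabla_z\phi=r\,\partial_r(\vartheta-\beta)=-r\beta_r$; on the other hand, since $\phi=\vartheta-\beta$ one has $\beta_{z_1}\phi_{z_2}-\beta_{z_2}\phi_{z_1}=\beta_{z_1}\vartheta_{z_2}-\beta_{z_2}\vartheta_{z_1}$, which upon expanding by the chain rule factors as $\beta_r\,(r_{z_1}\vartheta_{z_2}-r_{z_2}\vartheta_{z_1})=\beta_r/r$.

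Substituting these into the bracket yields
\[
\frac{\beta_r}{r}\,\Psi_\beta+\frac{r\beta_r}{1+\alpha}=\frac{\beta_r}{r}\left(\Psi_\beta+\frac{r^2}{1+\alpha}\right),
\]
which, as long as $\beta_r\not\equiv0$ (a nondegeneracy one needs anyway for $(\beta,\phi)$ to be a genuine change of coordinates), vanishes exactly when $\Psi_\beta=-\dfrac{r^2}{1+\alpha}$, that is, $r=\big(-(1+\alpha)\Psi_\beta\big)^{1/2}$. This is the claimed relation, and the proposition follows.

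The computation is routine; what needs care is the bookkeeping about which variable is held fixed in each partial derivative ($\Psi_\beta$ denotes $\partial_\beta$ at fixed $\phi$, whereas on the physical side one differentiates at fixed $z_1$ or $z_2$), together with the fact that the defining relation $r=(-(1+\alpha)\Psi_\beta)^{1/2}$ is implicit, since $\Psi$ is itself built from $\hat\psi$ expressed in these very coordinates. For the present statement this circularity is harmless --- we only check that the algebraic relation forces the $\phi$-component to vanish --- but it is precisely why the later sections must show that this implicit relation really does define an admissible, invertible change of coordinates near the radial profile \eqref{trivial-radial}; that, rather than the computation above, is the substantive difficulty.
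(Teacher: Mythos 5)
Your proof is correct and takes essentially the same route as the paper's: both expand the advection term by the chain rule, isolate the coefficient of $\Theta_\phi$ as in \eqref{beta-1}, and show it vanishes precisely when $\Psi_\beta=-r^2/(1+\alpha)$. The only variation is a computational one --- the paper divides \eqref{beta-1} by the Jacobian determinant and uses the inverse-Jacobian relation \eqref{jacobian} to reduce the coefficient to $\Psi_\beta+\tfrac{1}{1+\alpha}\,z\times z_\beta$, then evaluates $z\times z_\beta=r^2$, whereas you compute $\beta_{z_1}\phi_{z_2}-\beta_{z_2}\phi_{z_1}=\beta_r/r$ and $z\cdot\nabla_z\phi=-r\beta_r$ directly in polar variables and factor out $\beta_r/r$; these manipulations are equivalent.
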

\begin{proof}
Let us compute the term $\left(\nabla^\perp\hat{\psi}-\frac{z}{1+\alpha}\right)\cdot\nabla_z\hat{\theta}$  in \eqref{eq-self-sim} in terms of $(\beta,\phi)$:

\begin{align*}
\left(\nabla^\perp\hat{\psi}-\frac{z}{1+\alpha}\right)&\cdot\nabla_z\hat{\theta}=\left(-\hat{\psi}_{z_2}-\frac{z_1}{1+\alpha}\right)\hat{\theta}_{z_1}+\left(\hat{\psi}_{z_1}-\frac{z_2}{1+\alpha}\right)\hat{\theta}_{z_2}\\
=&\left\{-\Psi_\beta\beta_{z_2}-\Psi_\phi\phi_{z_2-\frac{z_1}{1+\alpha}}\right\}\left\{\Theta_\beta\beta_{z_1}+\Theta_\phi\phi_{z_1}\right\}\\
&+\left\{\Psi_\beta\beta_{z_1}+\Psi_\phi\phi_{z_1-\frac{z_2}{1+\alpha}}\right\}\left\{\Theta_\beta\beta_{z_2}+\Theta_\phi\phi_{z_2}\right\}\\
=&\left\{(\beta_{z_1}\phi_{z_2}-\beta_{z_2}\phi_{z_1})\Psi_\beta-\frac{1}{1+\alpha}(z_1\phi_{z_1}+z_2\phi_{z_2})\right\}\Theta_\phi\\
&+\left\{(\beta_{z_2}\phi_{z_1}-\beta_{z_1}\phi_{z_2})\Psi_\phi-\frac{1}{1+\alpha}(z_1\beta_{z_1}+z_2\beta_{z_2})\right\}\Theta_\beta\\
=&\left((\beta_{z_2}\phi_{z_1}-\beta_{z_1}\phi_{z_2})\Psi_\phi-\frac{1}{1+\alpha}(z_1\beta_{z_1}+z_2\beta_{z_2}),\right.\\
&\left.(\beta_{z_1}\phi_{z_2}-\beta_{z_2}\phi_{z_1})\Psi_\beta-\frac{1}{1+\alpha}(z_1\phi_{z_1}+z_2\phi_{z_2}) \right)\cdot \nabla_{\beta,\phi}\Theta.
\end{align*}
We want to fix $\beta$ such that the $\phi$-component of the pseudo-velocity is 0, that is, $\beta$ will be defined through the following nonlinear equation
\begin{equation}\label{beta-1}
(\beta_{z_1}\phi_{z_2}-\beta_{z_2}\phi_{z_1})\Psi_\beta-\frac{1}{1+\alpha}(z_1\phi_{z_1}+z_2\phi_{z_2})=0.
\end{equation}
Note that assuming \eqref{beta-1}, we have that 
\begin{align*}
\left(\nabla^\perp\hat{\psi}-\frac{z}{1+\alpha}\right)&\cdot\nabla_z\hat{\theta}=\left\{(\beta_{z_2}\phi_{z_1}-\beta_{z_1}\phi_{z_2})\Psi_\phi-\frac{1}{1+\alpha}(z_1\beta_{z_1}+z_2\beta_{z_2})\right\}\Theta_\beta,
\end{align*}
where only $\partial_\beta$ survives there, which will be the crucial in order to find some trivial solution to the vorticity equation.

We can also compute the Jacobian of the transformation as follows
\begin{equation}\label{jacobian}
\left[\begin{array}{cc}
z_{1,\beta}&z_{1,\phi}\\ z_{2,\beta}&z_{2,\phi}
\end{array}
\right]=
\left[\begin{array}{cc}
\beta_{z_1}&\beta_{z_2}\\ \phi_{z_1}&\phi_{z_2}
\end{array}
\right]^{-1}=
\frac{1}{\beta_{z_1}\phi_{z_2}-\beta_{z_2}\phi_{z_1}}\left[\begin{array}{cc}
\phi_{z_2}&-\beta_{z_2}\\ -\phi_{z_1}&\beta_{z_1}
\end{array}
\right].
\end{equation}
and using \eqref{beta-1}--\eqref{jacobian}, we find
$$
\Psi_\beta+\frac{1}{1+\alpha}z\times z_\beta=\Psi_\beta+\frac{1}{1+\alpha}(z_1z_{2,\beta}-z_2z_{1,\beta})=0.
$$
By virtue of the variables $\vartheta(\beta,\phi)=\beta+\phi$ and $z=z(r,\vartheta)$, we find that
$$
z\times z_\beta=|z|^2,
$$
which amounts to
$$
\Psi_\beta=-\frac{1}{(1+\alpha)}|z|^2.
$$
Hence, the map $(\beta,\phi)\mapsto(r,\vartheta)$ takes the form
$$
r=(-(1+\alpha)\Psi_\beta)^\frac12,\quad \vartheta=\beta+\phi.
$$
That concludes the proof of this proposition.
\end{proof}

Let us remark again that the change of variables of the previous proposition is a nonlinear change of variables that depends on the solution, indeed, it depends on the stream function. Later, in Section \ref{sec-invertibility-change} we will check that we can come back to the physical variables and thus the change of variables is invertible. Moreover, from now on define
\begin{equation}\label{partialvarphi}
\partial_\varphi=\partial_\phi-\partial_\beta.
\end{equation}
In the following, let us compute the jacobian of the transformation.

\begin{pro}\label{prop-jacobian}
The jacobian of the transformation {$(\beta,\phi)\mapsto (z_1,z_2)$} is given by
$$
|J|=\frac{1+\alpha}{2}\Psi_{\beta\varphi},
$$
where $\partial_\varphi=\partial_\phi-\partial_\beta$.
\end{pro}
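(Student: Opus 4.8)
The plan is to factor the change of coordinates through the usual polar coordinates and then compute the Jacobian by the chain rule. By Proposition~\ref{prop-coordinates}, the map $(\beta,\phi)\mapsto(z_1,z_2)$ is the composition of
\[
G\colon (\beta,\phi)\longmapsto(r,\vartheta),\qquad r=\bigl(-(1+\alpha)\Psi_\beta\bigr)^{1/2},\quad \vartheta=\beta+\phi,
\]
with the standard polar map $P\colon(r,\vartheta)\mapsto(r\cos\vartheta,r\sin\vartheta)$, whose Jacobian determinant is $r$. Hence $\det D(P\circ G)=r\,\det DG$, and since $\vartheta_\beta=\vartheta_\phi=1$ one has $\det DG=r_\beta\vartheta_\phi-r_\phi\vartheta_\beta=r_\beta-r_\phi$.

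To compute $r_\beta$ and $r_\phi$ I would differentiate the identity $r^2=-(1+\alpha)\Psi_\beta$ with respect to $\beta$ and to $\phi$, obtaining $2r\,r_\beta=-(1+\alpha)\Psi_{\beta\beta}$ and $2r\,r_\phi=-(1+\alpha)\Psi_{\beta\phi}$. Therefore
\[
\det D(P\circ G)=r(r_\beta-r_\phi)=-\frac{1+\alpha}{2}\bigl(\Psi_{\beta\beta}-\Psi_{\beta\phi}\bigr)=\frac{1+\alpha}{2}\bigl(\Psi_{\beta\phi}-\Psi_{\beta\beta}\bigr).
\]
Finally, by the definition \eqref{partialvarphi} of $\partial_\varphi$ and the smoothness of $\Psi$ (so that the mixed second derivatives commute), $\Psi_{\beta\varphi}=\partial_\beta(\partial_\phi-\partial_\beta)\Psi=\Psi_{\beta\phi}-\Psi_{\beta\beta}$, which is exactly the claimed formula $|J|=\frac{1+\alpha}{2}\Psi_{\beta\varphi}$.

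There is no substantial obstacle here: the proof is a short chain-rule computation. The only points requiring care are the minus sign coming from $r^2=-(1+\alpha)\Psi_\beta$ and the fact that the choice $\partial_\varphi=\partial_\phi-\partial_\beta$ replaces the naive mixed derivative $\Psi_{\beta\phi}$ by $\Psi_{\beta\phi}-\Psi_{\beta\beta}$, which is precisely $\Psi_{\beta\varphi}$. The orientation of the transformation (equivalently, the sign of $\Psi_{\beta\varphi}$) is fixed by the regime close to the trivial solution \eqref{trivial-radial}, consistently with the expression \eqref{trivial-sol-1-intro} for $\Psi$, so that $|J|$ is indeed given by $\tfrac{1+\alpha}{2}\Psi_{\beta\varphi}$ with the stated sign. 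Alternatively, one could argue directly from \eqref{jacobian}, which gives $|J|=(\beta_{z_1}\phi_{z_2}-\beta_{z_2}\phi_{z_1})^{-1}$, and evaluate this denominator using \eqref{beta-1} together with $r^2=-(1+\alpha)\Psi_\beta$; the factorization through polar coordinates is, however, more transparent.
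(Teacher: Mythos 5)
Your proof is correct and follows essentially the same route as the paper: both compute the Jacobian via the chain rule through polar coordinates, using $r^2=-(1+\alpha)\Psi_\beta$ and $\vartheta=\beta+\phi$. The paper expands each $z_{i,\beta},z_{i,\phi}$ and evaluates the $2\times2$ determinant directly, whereas you organize the same calculation more cleanly by factoring the map as $P\circ G$ and invoking multiplicativity of determinants; both reduce to $r(r_\beta-r_\phi)=\tfrac{1+\alpha}{2}(\Psi_{\beta\phi}-\Psi_{\beta\beta})=\tfrac{1+\alpha}{2}\Psi_{\beta\varphi}$.
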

\begin{proof}

Note that the matrix of the transformation is given by
\begin{equation*}
J=\left[\begin{array}{cc}
z_{1,\beta}&z_{1,\phi}\\ z_{2,\beta}&z_{2,\phi}
\end{array}
\right].
\end{equation*}
From Proposition \ref{prop-coordinates} we have that
$$
r=(-(1+\alpha)\Psi_\beta)^\frac12,\quad \vartheta=\beta+\phi,
$$
and using such expression, we can compute the following
\begin{align*}
\partial_\phi z_1=&\partial_r z_1\partial_\phi r+\partial_\vartheta z_1\partial_\phi\theta\\
=&-\frac12\cos(\vartheta)(-(1+\alpha)\Psi_\beta)^{-\frac12}(1+\alpha)\partial^2_{\phi\beta}\Psi-r\sin(\vartheta),\\
\partial_\beta z_1=&-\frac12\cos(\vartheta)(-(1+\alpha)\Psi_\beta)^{-\frac12}(1+\alpha)\partial^2_{\beta\beta}\Psi-r\sin(\vartheta),\\
\partial_\phi z_2=&-\frac12\sin(\vartheta)(-(1+\alpha)\Psi_\beta)^{-\frac12}(1+\alpha)\partial^2_{\phi\beta}\Psi+r\cos(\vartheta),\\
\partial_\beta z_2=&-\frac12\sin(\vartheta)(-(1+\alpha)\Psi_\beta)^{-\frac12}(1+\alpha)\partial^2_{\beta\beta}\Psi+r\cos(\vartheta).
\end{align*}
Then, after some straightforward computations, we finally find that
\begin{align*}
|J|=&z_{1,\beta}z_{2,\phi}-z_{1,\phi}z_{2,\beta}=\frac{1+\alpha}{2}\Psi_{\beta\varphi}.
\end{align*}
\end{proof}

\subsection{The equation in the new coordinates}
In Section \ref{sec-adaptive-coord} we have defined the adapted coordinates that will be used through this work. Recall that we have chosen such coordinates in order to have that the $\phi$ component of the pseudo-velocity is vanishing. In this way, we find
$$
\left(\nabla^\perp\hat{\psi}-\frac{z}{1+\alpha}\right)\cdot\nabla\hat{\theta}=\left\{(\beta_{z_2}\phi_{z_1}-\beta_{z_1}\phi_{z_2})\Psi_\phi-\frac{1}{1+\alpha}(z_1\beta_{z_1}+z_2\beta_{z_2})\right\}\Theta_\beta,
$$
and then the vorticity equation \eqref{eq-self-sim} agrees with
\begin{equation}\label{eq-self-sim-2}
\frac{1+\alpha-\gamma}{1+\alpha}\Theta(\beta,\phi)=\left\{(\beta_{z_2}\phi_{z_1}-\beta_{z_1}\phi_{z_2})\Psi_\phi-\frac{1}{1+\alpha}(z_1\beta_{z_1}+z_2\beta_{z_2})\right\}\Theta_\beta.
\end{equation}
The very special structure of \eqref{eq-self-sim-2} involving only a $\beta$ derivative of $\Theta$ gives us a trivial solution. That will be discussed in the following result.

\begin{pro}\label{prop-eq-theta}
Equation \eqref{eq-self-sim-2} agrees with
\begin{equation}\label{eq-self-sim-3}
(1+\alpha-\gamma)\Theta(\beta,\phi)\Psi_{\beta\varphi}(\beta,\phi)+2\Psi_{\varphi}(\beta,\phi)\Theta_\beta(\beta,\phi)=0,
\end{equation}
which has the following trivial solution:
\begin{equation}\label{trivial-sol-1}
\Theta(\beta,\phi)=(\Psi_\varphi)^{-\frac{1+\alpha-\gamma}{2}}\Omega(\phi)=(\Psi_\varphi)^{-\frac{1/\mu}{2}}\Omega(\phi),
\end{equation}
for any $\Omega$.
\end{pro}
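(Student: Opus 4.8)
The plan is to prove the proposition in two stages: first reduce the geometric bracket that multiplies $\Theta_\beta$ in \eqref{eq-self-sim-2} to an expression involving only $\Psi$, so that \eqref{eq-self-sim-2} turns into \eqref{eq-self-sim-3}; and then verify the ansatz \eqref{trivial-sol-1} by a direct differentiation.

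For the reduction, I would use the inverse relation \eqref{jacobian} to express the partials of $\beta$ through those of $z$, namely $\beta_{z_1}=z_{2,\phi}/|J|$ and $\beta_{z_2}=-z_{1,\phi}/|J|$. This turns the two scalar combinations in the bracket into $\beta_{z_2}\phi_{z_1}-\beta_{z_1}\phi_{z_2}=-1/|J|$ and $z_1\beta_{z_1}+z_2\beta_{z_2}=(z_1z_{2,\phi}-z_2z_{1,\phi})/|J|=(z\times z_\phi)/|J|$. Using the parametrization $z=r(\cos\vartheta,\sin\vartheta)$, $\vartheta=\beta+\phi$ from Proposition~\ref{prop-coordinates} together with $\partial_\phi\vartheta=1$, the same computation as in the proof of Proposition~\ref{prop-coordinates} gives $z\times z_\phi=r^2=|z|^2$, and then $|z|^2=-(1+\alpha)\Psi_\beta$ collapses the bracket to $-\tfrac1{|J|}(\Psi_\phi-\Psi_\beta)=-\Psi_\varphi/|J|$, with $\partial_\varphi=\partial_\phi-\partial_\beta$ as in \eqref{partialvarphi}. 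Inserting $|J|=\tfrac{1+\alpha}{2}\Psi_{\beta\varphi}$ from Proposition~\ref{prop-jacobian} into \eqref{eq-self-sim-2} and clearing denominators produces exactly \eqref{eq-self-sim-3}.

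For the trivial solution, substitute $\Theta=(\Psi_\varphi)^{-\frac{1+\alpha-\gamma}{2}}\Omega(\phi)$ into \eqref{eq-self-sim-3}. Since $\Omega$ is independent of $\beta$, the chain rule yields $\Theta_\beta=-\tfrac{1+\alpha-\gamma}{2\Psi_\varphi}\Psi_{\beta\varphi}\Theta$, hence $2\Psi_\varphi\Theta_\beta=-(1+\alpha-\gamma)\Psi_{\beta\varphi}\Theta$, which cancels the first term of \eqref{eq-self-sim-3} identically. The second equality in \eqref{trivial-sol-1} is just the relation $1+\alpha-\gamma=1/\mu$ recorded in \eqref{mu}.

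The computation is entirely algebraic, so I do not expect a genuine obstacle; the points that require care are the sign bookkeeping and the consistent use of $\partial_\varphi=\partial_\phi-\partial_\beta$ when rewriting $\Psi_\phi+|z|^2/(1+\alpha)$ as $-\Psi_\varphi$, together with the (harmless) remark that all the identities invoked are pointwise relations among the partials of the implicit, solution-dependent change of coordinates, whose invertibility is treated separately in Section~\ref{sec-invertibility-change}.
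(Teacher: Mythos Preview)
Your proposal is correct and follows essentially the same approach as the paper: both use the inverse-Jacobian relations to compute $\beta_{z_2}\phi_{z_1}-\beta_{z_1}\phi_{z_2}=-1/|J|$ and $z_1\beta_{z_1}+z_2\beta_{z_2}=(z\times z_\phi)/|J|$, then invoke $z\times z_\phi=r^2=-(1+\alpha)\Psi_\beta$ and $|J|=\tfrac{1+\alpha}{2}\Psi_{\beta\varphi}$ to collapse \eqref{eq-self-sim-2} into \eqref{eq-self-sim-3}. The only cosmetic differences are that you reduce the whole bracket to $-\Psi_\varphi/|J|$ before clearing denominators (the paper first multiplies by $\Psi_{\beta\varphi}$ and then simplifies), and you verify the ansatz \eqref{trivial-sol-1} by an explicit differentiation whereas the paper simply states it.
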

\begin{proof}
Let us start from \eqref{eq-self-sim-2} taking into account that
$$
\beta_{z_2}\phi_{z_1}-\beta_{z_1}\phi_{z_2}=-|J^{-1}|=-\frac{1}{|J|}=-\frac{2}{(1+\alpha)\Psi_{\beta\varphi}},
$$
that is, \eqref{eq-self-sim-2} agrees with
\begin{equation}\label{eq-self-sim-4}
\frac{1+\alpha-\gamma}{1+\alpha}\Theta(\beta,\phi)\Psi_{\beta\varphi}+\left\{\frac{2}{1+\alpha}\Psi_\phi+\frac{\Psi_{\beta\varphi}}{1+\alpha}(z_1\beta_{z_1}+z_2\beta_{z_2})\right\}\Theta_\beta=0.
\end{equation}
Let us now compute $z_1\beta_{z_1}+z_2\beta_{z_2}=(z_1, z_2)\cdot \nabla_z \beta$. Note that
\begin{equation}\label{jacobian-inverse}
J^{-1}=\left[\begin{array}{cc}
\beta_{z_1}&\beta_{z_2}\\ \phi_{z_1}&\phi_{z_2}
\end{array}
\right]=\frac{1}{|J|}
\left[\begin{array}{cc}
z_{2,\phi}&-z_{1,\phi}\\ -z_{2,\beta}&z_{1,\beta}
\end{array}
\right].
\end{equation}
Then:
\begin{align*}
\beta_{z_1}=&\frac{2}{(1+\alpha)\Psi_{\beta\varphi}}\left\{-\frac{\sin(\vartheta)(1+\alpha)}{2r}\partial^2_{\phi\beta}\Psi+r\cos(\vartheta)\right\},\\
\beta_{z_2}=&-\frac{2}{(1+\alpha)\Psi_{\beta\varphi}}\left\{-\frac{\cos(\vartheta)(1+\alpha)}{2r}\partial^2_{\phi\beta}\Psi-r\sin(\vartheta)\right\}.
\end{align*}
After some straightforward computations we find
$$
z_1\beta_{z_1}+z_2\beta_{z_2}=\frac{2r^2}{(1+\alpha)\Psi_{\beta\varphi}}=-2\frac{\Psi_\beta}{\Psi_{\beta\varphi}}.
$$
Putting everything together, one achieve that \eqref{eq-self-sim-4} is equivalent to \eqref{eq-self-sim-3}, that is:
\begin{align*}
(1+\alpha-\gamma)\Theta(\beta,\phi)\Psi_{\beta\varphi}(\beta,\phi)+2\Psi_{\varphi}(\beta,\phi)\Theta_\beta(\beta,\phi)=0.
\end{align*}
Note that it has the following trivial solution:
$$
\Theta(\beta,\phi)=(\Psi_\varphi)^{-\frac{1+\alpha-\gamma}{2}}\Omega(\phi),
$$
for any function $\Omega$.
\end{proof}
Via the adapted coordinates, we have simplified the nonlinear equation \eqref{eq-self-sim} into \eqref{eq-self-sim-3} where a trivial solution is found by \eqref{trivial-sol-1}.  However, it does not take into account the relation between $\hat{\theta}$ and $\hat{\psi}$ via the singular integral operator \eqref{psi-theta}, and this will be our aim from now on. Note that
\begin{equation}\label{elliptic-eq}
\hat{\psi}(z)=-\frac{C_\gamma}{2\pi}\int_{\R^2}\frac{\hat{\theta}(y)}{|x-y|^\gamma}dA(y).
\end{equation}
By writing \eqref{elliptic-eq} with the adapted coordinates with the help of the previous computation on the jacobian of the transformation in Proposition \ref{prop-jacobian} together with the trivial solution from in Proposition \ref{prop-eq-theta}, we find that it agrees with
\begin{equation}\label{elliptic-eq-2}
\Psi(\beta,\phi)=\frac{C_\gamma (1+\alpha)^{1-\gamma/2}}{4\pi}\int_0^{2\pi}\int_0^\infty\frac{(\Psi_\varphi(b,\Phi))^{-\frac{1+\alpha-\gamma}{2}}\Omega(\Phi)\Psi_{b\varphi}(b,\Phi)dbd\Phi}{\{-\Psi_\beta-\Psi_b+2\Psi_\beta^\frac12\Psi_b^\frac12\cos(\phi+\beta-\Phi-b)\}^\frac{\gamma}{2}}.
\end{equation}
In the previous expression we need to use that $r=0$ corresponds to $\beta=+\infty$ and $r=+\infty$ agrees with $\beta=0$ as we shall see later, which is the reason we have a change of sign in the integral \eqref{elliptic-eq-2}. Hence, our aim is to find functions $\Psi$ and $\Omega$ such that
\begin{align}\label{equation}
F(\Psi,\Omega)&(\beta,\phi):=\Psi(\beta,\phi)\nonumber\\
&-\frac{C_\gamma (1+\alpha)^{1-\gamma/2}}{4\pi}\int_0^{2\pi}\int_0^\infty\frac{(\Psi_\varphi(b,\Phi))^{-\frac{1}{2\mu}}\Omega(\Phi)\Psi_{b\varphi}(b,\Phi)dbd\Phi}{\{-\Psi_\beta-\Psi_b+2\Psi_\beta^\frac12\Psi_b^\frac12\cos(\phi+\beta-\Phi-b)\}^\frac{\gamma}{2}}=0,
\end{align}
for any $\beta\in[0,+\infty)$ and $\phi\in\T$. In the previous expression we have used that $\frac{1}{\mu}=1+\alpha-\gamma$ coming from \eqref{mu}.

\subsection{Trivial solution}
The problem reduces to study the nonlinear equation \eqref{equation} which is nothing but the relation between $\Omega$ and $\Psi$. In this section, we look for trivial solutions of such equation. That trivial solutions comes from the fact that any radial function $\theta$ provides a stationary solution of \eqref{gSQG}, where the stream function is also radial. Here, we will use the computations done in Appendix \ref{ap-radial} for the fractional Laplacian.

\begin{pro}\label{prop-trivial}
If
$$
\Omega^0=\frac{1}{(1+\alpha)^\frac{1}{2\mu}},
$$
and
\begin{equation}\label{Psi0beta}
\Psi^0(\beta)=-C_0^{\frac{2}{1+\alpha}}(\alpha-1)^{\frac{1-\alpha}{1+\alpha}}\beta^{-\frac{1-\alpha}{1+\alpha}}.
\end{equation}
then $F(\Psi^0,\Omega^0)(\beta,\phi)=0$, for any $\beta\in[0,\infty)$ and $\phi\in\T$, where $C_0$ is defined in \eqref{C0}. Moreover, the adapted coordinates $(r,\vartheta)\mapsto (\beta,\phi)$ at the trivial solution reads as follows
 $$\beta=C_0(\alpha-1)r^{-1-\alpha}, \quad \phi=\vartheta-C_0(\alpha-1)r^{-1-\alpha}.$$ 
\end{pro}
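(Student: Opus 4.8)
The plan is to verify directly that the pair $(\Psi^0,\Omega^0)$ annihilates the functional $F$ in \eqref{equation}, by first computing the relevant derivatives of $\Psi^0$ and then recognizing the resulting double integral as an instance of the radial computation collected in Appendix \ref{ap-radial}. Concretely, since $\Psi^0$ depends only on $\beta$, we have $\partial_\varphi\Psi^0=\partial_\phi\Psi^0-\partial_\beta\Psi^0=-\Psi^0_\beta$, so the factor $(\Psi^0_\varphi)^{-1/(2\mu)}$ in \eqref{equation} equals $(-\Psi^0_\beta)^{-1/(2\mu)}$, and similarly $\Psi^0_{b\varphi}=-\Psi^0_{bb}$. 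From \eqref{Psi0beta} one reads off $\Psi^0_\beta$ and $\Psi^0_{\beta\beta}$ as explicit powers of $\beta$ with explicit constants involving $C_0$, $\alpha$. Plugging these into the integrand of \eqref{equation}, the $\Phi$-integral decouples trivially (the integrand is $\Phi$-independent apart from $\Omega^0$, which is constant, and the $\cos(\phi+\beta-\Phi-b)$ term, whose $\Phi$-average is handled below), leaving a one-dimensional integral in $b$ against a kernel built from $\{-\Psi^0_\beta-\Psi^0_b+2(\Psi^0_\beta)^{1/2}(\Psi^0_b)^{1/2}\cos(\cdots)\}^{-\gamma/2}$.

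The key observation is that this is exactly the formula one obtains by writing the elliptic relation \eqref{elliptic-eq} for the radial stationary solution $\hat\theta(z)=|z|^{-1/\mu}$ in the adapted coordinates: indeed $\beta=C_0(\alpha-1)r^{-1-\alpha}$ at the trivial solution (which we also have to check — see below), so $(-\Psi^0_\beta)^{1/2}=r/(1+\alpha)^{1/2}=|z|/(1+\alpha)^{1/2}$, and the bracketed denominator becomes $|x-y|^2/(1+\alpha)$ after using the law of cosines with angles $\vartheta=\beta+\phi$, $\Theta=b+\Phi$. Thus the claimed identity $F(\Psi^0,\Omega^0)=0$ reduces to the statement $\psi(x)=C_0|x|^{2-\gamma-1/\mu}$ for $\theta_0=|x|^{-1/\mu}$, which is precisely \eqref{psi-theta}–\eqref{mu-condition} and is proved in Appendix \ref{ap-radial}; here the constant $C_0$ is $\eqref{C0}$, chosen exactly so that the normalization works out. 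One then back-substitutes to confirm that $\Psi^0(\beta)=\hat\psi(z(\beta))$ with the stated power of $\beta$ and the stated $C_0^{2/(1+\alpha)}(\alpha-1)^{(1-\alpha)/(1+\alpha)}$ prefactor.

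For the second assertion, the relation between $(r,\vartheta)$ and $(\beta,\phi)$ at the trivial solution follows from Proposition \ref{prop-coordinates}: we must have $r=(-(1+\alpha)\Psi^0_\beta)^{1/2}$. Differentiating \eqref{Psi0beta} gives $\Psi^0_\beta = C_0^{2/(1+\alpha)}(\alpha-1)^{2/(1+\alpha)}\beta^{-2/(1+\alpha)}/(1+\alpha)$ up to sign bookkeeping, so $(-(1+\alpha)\Psi^0_\beta)^{1/2}= C_0^{1/(1+\alpha)}(\alpha-1)^{1/(1+\alpha)}\beta^{-1/(1+\alpha)}$, hence $r^{-1-\alpha}=C_0^{-1}(\alpha-1)^{-1}\beta$, i.e. $\beta=C_0(\alpha-1)r^{-1-\alpha}$; and $\phi=\vartheta-\beta=\vartheta-C_0(\alpha-1)r^{-1-\alpha}$. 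This also confirms the orientation remark made before \eqref{elliptic-eq-2}: $r\to 0$ sends $\beta\to+\infty$ and $r\to+\infty$ sends $\beta\to 0$, which is needed for the sign of the integral in \eqref{elliptic-eq-2} to be correct.

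The main obstacle is bookkeeping rather than conceptual: one must keep careful track of the fractional powers of $(-1)$, of $\alpha-1$ versus $1+\alpha$, and of the constant $C_\gamma$ and the normalizing $C_0$ from \eqref{C0} through the change of variables, so that the $b$-integral matches the Appendix \ref{ap-radial} value on the nose. The substitution $\beta\mapsto b$ in the integral, converting it back to an integral over $r=|y|\in(0,\infty)$ via $b=C_0(\alpha-1)|y|^{-1-\alpha}$ and checking that the Jacobian factor $\Psi^0_{b\varphi}=-\Psi^0_{bb}$ produces exactly the Euclidean area element $dA(y)$ together with the factor $(1+\alpha)^{1-\gamma/2}/(4\pi)\cdot C_\gamma$ collapsing to the $-\tfrac{C_\gamma}{2\pi}$ of \eqref{elliptic-eq}, is the delicate point, and I would carry it out explicitly.
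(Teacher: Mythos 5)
Your proposal is correct and takes essentially the same approach as the paper: both arguments bottom out on the radial fractional-Laplacian identity of Corollary \ref{cor-radial} (with the constant $C_0$ of \eqref{C0}) and on the same computation of $\beta$ from $r=(-(1+\alpha)\Psi^0_\beta)^{1/2}$. The only difference is direction: you verify $F(\Psi^0,\Omega^0)=0$ by substituting into \eqref{equation} and recognizing the radial convolution, whereas the paper derives $(\Psi^0,\Omega^0)$ by pushing the known stationary solution $\theta(x)=|x|^{-1/\mu}$, $\psi(x)=-C_0|x|^{1-\alpha}$ forward through the adapted change of coordinates — the two are logically equivalent.
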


\begin{proof}
The trivial solution that we will construct comes from the stationary one 
$$
\theta(x)=|x|^{-\frac{1}{\mu}},
$$
for $\frac12<\mu<\frac{1}{2-\gamma}$.  From Corollary \ref{cor-radial} we have that $\psi$ is also radial and is given by
$$
\psi(x)=-C_0|x|^{2-\gamma-\frac{1}{\mu}}=-C_0|x|^{1-\alpha},
$$
by using \eqref{mu} and where $C_0$ is defined in \eqref{C0}. Using \eqref{theta-self-similar}, we have that $\hat{\theta}(z)=|z|^{-\frac{1}{\mu}}$ and $\hat{\psi}(z)=-C_0|z|^{1-\alpha}$. Let us now obtain such solution in terms of $(\beta,\phi)$.

First, let us find the explicit relation between $\beta$ and $r$ via the nonlinear change of variables in Proposition \ref{prop-coordinates}. Indeed we have that $\Psi_\beta=-\frac{r^2}{1+\alpha}$ and then
\begin{align*}
\partial_r \hat{\psi}(z)=C_0(\alpha-1)r^{-\alpha}=\partial_r \beta \partial_\beta\Psi=-\partial_r\beta \frac{r^2}{1+\alpha}.
\end{align*}
Hence
$$
\partial_r\beta=-C_0(1+\alpha)(\alpha-1)r^{-2-\alpha},
$$
implying
\begin{equation}\label{rel-r-beta}
\beta=C_0(\alpha-1)r^{-1-\alpha},
\end{equation}
which amounts to
$$
r=\left(C_0(\alpha-1)\right)^{\frac{1}{1+\alpha}}\beta^{-\frac{1}{1+\alpha}}.
$$
Since $\alpha>1$ and $r\in[0,\infty)$ we get that $\beta\in[0,\infty)$. In this way, we can compute $\Psi$ depending on $\beta$ as
$$
\Psi^0(\beta)=-C_0^{\frac{2}{1+\alpha}}(\alpha-1)^{\frac{1-\alpha}{1+\alpha}}\beta^{-\frac{1-\alpha}{1+\alpha}}.
$$
Let us now compute $\Omega$. Note that
$$
\Theta(\beta)=\left(\frac{1}{C_0(\alpha-1)}\right)^{\frac{1/\mu}{1+\alpha}}\beta^{\frac{1/\mu}{1+\alpha}},
$$
and using the relation \eqref{trivial-sol-1}, we find
$$
\Omega^0=\frac{1}{(1+\alpha)^\frac{1}{2\mu}}.
$$
That concludes the proof.
\end{proof}

\begin{rem}
The solution that we aim to construct are perturbations of the singular radial stationary solution $\theta(x)=|x|^{-\frac{1}{\mu}}$. The associated initial stream function is also radial and can be computed via Corollary \ref{cor-radial} as $\psi(x)=-C_0|x|^{2-\gamma-\frac{1}{\mu}}=-C_0|x|^{1-\alpha}$. Note that both the potential temperature $\theta$ and the associated stream function have a singularity when $|x|=0$ since $\alpha>1$ via \eqref{alpha-condition}. However, the change of variables $(r,\vartheta)\mapsto (\beta,\phi)$ at the trivial solution gives us $\beta=C_0(1-\alpha)r^{-1-\alpha}$. Hence, $\theta$ and $\psi$ written in terms of $\beta$ are not anymore singular at $\beta=0$, but as $\beta\rightarrow +\infty$.
\end{rem}

\subsection{Formulation with the appropriate scaling}\label{sec-scaling}

Our aim is to construct solutions which are perturbations of the trivial one \eqref{Psi0beta}. Since $\alpha>1$ one has that $\Psi^0$ in \eqref{Psi0beta} is vanishing at $0$ and diverges at $+\infty$. We shall use similar scaling for the perturbations.
First, we shall simplify the constant in $\Psi$ and $\Omega$ by working with $\tilde{\Psi}$ and $\tilde{\Omega}$ instead:
$$
\Psi(\beta,\phi)=-C_0^{\frac{2}{1+\alpha}}(\alpha-1)^{\frac{1-\alpha}{1+\alpha}}\tilde{\Psi}(\beta,\phi),
$$
and
$$
\Omega(\phi)=(1+\alpha)^{-\frac{1}{2\mu}}\tilde{\Omega}(\phi).
$$
In that way, \eqref{equation} reads as
\begin{align*}
&\frac{F(\Psi,\Omega)(\beta,\phi)}{-C_0^{\frac{2}{1+\alpha}}(\alpha-1)^{\frac{1-\alpha}{1+\alpha}}}=\tilde{\Psi}(\beta,\phi)\\
&-(-1)^{-\frac{1}{2\mu}+\frac{\gamma}{2}}\frac{C_\gamma (1+\alpha)^{-\frac{\alpha-1}{2}}(\alpha-1)^\frac{\alpha-1}{2}}{4\pi C_0}\int_0^{2\pi}\int_0^\infty\frac{(\tilde{\Psi}_\varphi(b,\Phi))^{-\frac{1}{2\mu}}\tilde{\Omega}(\Phi)\tilde{\Psi}_{b\varphi}(b,\Phi)dbd\Phi}{\{-\tilde{\Psi}_\beta-\tilde{\Psi}_b+2\tilde{\Psi}_\beta^\frac12\tilde{\Psi}_b^\frac12\cos(\phi+\beta-\Phi-b)\}^\frac{\gamma}{2}}=0.
\end{align*}
Hence, now take $f$ as
$$
\tilde{\Psi}(\beta,\phi)=\beta^{\frac{\alpha-1}{1+\alpha}}\left(1+f(\beta,\phi)\right),
$$
and we divide all the equation by $\beta^{\frac{\alpha-1}{1+\alpha}}$ getting the equivalence formulation
\begin{align*}
&\tilde{F}(f,\tilde{\Omega})(\beta,\phi):=\beta^{-\frac{\alpha-1}{1+\alpha}}\frac{F(\Psi,\Omega)(\beta,\phi)}{-C_0^{\frac{2}{1+\alpha}}(\alpha-1)^{\frac{1-\alpha}{1+\alpha}}}=1+f(\beta,\phi)\\
&-(-1)^{-\frac{1}{2\mu}+\frac{\gamma}{2}}\frac{C_\gamma (1+\alpha)^{-\frac{\alpha-1}{2}}(\alpha-1)^\frac{\alpha-1}{2}}{4\pi C_0}\beta^{-\frac{\alpha-1}{1+\alpha}}\\
&\times\int_0^{2\pi}\int_0^\infty\frac{(\tilde{\Psi}_\varphi(b,\Phi))^{-\frac{1}{2\mu}}\tilde{\Omega}(\Phi)\tilde{\Psi}_{b\varphi}(b,\Phi)dbd\Phi}{\{-\tilde{\Psi}_\beta-\tilde{\Psi}_b+2\tilde{\Psi}_\beta^\frac12\tilde{\Psi}_b^\frac12\cos(\phi+\beta-\Phi-b)\}^\frac{\gamma}{2}}=0.
\end{align*}
We can compute $\partial_\varphi\tilde{\Psi}$, $\partial_\beta\tilde{\Psi}$ and $\partial_{\beta\varphi}\tilde{\Psi}$ in terms of $f$ as follows
\begin{align}
\partial_\beta \tilde{\Psi}(\beta,\phi)=&\beta^{\frac{-2}{1+\alpha}}\left\{\frac{\alpha-1}{1+\alpha}+\partial_{\overline{\beta}}f(\beta,\phi)\right\},\nonumber\\
\partial_\varphi \tilde{\Psi}(\beta,\phi)=&\beta^{\frac{-2}{1+\alpha}}\left\{-\frac{\alpha-1}{1+\alpha}+\partial_{\overline{\varphi}}f(\beta,\phi)\right\},\nonumber\\
\partial_{\beta\varphi} \tilde{\Psi}(\beta,\phi)=&\beta^{-1}\beta^{\frac{-2}{1+\alpha}}\left\{\frac{2(\alpha-1)}{(1+\alpha)^2}+\partial_{\overline{\beta\varphi}}f\right\},\label{Psibetapsi}
\end{align}
where the linear operators $\partial_{\overline{\beta}},\partial_{\overline{\varphi}}$ and $\partial_{\overline{\beta\varphi}}$ reads as
\begin{align}
\partial_{\overline{\beta}}f(\beta,\phi):=&\frac{\alpha-1}{1+\alpha}f(\beta,\phi)+\beta f_\beta(\beta,\phi),\label{lin-op-1}\\
\partial_{\overline{\varphi}}f(\beta,\phi):=&-\frac{\alpha-1}{1+\alpha}f(\beta,\phi)+\beta f_\varphi(\beta,\phi),\label{lin-op-2}\\
\partial_{\overline{\beta\varphi}}f:=&\frac{\alpha-1}{1+\alpha}f+\partial_{\overline{\beta}}\partial_{\overline{\varphi}}f\nonumber\\
=&\frac{2(\alpha-1)}{(1+\alpha)^2}f+\frac{\alpha-1}{1+\alpha}\beta(f_\varphi-f_\beta)+\beta^2 f_{\beta\varphi}.\label{lin-op-3}
\end{align}
Hence, $\tilde{F}$ reads as follows
\begin{align}\label{Ftilde}
\nonumber&\tilde{F}(f,\tilde{\Omega})(\beta,\phi)=1+f(\beta,\phi)\\
\nonumber&-(-1)^\frac{-1}{2\mu}\frac{C_\gamma (\alpha-1)^\frac{\alpha-1}{2}}{4\pi C_0(1+\alpha)^{\frac{\alpha-1}{2}}}\beta^{-\frac{\alpha-1}{1+\alpha}}\\
&\times \int_0^{2\pi}\int_0^\infty\frac{b^{-\frac{\gamma}{1+\alpha}}b^{\frac{-2}{1+\alpha}}\left\{-\frac{\alpha-1}{1+\alpha}+\partial_{\overline{\varphi}}f(b,\Phi)\right\}^{-\frac{1}{2\mu}}\left\{\frac{2(\alpha-1)}{(1+\alpha)^2}+\partial_{\overline{\beta\varphi}}f(b,\Phi)\right\}\tilde{\Omega}(\Phi)dbd\Phi}{D(f)(\beta,\phi,b,\Phi)^\frac{\gamma}{2}},
\end{align}
where
\begin{align}\label{denominator-0}
D(f)(\beta,\phi,b,\Phi):=&\tilde{\Psi}_\beta+\tilde{\Psi}_b-2\tilde{\Psi}_\beta^\frac12\tilde{\Psi}_b^\frac12\cos(\phi+\beta-\Phi-b)\\
=&\beta^{\frac{-2}{1+\alpha}}\left\{\frac{\alpha-1}{1+\alpha}+\partial_{\overline{\beta}}f(\beta,\phi)\right\}+b^{\frac{-2}{1+\alpha}}\left\{\frac{\alpha-1}{1+\alpha}+\partial_{\overline{b}}f(b,\Phi)\right\}\nonumber\\
&-2\beta^{\frac{-1}{1+\alpha}}b^{\frac{-1}{1+\alpha}}\left\{\frac{\alpha-1}{1+\alpha}+\partial_{\overline{\beta}}f(\beta,\phi)\right\}^\frac12\left\{\frac{\alpha-1}{1+\alpha}+\partial_{\overline{b}}f(b,\Phi)\right\}^\frac12\cos(\phi+\beta-\Phi-b)\nonumber\\
=&\left(\beta^{\frac{-1}{1+\alpha}}\left\{\frac{\alpha-1}{1+\alpha}+\partial_{\overline{\beta}}f(\beta,\phi)\right\}^\frac12-b^{\frac{-1}{1+\alpha}}\left\{\frac{\alpha-1}{1+\alpha}+\partial_{\overline{b}}f(b,\Phi)\right\}^\frac12\right)^2\nonumber\\
&+2\beta^{\frac{-1}{1+\alpha}}b^{\frac{-1}{1+\alpha}}\left\{\frac{\alpha-1}{1+\alpha}+\partial_{\overline{\beta}}f(\beta,\phi)\right\}^\frac12\left\{\frac{\alpha-1}{1+\alpha}+\partial_{\overline{b}}f(b,\Phi)\right\}^\frac12\nonumber\\
&\times (1-\cos(\phi+\beta-\Phi-b)).\nonumber
\end{align}

Let us remark that in the expression of $\tilde{F}$ no derivatives in $\phi$ appear. Indeed, it only depends on $\partial_\beta$ and $\partial_\varphi$. In that way, we will not assume any boundedness of $\partial_\phi$ in the function spaces. Moreover, since we are working the an unbounded space $[0,\infty)$ and the trivial solution is singular at $\beta=0$, we shall insert some weights in the function spaces as we will see in the following section.

\section{Functional regularity}\label{sec-functional}
\subsection{Function spaces}
Let us describe the function spaces for the scaled perturbation $f$. Indeed, we will take that $f$ is bounded and has some appropriate decay at $0$ and $+\infty$. We need such a decay for $f$ in order to tackle the singularity at $\beta=0$ and $\beta\rightarrow +\infty$ in the nonlinear equation.

For $\sigma>0$ (which will be fixed later), let us define the weighted norm
\begin{equation}\label{norm}
||f||_{L^\infty_\sigma}:=\sup_{\beta\in[0,+\infty),\phi\in[0,2\pi]} \frac{(1+\beta)^{2\sigma}}{\beta^\sigma}|f(\beta,\phi)|.
\end{equation}
We will assume that $f$ is H\"older continuous in the variable $\beta$ and then define
\begin{equation}\label{norm-holder}
||f||_{C^\delta_\beta}:=\sup_{\phi\in[0,2\pi]}\sup_{|\beta-b|<1}|\beta+b|^\delta\frac{|f(\beta,\phi)-f(b,\phi)|}{|\beta-b|^\delta},
\end{equation}
and Lipschitz continuous in the periodic variable $\phi$:
\begin{equation}\label{norm-lipschitz}
||f||_{\textnormal{Lip}_\phi}:=\sup_{\beta\in[0,+\infty)}\sup_{|\phi-\Phi|<1}\frac{|f(\beta,\phi)-f(\beta,\Phi)|}{\sin((\Phi-\phi)/2)}.
\end{equation}
Hence, let us define
\begin{equation}\label{X-0}
\mathcal{X}^{\sigma,\delta}:=\left\{f:(\beta,\phi)\in[0,+\infty)\times[0,2\pi]\rightarrow\C, \quad  ||f||_{\mathcal{X}^{\sigma,\delta}}:=||f||_{L^\infty_\sigma}+||f||_{C^\delta_\beta}<+\infty \right\}.
\end{equation}
Note that the functions  $f$ lying in $\mathcal{X}^{\sigma,\delta}$ satisfy
$$
 f\sim_0 \beta^{\sigma}, \quad f\sim_\infty \beta^{-\sigma},
$$
which will be crucial to work with the singularity at $0$ and $+\infty$.

Now, we can define the function spaces for $f$:
\begin{equation}\label{X1}
{X}_1^{\sigma,\delta}:=\{f,\quad f, \beta f_\beta,\beta f_\varphi, \beta^2 f_{\beta\varphi}\in\mathcal{X}^{\sigma,\delta}, ||f||_{\textnormal{Lip}_\phi}+||\beta f_\beta||_{\textnormal{Lip}_\phi}<+\infty, f(\beta,\phi)=\sum_{k\in\Z}f_k(\beta)e^{ik\phi}\},
\end{equation}
with the norm
\begin{equation}\label{norm-X1}
||f||_{X_1^{\sigma,\delta}}:=||f||_{\mathcal{X}^{\sigma,\delta}}+||\beta f_\beta||_{\mathcal{X}^{\sigma,\delta}}+||\beta f_\varphi||_{\mathcal{X}^{\sigma,\delta}}+||\beta^2 f_{\varphi\beta}||_{\mathcal{X}^{\sigma,\delta}}+||f||_{\textnormal{Lip}_\phi}+||\beta f_\beta||_{\textnormal{Lip}_\phi}.
\end{equation}
Notice that the expression of the nonlinear operator \eqref{Ftilde} only involves derivatives of the type $\partial_\varphi$ and $\partial_\beta$, and not $\partial_\phi$. For that reason, we only consider such derivatives in the function space for the perturbations \eqref{X1} and we will take into account that $\partial_\phi=\partial_\varphi+\partial_\beta$. However, we shall ask for Lipschitz continuity in the $\phi$ variable in order to tackle the singularity of the denominator in \eqref{Ftilde}.

Moreover, let us denote by $B_{X_1^{\sigma,\delta}}(\epsilon)$  the ball in $X_1^{\sigma,\delta}$ centered at 0 and radius $\epsilon$. Let us remark that $1\notin X_1^{\sigma,\delta}$ (this is the associated trivial solution after the scaling in \eqref{Ftilde}) because it does not have the same decay at $0$ and $+\infty$. Hence, we will have to subtract the contribution from the trivial solution from the nonlinear operator to obtain its well posedness.

\begin{rem}
For $f\in X_1^{\sigma,\delta}$, one has that $\partial_{\overline{\beta}}f,\partial_{\overline{\varphi}}f, \partial_{\overline{\beta\varphi}}f \in \mathcal{X}^{\sigma,\delta}$, where those linear operators are defined in \eqref{lin-op-1}--\eqref{lin-op-3}.
\end{rem}

We will assume that $\Omega$ lies in the Lebesgue space $L^p$ for some $p$ that will be fixed later. Then, define
\begin{equation}\label{X2}
X_2^p:=\{f\in L^p(0,2\pi), \quad f(\phi)=1+\sum_{0\neq k\in\Z} f_k e^{ik\phi}\}.
\end{equation}

\begin{rem}
Our goal is to prove self-similar solutions for a large family of perturbations of the initial vorticity. Hence, we need to assume that $\Omega$ lies in a large space. Due to the regularity of the nonlinear function $\tilde{F}$, we realized that the minimum condition to have that all the analysis works is that $\Omega\in L^p$, for some appropriate $p$ that will depend on $\gamma$.
\end{rem}
Finally, we set the range space:
\begin{equation}\label{Y}
Y^{\sigma,\delta}:=\{f,\quad f, \beta f_\varphi, \in\mathcal{X}^{\sigma,\delta}, ||f||_{\textnormal{Lip}_\phi}<+\infty\},
\end{equation}
with the norm
$$
||f||_{Y^{\sigma,\delta}}:=||f||_{\mathcal{X}^{\sigma,\delta}}+||\beta f_\varphi|_{\mathcal{X}^{\sigma,\delta}}+||f||_{\textnormal{Lip}_\phi}.
$$
From now on, let us denote by $B_{X}(c,r)$ the ball in $X$ centered at $c$ of radius $r$.

\begin{rem}\label{rem-fourier-coef}
Note that any function $f\in X_1^{\sigma,\delta}$ can be decompose in Fourier series in $\phi$, that is, there exists $f_k(\beta)$ such that
$$
f(\beta,\phi)=\sum_{k\in\Z}f_k(\beta) e^{ik\phi}.
$$
Although we are looking for a real function $f$, which will be the perturbation of the initial stream function, we will consider it initially as a complex function. Later, we can just include in the function spaces that $f$ is real.

Moreover, we can translate the decay properties of $f$ into the coefficients $f_k$. In that way, we find that $f_k\in\mathcal{X}_0$ where 
\begin{align}
\mathcal{X}_0:=&\left\{f,\quad f,\beta f'\in\mathcal{X}^{\sigma,\delta}\right\}. \label{X01}
\end{align}
However, let us remark that functions with Fourier coefficients in $\mathcal{X}_0$ are not necessary in $X_1^{\sigma,\delta}$. We can do the same with $f\in Y^{\sigma,\delta}$ having that the associated Fourier coefficients lie in the following space:
\begin{align}
\mathcal{Y}_0:=&\left\{f,\quad f\in\mathcal{X}^{\sigma,\delta}\right\}. \label{X00}
\end{align}
These spaces will be used in order to check that the kernel of the linearized operator is trivial.
\end{rem}

\begin{rem}
The function spaces used in \cite{Bressan-Murray:self-similar-euler, Elling:algebraic-spiral-solutions-2d-euler, Elling:self-similar-euler-mixed-sign-vorticity} for the Euler equations can not be used here. This is due to the different formulation of the nonlinear function $F$ used in these papers. Note that the relation between the stream function $\psi$ and the vorticity $\omega$ in the Euler equations is given via a local relation. There, there is no need to use the nonlocal expression via the singular integral, as we have to do for gSQG equations.
\end{rem}

\begin{rem}
We consider that $\Omega$ is of the form
$$
\Omega(\phi)=1+\sum_{0\neq k\in\Z}g_k e^{ik\phi},
$$
where $1$ is the trivial solution and then no perturbation of the mode $k=0$ is done there. That ensures us that we do not obtain more trivial solutions and that the solution is non radial.
\end{rem}

\begin{rem}
Function spaces with weights are commonly used when working in an unbounded space. Moreover, such weight must be consider when taking any derivative. That happens also in \cite{Elgindi-Jeong:symmetries-fluids}, where the authors needs to include some weight even for the H\"older norm. Moreover, note that in the case that $f$ only depends on the angle $\phi$ (and not on $\beta$), then these are the usual spaces $C^1, L^\infty$,...
\end{rem}

\subsection{Deformation of the Euclidean norm}

In order to study the regularity of the nonlinear function \eqref{Ftilde}, we shall work with the denominator given in \eqref{denominator-0}. Such denominator is nothing but the euclidean norm deformed by the adapted coordinates and the scaling. That happened also in \cite{Garcia-Hmidi-Mateu:time-periodic-doubly-3d-qg,Garcia-Hmidi-Mateu:time-periodic-3d-qg}, where the authors performed some cylindrical coordinates amounting to a deformation of the euclidean norm. Hence, estimating by below the denominator, i.e., 
\begin{align}\label{exp-denominator-2}
D(f)(\beta,\phi,b,\Phi):=&\tilde{\Psi}_\beta+\tilde{\Psi}_b-2\tilde{\Psi}_\beta^\frac12\tilde{\Psi}_b^\frac12\cos(\phi+\beta-\Phi-b)\\
=&\left(\beta^{\frac{-1}{1+\alpha}}\left\{\frac{\alpha-1}{1+\alpha}+\partial_{\overline{\beta}}f(\beta,\phi)\right\}^\frac12-b^{\frac{-1}{1+\alpha}}\left\{\frac{\alpha-1}{1+\alpha}+\partial_{\overline{b}}f(b,\Phi)\right\}^\frac12\right)^2\nonumber\\
&+2\beta^{\frac{-1}{1+\alpha}}b^{\frac{-1}{1+\alpha}}\left\{\frac{\alpha-1}{1+\alpha}+\partial_{\overline{\beta}}f(\beta,\phi)\right\}^\frac12\left\{\frac{\alpha-1}{1+\alpha}+\partial_{\overline{b}}f(b,\Phi)\right\}^\frac12\nonumber \\
&\times(1-\cos(\phi+\beta-\Phi-b)),\nonumber
\end{align}
is crucial to study the regularity of \eqref{Ftilde}. That is the goal of the following proposition.

\begin{pro}\label{prop-denominator}
There exists $\epsilon>0$ such that if $f\in B_{X_1^{\sigma,\delta}}(0,\epsilon)$ then
\begin{align}\label{den-estim-1}
D(f)(\beta,\phi,b,\Phi)\geq&C(\beta^{\frac{-2}{1+\alpha}}+b^{\frac{-2}{1+\alpha}})\left\{ \frac{|\beta-b|^2}{|\beta+b|^2}+\sin^2\left((\beta-b+\phi-\Phi)/2\right)\right\}.
\end{align}
\end{pro}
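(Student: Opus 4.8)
The plan is to work from the last expression for $D(f)$ in \eqref{exp-denominator-2}, which already displays $D(f)$ as a sum of two manifestly non-negative terms: a \emph{radial part} $\left(\beta^{-1/(1+\alpha)}A^{1/2}-b^{-1/(1+\alpha)}B^{1/2}\right)^2$ and an \emph{angular part} $2\beta^{-1/(1+\alpha)}b^{-1/(1+\alpha)}A^{1/2}B^{1/2}(1-\cos(\phi+\beta-\Phi-b))$, where I abbreviate $A:=\frac{\alpha-1}{1+\alpha}+\partial_{\overline\beta}f(\beta,\phi)$ and $B:=\frac{\alpha-1}{1+\alpha}+\partial_{\overline b}f(b,\Phi)$. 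The first step is to fix $\epsilon$ small enough (using $\|f\|_{X_1^{\sigma,\delta}}<\epsilon$, the remark that $\partial_{\overline\beta}f\in\mathcal X^{\sigma,\delta}$, and the bound $|\partial_{\overline\beta}f|\lesssim \|f\|_{\mathcal X^{\sigma,\delta}}\frac{\beta^\sigma}{(1+\beta)^{2\sigma}}\le \|f\|_{\mathcal X^{\sigma,\delta}}$) so that $A,B\in\left[\frac{\alpha-1}{2(1+\alpha)},\,\frac{2(\alpha-1)}{1+\alpha}\right]$ uniformly; in particular $A^{1/2},B^{1/2}$ are bounded above and below by positive constants depending only on $\alpha$. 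Hence the angular part is bounded below by $c\,\beta^{-1/(1+\alpha)}b^{-1/(1+\alpha)}(1-\cos(\beta-b+\phi-\Phi))= 2c\,\beta^{-1/(1+\alpha)}b^{-1/(1+\alpha)}\sin^2((\beta-b+\phi-\Phi)/2)$, and since $\beta^{-1/(1+\alpha)}b^{-1/(1+\alpha)}\ge \tfrac12(\beta^{-2/(1+\alpha)}+b^{-2/(1+\alpha)})$ is \emph{false} in general, I instead keep the geometric-mean factor and only at the end compare it with $\beta^{-2/(1+\alpha)}+b^{-2/(1+\alpha)}$ after splitting into regimes (see below).

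For the radial part I want to show $\left(\beta^{-1/(1+\alpha)}A^{1/2}-b^{-1/(1+\alpha)}B^{1/2}\right)^2\gtrsim (\beta^{-2/(1+\alpha)}+b^{-2/(1+\alpha)})\frac{|\beta-b|^2}{|\beta+b|^2}$ \emph{in the regime where $\beta,b$ are comparable}, say $\tfrac12\le \beta/b\le 2$. There one writes $\beta^{-1/(1+\alpha)}A^{1/2}-b^{-1/(1+\alpha)}B^{1/2}=\beta^{-1/(1+\alpha)}(A^{1/2}-B^{1/2})+(\beta^{-1/(1+\alpha)}-b^{-1/(1+\alpha)})B^{1/2}$; the map $t\mapsto t^{-1/(1+\alpha)}$ has derivative bounded below in absolute value by $c\,\beta^{-1/(1+\alpha)-1}$ on a comparable interval, giving $|\beta^{-1/(1+\alpha)}-b^{-1/(1+\alpha)}|\gtrsim \beta^{-1/(1+\alpha)}\frac{|\beta-b|}{\beta}\gtrsim \beta^{-1/(1+\alpha)}\frac{|\beta-b|}{|\beta+b|}$, while the error term $\beta^{-1/(1+\alpha)}|A^{1/2}-B^{1/2}|\lesssim \beta^{-1/(1+\alpha)}|\partial_{\overline\beta}f(\beta,\phi)-\partial_{\overline b}f(b,\Phi)|$ is controlled, using the H\"older-in-$\beta$ and Lipschitz-in-$\phi$ norms in $X_1^{\sigma,\delta}$ together with $\epsilon$ small, by $o(1)\cdot\beta^{-1/(1+\alpha)}\left(\frac{|\beta-b|}{|\beta+b|}+|\sin((\beta-b+\phi-\Phi)/2)|\right)$; absorbing this into the main terms (and noting $\beta^{-2/(1+\alpha)}\sim \beta^{-2/(1+\alpha)}+b^{-2/(1+\alpha)}$ here) yields \eqref{den-estim-1} in the comparable regime.

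The main obstacle — and the remaining case — is the \emph{non-comparable regime}, say $\beta\le b/2$ (the case $b\le\beta/2$ is symmetric). Here $\frac{|\beta-b|}{|\beta+b|}$ and $\sin^2((\beta-b+\phi-\Phi)/2)$ are both $O(1)$, so the right-hand side of \eqref{den-estim-1} is $\lesssim \beta^{-2/(1+\alpha)}+b^{-2/(1+\alpha)}\sim \beta^{-2/(1+\alpha)}$ (the larger of the two, attained at the smaller variable $\beta$), and I must show $D(f)\gtrsim \beta^{-2/(1+\alpha)}$. This follows by dropping the angular part and bounding the radial part from below: $\left(\beta^{-1/(1+\alpha)}A^{1/2}-b^{-1/(1+\alpha)}B^{1/2}\right)^2\ge \left(\beta^{-1/(1+\alpha)}A^{1/2}-b^{-1/(1+\alpha)}B^{1/2}\right)^2$, and since $b^{-1/(1+\alpha)}B^{1/2}\le C b^{-1/(1+\alpha)}\le C\,2^{-1/(1+\alpha)}\beta^{-1/(1+\alpha)}$ while $\beta^{-1/(1+\alpha)}A^{1/2}\ge c\,\beta^{-1/(1+\alpha)}$, choosing the separation threshold $2$ slightly larger if needed (depending only on $\alpha$) makes $\beta^{-1/(1+\alpha)}A^{1/2}-b^{-1/(1+\alpha)}B^{1/2}\ge \tfrac{c}{2}\beta^{-1/(1+\alpha)}>0$, so the radial part alone is $\gtrsim \beta^{-2/(1+\alpha)}$, as required. (If one prefers the symmetric cut $\beta/b\in[1/2,2]$ rather than a larger ratio, in the non-comparable regime one instead uses that $A,B$ are so close to the constant $\frac{\alpha-1}{1+\alpha}$ — by smallness of $\epsilon$ — that $\beta^{-1/(1+\alpha)}A^{1/2}-b^{-1/(1+\alpha)}B^{1/2}$ has the same sign and comparable size as $\frac{\alpha-1}{1+\alpha}^{1/2}(\beta^{-1/(1+\alpha)}-b^{-1/(1+\alpha)})$, which is $\gtrsim \beta^{-1/(1+\alpha)}$ there.) Combining the two regimes and choosing $\epsilon$ uniformly gives the stated constant $C=C(\alpha,\gamma,\sigma,\delta)>0$, completing the proof. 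The only place real care is needed is the error-absorption step in the comparable regime, where one must verify that the weighted H\"older/Lipschitz bounds genuinely produce a factor that is small \emph{relative to} $\frac{|\beta-b|}{|\beta+b|}+|\sin((\beta-b+\phi-\Phi)/2)|$ and not merely small in absolute value — this is exactly why the norms in $X_1^{\sigma,\delta}$ include the scale-invariant H\"older seminorm $\|f\|_{C^\delta_\beta}$ with the weight $|\beta+b|^\delta$ and the Lipschitz-in-$\phi$ seminorm.
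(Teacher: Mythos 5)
Your proof is correct and takes a genuinely different route from the paper's. The paper does not split into regimes: it attempts a single unified estimate, claiming (in the step ``by symmetry implies'') that the angular part $2\beta^{-1/(1+\alpha)}b^{-1/(1+\alpha)}A^{1/2}B^{1/2}(1-\cos(\cdot))$ is bounded below by $C^{-1}(\beta^{-2/(1+\alpha)}+b^{-2/(1+\alpha)})(1-\cos(\cdot))$, and then proceeds to bound the radial part and absorb the error $|\partial_{\overline{\beta}}f(\beta,\phi)-\partial_{\overline{b}}f(b,\Phi)|$ via the auxiliary function $g(\beta,x)=\partial_{\overline{\beta}}f(\beta,x-\beta)$. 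That angular lower bound is false when $\beta$ and $b$ are far from comparable (the geometric mean $\beta^{-1/(1+\alpha)}b^{-1/(1+\alpha)}$ can be arbitrarily small relative to the arithmetic mean $\beta^{-2/(1+\alpha)}+b^{-2/(1+\alpha)}$); the conclusion of the paper's intermediate estimate \eqref{esti-den} is nevertheless true, but in that regime it is the radial part alone (with $|\beta-b|/|\beta+b|\gtrsim 1$) that carries both the $|\beta-b|^2/|\beta+b|^2$ and the $\sin^2$ contributions. Your explicit split into comparable ($\beta/b\in[1/2,2]$) and non-comparable cases makes exactly this visible: in the non-comparable regime you drop the angular part entirely and show the radial part alone is $\gtrsim \beta^{-2/(1+\alpha)}+b^{-2/(1+\alpha)}$, which is cleaner and avoids the questionable step. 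In the comparable regime your argument and the paper's coincide: the key lemma controlling $|\partial_{\overline{\beta}}f(\beta,\phi)-\partial_{\overline{b}}f(b,\Phi)|$ by $\|f\|\{|\beta-b|/|\beta+b|+|\sin((\beta-b+\phi-\Phi)/2)|\}$ is the same (and is what the weighted $C^\delta_\beta$ and $\mathrm{Lip}_\phi$ seminorms are designed for). One place your write-up stays brief is the absorption: after subtracting the error from the radial main term, the leftover $o(1)|\sin|$ piece must be absorbed by the \emph{angular} main term, not the radial one — the radial part alone can be small or zero when $|\sin|$ is of order one and $|\beta-b|/|\beta+b|$ is tiny — and one should add the two non-negative pieces of $D(f)$ before estimating from below, rather than lower-bounding them separately. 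You flag this yourself at the end, so I read ``absorbing into the main terms'' (plural) as intending precisely this; it is the one step that deserves a line or two more if written out in full.
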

\begin{rem}\label{rem-denominator}
In particular, we find that $D(f)(\beta,\phi,b,\Phi)\geq C D(0)(\beta,\phi,b,\Phi)$. Moreover, we also get that $D(f)(\beta,\phi,b,\Phi)\leq C D(0)(\beta,\phi,b,\Phi)$.
\end{rem}

\begin{proof}

Take $f$ such that $f\in B_{X_1^{\sigma,\delta}}(\epsilon)$. In particular $\partial_{\overline{\beta}}f\in \mathcal{X}^{\sigma,\delta}$ and $||\partial_{\overline{\beta}}f||_{\textnormal{Lip}_\phi}<+\infty$.

Hence, we have that
$$
C^{-1}<\left\{\frac{\alpha-1}{1+\alpha}+\partial_{\overline{\beta}}f(\beta,\phi)\right\}<C,
$$
and
$$
C^{-1}<\left\{\frac{\alpha-1}{1+\alpha}+\partial_{\overline{b}}f(b,\Phi)\right\}<C.
$$
That, by symmetry, implies
\begin{align*}
D(f)(\beta,\phi,b,\Phi)\geq& \left(\beta^{\frac{-1}{1+\alpha}}\left\{\frac{\alpha-1}{1+\alpha}+\partial_{\overline{\beta}}f(\beta,\phi)\right\}^\frac12-b^{\frac{-1}{1+\alpha}}\left\{\frac{\alpha-1}{1+\alpha}+\partial_{\overline{b}}f(b,\Phi)\right\}^\frac12\right)^2\nonumber\\
&+C^{-1}(\beta^{\frac{-2}{1+\alpha}}+b^{\frac{-2}{1+\alpha}})(1-\cos(\phi+\beta-\Phi-b)),\nonumber
\end{align*}
for $C$ uniformly in $b,\beta,\phi,\Phi$. Let us then work with the first part. Note that
\begin{align*}
&\left|\beta^{\frac{-1}{1+\alpha}}\left\{\frac{\alpha-1}{1+\alpha}+\partial_{\overline{\beta}}f(\beta,\phi)\right\}^\frac12-b^{\frac{-1}{1+\alpha}}\left\{\frac{\alpha-1}{1+\alpha}+\partial_{\overline{b}}f(b,\Phi)\right\}^\frac12\right|\\
&=\left|\frac{\beta^{\frac{-2}{1+\alpha}}\left\{\frac{\alpha-1}{1+\alpha}+\partial_{\overline{\beta}}f(\beta,\phi)\right\}-b^{\frac{-2}{1+\alpha}}\left\{\frac{\alpha-1}{1+\alpha}+\partial_{\overline{b}}f(b,\Phi)\right\}}{\beta^{\frac{-1}{1+\alpha}}\left\{\frac{\alpha-1}{1+\alpha}+\partial_{\overline{\beta}}f(\beta,\phi)\right\}^\frac12+b^{\frac{-1}{1+\alpha}}\left\{\frac{\alpha-1}{1+\alpha}+\partial_{\overline{b}}f(b,\Phi)\right\}^\frac12}\right|\\
&\gtrsim \frac{\left|\beta^{\frac{-2}{1+\alpha}}\left\{\frac{\alpha-1}{1+\alpha}+\partial_{\overline{\beta}}f(\beta,\phi)\right\}-b^{\frac{-2}{1+\alpha}}\left\{\frac{\alpha-1}{1+\alpha}+\partial_{\overline{b}}f(b,\Phi)\right\}\right|}{\beta^{\frac{-1}{1+\alpha}}+b^{\frac{-1}{1+\alpha}}}.
\end{align*}
On the other hand, we have
\begin{align*}
&\left|\beta^{\frac{-2}{1+\alpha}}\left\{\frac{\alpha-1}{1+\alpha}+\partial_{\overline{\beta}}f(\beta,\phi)\right\}-b^{\frac{-2}{1+\alpha}}\left\{\frac{\alpha-1}{1+\alpha}+\partial_{\overline{b}}f(b,\Phi)\right\}\right|\gtrsim |\beta^{\frac{-2}{1+\alpha}}-b^{\frac{-2}{1+\alpha}}|\\
&-(\beta^{\frac{-2}{1+\alpha}}+b^{\frac{-2}{1+\alpha}})|\partial_{\overline{\beta}}f(\beta,\phi)-\partial_{\overline{b}}f(b,\Phi)|\\
\gtrsim& |\beta^{\frac{-2}{1+\alpha}}+b^{\frac{-2}{1+\alpha}}|\left\{\frac{|\beta-b|}{\beta+b}-|\partial_{\overline{\beta}}f(\beta,\phi)-\partial_{\overline{b}}f(b,\Phi)|\right\}.
\end{align*}
Then, we find
\begin{align*}
&\left|\beta^{\frac{-1}{1+\alpha}}\left\{\frac{\alpha-1}{1+\alpha}+\partial_{\overline{\beta}}f(\beta,\phi)\right\}^\frac12-b^{\frac{-1}{1+\alpha}}\left\{\frac{\alpha-1}{1+\alpha}+\partial_{\overline{b}}f(b,\Phi)\right\}^\frac12\right|\\
&\gtrsim |\beta^{\frac{-1}{1+\alpha}}+b^{\frac{-1}{1+\alpha}}|\left\{\frac{|\beta-b|}{\beta+b}-|\partial_{\overline{\beta}}f(\beta,\phi)-\partial_{\overline{b}}f(b,\Phi)|\right\},
\end{align*}
which implies
\begin{align*}
&\left|\beta^{\frac{-1}{1+\alpha}}\left\{\frac{\alpha-1}{1+\alpha}+\partial_{\overline{\beta}}f(\beta,\phi)\right\}^\frac12-b^{\frac{-1}{1+\alpha}}\left\{\frac{\alpha-1}{1+\alpha}+\partial_{\overline{b}}f(b,\Phi)\right\}^\frac12\right|^2\\
&\gtrsim |\beta^{\frac{-2}{1+\alpha}}+b^{\frac{-2}{1+\alpha}}|\left\{\frac{|\beta-b|^2}{|\beta+b|^2}-|\partial_{\overline{\beta}}f(\beta,\phi)-\partial_{\overline{b}}f(b,\Phi)|^2\right\}.
\end{align*}
That is,
\begin{align}\label{esti-den}
D(f)(\beta,\phi,b,\Phi)\gtrsim&(\beta^{\frac{-2}{1+\alpha}}+b^{\frac{-2}{1+\alpha}})\left\{ \frac{|\beta-b|^2}{|\beta+b|^2}+(1-\cos(\phi+\beta-\Phi-b))-|\partial_{\overline{\beta}}f(\beta,\phi)-\partial_{\overline{b}}f(b,\Phi)|^2\right\}.
\end{align}
It remains to work with 
$$
|\partial_{\overline{\beta}}f(\beta,\phi)-\partial_{\overline{b}}f(b,\Phi)|.
$$
Since we should bound this quantity by $\frac{|\beta-b|}{|\beta+b|}$ or $1-\cos(\phi+\beta-\Phi-b)$, we need to define the auxiliary function
$$
g(\beta,x)=\partial_{\overline{\beta}}f(\beta, x-\beta).
$$
In that way, taking $x=\beta+\phi$ and $y=b+\Phi$ one has
\begin{equation}\label{den-aux}
|\partial_{\overline{\beta}}f(\beta,\phi)-\partial_{\overline{b}}f(b,\Phi)|=|g(\beta,x)-g(b,y)|\leq |g(\beta,x)-g(b,x)|+|g(b,x)-g(b,y)|.
\end{equation}
For the first term we use the mean value theorem. Note that $\partial_\beta g$ behaves as $\partial_\varphi f$ and $\beta \partial_{\beta\varphi}f$, by the definition of $\partial_{\overline{\beta}}$. Then, from the definition of $X_1^{\sigma,\delta}$ in \eqref{X1} we have that $\beta \partial_\varphi g$ is bounded. In that way, one finds
$$
|g(\beta,x)-g(b,x)|\leq C||f||_{X_1^{\sigma,\delta}}\frac{|\beta-b|}{\beta+b}.
$$
For the second term, we use the Lipschitz continuity of $f$ and $\beta f_\beta$ (and then $\partial_{\overline{\beta}}f$) with respect to $\phi$ obtaining
$$
|g(b,x)-g(b,y)|\leq C||f||_{X_1^{\sigma,\delta}}|\sin((x-y)/2)|.
$$
By using that $x=\beta+\phi$ and $y=b+\Phi$ we find
$$
|\partial_{\overline{\beta}}f(\beta,\phi)-\partial_{\overline{b}}f(b,\Phi)|\leq C||f||_{X_1^{\sigma,\delta}}\left\{\frac{|\beta-b|}{\beta+b}+\sin^2\left((\beta-b+\phi-\Phi)/2\right)\right\}.
$$
Coming back to \eqref{esti-den} and using that $||f||_{X_1^{\sigma,\delta}}$ is small enough we achieve \eqref{den-estim-1}:
\begin{align*}
D(f)(\beta,\phi,b,\Phi)\geq&(\beta^{\frac{-2}{1+\alpha}}+b^{\frac{-2}{1+\alpha}})\left\{ \frac{|\beta-b|^2}{|\beta+b|^2}+\sin^2\left((\beta-b+\phi-\Phi)/2\right)\right\}.
\end{align*}
That concludes the proof.
\end{proof}

\subsection{Regularity of the nonlinear function}

In the following proposition, we deal with singular operators defined as 
\begin{equation}\label{H-operator}
\mathcal{H}_f(h_1,h_2)(\beta,\phi):=\beta^{-\frac{\alpha-1}{1+\alpha}}\int_0^{2\pi}\int_0^\infty \frac{b^{-\frac{\gamma}{1+\alpha}}b^{\frac{-2}{1+\alpha}}h_1(b,\Phi)h_2(\Phi)dbd\Phi}{D(f)(\beta,\phi,b,\Phi)^\frac{\gamma}{2}}.
\end{equation}
This will be crucial in order to prove later the persistence regularity of the nonlinear functional \eqref{equation}.

\begin{pro}\label{prop-operator-H}
Let $\gamma,\sigma\in(0,1)$ and $\alpha\in(1,2)$. Then, there exists $\epsilon<1$ such that if $f\in B_{X_1^{\sigma,\delta}} (0,\epsilon)$ and 
\begin{equation}\label{sigma-rel}
0<\sigma<\min\left\{\frac{\alpha-1}{1+\alpha},\frac{\gamma-\alpha+1}{1+\alpha}\right\},  \quad 1<\alpha<1+\gamma,
\end{equation}
then
\begin{align}\label{est-integral}
\left\|\mathcal{H}(h_1,h_2)\right\|_{Y^{\sigma,\delta}}\leq C||h_1||_{\mathcal{X}^{\sigma,\delta}}||h_2||_{X_2^p},
\end{align}
for any $p>\frac{1}{1-\gamma}$ and {$\delta<\min\{1-\gamma,\sigma\}$}. 
\end{pro}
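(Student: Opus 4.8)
The plan is to estimate the three norms comprising $\|\cdot\|_{Y^{\sigma,\delta}}$ separately: the weighted sup norm $\|\cdot\|_{L^\infty_\sigma}$, the weighted H\"older norm $\|\cdot\|_{C^\delta_\beta}$, and the Lipschitz-in-$\phi$ norm. The starting point for all three is Proposition \ref{prop-denominator} (and Remark \ref{rem-denominator}), which lets us replace $D(f)$ by the explicit kernel $D(0)$ up to constants, so that
\[
\mathcal{H}_f(h_1,h_2)(\beta,\phi)\lesssim \beta^{-\frac{\alpha-1}{1+\alpha}}\int_0^{2\pi}\int_0^\infty \frac{b^{-\frac{\gamma}{1+\alpha}}b^{\frac{-2}{1+\alpha}}|h_1(b,\Phi)|\,|h_2(\Phi)|\,db\,d\Phi}{\left\{(\beta^{\frac{-2}{1+\alpha}}+b^{\frac{-2}{1+\alpha}})\left(\frac{|\beta-b|^2}{|\beta+b|^2}+\sin^2(\tfrac{\beta-b+\phi-\Phi}{2})\right)\right\}^{\gamma/2}}.
\]
First I would do the pointwise bound. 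Insert $|h_1(b,\Phi)|\le \|h_1\|_{L^\infty_\sigma}\, b^\sigma/(1+b)^{2\sigma}$, pull $h_2$ out in $\Phi$ by H\"older with exponent $p$ (so $\|h_2\|_{X_2^p}$ appears and the $\Phi$-integral of the kernel to the power $p'=\frac{p}{p-1}$ must converge, which forces $p'\gamma<1$, i.e. $p>\frac1{1-\gamma}$), and then estimate the remaining $b$-integral. The $b$-integral splits into the region $b\sim\beta$, where the $\sin^2$ and $\frac{|\beta-b|^2}{|\beta+b|^2}$ terms provide an integrable singularity of order $\gamma$ in two "directions" (one angular, one radial), and the regions $b\ll\beta$ and $b\gg\beta$, where one simply tracks powers of $b$ and $\beta$. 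Collecting the powers, the condition \eqref{sigma-rel} on $\sigma$ together with $\alpha<1+\gamma$ is exactly what guarantees convergence at both $b\to0$ and $b\to\infty$ and produces the correct weight $(1+\beta)^{-2\sigma}\beta^{\sigma}$ on the left; this bookkeeping is routine but must be done carefully, as the exponents are tight.

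Next I would treat the H\"older seminorm in $\beta$. Fix $\phi$ and two points $\beta,\beta'$ with $|\beta-\beta'|<1$, and bound $|\mathcal{H}_f(h_1,h_2)(\beta,\phi)-\mathcal{H}_f(h_1,h_2)(\beta',\phi)|$. The prefactor $\beta^{-\frac{\alpha-1}{1+\alpha}}$ is smooth away from $0$ and its increment is handled by the pointwise bound already obtained (losing a power of $\beta$, absorbed by the weight). For the increment of the integral kernel itself, the standard device is to split the $b$-integration into the "near" region, say $|b-\beta|\le 2|\beta-\beta'|$ (or $b$ within a fixed multiple of $\beta$ of the singularity), where one estimates each of the two terms $\mathcal{H}_f(h_1,h_2)(\beta,\phi)$ and $\mathcal{H}_f(h_1,h_2)(\beta',\phi)$ separately and uses that the singularity is integrable with a $\delta$ to spare (this needs $\delta<1-\gamma$), and the "far" region, where one applies the mean value theorem to the kernel in $\beta$ — differentiating $D(0)^{-\gamma/2}$ in $\beta$ costs one power of the distance to the singularity — and again integrates, now needing $\delta<1-\gamma$ to close. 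The Lipschitz-in-$\phi$ estimate (which for $Y^{\sigma,\delta}$ is only required for $\mathcal H_f(h_1,h_2)$ itself and $\beta\partial_\varphi \mathcal H_f(h_1,h_2)$, not for a $\beta$-derivative) is analogous: $\partial_\varphi$ hits either the prefactor (trivial) or the kernel, where $\partial_\phi D(0)$ again produces a factor comparable to one power of $\sin(\tfrac{\beta-b+\phi-\Phi}{2})$, i.e. the distance to the angular singularity, which is absorbed against the $\gamma/2$ power with room; the Lipschitz modulus in $\phi$ is then estimated by splitting the $\Phi$-integral into near and far angular regions exactly as before. One also uses here the $L^p$-bound on $h_2$ (not a Lipschitz bound), so the $\phi$-regularity of the output comes entirely from the kernel, which is why $\|h_2\|_{X_2^p}$, not a stronger norm, suffices.

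The main obstacle I expect is the simultaneous bookkeeping of the two competing singularities (radial, governed by $\frac{|\beta-b|}{|\beta+b|}$, and angular, governed by $\sin^2(\cdots)$) inside a single $\gamma/2$-power, while keeping track of the homogeneity weights at $\beta=0$ and $\beta=\infty$: one must show that integrating the kernel over $b\in(0,\infty)$ against $b^{-\frac{\gamma+2}{1+\alpha}}b^{\sigma}/(1+b)^{2\sigma}$ returns precisely $C\,\beta^{\frac{\alpha-1}{1+\alpha}}\cdot\beta^{\sigma}/(1+\beta)^{2\sigma}$ up to the $\beta^{-\frac{\alpha-1}{1+\alpha}}$ prefactor, with no logarithmic losses, and this is exactly where the strict inequalities $\sigma<\frac{\alpha-1}{1+\alpha}$, $\sigma<\frac{\gamma-\alpha+1}{1+\alpha}$, $\alpha<1+\gamma$, $p>\frac1{1-\gamma}$ and $\delta<\min\{1-\gamma,\sigma\}$ are each consumed. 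A convenient way to organize this is to change variables $b=\beta s$ in the $b$-integral, reducing to a scalar integral in $s$ over $(0,\infty)$ whose convergence at $s\to0$, $s\to\infty$ and $s\to1$ is checked once and for all, with the $(1+\beta)$ cutoffs re-inserted at the end by the usual near/far-from-$\beta\sim1$ dichotomy. All other steps — Minkowski's integral inequality to move norms inside, H\"older in $\Phi$, and the mean value theorem increments — are routine given Proposition \ref{prop-denominator}.
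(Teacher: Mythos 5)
The central structural issue is that you have misread the target norm. The space $Y^{\sigma,\delta}$ is defined by $\|g\|_{Y^{\sigma,\delta}}=\|g\|_{\mathcal{X}^{\sigma,\delta}}+\|\beta g_\varphi\|_{\mathcal{X}^{\sigma,\delta}}+\|g\|_{\textnormal{Lip}_\phi}$, so in addition to the three bounds you enumerate (sup, H\"older in $\beta$, Lipschitz in $\phi$, all for $\mathcal{H}_f$ itself) one must \emph{also} control $\beta\partial_\varphi\mathcal{H}_f(h_1,h_2)$ in $L^\infty_\sigma$ and in $C^\delta_\beta$. Your proposal folds this into the Lipschitz paragraph, asserting the $\textnormal{Lip}_\phi$ bound is required for $\beta\partial_\varphi\mathcal{H}_f$ — it is not; rather, what is required is $\beta\partial_\varphi\mathcal{H}_f\in\mathcal{X}^{\sigma,\delta}$, and this is the hardest part of the proof. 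When $\partial_\varphi$ hits $D(0)^{-\gamma/2}$, the order of the kernel singularity genuinely worsens, from $D^{-\gamma/2}$ to (after combining $\partial_\varphi D\lesssim D^{1/2}\beta^{-1/(1+\alpha)}\beta^{-1}$ with the extra $\beta$) an effective $D^{-(1+\gamma)/2}$ with an additional inverse power of the mixed distance; this is not "absorbed with room" but is a critical exponent, and the whole reason for the constraint $p>\tfrac1{1-\gamma}$ and for the paper's splitting of the $\gamma+1$ exponent between a $\nu$-power of $\sin$ and a $(1+\gamma-\nu)$-power of $|\beta-b|/|\beta+b|$. Your account also misattributes the constraint $\delta<1-\gamma$: the $C^\delta_\beta$ bound for $\mathcal{H}_f$ itself only consumes $\delta<\sigma$; it is the $C^\delta_\beta$ bound for $\beta\partial_\varphi\mathcal{H}_f$ (the decomposition $\mathcal{H}_{2,1},\dots,\mathcal{H}_{2,4}$ in the paper's proof, combined with the interpolated kernel-increment estimates) that produces $\delta<1-\gamma$, and your proposal never reaches that computation.

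On the parts you do treat, the approach is sound and essentially matches the paper: replacing $D(f)$ by $D(0)$ via Proposition~\ref{prop-denominator}, scaling out via $b=\beta z$, peeling off $h_2$ by H\"older with exponent $p$ against a $\nu$-power of the angular singularity (which yields $p>\tfrac1{1-\gamma}$), and checking convergence at $z\to0,\infty,1$ is exactly how the sup bound is obtained. Your "near/far plus mean-value theorem" route for the $\beta$-H\"older seminorm is a valid alternative to the paper's technique of interpolating the kernel's Lipschitz increment against the trivial triangle bound — these are organizationally different ways to encode the same modulus estimate, and neither is preferable. But as written the proposal leaves a hole where the most delicate $\beta\partial_\varphi$ estimates ought to be, and the claim of "room" is precisely where there is none.
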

\begin{proof}
The proof is very technical and will be divided in different steps. We will provide the key points and the general ideas.\\

\noindent\medskip $\bullet$ $\mathcal{H}_f\in L^\infty_\sigma$.

Using Proposition \ref{prop-denominator}, we can bound $\mathcal{H}$ as
\begin{align*}
|\mathcal{H}_f(h_1,h_2)(\beta,\phi)|\leq &C \beta^{-\frac{\alpha-1}{1+\alpha}}\beta^{\frac{\gamma}{1+\alpha}}\int_0^{2\pi}\int_0^\infty b^{\frac{-2}{1+\alpha}}\frac{|h_1(b,\phi+\beta-\eta-b)||h_2(\phi+\beta-\eta-b)|dbd\eta}{(\beta^\frac{\gamma}{1+\alpha}+b^\frac{\gamma}{1+\alpha})\left\{\frac{|\beta-b|^2}{|\beta+b|^2}+\sin^2\left(\frac{\eta}{2}\right)\right\}^\frac{\gamma}{2}}\\
\leq &C \beta^{-\frac{\alpha-1}{1+\alpha}}\beta^{\frac{\gamma}{1+\alpha}}\int_0^{2\pi}\int_0^\infty \frac{b^{\frac{-2}{1+\alpha}}b^\sigma/(1+b)^{2\sigma}|h_2(\phi+\beta-\eta-b)|}{(\beta^\frac{\gamma}{1+\alpha}+b^\frac{\gamma}{1+\alpha})\left\{\frac{|\beta-b|^2}{|\beta+b|^2}+\sin^2\left(\frac{\eta}{2}\right)\right\}^\frac{\gamma}{2}}\\
&\times \frac{(1+b)^{2\sigma}}{b^\sigma}|h_1(b,\phi+\beta-\eta-b)|dbd\eta\\
=&C \beta^\sigma\int_0^{2\pi}\int_0^\infty \frac{z^{\frac{-2}{1+\alpha}}z^\sigma/(1+\beta z)^{2\sigma}|h_2(\phi+\beta-\eta-\beta z)|}{(1+z^\frac{\gamma}{1+\alpha})\left\{\frac{|1-z|^2}{|1+z|^2}+\sin^2\left(\frac{\eta}{2}\right)\right\}^\frac{\gamma}{2}}\\
&\times \frac{(1+\beta z)^{2\sigma}}{(\beta z)^\sigma}|h_1(\beta z,\phi+\beta-\eta-\beta z)|dzd\eta,
\end{align*}
where we have done the change of variables $b=\beta z$ in the last equality. Then, we find
\begin{align*}
||\mathcal{H}(h_1,h_2)||_{L^\infty_\sigma} \leq & C|| h_1||_{L^\infty_\sigma}||h_2||_{L^1}\sup_{\beta\in[0,+\infty)}\int_0^\infty \frac{(1+\beta)^{2\sigma}}{(1+\beta z)^{2\sigma}}  \frac{z^{\frac{-2}{1+\alpha}}z^\sigma|1+z|^\gamma}{|1-z|^\gamma|1+z|^{\frac{\gamma}{1+\alpha}}}dz.
\end{align*}
Recall that $\alpha>1$ by \eqref{mu-intro}. In order to estimate the above integral, we split in cases. Take $\varepsilon>0$, then let us work with $\beta\in[0,\varepsilon)$ and $\beta\in[\varepsilon,+\infty)$ separately. First,
$$
\sup_{\beta\in[0,\varepsilon)}\int_0^\infty \frac{(1+\beta)^{2\sigma}}{(1+\beta z)^{2\sigma}}  \frac{z^{\frac{-2}{1+\alpha}}z^\sigma|1+z|^\gamma}{|1-z|^\gamma|1+z|^{\frac{\gamma}{1+\alpha}}}dz\leq C \sup_{\beta\in[0,\varepsilon)}\int_0^\infty    \frac{z^{\frac{-2}{1+\alpha}}z^\sigma|1+z|^\gamma}{|1-z|^\gamma|1+z|^{\frac{\gamma}{1+\alpha}}}dz\leq C,
$$
if 
\begin{equation}\label{sigma1}
0<\sigma<\frac{\gamma-\alpha+1}{1+\alpha}, \quad 1<\alpha<1+\gamma.
\end{equation}
In the other case:
$$
\sup_{\beta\in[\varepsilon,+\infty)}\int_0^\infty \frac{(1+\beta)^{2\sigma}}{(1+\beta z)^{2\sigma}}  \frac{z^{\frac{-2}{1+\alpha}}z^\sigma|1+z|^\gamma}{|1-z|^\gamma|1+z|^{\frac{\gamma}{1+\alpha}}}dz\leq C \sup_{\beta\in[\varepsilon,+\infty)}\int_0^\infty  \frac{1}{z^\sigma} \frac{z^{\frac{-2}{1+\alpha}}|1+z|^\gamma}{|1-z|^\gamma|1+z|^{\frac{\gamma}{1+\alpha}}}dz\leq C,
$$
if 
\begin{equation}\label{sigma2}
0<\sigma<\frac{\alpha-1}{1+\alpha}.
\end{equation}
Hence, we find that
\begin{equation}\label{sigma3}
0<\sigma<\min\left\{\frac{\alpha-1}{1+\alpha},\frac{\gamma-\alpha+1}{1+\alpha}\right\},  \quad 1<\alpha<1+\gamma.
\end{equation}

\noindent\medskip $\bullet$ $\mathcal{H}_f\in C^\delta_\beta$.

Let us now check the H\"older semi-norm. Take $\beta_1<\beta_2$ and then
\begin{equation}\label{Holder-betas}
\beta_2^{-\frac{\alpha-1}{1+\alpha}}<\beta_1^{-\frac{\alpha-1}{1+\alpha}}.
\end{equation}
Doing the difference of $\mathcal{H}_f$ at two points we arrive at
\begin{align}\label{H-holder-0}
&|\mathcal{H}_f(h_1,h_2)(\beta_1,\phi)-\mathcal{H}_f(h_1,h_2)(\beta_2,\phi)|=\Bigg|\beta_1^{-\frac{\alpha-1}{1+\alpha}}\int_0^{2\pi}\int_0^\infty \frac{b^{-\frac{\gamma}{1+\alpha}}b^{\frac{-2}{1+\alpha}}h_1(b,\Phi)h_2(\Phi)dbd\Phi}{D(f)(\beta_1,\phi,b,\Phi)^\frac{\gamma}{2}}\\
&-\beta_2^{-\frac{\alpha-1}{1+\alpha}}\int_0^{2\pi}\int_0^\infty \frac{b^{-\frac{\gamma}{1+\alpha}}b^{\frac{-2}{1+\alpha}}h_1(b,\Phi)h_2(\Phi)dbd\Phi}{D(f)(\beta_2,\phi,b,\Phi)^\frac{\gamma}{2}}\Bigg|\nonumber\\
\leq &C\frac{|\beta_1-\beta_2|^\delta}{|\beta_1+\beta_2|^\delta}\left(\beta_1^{-\frac{\alpha-1}{1+\alpha}}+\beta_2^{-\frac{\alpha-1}{1+\alpha}}\right){\Bigg|\int_0^{2\pi}\int_0^\infty \frac{b^{-\frac{\gamma}{1+\alpha}}b^{\frac{-2}{1+\alpha}}h_1(b,\Phi)h_2(\Phi)dbd\Phi}{D(f)(\beta_1,\phi,b,\Phi)^\frac{\gamma}{2}}\Bigg|}\nonumber\\
&+\beta_2^{-\frac{\alpha-1}{1+\alpha}}\Bigg|\int_0^{2\pi}\int_0^\infty \frac{b^{-\frac{\gamma}{1+\alpha}}b^{\frac{-2}{1+\alpha}}h_1(b,\Phi)h_2(\Phi)dbd\Phi}{D(f)(\beta_1,\phi,b,\Phi)^\frac{\gamma}{2}}-\int_0^{2\pi}\int_0^\infty \frac{b^{-\frac{\gamma}{1+\alpha}}b^{\frac{-2}{1+\alpha}}h_1(b,\Phi)h_2(\Phi)dbd\Phi}{D(f)(\beta_2,\phi,b,\Phi)^\frac{\gamma}{2}}\Bigg|.\nonumber
\end{align}
Due to \eqref{Holder-betas} and using the previous analysis to bound $\mathcal{H}_f$ in $L^\infty_\sigma$ we get that
$$
\left(\beta_1^{-\frac{\alpha-1}{1+\alpha}}+\beta_2^{-\frac{\alpha-1}{1+\alpha}}\right)\Bigg|\int_0^{2\pi}\int_0^\infty \frac{b^{-\frac{\gamma}{1+\alpha}}b^{\frac{-2}{1+\alpha}}h_1(b,\Phi)h_2(\Phi)dbd\Phi}{D(f)(\beta_1,\phi,b,\Phi)^\frac{\gamma}{2}}\Bigg|<C.
$$
Hence, we need to estimate the second term in \eqref{H-holder-0}. Due to its expression, we will need to work with the difference of the denominator at two points. Thus
\begin{align}\label{H-holder}
&D(f)(\beta_1,\phi,b,\Phi)-D(f)(\beta_2,\phi,b,\Phi)=\left(\beta_1^{\frac{-1}{1+\alpha}}\left\{\frac{\alpha-1}{1+\alpha}+\partial_{\overline{\beta}}f(\beta_1,\phi)\right\}^\frac12-b^{\frac{-1}{1+\alpha}}\left\{\frac{\alpha-1}{1+\alpha}+\partial_{\overline{b}}f(b,\Phi)\right\}^\frac12\right)^2\nonumber\\
&-\left(\beta_2^{\frac{-1}{1+\alpha}}\left\{\frac{\alpha-1}{1+\alpha}+\partial_{\overline{\beta}}f(\beta_2,\phi)\right\}^\frac12-b^{\frac{-1}{1+\alpha}}\left\{\frac{\alpha-1}{1+\alpha}+\partial_{\overline{b}}f(b,\Phi)\right\}^\frac12\right)^2\nonumber\\
&+2\beta_1^{\frac{-1}{1+\alpha}}b^{\frac{-1}{1+\alpha}}\left\{\frac{\alpha-1}{1+\alpha}+\partial_{\overline{\beta}}f(\beta_1,\phi)\right\}^\frac12\left\{\frac{\alpha-1}{1+\alpha}+\partial_{\overline{b}}f(b,\Phi)\right\}^\frac12(1-\cos(\phi+\beta_1-\Phi-b))\nonumber\\
&-2\beta_2^{\frac{-1}{1+\alpha}}b^{\frac{-1}{1+\alpha}}\left\{\frac{\alpha-1}{1+\alpha}+\partial_{\overline{\beta}}f(\beta_2,\phi)\right\}^\frac12\left\{\frac{\alpha-1}{1+\alpha}+\partial_{\overline{b}}f(b,\Phi)\right\}^\frac12(1-\cos(\phi+\beta_2-\Phi-b)).
\end{align}
Let us estimate it in different steps. First
\begin{align*}
&\Big|\left(\beta_1^{\frac{-1}{1+\alpha}}\left\{\frac{\alpha-1}{1+\alpha}+\partial_{\overline{\beta}}f(\beta_1,\phi)\right\}^\frac12-b^{\frac{-1}{1+\alpha}}\left\{\frac{\alpha-1}{1+\alpha}+\partial_{\overline{b}}f(b,\Phi)\right\}^\frac12\right)^2\\
&-\left(\beta_2^{\frac{-1}{1+\alpha}}\left\{\frac{\alpha-1}{1+\alpha}+\partial_{\overline{\beta}}f(\beta_2,\phi)\right\}^\frac12-b^{\frac{-1}{1+\alpha}}\left\{\frac{\alpha-1}{1+\alpha}+\partial_{\overline{b}}f(b,\Phi)\right\}^\frac12\right)^2\Big|\\
\leq &\left|\beta_1^{\frac{-1}{1+\alpha}}\left\{\frac{\alpha-1}{1+\alpha}+\partial_{\overline{\beta}}f(\beta_1,\phi)\right\}^\frac12-\beta_2^{\frac{-1}{1+\alpha}}\left\{\frac{\alpha-1}{1+\alpha}+\partial_{\overline{\beta}}f(\beta_2,\Phi)\right\}^\frac12\right|\\
&\times\Big|\left(\beta_1^{\frac{-1}{1+\alpha}}\left\{\frac{\alpha-1}{1+\alpha}+\partial_{\overline{\beta}}f(\beta_1,\phi)\right\}^\frac12-b^{\frac{-1}{1+\alpha}}\left\{\frac{\alpha-1}{1+\alpha}+\partial_{\overline{b}}f(b,\Phi)\right\}^\frac12\right)\\
&+\left(\beta_2^{\frac{-1}{1+\alpha}}\left\{\frac{\alpha-1}{1+\alpha}+\partial_{\overline{\beta}}f(\beta_2,\phi)\right\}^\frac12-b^{\frac{-1}{1+\alpha}}\left\{\frac{\alpha-1}{1+\alpha}+\partial_{\overline{b}}f(b,\Phi)\right\}^\frac12\right)\Big|,
\end{align*}
where
\begin{align*}
&\left|\beta_1^{\frac{-1}{1+\alpha}}\left\{\frac{\alpha-1}{1+\alpha}+\partial_{\overline{\beta}}f(\beta_1,\phi)\right\}^\frac12-\beta_2^{\frac{-1}{1+\alpha}}\left\{\frac{\alpha-1}{1+\alpha}+\partial_{\overline{\beta}}f(\beta_2,\Phi)\right\}^\frac12\right|\\
\leq & C\left|\frac{\beta_1^{\frac{-2}{1+\alpha}}\left\{\frac{\alpha-1}{1+\alpha}+\partial_{\overline{\beta}}f(\beta_1,\phi)\right\}-\beta_2^{\frac{-2}{1+\alpha}}\left\{\frac{\alpha-1}{1+\alpha}+\partial_{\overline{\beta}}f(\beta_2,\Phi)\right\}}{\beta_1^{\frac{-1}{1+\alpha}}+\beta_2^{\frac{-1}{1+\alpha}}}\right|\\
\leq & C\left|\frac{(\beta_1^{\frac{-2}{1+\alpha}}-\beta_2^{\frac{-2}{1+\alpha}})\left\{\frac{\alpha-1}{1+\alpha}+\partial_{\overline{\beta}}f(\beta_1,\phi)\right\}+\beta_2^{\frac{-2}{1+\alpha}}\left\{\partial_{\overline{\beta}}f(\beta_1,\Phi)-\partial_{\overline{\beta}}f(\beta_2,\Phi)\right\}}{\beta_1^{\frac{-1}{1+\alpha}}+\beta_2^{\frac{-1}{1+\alpha}}}\right|\\
\leq &C\frac{\beta_1^{\frac{-2}{1+\alpha}}+\beta_2^{\frac{-2}{1+\alpha}}}{\beta_1^{\frac{-1}{1+\alpha}}+\beta_2^{\frac{-1}{1+\alpha}}}\frac{|\beta_1-\beta_2|^\delta}{|\beta_1+\beta_2|^\delta}\\
\leq &C\left\{\beta_1^{\frac{-1}{1+\alpha}}+\beta_2^{\frac{-1}{1+\alpha}}\right\}\frac{|\beta_1-\beta_2|^\delta}{|\beta_1+\beta_2|^\delta}.
\end{align*}
Hence
\begin{align*}
&\Big|\left(\beta_1^{\frac{-1}{1+\alpha}}\left\{\frac{\alpha-1}{1+\alpha}+\partial_{\overline{\beta}}f(\beta_1,\phi)\right\}^\frac12-b^{\frac{-1}{1+\alpha}}\left\{\frac{\alpha-1}{1+\alpha}+\partial_{\overline{b}}f(b,\Phi)\right\}^\frac12\right)^2\\
&-\left(\beta_2^{\frac{-1}{1+\alpha}}\left\{\frac{\alpha-1}{1+\alpha}+\partial_{\overline{\beta}}f(\beta_2,\phi)\right\}^\frac12-b^{\frac{-1}{1+\alpha}}\left\{\frac{\alpha-1}{1+\alpha}+\partial_{\overline{b}}f(b,\Phi)\right\}^\frac12\right)^2\Big|\\
\leq& C\Big|\left(\beta_1^{\frac{-1}{1+\alpha}}\left\{\frac{\alpha-1}{1+\alpha}+\partial_{\overline{\beta}}f(\beta_1,\phi)\right\}^\frac12-b^{\frac{-1}{1+\alpha}}\left\{\frac{\alpha-1}{1+\alpha}+\partial_{\overline{b}}f(b,\Phi)\right\}^\frac12\right)\\
&+\left(\beta_2^{\frac{-1}{1+\alpha}}\left\{\frac{\alpha-1}{1+\alpha}+\partial_{\overline{\beta}}f(\beta_2,\phi)\right\}^\frac12-b^{\frac{-1}{1+\alpha}}\left\{\frac{\alpha-1}{1+\alpha}+\partial_{\overline{b}}f(b,\Phi)\right\}^\frac12\right)\Big|\\
&\times \left\{\beta_1^{\frac{-1}{1+\alpha}}+\beta_2^{\frac{-1}{1+\alpha}}\right\}\frac{|\beta_1-\beta_2|^\delta}{|\beta_1+\beta_2|^\delta}\\
\leq &C(D(f)(\beta_1,\phi,b,\Phi)^\frac12+D(f)(\beta_2,\phi,b,\Phi)^\frac12)\left\{\beta_1^{\frac{-1}{1+\alpha}}+\beta_2^{\frac{-1}{1+\alpha}}\right\}\frac{|\beta_1-\beta_2|^\delta}{|\beta_1+\beta_2|^\delta}.
\end{align*}
Now let us estimate the second part of \eqref{H-holder} using similar ideas
\begin{align*}
&\Big|\beta_1^{\frac{-1}{1+\alpha}}b^{\frac{-1}{1+\alpha}}\left\{\frac{\alpha-1}{1+\alpha}+\partial_{\overline{\beta}}f(\beta_1,\phi)\right\}^\frac12\left\{\frac{\alpha-1}{1+\alpha}+\partial_{\overline{b}}f(b,\Phi)\right\}^\frac12(1-\cos(\phi+\beta_1-\Phi-b))\nonumber\\
&-\beta_2^{\frac{-1}{1+\alpha}}b^{\frac{-1}{1+\alpha}}\left\{\frac{\alpha-1}{1+\alpha}+\partial_{\overline{\beta}}f(\beta_2,\phi)\right\}^\frac12\left\{\frac{\alpha-1}{1+\alpha}+\partial_{\overline{b}}f(b,\Phi)\right\}^\frac12(1-\cos(\phi+\beta_2-\Phi-b))\Big|\\
\leq &C\frac{|\beta_1-\beta_2|^\delta}{|\beta_1+\beta_2|^\delta}(\beta_1^\frac{-1}{1+\alpha}+\beta_2^\frac{-1}{1+\alpha})b^\frac{-1}{1+\alpha}\left\{(\sin^2((\phi+\beta_1-\Phi-b)/2))+(\sin^2((\phi+\beta_2-\Phi-b)/2))\right\}\\
&+C|\sin^\delta((\beta_1-\beta_2)/2)|(\beta_1^\frac{-1}{1+\alpha}+\beta_2^\frac{-1}{1+\alpha})b^\frac{-1}{1+\alpha}\\
&\times \left\{(\sin^{2-\delta}((\phi+\beta_1-\Phi-b)/2))+(\sin^{2-\delta}((\phi+\beta_2-\Phi-b)/2))\right\}\\
\leq &C\frac{|\beta_1-\beta_2|^\delta}{|\beta_1+\beta_2|^\delta}(\beta_1^\frac{-1}{1+\alpha}+\beta_2^\frac{-1}{1+\alpha})b^\frac{-1}{1+\alpha}\left\{(\sin^2((\phi+\beta_1-\Phi-b)/2))+(\sin^2((\phi+\beta_2-\Phi-b)/2))\right\}\\
&+C\frac{|\beta_1-\beta_2|^\delta}{|\beta_1+\beta_2|^\delta}(\beta_1^{\frac{-1}{1+\alpha}+\delta}+\beta_2^{\frac{-1}{1+\alpha}+\delta})b^\frac{-1}{1+\alpha}\left\{(\sin^{2-\delta}((\phi+\beta_1-\Phi-b)/2))+(\sin^{2-\delta}((\phi+\beta_2-\Phi-b)/2))\right\}\\
\leq &C \frac{|\beta_1-\beta_2|^\delta}{|\beta_1+\beta_2|^\delta}(\beta_1^\frac{-1}{1+\alpha}+\beta_2^\frac{-1}{1+\alpha})(D(f)(\beta_1,\phi,b,\Phi)^\frac12+D(f)(\beta_2,\phi,b,\Phi)^\frac12)\\
&+C\frac{|\beta_1-\beta_2|^\delta}{|\beta_1+\beta_2|^\delta}(\beta_1^{\frac{-1}{1+\alpha}+\delta}+\beta_2^{\frac{-1}{1+\alpha}+\delta})\left\{(\sin^{1-\delta}((\phi+\beta_1-\Phi-b)/2))+(\sin^{1-\delta}((\phi+\beta_2-\Phi-b)/2))\right\}\\
&\times (D(f)(\beta_1,\phi,b,\Phi)^\frac12+D(f)(\beta_2,\phi,b,\Phi)^\frac12).
\end{align*}
Then, at the end we get
\begin{align}\label{H-holder2}
|D(f)(\beta_2,\phi,b,\Phi)-D(f)(\beta_1,\phi,b,\Phi)|\leq& C \frac{|\beta_1-\beta_2|^\delta}{|\beta_1+\beta_2|^\delta}(\beta_1^\frac{-1}{1+\alpha}+\beta_2^\frac{-1}{1+\alpha}+\beta_1^{\frac{-1}{1+\alpha}+\delta}+\beta_2^{\frac{-1}{1+\alpha}+\delta})\nonumber\\
&\times(D(f)(\beta_1,\phi,b,\Phi)^\frac12+D(f)(\beta_2,\phi,b,\Phi)^\frac12).
\end{align}
Hence
\begin{align*}
&\left|\frac{1}{D(f)(\beta_1,\phi,b,\Phi)^\frac{\gamma}{2}}-\frac{1}{D(f)(\beta_2,\phi,b,\Phi)^\frac{\gamma}{2}}\right|\leq \frac{D(f)(\beta_2,\phi,b,\Phi)^\frac{\gamma}{2}-D(f)(\beta_1,\phi,b,\Phi)^\frac{\gamma}{2}}{D(f)(\beta_1,\phi,b,\Phi)^\frac{\gamma}{2} D(f)(\beta_2,\phi,b,\Phi)^\frac{\gamma}{2}}\\
\lesssim& \frac{|D(f)(\beta_2,\phi,b,\Phi)-D(f)(\beta_1,\phi,b,\Phi)|}{D(f)(\beta_1,\phi,b,\Phi)^\frac{\gamma}{2}D(f)(\beta_2,\phi,b,\Phi)^\frac{\gamma}{2}}\frac{1}{(D(f)(\beta_2,\phi,b,\Phi)^\frac{2-\gamma}{2}+D(f)(\beta_1,\phi,b,\Phi)^\frac{2-\gamma}{2})},
\end{align*}
and using \eqref{H-holder2} we find
\begin{align}\label{Dfbetabeta2}
&\left|\frac{1}{D(f)(\beta_1,\phi,b,\Phi)^\frac{\gamma}{2}}-\frac{1}{D(f)(\beta_2,\phi,b,\Phi)^\frac{\gamma}{2}}\right|\\
\leq &C \frac{|\beta_1-\beta_2|^\delta}{|\beta_1+\beta_2|^\delta}\frac{\beta_1^\frac{-1}{1+\alpha}+\beta_2^\frac{-1}{1+\alpha}+\beta_1^{\frac{-1}{1+\alpha}+\delta}+\beta_2^{\frac{-1}{1+\alpha}+\delta}}{D(f)(\beta_1,\phi,b,\Phi)^\frac12 D(f)(\beta_2,\phi,b,\Phi)^\frac{\gamma}{2}+D(f)(\beta_2,\phi,b,\Phi)^\frac12 D(f)(\beta_1,\phi,b,\Phi)^\frac{\gamma}{2}}.\nonumber
\end{align}
Let us come back to \eqref{H-holder-0} to achieve
\begin{align*}
&\left|\mathcal{H}_f(h_1,h_2)(\beta_1,\phi)-\mathcal{H}_f(h_1,h_2)(\beta_2,\phi)\right|\leq C\frac{|\beta_1-\beta_2|^\delta}{|\beta_1+\beta_2|^\delta}\\
&+C\frac{|\beta_1-\beta_2|^\delta}{|\beta_1+\beta_2|^\delta}\beta_2^{-\frac{\alpha-1}{1+\alpha}}\int_0^{2\pi}\int_0^\infty b^{-\frac{\gamma}{1+\alpha}}b^{\frac{-2}{1+\alpha}}|h_1(b,\Phi)||h_2(\Phi)|dbd\Phi\\
&\times  \frac{\beta_1^\frac{-1}{1+\alpha}+\beta_2^\frac{-1}{1+\alpha}+\beta_1^{\frac{-1}{1+\alpha}+\delta}+\beta_2^{\frac{-1}{1+\alpha}+\delta}}{D(f)(\beta_1,\phi,b,\Phi)^\frac12 D(f)(\beta_2,\phi,b,\Phi)^\frac{\gamma}{2}+D(f)(\beta_2,\phi,b,\Phi)^\frac12 D(f)(\beta_1,\phi,b,\Phi)^\frac{\gamma}{2}} dbd\Phi.
\end{align*}
By virtue of Proposition \ref{prop-denominator} we can bound the denominator as
{\footnotesize{\begin{align*}
&\left|\mathcal{H}_f(h_1,h_2)(\beta_1,\phi)-\mathcal{H}_f(h_1,h_2)(\beta_2,\phi)\right|\leq C\frac{|\beta_1-\beta_2|^\delta}{|\beta_1+\beta_2|^\delta}\\
&+C\frac{|\beta_1-\beta_2|^\delta}{|\beta_1+\beta_2|^\delta}\int_0^{2\pi}\int_0^\infty b^{-\frac{\gamma}{1+\alpha}}b^{\frac{-2}{1+\alpha}}|h_1(b,\Phi)||h_2(\Phi)|\\
&\times  \Bigg(\frac{\beta_2^{-\frac{\alpha-1}{1+\alpha}}\beta_1^\frac{-1}{1+\alpha}}{(\beta_1^\frac{-1}{1+\alpha}+b^\frac{-1}{1+\alpha})(\beta_2^\frac{-\gamma}{1+\alpha}+b^\frac{-\gamma}{1+\alpha}) \left\{\frac{|\beta_1-b|^2}{|\beta_1+b|^2}+\sin^2((\beta_1+\phi-b-\Phi)/2)\right\}^\frac12\left\{\frac{|\beta_2-b|^2}{|\beta_2+b|^2}+\sin^2((\beta_2+\phi-b-\Phi)/2)\right\}^\frac{\gamma}{2}}\\
&+\frac{\beta_2^{-\frac{\alpha-1}{1+\alpha}}\beta_2^\frac{-1}{1+\alpha}}{(\beta_2^\frac{-1}{1+\alpha}+b^\frac{-1}{1+\alpha})(\beta_1^\frac{-\gamma}{1+\alpha}+b^\frac{-\gamma}{1+\alpha}) \left\{\frac{|\beta_2-b|^2}{|\beta_2+b|^2}+\sin^2((\beta_2+\phi-b-\Phi)/2)\right\}^\frac12\left\{\frac{|\beta_1-b|^2}{|\beta_1+b|^2}+\sin^2((\beta_1+\phi-b-\Phi)/2)\right\}^\frac{\gamma}{2}}\\
&+\frac{\beta_2^{-\frac{\alpha-1}{1+\alpha}}\beta_1^\frac{-1}{1+\alpha}\beta_1^\delta}{(\beta_1^\frac{-1}{1+\alpha}+b^\frac{-1}{1+\alpha})(\beta_2^\frac{-\gamma}{1+\alpha}+b^\frac{-\gamma}{1+\alpha}) \left\{\frac{|\beta_1-b|^2}{|\beta_1+b|^2}+\sin^2((\beta_1+\phi-b-\Phi)/2)\right\}^\frac12\left\{\frac{|\beta_2-b|^2}{|\beta_2+b|^2}+\sin^2((\beta_2+\phi-b-\Phi)/2)\right\}^\frac{\gamma}{2}}\\
&+\frac{\beta_2^{-\frac{\alpha-1}{1+\alpha}}\beta_2^\frac{-1}{1+\alpha}\beta_2^\delta}{(\beta_2^\frac{-1}{1+\alpha}+b^\frac{-1}{1+\alpha})(\beta_1^\frac{-\gamma}{1+\alpha}+b^\frac{-\gamma}{1+\alpha}) \left\{\frac{|\beta_2-b|^2}{|\beta_2+b|^2}+\sin^2((\beta_2+\phi-b-\Phi)/2)\right\}^\frac12\left\{\frac{|\beta_1-b|^2}{|\beta_1+b|^2}+\sin^2((\beta_1+\phi-b-\Phi)/2)\right\}^\frac{\gamma}{2}}\Bigg) dbd\Phi.
\end{align*}
}}
Let us show the bound for the third integral, and the others will follow similarly. Using now that $h_1\in \mathcal{X}^{\sigma,\delta}$ and then
{\footnotesize{\begin{align*}
&\int_0^{2\pi}\int_0^\infty \frac{\beta_2^{-\frac{\alpha-1}{1+\alpha}}\beta_1^{\frac{-1}{1+\alpha}+\delta}b^{-\frac{\gamma}{1+\alpha}}b^{\frac{-2}{1+\alpha}}|h_1(b,\Phi)||h_2(\Phi)|dbd\Phi}{(\beta_1^\frac{-1}{1+\alpha}+b^\frac{-1}{1+\alpha})(\beta_2^\frac{-\gamma}{1+\alpha}+b^\frac{-\gamma}{1+\alpha}) \left\{\frac{|\beta_1-b|^2}{|\beta_1+b|^2}+\sin^2((\beta_1+\phi-b-\Phi)/2)\right\}^\frac12\left\{\frac{|\beta_2-b|^2}{|\beta_2+b|^2}+\sin^2((\beta_2+\phi-b-\Phi)/2)\right\}^\frac{\gamma}{2}}\\
&\leq C||h_1||_{L^\infty_{\sigma}} \beta_1^\delta\int_0^{2\pi}\int_0^\infty \frac{\beta_2^{-\frac{\alpha-1}{1+\alpha}}b^{-\frac{\gamma}{1+\alpha}}b^{\frac{-2}{1+\alpha}}b^\sigma/(1+b)^{2\sigma}|h_2(\Phi)|dbd\Phi}{(\beta_2^\frac{-\gamma}{1+\alpha}+b^\frac{-\gamma}{1+\alpha}) \left\{\frac{|\beta_1-b|^2}{|\beta_1+b|^2}+\sin^2((\beta_1+\phi-b-\Phi)/2)\right\}^\frac12\left\{\frac{|\beta_2-b|^2}{|\beta_2+b|^2}+\sin^2((\beta_2+\phi-b-\Phi)/2)\right\}^\frac{\gamma}{2}}\\
&\leq C||h_1||_{L^\infty_{\sigma}}\beta_1^\delta \int_0^{2\pi}\int_0^\infty \frac{\beta_2^\frac{\gamma}{1+\alpha}\beta_2^{-\frac{\alpha-1}{1+\alpha}}b^{\frac{-2}{1+\alpha}}b^\sigma/(1+b)^{2\sigma} |h_2(\Phi)|dbd\Phi}{(\beta_2^\frac{\gamma}{1+\alpha}+b^\frac{\gamma}{1+\alpha}) \frac{|\beta_1-b|^{1-\nu}}{|\beta_1+b|^{1-\nu}}|\sin((\beta_1+\phi-b-\Phi)/2)|^\nu\frac{|\beta_2-b|^\gamma}{|\beta_2+b|^\gamma}}
\end{align*}
}}
for $\nu\in(0,1)$ with 
\begin{equation}\label{nu-gamma}
{1-\nu+\gamma\in(0,1).}
\end{equation} Hence
{\footnotesize{\begin{align*}
&\int_0^{2\pi}\int_0^\infty \frac{\beta_2^{-\frac{\alpha-1}{1+\alpha}}\beta_1^{\frac{-1}{1+\alpha}+\delta}b^{-\frac{\gamma}{1+\alpha}}b^{\frac{-2}{1+\alpha}}|h_1(b,\Phi)||h_2(\Phi)|dbd\Phi}{(\beta_1^\frac{-1}{1+\alpha}+b^\frac{-1}{1+\alpha})(\beta_2^\frac{-\gamma}{1+\alpha}+b^\frac{-\gamma}{1+\alpha}) \left\{\frac{|\beta_1-b|^2}{|\beta_1+b|^2}+\sin^2((\beta_1+\phi-b-\Phi)/2)\right\}^\frac12\left\{\frac{|\beta_2-b|^2}{|\beta_2+b|^2}+\sin^2((\beta_2+\phi-b-\Phi)/2)\right\}^\frac{\gamma}{2}}\\
&\leq C||h_1||_{L^\infty_{\sigma}}\beta_1^\delta \int_0^\infty \frac{\beta_2^\frac{\gamma}{1+\alpha}\beta_2^{-\frac{\alpha-1}{1+\alpha}}b^{\frac{-2}{1+\alpha}}b^\sigma/(1+b)^{2\sigma} }{(\beta_2^\frac{\gamma}{1+\alpha}+b^\frac{\gamma}{1+\alpha}) \frac{|\beta_1-b|^{1-\nu}}{|\beta_1+b|^{1-\nu}}\frac{|\beta_2-b|^\gamma}{|\beta_2+b|^\gamma}}\int_0^{2\pi} |\sin^{-\nu}((\beta_2+\phi-b-\Phi)/2)||h_2(\Phi)|d\Phi db
\end{align*}
}}
Note that by interpolation we get
\begin{align}\label{H-interp}
\int_0^{2\pi}|\sin^{-\nu}((\beta_2+\phi-b-\Phi)/2)||h_2(\Phi)|d\Phi\leq& C\left(\int_0^{2\pi}|h_2(\Phi)|^p d\Phi\right)^\frac{1}{p}\left(\int_0^{2\pi}|\sin^{-\nu}((\Phi)/2)|^q d\Phi\right)^\frac{1}{q}\nonumber\\
\leq& C||h_2||_{X_2^p},
\end{align}
provided that {$\nu q<1$ and that $\frac{1}{p}+\frac{1}{q}=1$. By using \eqref{nu-gamma}, we can choose any $p$ such that 
\begin{equation}\label{condition-p}
\frac{1}{1-\gamma}<p.
\end{equation}
} Finally, using the change of variables $b=\beta_2 z$ we get:
{\footnotesize{\begin{align}\label{Hholder-2}
&\int_0^{2\pi}\int_0^\infty \frac{\beta_2^{-\frac{\alpha-1}{1+\alpha}}\beta_1^{\frac{-1}{1+\alpha}+\delta}b^{-\frac{\gamma}{1+\alpha}}b^{\frac{-2}{1+\alpha}}|h_1(b,\Phi)||h_2(\Phi)|dbd\Phi}{(\beta_1^\frac{-1}{1+\alpha}+b^\frac{-1}{1+\alpha})(\beta_2^\frac{-\gamma}{1+\alpha}+b^\frac{-\gamma}{1+\alpha}) \left\{\frac{|\beta_1-b|^2}{|\beta_1+b|^2}+\sin^2((\beta_1+\phi-b-\Phi)/2)\right\}^\frac12\left\{\frac{|\beta_2-b|^2}{|\beta_2+b|^2}+\sin^2((\beta_2+\phi-b-\Phi)/2)\right\}^\frac{\gamma}{2}}\nonumber\\
&\leq C||h_1||_{L^\infty_{\sigma}} ||h_2||_{X_2^p}\beta_1^\delta\int_0^\infty \frac{\beta_2^\frac{\gamma}{1+\alpha}\beta_2^{-\frac{\alpha-1}{1+\alpha}}b^{\frac{-2}{1+\alpha}}b^\sigma/(1+b)^{2\sigma} }{(\beta_2^\frac{\gamma}{1+\alpha}+b^\frac{\gamma}{1+\alpha}) \frac{|\beta_1-b|^{1-\nu}}{|\beta_1+b|^{1-\nu}}\frac{|\beta_2-b|^\gamma}{|\beta_2+b|^\gamma}}db\\
&\leq C||h_1||_{L^\infty_{\sigma}} ||h_2||_{X_2^p}\beta_2^\delta\int_0^\infty \frac{z^{\frac{-2}{1+\alpha}}(z\beta_2)^\sigma/(1+z\beta_2)^{2\sigma} }{(1+z^\frac{\gamma}{1+\alpha}) \frac{|\beta_1-z\beta_2|^{1-\nu}}{|\beta_1+z\beta_2|^{1-\nu}}\frac{|1-z|^\gamma}{|1+z|^\gamma}}dz\nonumber\\
&\leq C ||h_1||_{L^\infty_{\sigma}} ||h_2||_{X_2^p},\nonumber
\end{align}
}}
where we have used that $\beta_1<\beta_2$, and which is bounded provided that $\sigma$ verifies \eqref{sigma1} and \eqref{sigma2} and
$$
\delta<\sigma,
$$
in order to get the boundedness as $\beta_2$ goes to $\infty$. Hence, putting everything together we find that
$$
|\mathcal{H}_f(\beta_1,\phi,b,\Phi)-\mathcal{H}_f(\beta_2,\phi,b,\Phi)|\leq C ||h_1||_{L^\infty_{\sigma}} ||h_2||_{X_2^p}\frac{|\beta_1-\beta_2|^\delta}{|\beta_1+\beta_2|^\delta},
$$
implying that $H_f\in C^\delta_\beta$.

\noindent\medskip $\bullet$ $\mathcal{H}_f\in \textnormal{Lip}_\phi$.

We shall prove that
\begin{align}\label{H-lips}
|\mathcal{H}_f(\beta,\phi_1,b,\Phi)-\mathcal{H}_f(\beta,\phi_2,b,\Phi)|\leq C|\sin((\phi_1-\phi_2)/2)|.
\end{align}
For that we need to estimate the difference of the denominator at two points:
\begin{align*}
&D(f)(\beta,\phi_1,b,\Phi)-D(f)(\beta,\phi_2,b,\Phi)=\left(\beta^{\frac{-1}{1+\alpha}}\left\{\frac{\alpha-1}{1+\alpha}+\partial_{\overline{\beta}}f(\beta,\phi_1)\right\}^\frac12-b^{\frac{-1}{1+\alpha}}\left\{\frac{\alpha-1}{1+\alpha}+\partial_{\overline{b}}f(b,\Phi)\right\}^\frac12\right)^2\nonumber\\
&-\left(\beta^{\frac{-1}{1+\alpha}}\left\{\frac{\alpha-1}{1+\alpha}+\partial_{\overline{\beta}}f(\beta,\phi_2)\right\}^\frac12-b^{\frac{-1}{1+\alpha}}\left\{\frac{\alpha-1}{1+\alpha}+\partial_{\overline{b}}f(b,\Phi)\right\}^\frac12\right)^2\nonumber\\
&+2\beta^{\frac{-1}{1+\alpha}}b^{\frac{-1}{1+\alpha}}\left\{\frac{\alpha-1}{1+\alpha}+\partial_{\overline{\beta}}f(\beta,\phi_1)\right\}^\frac12\left\{\frac{\alpha-1}{1+\alpha}+\partial_{\overline{b}}f(b,\Phi)\right\}^\frac12(1-\cos(\phi_1+\beta-\Phi-b))\nonumber\\
&-2\beta^{\frac{-1}{1+\alpha}}b^{\frac{-1}{1+\alpha}}\left\{\frac{\alpha-1}{1+\alpha}+\partial_{\overline{\beta}}f(\beta,\phi_2)\right\}^\frac12\left\{\frac{\alpha-1}{1+\alpha}+\partial_{\overline{b}}f(b,\Phi)\right\}^\frac12(1-\cos(\phi_2+\beta-\Phi-b)).
\end{align*}
Hence we can achieve that
\begin{align}\label{H-phi-2}
&|D(f)(\beta,\phi_1,b,\Phi)-D(f)(\beta,\phi_2,b,\Phi)|\leq C |\sin((\phi_1-\phi_2)/2)|{(\beta^{\frac{-1}{1+\alpha}}+b^\frac{-1}{1+\alpha})}\nonumber\\
&\times(D(f)(\beta,\phi_1,b,\Phi)^\frac12+D(f)(\beta,\phi_2,b,\Phi)^\frac12).
\end{align}
Hence
\begin{align*}
&\left|\frac{1}{D(f)(\beta,\phi_1,b,\Phi)^\frac{\gamma}{2}}-\frac{1}{D(f)(\beta,\phi_2,b,\Phi)^\frac{\gamma}{2}}\right|\leq \frac{D(f)(\beta,\phi_1,b,\Phi)^\frac{\gamma}{2}-D(f)(\beta,\phi_2,b,\Phi)^\frac{\gamma}{2}}{D(f)(\beta,\phi_1,b,\Phi)^\frac{\gamma}{2} D(f)(\beta,\phi_2,b,\Phi)^\frac{\gamma}{2}}\\
\leq& \frac{|D(f)(\beta,\phi_1,b,\Phi)-D(f)(\beta,\phi_2,b,\Phi)|}{D(f)(\beta,\phi_1,b,\Phi)^\frac{\gamma}{2}D(f)(\beta,\phi_2,b,\Phi)^\frac{\gamma}{2}}\frac{1}{D(f)(\beta,\phi_2,b,\Phi)^\frac{2-\gamma}{2}+D(f)(\beta,\phi_1,b,\Phi)^\frac{2-\gamma}{2}},
\end{align*}
and using \eqref{H-phi-2} we find
\begin{align*}
&\left|\frac{1}{D(f)(\beta,\phi_1,b,\Phi)^\frac{\gamma}{2}}-\frac{1}{D(f)(\beta,\phi_2,b,\Phi)^\frac{\gamma}{2}}\right|\\
\leq &C|\sin((\phi_1-\phi_2)/2)|\frac{{(\beta^{\frac{-1}{1+\alpha}}+b^\frac{-1}{1+\alpha})}}{D(f)(\beta,\phi_1,b,\Phi)^\frac12 D(f)(\beta,\phi_2,b,\Phi)^\frac{\gamma}{2}+D(f)(\beta,\phi_2,b,\Phi)^\frac12 D(f)(\beta,\phi_1,b,\Phi)^\frac{\gamma}{2}}.
\end{align*}
In that way, we obtain that
\begin{align*}
&|\mathcal{H}_f(\beta,\phi_1,b,\Phi)-\mathcal{H}_f(\beta,\phi_2,b,\Phi)|\\
\leq& C |\sin((\phi_1-\phi_2)/2)|\beta^{-\frac{\alpha-1}{1+\alpha}}\\
&\times \int_0^{2\pi}\int_0^\infty \frac{b^{-\frac{\gamma}{1+\alpha}}b^\frac{-2}{1+\alpha}{(\beta^{\frac{-1}{1+\alpha}}+b^\frac{-1}{1+\alpha})}|h_1(b,\Phi)|h_2(\Phi)|db d\Phi}{D(f)(\beta,\phi_1,b,\Phi)^\frac12 D(f)(\beta,\phi_2,b,\Phi)^\frac{\gamma}{2}+D(f)(\beta,\phi_2,b,\Phi)^\frac12 D(f)(\beta,\phi_1,b,\Phi)^\frac{\gamma}{2}}\\
\leq& C |\sin((\phi_1-\phi_2)/2)|\beta^{-\frac{\alpha-1}{1+\alpha}}\int_0^{2\pi}\int_0^\infty \frac{b^{-\frac{\gamma}{1+\alpha}}b^\frac{-2}{1+\alpha}{(\beta^{\frac{-1}{1+\alpha}}+b^\frac{-1}{1+\alpha})}|h_1(b,\Phi)|h_2(\Phi)|db d\Phi}{D(f)(\beta,\phi_1,b,\Phi)^\frac12 D(f)(\beta,\phi_2,b,\Phi)^\frac{\gamma}{2}}\\
\leq& C |\sin((\phi_1-\phi_2)/2)|\beta^{-\frac{\alpha-1}{1+\alpha}}\\
&\times\int_0^{2\pi}\int_0^\infty \frac{b^{-\frac{\gamma}{1+\alpha}}b^\frac{-2}{1+\alpha}{(\beta^{\frac{-1}{1+\alpha}}+b^\frac{-1}{1+\alpha})}|h_1(b,\Phi)|h_2(\Phi)|db d\Phi}{\left\{\beta^{\frac{-\gamma}{1+\alpha}}+b^\frac{-\gamma}{1+\alpha}\right\}\left\{\beta^{\frac{-1}{1+\alpha}}+b^\frac{-1}{1+\alpha}\right\}\frac{|\beta-b|^\gamma}{|\beta+b|^\gamma}\left\{\frac{|\beta-b|^2}{|\beta+b|^2}+\sin^2((\beta-b+\phi_1-\Phi)/2)\right\}^\frac12}\\
\leq& C |\sin((\phi_1-\phi_2)/2)|\beta^{-\frac{\alpha-1}{1+\alpha}}\int_0^{2\pi}\int_0^\infty \frac{b^{-\frac{\gamma}{1+\alpha}}{b^\frac{-2}{1+\alpha}}|h_1(b,\Phi)||h_2(\Phi)|db d\Phi}{\left\{\beta^{\frac{-\gamma}{1+\alpha}}+b^\frac{-\gamma}{1+\alpha}\right\}\frac{|\beta-b|^{\gamma+1-\nu}}{|\beta+b|^{\gamma+1-\nu}}|\sin((\beta-b+\phi_1-\Phi)/2)|^\nu}.
\end{align*}
Using now \eqref{H-interp} we get
\begin{align*}
&|\mathcal{H}_f(\beta,\phi_1,b,\Phi)-\mathcal{H}_f(\beta,\phi_2,b,\Phi)|\\
\leq& C||h_1||_{X_1^{\sigma,\delta}}||h_2||_{X_2^p} |\sin((\phi_1-\phi_2)/2)|\beta^{-\frac{\alpha-1}{1+\alpha}}\int_0^\infty \frac{b^{-\frac{\gamma}{1+\alpha}}b^\frac{-2}{1+\alpha}b^\sigma/(1+b)^{2\sigma}db }{\left\{\beta^{\frac{-\gamma}{1+\alpha}}+b^\frac{-\gamma}{1+\alpha}\right\}\frac{|\beta-b|^{\gamma+1-\nu}}{|\beta+b|^{\gamma+1-\nu}}},
\end{align*}
{for $p$ with $\frac{1}{p}+\frac{1}{q}=1$ and $q\nu<1$ as done in \eqref{H-interp}}.
Doing now the change of variable $b=z\beta$ amounts to
\begin{align*}
&|\mathcal{H}_f(\beta,\phi_1,b,\Phi)-\mathcal{H}_f(\beta,\phi_2,b,\Phi)|\\
\leq& C||h_1||_{\mathcal{X}^{\sigma,\delta}}||h_2||_{X_2^p} |\sin((\phi_1-\phi_2)/2)|\beta^\sigma\int_0^\infty \frac{z^\frac{-2}{1+\alpha}z^\sigma/(1+z\beta)^{2\sigma}dz }{\left\{1+z^\frac{\gamma}{1+\alpha}\right\}\frac{|1-z|^{\gamma+1-\nu}}{|1+z|^{\gamma+1-\nu}}}\\\
\leq& C||h_1||_{\mathcal{X}^{\sigma,\delta}}||h_2||_{X_2^p},
\end{align*}
provided that $\gamma+1-\nu<1$, $p$ satisfies \eqref{condition-p} and $\sigma$ satisfies \eqref{sigma1} and \eqref{sigma2}.

\noindent\medskip $\bullet$ $\beta\partial_\varphi \mathcal{H}_f\in L^\infty_\sigma$.

By using the expression of $\mathcal{H}_f$ we find
\begin{align*}
\partial_\varphi \mathcal{H}_f(h_1,h_2)(\beta,\phi)=&\frac{\alpha-1}{1+\alpha}\frac{1}{\beta}\mathcal{H}_f(h_1,h_2)(\beta,\phi)\\
&-\frac{\gamma}{2}\beta^{-\frac{\alpha-1}{1+\alpha}}\int_0^{2\pi}\int_0^\infty \frac{b^\frac{-\gamma}{1+\alpha}b^\frac{-2}{1+\alpha}h_1(b,\Phi)h_2(\Phi)\partial_\varphi D(f)(\beta,\phi,b,\Phi)}{D(f)(\beta,\phi,b,\Phi)^\frac{2+\gamma}{2}}dbd\Phi.
\end{align*}

Note that
\begin{align}\label{H-betavarphi}
\beta \partial_\varphi \mathcal{H}_f(h_1,h_2)(\beta,\phi)=&\frac{\alpha-1}{1+\alpha}\mathcal{H}_f(h_1,h_2)(\beta,\phi)\nonumber\\
&-\frac{\gamma}{2}\beta \beta^{-\frac{\alpha-1}{1+\alpha}}\int_0^{2\pi}\int_0^\infty \frac{b^\frac{-\gamma}{1+\alpha}b^\frac{-2}{1+\alpha}h_1(b,\Phi)h_2(\Phi)\partial_\varphi D(f)(\beta,\phi,b,\Phi)}{D(f)(\beta,\phi,b,\Phi)^\frac{2+\gamma}{2}}dbd\Phi\nonumber\\
=:&\mathcal{H}_1+\mathcal{H}_2,
\end{align}
where the first term was already studied in the previous steps. For the second term, let us compute the derivative of the denominator as follows:
\begin{align}\label{H-varphiD}
&\partial_\varphi D(f)(\beta,\phi,b,\Phi)=2\left(\beta^{\frac{-1}{1+\alpha}}\left\{\frac{\alpha-1}{1+\alpha}+\partial_{\overline{\beta}}f(\beta,\phi)\right\}^\frac12-b^{\frac{-1}{1+\alpha}}\left\{\frac{\alpha-1}{1+\alpha}+\partial_{\overline{b}}f(b,\Phi)\right\}^\frac12\right)\nonumber\\
&\times \beta^\frac{-1}{1+\alpha}\frac{\partial_\varphi\partial_{\overline{\beta}}f(\beta,\phi)}{2\left\{\frac{\alpha-1}{1+\alpha}+\partial_{\overline{\beta}}f(\beta,\phi)\right\}^\frac12}\nonumber\\
&+\beta^{\frac{-1}{1+\alpha}}b^{\frac{-1}{1+\alpha}}\left\{\frac{\alpha-1}{1+\alpha}+\partial_{\overline{\beta}}f(\beta,\phi)\right\}^\frac{-1}{2}\partial_\varphi \partial_{\overline{\beta}}f(\beta,\phi)\left\{\frac{\alpha-1}{1+\alpha}+\partial_{\overline{b}}f(b,\Phi)\right\}^\frac12(1-\cos(\phi+\beta-\Phi-b))\nonumber\\
&-\frac{2}{1+\alpha}\frac{1}{\beta}\beta^{\frac{-1}{1+\alpha}}b^{\frac{-1}{1+\alpha}}\left\{\frac{\alpha-1}{1+\alpha}+\partial_{\overline{\beta}}f(\beta,\phi)\right\}^\frac12\left\{\frac{\alpha-1}{1+\alpha}+\partial_{\overline{b}}f(b,\Phi)\right\}^\frac12(1-\cos(\phi+\beta-\Phi-b)),
\end{align}
implying that
\begin{align}\label{H-varphiD2}
|\partial_\varphi D(f)(\beta,\phi,b,\Phi)|\leq C D(f)(\beta,\phi,b,\Phi)^\frac12 \beta^{\frac{-1}{1+\alpha}}\frac{1}{\beta}.
\end{align}
In this way, we find that
\begin{align*}
&\left|\beta \partial_\varphi \mathcal{H}(h_1,h_2)(\beta,\phi)\right|\leq C|\mathcal{H}_f(h_1,h_2)(\beta,\phi)|\\
&+C\beta^{\frac{-1}{1+\alpha}}\beta^{-\frac{\alpha-1}{1+\alpha}}\int_0^{2\pi}\int_0^\infty \frac{b^{-\frac{\gamma}{1+\alpha}}b^\frac{-2}{1+\alpha}|h_1(b,\phi+\beta-\eta-b)||h_2(\phi+\beta-\eta-b)|}{D(f)(\beta,\phi,b,\phi+\beta-\eta-b)^\frac{1+\gamma}{2}}dbd\eta\\
\leq &C|\mathcal{H}_f(h_1,h_2)(\beta,\phi)|\\
&+C||h_1||_{X_1^{\sigma,\delta}}\beta^\sigma\beta^{\frac{-1}{1+\alpha}}\int_0^{2\pi}\int_0^\infty \frac{z^{-\frac{\gamma}{1+\alpha}}z^\frac{-2}{1+\alpha}b^\sigma/(1+\beta z)^{2\sigma}|h_2(\phi+\beta-\eta-b)|}{(\beta^\frac{-\gamma}{1+\alpha}+b^{\frac{-\gamma}{1+\alpha}})(\beta^\frac{-1}{1+\alpha}+b^{\frac{-1}{1+\alpha}})\left\{\frac{|\beta-b|^2}{|\beta+b|^2}+\sin^2(\eta/2)\right\}^{\frac{1+\gamma}{2}}}dbd\eta.
\end{align*}
Using \eqref{H-interp} we get
\begin{align*}
\left|\beta \partial_\varphi \mathcal{H}(h_1,h_2)(\beta,\phi)\right|\leq &C|\mathcal{H}_f(h_1,h_2)(\beta,\phi)|\\
&+C||h_1||_{\mathcal{X}^{\sigma,\delta}}||h_2||_{X_2^p}\beta^\sigma\int_0^\infty \frac{z^{-\frac{\gamma}{1+\alpha}}z^\frac{-2}{1+\alpha}b^\sigma/(1+\beta z)^{2\sigma}}{(\beta^\frac{-\gamma}{1+\alpha}+b^{\frac{-\gamma}{1+\alpha}})\frac{|\beta-b|^{1+\gamma-\nu}}{|\beta+b|^{1+\gamma-\nu}}}db.
\end{align*}

Then:
\begin{align}\label{h-est1}
||\beta \partial_\varphi \mathcal{H}(h_1,h_2)||_\sigma\leq C||h_1||_{\mathcal{X}^{\sigma,\delta}}||h_2||_{X_2^p},
\end{align}
provided that \eqref{sigma1} and \eqref{sigma2} hold, as well as \eqref{condition-p} and $1+\gamma-\nu<1$.

\noindent\medskip $\bullet$ $\beta \partial_\varphi \mathcal{H}_f(h_1,h_2)\in C^\delta_\beta$.

Since $\mathcal{H}_f(h_1,h_2)\in C^\delta_\beta$, the first term in \eqref{H-betavarphi} does. Hence, it remains to check the second term in \eqref{H-betavarphi}: $\mathcal{H}_2$. Take $\beta_1<\beta_2$, and then they satisfy \eqref{Holder-betas}. Let us write the difference of $\mathcal{H}_2$ in these two terms as follows
\begin{align*}
&\mathcal{H}_2(\beta_1)-\mathcal{H}_2(\beta_2)=C\beta_1^{\frac{2}{1+\alpha}}\int_0^{2\pi}\int_0^\infty \frac{b^\frac{-\gamma}{1+\alpha}b^\frac{-2}{1+\alpha}h_1(b,\Phi)h_2(\Phi)\partial_\varphi D(f)(\beta_1,\phi,b,\Phi)}{D(f)(\beta_1,\phi,b,\Phi)^\frac12}\\
&\times\left\{\frac{1}{D(f)(\beta_1,\phi,b,\Phi)^\frac{1+\gamma}{2}}-\frac{1}{D(f)(\beta_2,\phi,b,\Phi)^\frac{1+\gamma}{2}}\right\}dbd\Phi\\
&+C \beta_1^{\frac{2}{1+\alpha}}\int_0^{2\pi}\int_0^\infty \frac{b^\frac{-\gamma}{1+\alpha}b^\frac{-2}{1+\alpha}h_1(b,\Phi)h_2(\Phi)(\partial_\varphi D(f)(\beta_1,\phi,b,\Phi)-\partial_\varphi D(f)(\beta_2,\phi,b,\Phi))}{D(f)(\beta_1,\phi,b,\Phi)^\frac{1}{2}D(f)(\beta_2,\phi,b,\Phi)^\frac{1+\gamma}{2}}dbd\Phi\\
&+C\beta_1^{\frac{2}{1+\alpha}}\int_0^{2\pi}\int_0^\infty \frac{b^\frac{-\gamma}{1+\alpha}b^\frac{-2}{1+\alpha}h_1(b,\Phi)h_2(\Phi)\partial_\varphi D(f)(\beta_2,\phi,b,\Phi)}{D(f)(\beta_2,\phi,b,\Phi)^\frac{1+\gamma}{2}}\\
&\times\left\{\frac{1}{D(f)(\beta_1,\phi,b,\Phi)^\frac{1}{2}}-\frac{1}{D(f)(\beta_2,\phi,b,\Phi)^\frac{1}{2}}\right\}dbd\Phi\\
&+C(\beta_1^{\frac{2}{1+\alpha}}-\beta_2^{\frac{2}{1+\alpha}})\int_0^{2\pi}\int_0^\infty \frac{b^\frac{-\gamma}{1+\alpha}b^\frac{-2}{1+\alpha}h_1(b,\Phi)h_2(\Phi)\partial_\varphi D(f)(\beta_2,\phi,b,\Phi)}{D(f)(\beta_2,\phi,b,\Phi)^\frac{2+\gamma}{2}}dbd\Phi\\
=:&\mathcal{H}_{2,1}+\mathcal{H}_{2,2}+\mathcal{H}_{2,3}+\mathcal{H}_{2,4}.
\end{align*}
By using \eqref{Dfbetabeta2} and \eqref{H-varphiD2} we can bound $\mathcal{H}_{2,1}$ and $\mathcal{H}_{2,3}$. Let us show the estimate for $\mathcal{H}_{2,1}$. Indeed, from \eqref{H-holder2} we find for $\delta=1$ that
\begin{align*}
|D(f)(\beta_2,\phi,b,\Phi)-D(f)(\beta_1,\phi,b,\Phi)|\leq& C \frac{|\beta_1-\beta_2|}{|\beta_1+\beta_2|}(\beta_1^\frac{-1}{1+\alpha}+\beta_2^\frac{-1}{1+\alpha}+\beta_1^{\frac{-1}{1+\alpha}+1}+\beta_2^{\frac{-1}{1+\alpha}+1})\nonumber\\
&\times(D(f)(\beta_1,\phi,b,\Phi)^\frac12+D(f)(\beta_2,\phi,b,\Phi)^\frac12).
\end{align*}
By interpolating with
$$
|D(f)(\beta_2,\phi,b,\Phi)-D(f)(\beta_1,\phi,b,\Phi)|\leq|D(f)(\beta_2,\phi,b,\Phi)+D(f)(\beta_1,\phi,b,\Phi)|,
$$
we find
\begin{align}\label{Df-interp}
|D(f)(\beta_2,\phi,b,\Phi)-D(f)(\beta_1,\phi,b,\Phi)|\leq& C \frac{|\beta_1-\beta_2|^\delta}{|\beta_1+\beta_2|^\delta}(\beta_1^\frac{-\delta}{1+\alpha}+\beta_2^\frac{-\delta}{1+\alpha}+\beta_1^{\frac{-\delta}{1+\alpha}+\delta}+\beta_2^{\frac{-\delta}{1+\alpha}+\delta})\nonumber\\
&\times(D(f)(\beta_1,\phi,b,\Phi)^\frac{2-\delta}{2}+D(f)(\beta_2,\phi,b,\Phi)^\frac{2-\delta}{2}).
\end{align}
Hence using the ideas in \eqref{Dfbetabeta2} together with \eqref{Df-interp} we find
\begin{align}\label{Df-interp-2}
&\left|\frac{1}{D(f)(\beta_1,\phi,b,\Phi)^\frac{1+\gamma}{2}}-\frac{1}{D(f)(\beta_2,\phi,b,\Phi)^\frac{1+\gamma}{2}}\right|\leq C\frac{|\beta_1-\beta_2|^\delta}{|\beta_1+\beta_2|^\delta}\\
&\times\frac{\beta_1^{\frac{-\delta}{1+\alpha}}+\beta_2^{\frac{-\delta}{1+\alpha}}+\beta_1^{\frac{-\delta}{1+\alpha}+\delta}+\beta_2^{\frac{-\delta}{1+\alpha}+\delta}}{D(f)(\beta_1,\phi,b,\Phi)^\frac{\delta}{2}D(f)(\beta_2,\phi,b,\Phi)^\frac{1+\gamma}{2}+D(f)(\beta_2,\phi,b,\Phi)^\frac{\delta}{2}D(f)(\beta_1,\phi,b,\Phi)^\frac{1+\gamma}{2}}.
\end{align}
Hence, \eqref{Df-interp-2} together with \eqref{H-varphiD2} amounts to
{\footnotesize\begin{align*}
&|\mathcal{H}_{2,1}|\leq C\frac{|\beta_1-\beta_2|^\delta}{|\beta_1+\beta_2|^\delta}\beta_1^{\frac{2}{1+\alpha}}\beta_1^\frac{-1}{1+\alpha}\beta_1^{-1}\int_0^{2\pi}\int_0^\infty \frac{b^\frac{-\gamma}{1+\alpha}b^\frac{-2}{1+\alpha}|h_1(b,\Phi)||h_2(\Phi)|(\beta_1^{\frac{-\delta}{1+\alpha}}+\beta_1^{\frac{-\delta}{1+\alpha}+\delta})dbd\Phi}{D(f)(\beta_2,\phi,b,\Phi)^\frac{\delta}{2}D(f)(\beta_1,\phi,b,\Phi)^\frac{1+\gamma}{2}}\\
\leq &C\frac{|\beta_1-\beta_2|^\delta}{|\beta_1+\beta_2|^\delta}\beta_1^{\frac{2}{1+\alpha}}\beta_1^\frac{-1}{1+\alpha}\beta_1^{-1}\\
&\times \int_0^{2\pi}\int_0^\infty \frac{b^\frac{-\gamma}{1+\alpha}b^\frac{-2}{1+\alpha}|h_1(b,\Phi)||h_2(\Phi)|(\beta_1^{\frac{-\delta}{1+\alpha}}+\beta_1^{\frac{-\delta}{1+\alpha}+\delta})dbd\Phi}{|\beta_2^\frac{-\delta}{1+\alpha}+b^\frac{-\delta}{1+\alpha}|\left\{\frac{|\beta_2-b|^2}{|\beta_2+b|^2}+\sin^2((\beta_2-b+\phi-\Phi)/2)\right\}^\frac{\delta}{2}|\beta_1^\frac{-1-\gamma}{1+\alpha}+b^\frac{-1-\gamma}{1+\alpha}|\left\{\frac{|\beta_1-b|^2}{|\beta_1+b|^2}+\sin^2((\beta_1-b+\phi-\Phi)/2)\right\}^\frac{1+\gamma}{2}}\\
\end{align*}}
Using the ideas for \eqref{Hholder-2} we finally get
$$
|\mathcal{H}_{2,1}|\leq C\frac{|\beta_1-\beta_2|^\delta}{|\beta_1+\beta_2|^\delta}||h_1||_{L^\infty_\sigma}||h_2||_{L^p},
$$
for some constant $C$, provided that $\delta<1-\gamma$, $\delta<\sigma$ and $p>\frac{1}{1-\gamma}$.

Let us move to $\mathcal{H}_{2,3}$ and show the main idea. In order to work with that term, we should estimate the difference of $\partial_\varphi D(f)$ in two points. Note that
\begin{align*}
&\partial_\varphi D(f)(\beta,\phi,b,\Phi)=2\left(\beta^{\frac{-1}{1+\alpha}}\left\{\frac{\alpha-1}{1+\alpha}+\partial_{\overline{\beta}}f(\beta,\phi)\right\}^\frac12-b^{\frac{-1}{1+\alpha}}\left\{\frac{\alpha-1}{1+\alpha}+\partial_{\overline{b}}f(b,\Phi)\right\}^\frac12\right) \\
&\times \partial_\varphi\left[\beta^{\frac{-1}{1+\alpha}}\left\{\frac{\alpha-1}{1+\alpha}+\partial_{\overline{\beta}}f(\beta,\phi)\right\}^\frac12\right]\nonumber\\
&+2\partial_\varphi \left[\beta^{\frac{-1}{1+\alpha}}\left\{\frac{\alpha-1}{1+\alpha}+\partial_{\overline{\beta}}f(\beta,\phi)\right\}^\frac12\right] b^{\frac{-1}{1+\alpha}}\left\{\frac{\alpha-1}{1+\alpha}+\partial_{\overline{b}}f(b,\Phi)\right\}^\frac12(1-\cos(\phi+\beta-\Phi-b)).\nonumber
\end{align*}
By working with that term as for \eqref{H-holder2} we obtain
\begin{align}\label{Interpo-1}
|\partial_\varphi D(f)(\beta_1,\phi,b,\Phi)-\partial_\varphi D(f)(\beta_2,\phi,b,\Phi)|\leq C& \frac{|\beta_1-\beta_2|}{|\beta_1+\beta_2|}\\
&\times(\beta_1^{-1}\beta_1^\frac{-2}{1+\alpha}+\beta_2^{-1}\beta_2^\frac{-2}{1+\alpha}+\beta_1^{-1}\beta_1^{\frac{-2}{1+\alpha}+1}+\beta_2^{-1}\beta_2^{\frac{-2}{1+\alpha}+1})\nonumber,
\end{align}
which interpolating with \eqref{H-varphiD2} agrees with
\begin{align}\label{Interpo-2}
&|\partial_\varphi D(f)(\beta_1,\phi,b,\Phi)-\partial_\varphi D(f)(\beta_2,\phi,b,\Phi)|\leq C \frac{|\beta_1-\beta_2|^\delta}{|\beta_1+\beta_2|^\delta}\\
&\times(\beta_1^{-\delta}\beta_1^\frac{-2\delta}{1+\alpha}+\beta_2^{-\delta}\beta_2^\frac{-2\delta}{1+\alpha}+\beta_1^{-\delta}\beta_1^{\frac{-\delta}{1+\alpha}+\delta}+\beta_2^{-\delta}\beta_2^{\frac{-\delta}{1+\alpha}+\delta})\nonumber\\
&\times (\beta_1^{-\frac{1-\delta}{1+\alpha}}\beta_1^{-(1-\delta)}D(f)(\beta_1,\phi,b,\Phi)^\frac{1-\delta}{2}+\beta_2^{-\frac{1-\delta}{1+\alpha}}\beta_2^{-(1-\delta)}D(f)(\beta_2,\phi,b,\Phi)^\frac{1-\delta}{2}).\nonumber
\end{align}
Then, the key point to bound $\mathcal{H}_{2,2}$ is the use of \eqref{Interpo-2} instead of just \eqref{Interpo-1}.

Finally, the estimate for the last term $\mathcal{H}_{2,4}$ follows identically as for $\beta \partial_\varphi \mathcal{H}_f\in L^\infty_\sigma$ taking into account that
$$
|\beta_1^{\frac{2}{1+\alpha}}-\beta_2^{\frac{2}{1+\alpha}}|\leq C\frac{|\beta_1-\beta_2|}{|\beta_1+\beta_2|}(\beta_1^{\frac{2}{1+\alpha}}+\beta_2^{\frac{2}{1+\alpha}})\leq C\frac{|\beta_1-\beta_2|}{|\beta_1+\beta_2|}\beta_2^{\frac{2}{1+\alpha}}.
$$

The $C^1$ regularity of $F$ follows similarly by estimating the associated Fr\'echet derivatives.
\end{proof}

In the next proposition, we give the well-definition of the nonlinear function together with its $C^1$ regularity.

\begin{pro}\label{prop-well-def}
Let $\gamma,\sigma\in(0,1)$ and $\alpha\in(1,2)$ satisfying \eqref{sigma-rel}. There exists $\epsilon<1$ such that $\tilde{F}:B_{X_1^{\sigma,\delta}}(\epsilon)\times X_2^p\rightarrow Y^{\sigma,\delta}$ is well-defined and $C^1$, for any $p>\frac{1}{1-\gamma}$ and $\delta<\min\{1-\gamma,\sigma\}$.
\end{pro}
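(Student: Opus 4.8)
The plan is to read off from \eqref{Ftilde} that, apart from the affine part $1+f$, the functional $\tilde F$ is a fixed multiple of the singular operator $\mathcal{H}_f$ of \eqref{H-operator} applied to data built from $f$ and $\tilde\Omega$, and then to conclude from Proposition \ref{prop-operator-H} once the contribution of the trivial solution has been peeled off. Concretely, with
\[
c_{\gamma,\alpha}:=(-1)^{-\frac{1}{2\mu}}\frac{C_\gamma(\alpha-1)^{\frac{\alpha-1}{2}}}{4\pi C_0(1+\alpha)^{\frac{\alpha-1}{2}}},\qquad \mathcal{A}[f](b,\Phi):=\Big\{-\tfrac{\alpha-1}{1+\alpha}+\partial_{\overline{\varphi}}f(b,\Phi)\Big\}^{-\frac{1}{2\mu}}\Big\{\tfrac{2(\alpha-1)}{(1+\alpha)^{2}}+\partial_{\overline{\beta\varphi}}f(b,\Phi)\Big\},
\]
one has $\tilde F(f,\tilde\Omega)=1+f-c_{\gamma,\alpha}\,\mathcal{H}_f\!\big(\mathcal{A}[f],\tilde\Omega\big)$.

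First I would deal with the algebraic map $f\mapsto\mathcal{A}[f]$. For $\epsilon$ small the remark following \eqref{X1} gives $\partial_{\overline{\varphi}}f,\partial_{\overline{\beta\varphi}}f\in\mathcal{X}^{\sigma,\delta}$ with norms $\lesssim\|f\|_{X_1^{\sigma,\delta}}$, so $-\tfrac{\alpha-1}{1+\alpha}+\partial_{\overline{\varphi}}f$ stays in a fixed compact subset of $(-\infty,0)$ and $s\mapsto s^{-1/(2\mu)}$ is real analytic there; since $\mathcal{X}^{\sigma,\delta}$ is stable under products of bounded elements (the weights in \eqref{norm} reinforce each other and the H\"older seminorm obeys the Leibniz rule), a Taylor expansion yields $\mathcal{A}[f]=\mathcal{A}[0]+\mathcal{R}[f]$ with $\mathcal{A}[0]=(-\tfrac{\alpha-1}{1+\alpha})^{-1/(2\mu)}\tfrac{2(\alpha-1)}{(1+\alpha)^{2}}$ a constant, $\mathcal{R}[f]\in\mathcal{X}^{\sigma,\delta}$, $\|\mathcal{R}[f]\|_{\mathcal{X}^{\sigma,\delta}}\leq C\|f\|_{X_1^{\sigma,\delta}}$, and $f\mapsto\mathcal{R}[f]$ of class $C^{1}$ from $B_{X_1^{\sigma,\delta}}(\epsilon)$ into $\mathcal{X}^{\sigma,\delta}$.

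Next I would use Proposition \ref{prop-trivial}: the identity $\tilde F(0,1)=0$ reads $1=c_{\gamma,\alpha}\,\mathcal{H}_0(\mathcal{A}[0],1)$, whence
\[
\tilde F(f,\tilde\Omega)=f-c_{\gamma,\alpha}\,\mathcal{H}_f\!\big(\mathcal{R}[f],\tilde\Omega\big)+c_{\gamma,\alpha}\Big[\mathcal{H}_0(\mathcal{A}[0],1)-\mathcal{H}_f(\mathcal{A}[0],\tilde\Omega)\Big].
\]
Here $f\in X_1^{\sigma,\delta}\hookrightarrow Y^{\sigma,\delta}$ for free, and $\mathcal{H}_f(\mathcal{R}[f],\tilde\Omega)$ is controlled directly by Proposition \ref{prop-operator-H} because $\mathcal{R}[f]\in\mathcal{X}^{\sigma,\delta}$ and $\tilde\Omega\in X_2^{p}$, giving $\|\mathcal{H}_f(\mathcal{R}[f],\tilde\Omega)\|_{Y^{\sigma,\delta}}\leq C\|f\|_{X_1^{\sigma,\delta}}\|\tilde\Omega\|_{X_2^{p}}$ for $p>\frac{1}{1-\gamma}$ and $\delta<\min\{1-\gamma,\sigma\}$. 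The remaining bracket $\mathcal{G}:=\mathcal{H}_0(\mathcal{A}[0],1)-\mathcal{H}_f(\mathcal{A}[0],\tilde\Omega)$ is the crux of the matter: since $\mathcal{A}[0]$ is a constant it fails to lie in $\mathcal{X}^{\sigma,\delta}$, so neither summand of $\mathcal{G}$ is covered by Proposition \ref{prop-operator-H}, and only the difference lands in $Y^{\sigma,\delta}$, through the cancellations $D(0)\leftrightarrow D(f)$ and $1\leftrightarrow\tilde\Omega$ already present in $\mathcal{G}$. I expect this to be the main obstacle, and would handle it by re-running the singular-integral estimates of Proposition \ref{prop-operator-H} with the two terms kept together: the difference-of-denominators bounds \eqref{H-holder2}, \eqref{Dfbetabeta2}, \eqref{H-phi-2} (together with Proposition \ref{prop-denominator}) furnish a gain proportional to $\|f\|_{X_1^{\sigma,\delta}}$ carrying exactly the weight $\beta^{\sigma}(1+\beta)^{-2\sigma}$ and the $\beta\to 0,+\infty$ decay that the constant $\mathcal{A}[0]$ lacks, while the term $\mathcal{H}_f(\mathcal{A}[0],1)-\mathcal{H}_f(\mathcal{A}[0],\tilde\Omega)=-\mathcal{H}_f(\mathcal{A}[0],\tilde\Omega-1)$ is absorbed using the zero-mean structure of $\tilde\Omega-1$ and the Lipschitz-in-$\Phi$ control of $D(f)^{-\gamma/2}$; the $\Phi$-integration is closed, as throughout Section \ref{sec-functional}, by the interpolation inequality \eqref{H-interp}, which only requires $L^{p}$ control of the second slot.

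Finally, the $C^{1}$ regularity: the partial derivative of $\tilde F$ in $\tilde\Omega$ is the $\tilde\Omega$-linear operator $-c_{\gamma,\alpha}\mathcal{H}_f(\mathcal{A}[f],\cdot)$ itself, while the partial derivative in $f$ acts through the $C^{1}$ map $f\mapsto\mathcal{R}[f]$ of the first step and through the dependence of the kernel $D(f)^{-\gamma/2}$ on $f$; differentiating the latter produces operators of the same shape as $\mathcal{H}_f$ but with $D(f)$ raised to the power $\frac{2+\gamma}{2}$ and an extra factor $\partial_\varphi D(f)$ in the integrand, that is precisely the objects already bounded in the steps establishing $\beta\partial_\varphi\mathcal{H}_f\in L^\infty_\sigma$ and $\beta\partial_\varphi\mathcal{H}_f\in C^\delta_\beta$ in the proof of Proposition \ref{prop-operator-H} (via \eqref{H-varphiD2}), so the same estimates apply; continuity of these derivatives follows from the very same estimates applied to differences. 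Shrinking $\epsilon$ once more if necessary, this gives that $\tilde F:B_{X_1^{\sigma,\delta}}(\epsilon)\times X_2^{p}\to Y^{\sigma,\delta}$ is well defined and of class $C^{1}$.
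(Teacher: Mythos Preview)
Your proposal is correct and follows essentially the same route as the paper. Both subtract the trivial-solution identity $1=c_{\gamma,\alpha}\mathcal{H}_0(\mathcal{A}[0],1)$ and then split the remainder into a piece handled directly by Proposition~\ref{prop-operator-H} (your $\mathcal{H}_f(\mathcal{R}[f],\tilde\Omega)$, the paper's $I_1+I_2$) plus a ``constant bracket'' requiring cancellations (your $\mathcal{G}$, the paper's $I_3+I_4$). The only difference is the order of the two subtractions inside $\mathcal{G}$: the paper writes it (up to constants) as $I_3+I_4$ with $I_3\sim\mathcal{H}_0(\mathcal{A}[0],\tilde\Omega)-\mathcal{H}_f(\mathcal{A}[0],\tilde\Omega)$ and $I_4\sim\mathcal{H}_0(\mathcal{A}[0],\tilde\Omega-1)$, i.e.\ it keeps the full $\tilde\Omega$ in the denominator-difference term and uses the \emph{trivial} denominator $D(0)$ in the mean-zero term. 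This choice makes the $I_4$ estimate cleaner than your $\mathcal{G}_2=-\mathcal{H}_f(\mathcal{A}[0],\tilde\Omega-1)$: with $D(0)$ the kernel's $\Phi$-dependence enters only through $\cos(\phi+\beta-\Phi-b)$, so at the limits $\beta\to 0$ and $\beta\to\infty$ the kernel becomes $\Phi$-independent and the integral vanishes exactly by $\int(\tilde\Omega-1)\,d\Phi=0$. That pointwise-vanishing-at-the-endpoints argument, together with $\sigma<\min\{\frac{\alpha-1}{1+\alpha},\frac{\gamma-\alpha+1}{1+\alpha}\}$, is what produces the weight $\beta^\sigma/(1+\beta)^{2\sigma}$, not a Lipschitz-in-$\Phi$ bound on $D(f)^{-\gamma/2}$ (which fails near the diagonal singularity). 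With your splitting you would either need a further decomposition $\mathcal{H}_f=\mathcal{H}_0+(\mathcal{H}_f-\mathcal{H}_0)$, reducing to the paper's $I_4$ plus another denominator-difference piece, or an asymptotic argument using $\partial_{\overline b}f\to 0$ at the endpoints; either way the content is the same. Your treatment of the $C^1$ part matches the paper's.
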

\begin{proof}

Let us prove that $\tilde{F}$ is well-defined and $C^1$ in $f$, where we shall use the expression of $\tilde{F}(f,\Omega)$ in \eqref{Ftilde}. Note that the first term in the expression of $\tilde{F}(f,\Omega)$ is just the constant $1$, which is not in $Y^{\sigma,\delta}$  since it does not decay appropriately at $0$ and $\infty$. For that reason, we need to add and subtract the contribution of the trivial solution in $\tilde{F}$ obtaining that
\begin{align*}
\tilde{F}(f,\Omega)(\beta,\phi)=f(\beta,\phi)-(-1)^\frac{-1}{2\mu}\frac{C_\gamma (\alpha-1)^\frac{\alpha-1}{2}}{4\pi C_0(1+\alpha)^{\frac{\alpha-1}{2}}}(I_1(f,\Omega)+I_2(f,\Omega)+I_3(f,\Omega)+I_4(f,\Omega))(\beta,\phi),
\end{align*}
where
\begin{align*}
&I_1(f,\Omega):=\beta^{-\frac{\alpha-1}{1+\alpha}}\int_0^{2\pi}\int_0^\infty\frac{b^{-\frac{\gamma}{1+\alpha}}b^{\frac{-2}{1+\alpha}}\left\{-\frac{\alpha-1}{1+\alpha}+\partial_{\overline{\varphi}}f(b,\Phi)\right\}^{-\frac{1}{2\mu}}\partial_{\overline{\beta\varphi}}f(b,\Phi)\Omega(\Phi)dbd\Phi}{D(f)(\beta,\phi,b,\Phi)^\frac{\gamma}{2}},\\
&I_2(f,\Omega):=\frac{2(\alpha-1)}{(1+\alpha)^2}\beta^{-\frac{\alpha-1}{1+\alpha}}\int_0^{2\pi}\int_0^\infty\frac{b^{-\frac{\gamma}{1+\alpha}}b^{\frac{-2}{1+\alpha}}\left[\left\{-\frac{\alpha-1}{1+\alpha}+\partial_{\overline{\varphi}}f(b,\Phi)\right\}^{-\frac{1}{2\mu}}-\left\{-\frac{\alpha-1}{1+\alpha}\right\}^{-\frac{1}{2\mu}}\right]\Omega(\Phi)dbd\Phi}{D(f)(\beta,\phi,b,\Phi)^\frac{\gamma}{2}},\\
&I_3(f,\Omega):=\frac{2(\alpha-1)}{(1+\alpha)^2}\left\{-\frac{\alpha-1}{1+\alpha}\right\}^{-\frac{1}{2\mu}}\beta^{-\frac{\alpha-1}{1+\alpha}}\int_0^{2\pi}\int_0^\infty\frac{b^{-\frac{\gamma}{1+\alpha}}b^{\frac{-2}{1+\alpha}}\Omega(\Phi)}{D(f)(\beta,\phi,b,\Phi)^\frac{\gamma}{2}D(0)(\beta,\phi,b,\Phi)^\frac{\gamma}{2}}\\
&\times\left\{D(0)(\beta,\phi,b,\Phi)^\frac{\gamma}{2}-D(f)(\beta,\phi,b,\Phi)^\frac{\gamma}{2}\right\}dbd\Phi,\\
&I_4(f,\Omega):=\frac{2(\alpha-1)}{(1+\alpha)^2}\left\{-\frac{\alpha-1}{1+\alpha}\right\}^{-\frac{1}{2\mu}}\beta^{-\frac{\alpha-1}{1+\alpha}}\int_0^{2\pi}\int_0^\infty\frac{b^{-\frac{\gamma}{1+\alpha}}b^{\frac{-2}{1+\alpha}}(\Omega(\Phi)-1)dbd\Phi}{D(0)(\beta,\phi,b,\Phi)^\frac{\gamma}{2}}.
\end{align*}
Note that now $f\in Y^{\sigma,\delta}$ and it remains to prove that $I_i\in Y^{\sigma,\delta}$, for $i=1,2,3,4$.

By using the operator $\mathcal{H}$ defined in Proposition \ref{prop-operator-H} we can write $I_1$, $I_2$ and $I_4$ as:
\begin{align*}
I_1(f,\Omega)=&\alpha_1\mathcal{H}\left(\left\{-\frac{\alpha-1}{1+\alpha}+\partial_{\overline{\varphi}}f\right\}^{-\frac{1}{2\mu}}\partial_{\overline{\beta\varphi}}f,\Omega\right),\\
I_2(f,\Omega)=&\alpha_2\mathcal{H}\left(\left[\left\{-\frac{\alpha-1}{1+\alpha}+\partial_{\overline{\varphi}}f\right\}^{-\frac{1}{2\mu}}-\left\{-\frac{\alpha-1}{1+\alpha}\right\}^{-\frac{1}{2\mu}}\right],\Omega\right),
\end{align*}
for some constants $\alpha_1, \alpha_2, \alpha_3$.
Then, by Proposition \ref{prop-operator-H} we find
\begin{align*}
&||I_1||_{Y^{\sigma,\delta}}+||I_2||_{Y^{\sigma,\delta}}\\
\leq& C||\Omega||_{X_2^p}\left\{\left\|\left\{-\frac{\alpha-1}{1+\alpha}+\partial_{\overline{\varphi}}f\right\}^{-\frac{1}{2\mu}}\partial_{\overline{\beta\varphi}}f\right\|_{\mathcal{X}_{\sigma,\alpha}}+\left\|\left[\left\{-\frac{\alpha-1}{1+\alpha}+\partial_{\overline{\varphi}}f\right\}^{-\frac{1}{2\mu}}-\left\{-\frac{\alpha-1}{1+\alpha}\right\}^{-\frac{1}{2\mu}}\right] \right\|_{\mathcal{X}_{\sigma,\alpha}}\right\}\\
\leq& C||\Omega||_{X_2^p}||f||_{X_1^{\sigma,\alpha}}.
\end{align*}

Moreover, Proposition \ref{prop-operator-H} gives us that $I_1$ and $I_2$  are $C^1$ in $f$. It remains to check the well-posedness and $C^1$ regularity of $I_3$ and $I_4$. Let us start with $I_4$.

Note that
\begin{align*}
I_4(f,\Omega)(\beta,\phi)=&C\beta^{-\frac{\alpha-1}{1+\alpha}}\int_0^{2\pi}\int_0^\infty\frac{b^{-\frac{\gamma}{1+\alpha}}b^{\frac{-2}{1+\alpha}}(\Omega(\Phi)-1)dbd\Phi}{\left\{\beta^\frac{-2}{1+\alpha}+b^\frac{-2}{1+\alpha}-2\beta^\frac{-1}{1+\alpha}b^\frac{-1}{1+\alpha}\cos(\phi+\beta-\Phi-b)\right\}^\frac{\gamma}{2}}\\
=&C\beta^{-\frac{\alpha-1}{1+\alpha}}\beta^\frac{\gamma}{1+\alpha}\int_0^{2\pi}\int_0^\infty\frac{b^{\frac{-2}{1+\alpha}}(\Omega(\Phi)-1)dbd\Phi}{\left\{\beta^\frac{2}{1+\alpha}+b^\frac{2}{1+\alpha}-2\beta^\frac{1}{1+\alpha}b^\frac{1}{1+\alpha}\cos(\phi+\beta-\Phi-b)\right\}^\frac{\gamma}{2}}\\
=&C\beta^{-\frac{\alpha-1}{1+\alpha}}\beta^\frac{\gamma}{1+\alpha} I_{4,1}(f,\Omega)(\beta,\phi),
\end{align*}
where $C$ is some constant which is not important in this proof. Let us first check the $L^\infty_\sigma$ norm. Take first $\beta\in[0,\beta_0]$. Since we have in the numerator $\Omega-1$, we get an extra cancellation at $\beta=0$. This comes from the expression of $\Omega-1$:
$$
\Omega(\phi)-1=\sum_{0\neq k\in \Z} g_k e^{ik\phi},
$$
where the mode $k=0$ vanishes implying that
$$
\int_0^{2\pi}(\Omega(\Phi)-1)d\Phi=0.
$$ 
In that way, we obtain that
\begin{align*}
I_{4,1}(f,\Omega)(0,\phi)=&\lim_{\varepsilon\rightarrow 0}\int_0^{2\pi}\int_\varepsilon^{1/\varepsilon}\frac{b^{\frac{-2}{1+\alpha}}(\Omega(\Phi)-1)dbd\Phi}{\left\{b^\frac{2}{1+\alpha}\right\}^\frac{\gamma}{2}}\\
=&\lim_{\varepsilon\rightarrow 0}\int_\varepsilon^{1/\varepsilon}\frac{b^{\frac{-2}{1+\alpha}}db}{\left\{b^\frac{2}{1+\alpha}\right\}^\frac{\gamma}{2}}\int_0^{2\pi}(\Omega(\Phi)-1)d\Phi\\
=&0.
\end{align*}
Then:
\begin{align*}
\sup_{\beta\in[0,\beta_0]}\frac{(1+\beta)^{2\sigma}}{\beta^\sigma}
I_4(f,\Omega)(\beta,\phi)\leq \sup_{\beta\in[0,\beta_0]}\frac{\beta^{-\frac{\alpha-1}{1+\alpha}}\beta^\frac{\gamma}{1+\alpha}}{\beta^\sigma}<C,
\end{align*}
since $\sigma<\frac{\gamma+1-\alpha}{1+\alpha}$ due to \eqref{sigma-rel}. Now take $\beta\in[\beta_0,\infty)$ and note that $I_4$ can be also written as
\begin{align}\label{I4}
I_4(f,\Omega)(\beta,\phi)=&C\beta^{-\frac{\alpha-1}{1+\alpha}}\int_0^{2\pi}\int_0^\infty\frac{b^{-\frac{\gamma}{1+\alpha}}b^{\frac{-2}{1+\alpha}}(\Omega(\Phi)-1)dbd\Phi}{\left\{\beta^\frac{-2}{1+\alpha}+b^\frac{-2}{1+\alpha}-2\beta^\frac{-1}{1+\alpha}b^\frac{-1}{1+\alpha}\cos(\phi+\beta-\Phi-b)\right\}^\frac{\gamma}{2}}\\
=&C\beta^{-\frac{\alpha-1}{1+\alpha}}I_{4,2}(f,\Omega)(\beta,\phi),\nonumber
\end{align}
where
$$
\lim_{\beta\rightarrow +\infty}I_{4,2}(f,\Omega)(\beta,\phi)=0,
$$
since $\Omega-1$ has a vanishing $0$ Fourier mode as before. Since $\sigma <\frac{\alpha-1}{1+\alpha}$ by \eqref{sigma-rel} we get
$$
\sup_{\beta\in[\beta_0,+\infty)}I_4(f,\Omega)(\beta,\phi)<C.
$$

Now make the change of variables $b=\beta z$ in the expression of $I_4$ to get
\begin{align}\label{I4-2}
I_4(f,\Omega)(\beta,\phi)=&C\int_0^{2\pi}\int_0^\infty\frac{z^{\frac{-2}{1+\alpha}}(\Omega(\Phi)-1)dbd\Phi}{\left\{1+z^\frac{2}{1+\alpha}-2z^\frac{1}{1+\alpha}\cos(\phi+\beta-\Phi-z\beta)\right\}^\frac{\gamma}{2}},
\end{align}
where we can prove easily the $C^\delta$ regularity in $\beta$ and the $\textnormal{Lip}$ regularity in $\phi$ using such expression and the ideas developed in Proposition \ref{prop-operator-H}. Taking $\partial_\varphi$ to \eqref{I4} we find
\begin{align*}
&\partial_\varphi I_4(f,\Omega)(\beta,\phi)=C\beta^{-1} I_4(f,\Omega)(\beta,\phi)\\
&+C\beta^{-1}\beta^{-\frac{\alpha-1}{1+\alpha}}\int_0^{2\pi}\int_0^\infty\frac{b^{-\frac{\gamma}{1+\alpha}}b^{\frac{-2}{1+\alpha}}(\Omega(\Phi)-1)\left\{\beta^{\frac{-2}{1+\alpha}}-\beta^\frac{-1}{1+\alpha}b^\frac{-1}{1+\alpha}\cos(\phi+\beta-\Phi-b)\right\}dbd\Phi}{\left\{\beta^\frac{-2}{1+\alpha}+b^\frac{-2}{1+\alpha}-2\beta^\frac{-1}{1+\alpha}b^\frac{-1}{1+\alpha}\cos(\phi+\beta-\Phi-b)\right\}^\frac{2+\gamma}{2}}.
\end{align*}
Let us check the $L^\infty_\sigma$ norm of $\beta \partial_\varphi I_4$. The first term is clearly in $L^\infty_\sigma$ and the second term can be written as
\begin{align*}
&\beta^{-\frac{\alpha-1}{1+\alpha}}\int_0^{2\pi}\int_0^\infty\frac{b^{-\frac{\gamma}{1+\alpha}}b^{\frac{-2}{1+\alpha}}(\Omega(\Phi)-1)\left\{\beta^{\frac{-2}{1+\alpha}}-\beta^\frac{-1}{1+\alpha}b^\frac{-1}{1+\alpha}\cos(\phi+\beta-\Phi-b)\right\}dbd\Phi}{\left\{\beta^\frac{-2}{1+\alpha}+b^\frac{-2}{1+\alpha}-2\beta^\frac{-1}{1+\alpha}b^\frac{-1}{1+\alpha}\cos(\phi+\beta-\Phi-b)\right\}^\frac{2+\gamma}{2}}\\
=&\beta^{-\frac{\alpha-1}{1+\alpha}} I_{4,3}(\beta,\phi)\\
=&\beta^{-\frac{\alpha-1}{1+\alpha}}\beta^\frac{\gamma}{1+\alpha} I_{4,4}(\beta,\phi)\\
\end{align*}
with $I_{4,4}(0,\phi)=0$ due to the structure of $\Omega-1$, and 
$$
\lim_{\beta\rightarrow +\infty}I_{4,3}(\beta,\phi)=0.
$$
Arguing as before for $I_4$ working with $I_{4,4}$ for $\beta\in[0,\beta_0]$ and with $I_{4,3}$ for $\beta\in[\beta_0,+\infty)$ we find that $\beta \partial_\varphi I_4\in L^\infty_\sigma$. In the same way we can prove the H\"older continuity of such term and the $C^1$ regularity in $f$.

Let us work with $I_3$. A priori, it can not be written in terms of $\mathcal{H}$, however it has the same scaling and the estimates works similarly. Let us just gave the first ideas to estimate it. Recall
\begin{align*}
I_3(f,\Omega)=&C\beta^{-\frac{\alpha-1}{1+\alpha}}\int_0^{2\pi}\int_0^\infty\frac{b^\frac{-\gamma}{1+\alpha}b^{\frac{-2}{1+\alpha}}\Omega(\Phi)}{D(f)(\beta,\phi,b,\Phi)^\frac{\gamma}{2}D(0)(\beta,\phi,b,\Phi)^\frac{\gamma}{2}}\\
&\times \left\{D(0)(\beta,\phi,b,\Phi)^\frac{\gamma}{2}-D(f)(\beta,\phi,b,\Phi)^\frac{\gamma}{2}\right\}dbd\Phi.
\end{align*}
Notice that
\begin{align*}
|D(0)(\beta,\phi,b,\Phi)^\frac{\gamma}{2}-D(f)(\beta,\phi,b,\Phi)^\frac{\gamma}{2}|\leq& C\frac{|D(0)(\beta,\phi,b,\Phi)-D(f)(\beta,\phi,b,\Phi)|}{|D(0)(\beta,\phi,b,\Phi)^{1-\frac{\gamma}{2}}+D(f)(\beta,\phi,b,\Phi)^{1-\frac{\gamma}{2}}|}.
\end{align*}
Via Proposition \ref{prop-denominator} and Remark \ref{rem-denominator}, we have that $D(f)$ and $D(0)$ are comparable: $D(f)(\beta,\phi,b,\Phi)\geq D(0)(\beta,\phi,b,\Phi)$ and then
\begin{equation}\label{I3-exp-2}
|I_3(f,\Omega)|\leq C\beta^{-\frac{\alpha-1}{1+\alpha}}\int_0^{2\pi}\int_0^\infty\frac{b^\frac{-\gamma}{1+\alpha}b^{\frac{-2}{1+\alpha}}|\Omega(\Phi)|}{D(f)(\beta,\phi,b,\Phi)^\frac{\gamma}{2}}\frac{|D(0)(\beta,\phi,b,\Phi)-D(f)(\beta,\phi,b,\Phi)|}{D(0)(\beta,\phi,b,\Phi)}dbd\Phi.
\end{equation}

Moreover, using the expression of $D(f)$ in \eqref{exp-denominator-2} we find
\begin{align*}
&|D(0)(\beta,\phi,b,\Phi)-D(f)(\beta,\phi,b,\Phi)|\leq C|D(0)(\beta,\phi,b,\Phi)^\frac12+D(f)(\beta,\phi,b,\Phi)^\frac12| \\
&\times \Bigg[ \left|\beta^\frac{-1}{1+\alpha}\left\{\frac{\alpha-1}{1+\alpha}+\partial_{\overline{\beta}}f(\beta,\phi)\right\}^\frac12-\beta^\frac{-1}{1+\alpha}\left\{\frac{\alpha-1}{1+\alpha}\right\}^\frac12\right|\\
&+\left|b^\frac{-1}{1+\alpha}\left\{\frac{\alpha-1}{1+\alpha}+\partial_{\overline{b}}f(b,\Phi)\right\}^\frac12-b^\frac{-1}{1+\alpha}\left\{\frac{\alpha-1}{1+\alpha}\right\}^\frac12\right|\\
&+\beta^\frac{-1}{1+\alpha} \left|\left\{\frac{\alpha-1}{1+\alpha}+\partial_{\overline{\beta}}f(\beta,\phi)\right\}^\frac12-\left\{\frac{\alpha-1}{1+\alpha}\right\}^\frac12\right| \left|\frac{\alpha-1}{1+\alpha}+\partial_{\overline{b}}f(b,\phi)\right|^\frac12||\sin((\beta+\phi-b-\Phi)/2)|\\
&+b^\frac{-1}{1+\alpha} \left|\frac{\alpha-1}{1+\alpha}\right|^\frac12\left| \left\{\frac{\alpha-1}{1+\alpha}+\partial_{\overline{b}}f(b,\Phi)\right\}^\frac12-\left\{\frac{\alpha-1}{1+\alpha}\right\}^\frac12\right||\sin((\beta+\phi-b-\Phi)/2)|\Bigg].
\end{align*}
Since $\partial_b f(\beta,\Phi)$ is bounded in $L^\infty$ we get 
\begin{align*}
&|D(0)(\beta,\phi,b,\Phi)-D(f)(\beta,\phi,b,\Phi)|\leq C|D(0)(\beta,\phi,b,\Phi)^\frac12+D(f)(\beta,\phi,b,\Phi)^\frac12| \\
&\times \Bigg[ \beta^\frac{-1}{1+\alpha} \left|\left\{\frac{\alpha-1}{1+\alpha}+\partial_{\overline{\beta}}f(\beta,\phi)\right\}^\frac12-\left\{\frac{\alpha-1}{1+\alpha}\right\}^\frac12\right|+b^\frac{-1}{1+\alpha}\left| \left\{\frac{\alpha-1}{1+\alpha}+\partial_{\overline{b}}f(b,\Phi)\right\}^\frac12-\left\{\frac{\alpha-1}{1+\alpha}\right\}^\frac12\right|\Bigg].
\end{align*}
Now, we wish to take into account the bound for $f$ and hence using that the norm of $f$ is small, we can write it as
\begin{align*}
|D(0)(\beta,\phi,b,\Phi)-D(f)(\beta,\phi,b,\Phi)|\leq& C|D(0)(\beta,\phi,b,\Phi)^\frac12+D(f)(\beta,\phi,b,\Phi)^\frac12| \\
&\times \Bigg[ \beta^\frac{-1}{1+\alpha} \left|\partial_{\overline{\beta}}f(\beta,\phi)\right|+b^\frac{-1}{1+\alpha}\left|\partial_{\overline{b}}f(b,\Phi)\right|\Bigg].
\end{align*}

In this way, we separate $I_3$ in two integrals and use again Remark \ref{rem-denominator} to compare $D(f)$ with $D(0)$:
\begin{align*}
|I_3(f,\Omega)|\leq& C\beta^{-\frac{\alpha-1}{1+\alpha}}\int_0^{2\pi}\int_0^\infty\frac{b^\frac{-\gamma}{1+\alpha}b^{\frac{-2}{1+\alpha}}|\Omega(\Phi)|}{D(f)(\beta,\phi,b,\Phi)^\frac{\gamma}{2}}\frac{\beta^{\frac{-1}{1+\alpha}}|\partial_{\overline{\beta}}f(\beta,\phi)|+b^\frac{-1}{1+\alpha}|\partial_{\overline{b}}f(b,\Phi)|}{D(0)(\beta,\phi,b,\Phi)^\frac12}dbd\Phi.
\end{align*}

By using similar arguments than in Proposition \ref{prop-operator-H}, we finally achieve that
$$
||I_{3}||_{L^\infty_\sigma}\leq C||\Omega||_{L^p}||\partial_{\overline{\beta}}f||_{L^\infty_\sigma},
$$
for any $p>1$.

By working with $D(f)$ and using the ideas of Proposition \ref{prop-operator-H}, we can check that the others norms in $X_1^{\sigma,\delta}$ are also bounded.
\end{proof}

\section{Linearized operator}\label{sec-linearized}

This section aims to give different expression of the linear operator of $\tilde{F}$ at the trivial solution $(0,1)$. First, we will show that there is an extra cancellation that help us to prove that the linear operator is a compact perturbation of a isomorphism. Hence, we can conclude that the linearized operator is Fredholm of zero index, which will be useful in checking that it is a isomorphism in the following section. Moreover, at the end of this section, we give its expression in Fourier series.

\subsection{Linearization at the equilibrium}

First, let us give the expression for the linear operator of $\tilde{F}$ at $(0,1)$.
\begin{pro}\label{prop-lin-op-1}
The linearized operator of $\tilde{F}$ at the trivial solution $(0,1)$ reads as follows:
\begin{align}\label{L-I}
\nonumber\partial_f\tilde{F}(0,1)h(\beta,\phi)=&\frac{(1+\alpha)}{2}\partial_\beta(\beta h(\beta,\phi))\\
\nonumber&-\frac{C_\gamma }{4\pi C_0}\frac{1+\alpha}{\alpha-1}\left(\frac{\alpha-1}{1+\alpha}\right)^\frac{\gamma}{2}\beta^{-\frac{\alpha-1}{1+\alpha}}\int_0^{2\pi}\int_0^\infty\frac{\beta^{-\frac{\gamma}{1+\alpha}}\beta^{\frac{-2}{1+\alpha}}\partial_{\overline{b\varPsi}}h(b,\Phi)dbd\Phi}{D(0)(\beta,\phi,b,\Phi)^\frac{\gamma}{2}},\\
\nonumber&-\frac{1}{\mu}\frac{C_\gamma }{4\pi C_0}\frac{1}{\alpha-1}\left(\frac{\alpha-1}{1+\alpha}\right)^\frac{\gamma}{2}\beta^{-\frac{\alpha-1}{1+\alpha}}\int_0^{2\pi}\int_0^\infty\frac{\beta^{-\frac{\gamma}{1+\alpha}}\beta^{\frac{-2}{1+\alpha}}\partial_{\overline{\varPsi}}h(b,\Phi)dbd\Phi}{D(0)(\beta,\phi,b,\Phi)^\frac{\gamma}{2}},\\
\nonumber&+\frac{\gamma}{2}\frac{C_\gamma }{2\pi C_0}\frac{1}{1+\alpha}\left(\frac{\alpha-1}{1+\alpha}\right)^\frac{\gamma}{2}\beta^{-\frac{\alpha-1}{1+\alpha}}\int_0^{2\pi}\int_0^\infty\frac{\beta^{-\frac{\gamma}{1+\alpha}}\beta^{\frac{-2}{1+\alpha}}}{D(0)(\beta,\phi,b,\Phi)^{\frac{\gamma}{2}+1}}\\
\nonumber&\times \left\{b^{\frac{-2}{1+\alpha}}-\beta^{\frac{-1}{1+\alpha}}b^{\frac{-1}{1+\alpha}}\cos(\phi+\beta-\Phi-b)\right\}\partial_{\overline{b}}h(b,\Phi)dbd\Phi\\
=:&\frac{(1+\alpha)}{2}\partial_\beta(\beta h(\beta,\phi))+I_1+I_2+I_3.
\end{align}
\end{pro}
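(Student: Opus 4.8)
The plan is to compute the Gâteaux derivative $\partial_f\tilde F(0,1)h=\frac{d}{d\varepsilon}\big|_{\varepsilon=0}\tilde F(\varepsilon h,1)$ directly from the closed formula \eqref{Ftilde}; since $\tilde F$ is $C^1$ by Proposition \ref{prop-well-def}, this Gâteaux derivative is the Fréchet derivative, and differentiation under the integral sign is legitimate (the relevant integrals are controlled as in Propositions \ref{prop-denominator} and \ref{prop-operator-H}). The perturbation $f$ enters \eqref{Ftilde} in four places: the additive term $1+f$; the factor $\{-\frac{\alpha-1}{1+\alpha}+\partial_{\overline{\varphi}}f\}^{-\frac1{2\mu}}$; the factor $\{\frac{2(\alpha-1)}{(1+\alpha)^2}+\partial_{\overline{\beta\varphi}}f\}$; and the denominator $D(f)^{\frac\gamma2}$, which depends on $f$ through $\tilde\Psi_\beta,\tilde\Psi_b$ via \eqref{Psibetapsi}--\eqref{denominator-0}. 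I differentiate each of the four slots, freeze all the remaining factors at $f=0$ (where $\partial_{\overline{\varphi}}f$ and $\partial_{\overline{\beta\varphi}}f$ vanish, $D(0)$ is the $f=0$ specialization of \eqref{denominator-0}, and the base value of the power is $\{-\frac{\alpha-1}{1+\alpha}\}^{-\frac1{2\mu}}$), and reorganize the resulting five terms into $\frac{1+\alpha}{2}\partial_\beta(\beta h)+I_1+I_2+I_3$.

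The additive slot contributes $h$. Differentiating the power factor replaces it, at $f=0$, by $-\frac1{2\mu}\{-\frac{\alpha-1}{1+\alpha}\}^{-\frac1{2\mu}-1}\partial_{\overline{\varphi}}h(b,\Phi)$; collecting this with the prefactor of \eqref{Ftilde}, using $\frac1\mu=1+\alpha-\gamma$ and noting that the branch factors $(-1)^{\bullet}$ combine exactly as in the passage leading to \eqref{Ftilde}, reproduces $I_2$. Differentiating the linear factor $\{\frac{2(\alpha-1)}{(1+\alpha)^2}+\partial_{\overline{\beta\varphi}}f\}$ replaces it by $\partial_{\overline{\beta\varphi}}h(b,\Phi)$, and the same bookkeeping of constants yields $I_1$. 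Both of these are mechanical: only one factor in the integrand is being varied, all the rest being frozen at the trivial solution.

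The denominator slot contributes $-\frac\gamma2\,D(0)^{-\frac\gamma2-1}\big(\partial_fD(0)h\big)$ inside the integral. Differentiating \eqref{denominator-0} and using the trivial-solution values $\tilde\Psi_\beta=\frac{\alpha-1}{1+\alpha}\beta^{-2/(1+\alpha)}$, $\tilde\Psi_b=\frac{\alpha-1}{1+\alpha}b^{-2/(1+\alpha)}$ to simplify the derivative of the cross term $\tilde\Psi_\beta^{1/2}\tilde\Psi_b^{1/2}$, one finds
\[
\partial_fD(0)h=\partial_{\overline{\beta}}h(\beta,\phi)\Big[\beta^{\frac{-2}{1+\alpha}}-\beta^{\frac{-1}{1+\alpha}}b^{\frac{-1}{1+\alpha}}\cos(\phi+\beta-\Phi-b)\Big]+\partial_{\overline{b}}h(b,\Phi)\Big[b^{\frac{-2}{1+\alpha}}-\beta^{\frac{-1}{1+\alpha}}b^{\frac{-1}{1+\alpha}}\cos(\phi+\beta-\Phi-b)\Big].
\]
The $\partial_{\overline{b}}h(b,\Phi)$ part is a genuine integral operator in $(b,\Phi)$ and, after collecting constants, is exactly $I_3$; note that the bracket multiplying it vanishes on the diagonal $b=\beta$, $\Phi\equiv\phi+\beta-b$, a cancellation worth one factor of $D(0)^{1/2}$, which is what keeps the integral convergent when $\gamma<1$ (via Proposition \ref{prop-denominator} with $f=0$). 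In the $\partial_{\overline{\beta}}h(\beta,\phi)$ part, by contrast, $\partial_{\overline{\beta}}h(\beta,\phi)$ does not depend on $(b,\Phi)$ and factors out of the double integral; shifting $\Phi\mapsto\Phi+\beta-b$ kills the $\phi$- and $\beta$-translations by $2\pi$-periodicity (so the leftover integral is independent of $\phi$), and rescaling $b=\beta z$ shows it equals $\beta^{\frac{\alpha-1}{1+\alpha}}$ times a finite constant, which cancels the prefactor $\beta^{-\frac{\alpha-1}{1+\alpha}}$.

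The remaining point — and the one genuinely non-routine step — is to show this scalar constant is such that the $\partial_{\overline{\beta}}h(\beta,\phi)$ part equals precisely $\frac{1+\alpha}{2}\partial_{\overline{\beta}}h(\beta,\phi)$. The efficient way is to differentiate the radial identity underlying the trivial solution: since $\int_0^{2\pi}\partial_\beta D(0)\,D(0)^{-\frac\gamma2-1}\,d\Phi=-\frac2\gamma\partial_\beta\!\int_0^{2\pi}D(0)^{-\frac\gamma2}\,d\Phi$, and since $\partial_\beta D(0)$ splits into $-\frac{2(\alpha-1)}{(1+\alpha)^2}\beta^{-1}\big[\beta^{-2/(1+\alpha)}-\beta^{-1/(1+\alpha)}b^{-1/(1+\alpha)}\cos(\cdot)\big]$ plus a term carrying $\sin(\phi+\beta-\Phi-b)$ that integrates to zero over $\Phi\in[0,2\pi]$ (closed-loop substitution $u=D(0)$), the integral at hand is expressed through $\partial_\beta$ of $\int_0^{2\pi}\!\!\int_0^\infty b^{-\gamma/(1+\alpha)}b^{-2/(1+\alpha)}D(0)^{-\gamma/2}\,db\,d\Phi=A\,\beta^{(\alpha-1)/(1+\alpha)}$; combined with the normalization $PA=1$ ($P$ the total constant multiplying this integral) which is exactly the content of Proposition \ref{prop-trivial}/Corollary \ref{cor-radial} and the definition \eqref{C0} of $C_0$, one gets the factor $\frac{1+\alpha}{2}$. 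Finally, using $\partial_{\overline{\beta}}h=\frac{\alpha-1}{1+\alpha}h+\beta h_\beta$ from \eqref{lin-op-1},
\[
h+\frac{1+\alpha}{2}\Big(\frac{\alpha-1}{1+\alpha}h+\beta h_\beta\Big)=\frac{1+\alpha}{2}\big(h+\beta h_\beta\big)=\frac{1+\alpha}{2}\partial_\beta(\beta h),
\]
which is the first term of \eqref{L-I}; combining with the three integral terms above gives $\partial_f\tilde F(0,1)h=\frac{1+\alpha}{2}\partial_\beta(\beta h)+I_1+I_2+I_3$. I expect the bookkeeping of the numerical and $(-1)^{\bullet}$ constants in $I_1,I_2,I_3$ to be tedious but routine, whereas the extraction and evaluation of the ``local'' part $\frac{1+\alpha}{2}\partial_\beta(\beta\cdot)$ — the miraculous collapse of the $\partial_{\overline{\beta}}h(\beta,\phi)$-contribution to a first-order differential operator — is the substantive content of the proposition.
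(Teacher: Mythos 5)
Your decomposition of $\partial_f\tilde F(0,1)h$ into the four slots of \eqref{Ftilde}, and your identification of $I_1,I_2,I_3$ with the linearizations of the two factors and the $\partial_{\overline b}h$-piece of the denominator, is the same as the paper's. The only place the routes diverge is how one extracts the local contribution $\frac{1+\alpha}{2}\partial_\beta(\beta h)$ from the $\partial_{\overline\beta}h$-piece of the denominator variation together with the additive $h$. The paper shortcuts by recognizing that, at $\Omega\equiv1$, the whole integral $\int K_\gamma\hat\theta$ depends on $\Psi$ only through the evaluation radius $r=\big((1+\alpha)(-\Psi_\beta)\big)^{1/2}$, hence equals the explicit radial profile $\hat\psi^0(r)=-C_0r^{1-\alpha}$; it then differentiates this composition directly (this is what it calls $L$, obtaining $-L(h)=\frac{1+\alpha}{2}\partial_{\overline\beta}h$). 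You instead factor $\partial_{\overline\beta}h(\beta,\phi)$ out of the integral, observe by the shift $\Phi\mapsto\Phi+\beta-b$ and the rescaling $b=\beta z$ that the residual integral is $\beta^{(\alpha-1)/(1+\alpha)}$ times a scalar, rewrite the bracket via $D(0)^{-\gamma/2-1}\partial_\beta D(0)=-\frac{2}{\gamma}\partial_\beta D(0)^{-\gamma/2}$ after discarding the $\sin$-term (which integrates to zero in $\Phi$ as a perfect derivative), and then fix the scalar by taking $\partial_\beta$ of the trivial-solution identity $\tilde F(0,1)=0$, which encodes $PA=1$. I have checked your power counting and sign bookkeeping: they give $\frac{1+\alpha}{2}$, and combined with the additive $h$ one indeed obtains $h+\frac{1+\alpha}{2}\partial_{\overline\beta}h=\frac{1+\alpha}{2}\partial_\beta(\beta h)$. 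So both arguments are ultimately instances of ``differentiate what you already know is exact,'' but yours stays with the integral and the $\beta$-scaling, whereas the paper's invokes the explicit radial formula; the paper's is a line or two shorter, yours makes the cancellation mechanism (and the role of the normalization of $C_0$) more transparent without leaning on the radial closed form. One small point worth flagging: the numerators $\beta^{-\gamma/(1+\alpha)}\beta^{-2/(1+\alpha)}$ appearing in \eqref{L-I} (and in the paper's own proof display) should read $b^{-\gamma/(1+\alpha)}b^{-2/(1+\alpha)}$, consistent with \eqref{Ftilde} and with the statement of Proposition \ref{prop-lin-op-cancelation-1}; your derivation, starting from \eqref{Ftilde}, implicitly produces the corrected form.
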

\begin{proof}
Note that
\begin{align*}
&\partial_f\tilde{F}(0,1)h(\beta,\phi)=h(\beta,\phi)-L(h)(\beta,\phi)\\
&-\frac{C_\gamma (\alpha-1)^\frac{\alpha-1}{2}}{4\pi C_0(1+\alpha)^{\frac{\alpha-1}{2}}}\left\{\frac{\alpha-1}{1+\alpha}\right\}^{-\frac{1}{2\mu}}\beta^{-\frac{\alpha-1}{1+\alpha}}\int_0^{2\pi}\int_0^\infty\frac{\beta^{-\frac{\gamma}{1+\alpha}}\beta^{\frac{-2}{1+\alpha}}\partial_{\overline{b\varPsi}}h(b,\Phi)dbd\Phi}{D(0)(\beta,\phi,b,\Phi)^\frac{\gamma}{2}},\\
&-\frac{1}{2\mu}\frac{C_\gamma (\alpha-1)^\frac{\alpha-1}{2}}{4\pi C_0(1+\alpha)^{\frac{\alpha-1}{2}}}\left\{\frac{\alpha-1}{1+\alpha}\right\}^{-\frac{1}{2\mu}-1}\left\{\frac{2(\alpha-1)}{(1+\alpha)^2}\right\}\beta^{-\frac{\alpha-1}{1+\alpha}}\int_0^{2\pi}\int_0^\infty\frac{\beta^{-\frac{\gamma}{1+\alpha}}\beta^{\frac{-2}{1+\alpha}}\partial_{\overline{\varPsi}}h(b,\Phi)dbd\Phi}{D(0)(\beta,\phi,b,\Phi)^\frac{\gamma}{2}},\\
&+\frac{\gamma}{2}\frac{C_\gamma (\alpha-1)^\frac{\alpha-1}{2}}{4\pi C_0(1+\alpha)^{\frac{\alpha-1}{2}}}\left\{\frac{\alpha-1}{1+\alpha}\right\}^{-\frac{1}{2\mu}}\left\{\frac{2(\alpha-1)}{(1+\alpha)^2}\right\}\beta^{-\frac{\alpha-1}{1+\alpha}}\int_0^{2\pi}\int_0^\infty\frac{\beta^{-\frac{\gamma}{1+\alpha}}\beta^{\frac{-2}{1+\alpha}}}{D(0)(\beta,\phi,b,\Phi)^{\frac{\gamma}{2}+1}}\\
&\times \left\{b^{\frac{-2}{1+\alpha}}-\beta^{\frac{-1}{1+\alpha}}b^{\frac{-1}{1+\alpha}}\cos(\phi+\beta-\Phi-b)\right\}\partial_{\overline{b}}h(b,\Phi)dbd\Phi
\end{align*}

Let us explain the meaning of $L$. The function $L$ means the linearization of the local part associated to $-(-\Delta)^{-1+\frac{\gamma}{2}}\hat{\theta}$ after performing the adapted coordinates and the appropriate scaling from $\Psi$ to $f$. That is, take the initial stream function (which is a radial function):
$$
\hat{\psi}^0(re^{i\theta})=-C_0 r^{1-\alpha},
$$
and now we use the fact that $r=(1+\alpha)^\frac{1}{2}(-\Psi_\beta)^\frac{1}{2}$, where we recall
$$
\Psi=-C_0^\frac{2}{1+\alpha}(\alpha-1)^\frac{1-\alpha}{1+\alpha}\beta^\frac{\alpha-1}{1+\alpha}(1+f).
$$
Hence, after the scaling done from $F$ to $\tilde{F}$ in Section \ref{sec-scaling}, we achieve
\begin{align*}
L(h)=&\beta^{-\frac{\alpha-1}{1+\alpha}}\left\{-C_0^\frac{-2}{1+\alpha}(\alpha-1)^\frac{\alpha-1}{1+\alpha}\right\}\partial_f \left[-C_0 (1+\alpha)^\frac{1-\alpha}{2}(-\Psi_\beta)^\frac{1-\alpha}{2}\right]h\\
=&-\beta^{-\frac{\alpha-1}{1+\alpha}}\left\{-C_0^\frac{-2}{1+\alpha}(\alpha-1)^\frac{\alpha-1}{1+\alpha}\right\}C_0 (1+\alpha)^\frac{1-\alpha}{2} \frac{1-\alpha}{2}\left[(-\Psi_\beta^0)^{\frac{1-\alpha}{2}-1}\right]C_0^\frac{2}{1+\alpha}(\alpha-1)^\frac{1-\alpha}{1+\alpha}\partial_\beta (\beta^\frac{\alpha-1}{1+\alpha}h).
\end{align*}
Using that $\partial_\beta(\beta^\frac{\alpha-1}{1+\alpha}h)=\beta^\frac{-2}{1+\alpha}\partial_{\overline{\beta}}h$ together with
$$
(-\Psi_\beta^0)(\beta)=C_0^\frac{2}{1+\alpha}\frac{(\alpha-1)^\frac{2}{1+\alpha}}{1+\alpha}\beta^\frac{-2}{1+\alpha},
$$
we find
\begin{align*}
L(h)=&-\beta^{-\frac{\alpha-1}{1+\alpha}}\left\{-C_0^\frac{-2}{1+\alpha}(\alpha-1)^\frac{\alpha-1}{1+\alpha}\right\}C_0 (1+\alpha)^\frac{1-\alpha}{2} \frac{1-\alpha}{2} C_0^\frac{1-\alpha}{1+\alpha}\frac{(\alpha-1)^\frac{1-\alpha}{1+\alpha}}{(1+\alpha)^\frac{1-\alpha}{2}}\beta^\frac{\alpha-1}{1+\alpha}   C_0^\frac{-2}{1+\alpha}\frac{(\alpha-1)^\frac{-2}{1+\alpha}}{(1+\alpha)^{-1}}\beta^\frac{2}{1+\alpha}\\
&\times  C_0^\frac{2}{1+\alpha}(\alpha-1)^\frac{1-\alpha}{1+\alpha}  \beta^\frac{-2}{1+\alpha}\partial_{\overline{\beta}}h\\
=& -    \frac{(1+\alpha)}{2} \partial_{\overline{\beta}}h\\
\end{align*}
Using now the expression of $\partial_{\overline{\beta}}h$ we finally get 
$$
L(h)=-\frac{\alpha-1}{2}h-\frac{(1+\alpha)}{2}\beta h_\beta
$$
Hence, we find
\begin{align*}
\partial_f\tilde{F}(0,1)h(\beta,\phi)=&\frac{(1+\alpha)}{2}\partial_\beta(\beta h(\beta,\phi))\\
&-\frac{C_\gamma }{4\pi C_0}\frac{1+\alpha}{\alpha-1}\left(\frac{\alpha-1}{1+\alpha}\right)^\frac{\gamma}{2}\beta^{-\frac{\alpha-1}{1+\alpha}}\int_0^{2\pi}\int_0^\infty\frac{\beta^{-\frac{\gamma}{1+\alpha}}\beta^{\frac{-2}{1+\alpha}}\partial_{\overline{b\varPsi}}h(b,\Phi)dbd\Phi}{D(0)(\beta,\phi,b,\Phi)^\frac{\gamma}{2}},\\
&-\frac{1}{\mu}\frac{C_\gamma }{4\pi C_0}\frac{1}{\alpha-1}\left(\frac{\alpha-1}{1+\alpha}\right)^\frac{\gamma}{2}\beta^{-\frac{\alpha-1}{1+\alpha}}\int_0^{2\pi}\int_0^\infty\frac{\beta^{-\frac{\gamma}{1+\alpha}}\beta^{\frac{-2}{1+\alpha}}\partial_{\overline{\varPsi}}h(b,\Phi)dbd\Phi}{D(0)(\beta,\phi,b,\Phi)^\frac{\gamma}{2}},\\
&+\frac{\gamma}{2}\frac{C_\gamma }{2\pi C_0}\frac{1}{1+\alpha}\left(\frac{\alpha-1}{1+\alpha}\right)^\frac{\gamma}{2}\beta^{-\frac{\alpha-1}{1+\alpha}}\int_0^{2\pi}\int_0^\infty\frac{\beta^{-\frac{\gamma}{1+\alpha}}\beta^{\frac{-2}{1+\alpha}}}{D(0)(\beta,\phi,b,\Phi)^{\frac{\gamma}{2}+1}}\\
&\times \left\{b^{\frac{-2}{1+\alpha}}-\beta^{\frac{-1}{1+\alpha}}b^{\frac{-1}{1+\alpha}}\cos(\phi+\beta-\Phi-b)\right\}\partial_{\overline{b}}h(b,\Phi)dbd\Phi.
\end{align*}
\end{proof}

\begin{rem}
The denominator $D(0)$ agrees with
\begin{align*}
D(0)(\beta,\phi,b,\Phi)=\left(\frac{\alpha-1}{1+\alpha}\right)\left\{(\beta^{\frac{-1}{1+\alpha}}-b^{\frac{-1}{1+\alpha}})^2+2\beta^{\frac{-1}{1+\alpha}}b^{\frac{-1}{1+\alpha}}(1-\cos(\phi+\beta-\Phi-b)\right\}.
\end{align*}

\end{rem}

In the following proposition, we show an extra cancellation of the linearized operator amounting to a more compact expression of it.

\begin{pro}\label{prop-lin-op-cancelation-1}
The linearized operator in Proposition \ref{prop-lin-op-1} can be written as
\begin{align}\label{partialfF-1-2}
\nonumber&\partial_f\tilde{F}(0,1)h(\beta,\phi)=\frac{(1+\alpha)}{2}\partial_\beta(\beta h(\beta,\phi))\\
&-\frac{1}{\mu}\frac{C_\gamma }{4\pi C_0}\left(\frac{1}{\alpha-1}\right)\beta^{-\frac{\alpha-1}{1+\alpha}}\int_0^{2\pi}\int_0^\infty \frac{ b^{-\frac{\gamma}{1+\alpha}}b^{\frac{-2}{1+\alpha}}(\partial_{\overline{\varPsi}}h(b,\Phi)+\partial_{\overline{b}}h(b,\Phi))dbd\Phi}{\{\beta^{\frac{-2}{1+\alpha}}+b^{\frac{-2}{1+\alpha}}-2\beta^{\frac{-1}{1+\alpha}}b^{\frac{-1}{1+\alpha}}\cos(\phi+\beta-\Phi-b)\}^\frac{\gamma}{2}}.
\end{align}
\end{pro}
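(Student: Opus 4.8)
The plan is to collapse the three integral terms $I_1,I_2,I_3$ of Proposition \ref{prop-lin-op-1} into a single one by combining two elementary algebraic identities with one integration by parts. First I would rewrite all three denominators in terms of
$$
N(\beta,\phi,b,\Phi):=\beta^{\frac{-2}{1+\alpha}}+b^{\frac{-2}{1+\alpha}}-2\beta^{\frac{-1}{1+\alpha}}b^{\frac{-1}{1+\alpha}}\cos(\phi+\beta-\Phi-b),
$$
using $D(0)=\tfrac{\alpha-1}{1+\alpha}N$ (the remark following Proposition \ref{prop-lin-op-1}). After this substitution all powers of $\tfrac{\alpha-1}{1+\alpha}$ cancel, and $I_1,I_2,I_3$ carry the common prefactor $-\tfrac{C_\gamma}{4\pi C_0}\tfrac{1}{\alpha-1}\beta^{-\frac{\alpha-1}{1+\alpha}}$ with relative weights $1+\alpha$, $\tfrac1\mu$ and $-\gamma$, respectively, where $\tfrac1\mu=1+\alpha-\gamma$ by \eqref{mu}. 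Since the $\partial_{\overline{\varPsi}}h$ contribution of $I_2$ already produces exactly one half of the right-hand side of \eqref{partialfF-1-2}, the statement reduces to showing that $(1+\alpha)I_1-\gamma I_3$, stripped of the common prefactor, equals $\tfrac1\mu\int_0^{2\pi}\!\int_0^\infty b^{-\frac{\gamma+2}{1+\alpha}}\partial_{\overline{b}}h\,N^{-\gamma/2}\,db\,d\Phi$.

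The two identities I would establish are, writing $\partial_\varphi=\partial_\Phi-\partial_b$ for the action in the integration variables:
\begin{align*}
\text{(A)}\quad & b^{\frac{-2}{1+\alpha}}\partial_{\overline{b\varPsi}}h=b\,\partial_\varphi\!\left(b^{\frac{-2}{1+\alpha}}\partial_{\overline{b}}h\right),\\
\text{(B)}\quad & b^{\frac{-2}{1+\alpha}}-\beta^{\frac{-1}{1+\alpha}}b^{\frac{-1}{1+\alpha}}\cos(\phi+\beta-\Phi-b)=\frac{1+\alpha}{2}\,b\,\partial_\varphi N.
\end{align*}
Identity (A) is the linearization at $f=0$ of the commutation $\partial_\varphi\partial_\beta\tilde\Psi=\partial_{\beta\varphi}\tilde\Psi$ fed through the scalings \eqref{Psibetapsi}, and can alternatively be checked directly from \eqref{lin-op-1}--\eqref{lin-op-3}. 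Identity (B) holds because in $\partial_\varphi N=(\partial_\Phi-\partial_b)N$ the two contributions proportional to $\sin(\phi+\beta-\Phi-b)$ cancel, leaving only the stated expression. Using (B) together with $\partial_\varphi N/N^{\frac{\gamma}{2}+1}=-\tfrac2\gamma\partial_\varphi(N^{-\gamma/2})$, the $I_3$ integral becomes $-\tfrac{1+\alpha}{\gamma}\int\!\int b^{-\frac{\gamma+2}{1+\alpha}}b\,\partial_{\overline{b}}h\,\partial_\varphi(N^{-\gamma/2})$, so $-\gamma I_3=(1+\alpha)\int\!\int b^{-\frac{\gamma+2}{1+\alpha}}b\,\partial_{\overline{b}}h\,\partial_\varphi(N^{-\gamma/2})$.

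For $I_1$ I would insert (A) and integrate by parts in $\varphi$ — once in $\Phi$, where periodicity removes the boundary term, and once in $b$. Expanding
$$
\partial_\varphi\!\left(b^{-\frac{\gamma}{1+\alpha}+1}N^{-\gamma/2}\right)=b^{-\frac{\gamma}{1+\alpha}+1}\partial_\varphi(N^{-\gamma/2})-\Big(1-\tfrac{\gamma}{1+\alpha}\Big)b^{-\frac{\gamma}{1+\alpha}}N^{-\gamma/2}
$$
and using $(1+\alpha)\big(1-\tfrac{\gamma}{1+\alpha}\big)=\tfrac1\mu$ gives
$$
(1+\alpha)I_1=\frac1\mu\int_0^{2\pi}\!\int_0^\infty b^{-\frac{\gamma+2}{1+\alpha}}\partial_{\overline{b}}h\,N^{-\gamma/2}\,db\,d\Phi-(1+\alpha)\int_0^{2\pi}\!\int_0^\infty b^{-\frac{\gamma+2}{1+\alpha}}b\,\partial_{\overline{b}}h\,\partial_\varphi(N^{-\gamma/2})\,db\,d\Phi.
$$
Adding this to the expression for $-\gamma I_3$, the two $\partial_\varphi(N^{-\gamma/2})$ integrals cancel identically, so $(1+\alpha)I_1-\gamma I_3=\tfrac1\mu\int\!\int b^{-\frac{\gamma+2}{1+\alpha}}\partial_{\overline{b}}h\,N^{-\gamma/2}$; reassembling the constants and adding the $\partial_{\overline{\varPsi}}h$ part of $I_2$ then reproduces \eqref{partialfF-1-2} exactly.

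The remaining points are routine. The boundary terms at $b=0$ and $b\to+\infty$ vanish because $h\in X_1^{\sigma,\delta}$ forces $\partial_{\overline{b}}h\in\mathcal{X}^{\sigma,\delta}$, hence $\partial_{\overline{b}}h\sim_0 b^{\sigma}$ and $\partial_{\overline{b}}h\sim_\infty b^{-\sigma}$, while $N^{-\gamma/2}\sim_0 b^{\frac{\gamma}{1+\alpha}}$ and $N^{-\gamma/2}\sim_\infty\beta^{\frac{\gamma}{1+\alpha}}$, and the hypotheses $1<\alpha<1+\gamma$ and $\sigma>0$ make the resulting powers of $b$ strictly positive at $0$ and strictly negative at $\infty$; the constant bookkeeping is elementary. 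The only genuinely delicate step — the heart of the ``extra cancellation'' — is spotting the right pairing: that the $\partial_\varphi(N^{-\gamma/2})$ term generated by integrating $I_1$ by parts is the exact negative of the rewriting of $I_3$ coming from identity (B). Everything else is algebra or standard weighted-space estimates.
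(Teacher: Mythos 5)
Your argument is correct and is essentially the same as the paper's: both prove the cancellation by a single integration by parts in $(b,\Phi)$ that pairs $I_1$ with $I_3$, with your identity (A) being a rearrangement of the paper's relation $b\partial_\varPsi\partial_{\overline b}h=\partial_{\overline{b\varPsi}}h-\tfrac{2}{1+\alpha}\partial_{\overline b}h$, and your identity (B) together with the chain rule being exactly the paper's computation of $\partial_\varPsi\bigl(D(0)^{-\gamma/2}\bigr)$. The only difference is cosmetic — you integrate $I_1$ by parts onto $N^{-\gamma/2}$ whereas the paper integrates $I_3$ by parts off it — and your packaging of the computation into the two identities (A), (B) with a single cancellation of the $\partial_\varphi(N^{-\gamma/2})$ integrals is, if anything, a slightly cleaner way to present the same manipulation.
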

\begin{proof}
To obtain such expression, we will do an integration by parts in the last term of $I$ in \eqref{L-I}. 

First, note that using $\partial_\varPsi=\partial_\Phi-\partial_b$ we get
\begin{align*}
\partial_\varPsi\frac{1}{D(0)(\beta,\phi,b,\Phi)^\frac{\gamma}{2}}=&-\frac{\gamma}{2}\frac{2}{1+\alpha}\left(\frac{\alpha-1}{1+\alpha}\right)b^{-1}\frac{b^{\frac{-2}{1+\alpha}}-b^\frac{-1}{1+\alpha}\beta^\frac{-1}{1+\alpha}\cos(\phi+\beta-\Phi-b)}{D(0)(\beta,\phi,b,\Phi)^{\frac{\gamma}{2}+1}}.
\end{align*}
Then
\begin{align*}
I_3=&-\frac{C_\gamma }{4\pi C_0}\left(\frac{1+\alpha}{\alpha-1}\right)\left(\frac{\alpha-1}{1+\alpha}\right)^\frac{\gamma}{2}\beta^{-\frac{\alpha-1}{1+\alpha}}\int_0^{2\pi}\int_0^\infty b^{-\frac{\gamma}{1+\alpha}}b^{\frac{-2}{1+\alpha}}b\partial_{\overline{b}}h(b,\Phi)\partial_\varPsi\frac{1}{D(0)(\beta,\phi,b,\Phi)^{\frac{\gamma}{2}}}dbd\Phi\\
=&-\frac{C_\gamma }{4\pi C_0}\left(\frac{1+\alpha}{\alpha-1}\right)\left(\frac{\alpha-1}{1+\alpha}\right)^\frac{\gamma}{2}\beta^{-\frac{\alpha-1}{1+\alpha}}\int_0^{2\pi}\int_0^\infty b^{-\frac{\gamma}{1+\alpha}}b^{\frac{-2}{1+\alpha}}b\partial_{\overline{b}}h(b,\Phi)\partial_\Phi\frac{1}{D(0)(\beta,\phi,b,\Phi)^{\frac{\gamma}{2}}}dbd\Phi\\
&+\frac{C_\gamma }{4\pi C_0}\left(\frac{1+\alpha}{\alpha-1}\right)\left(\frac{\alpha-1}{1+\alpha}\right)^\frac{\gamma}{2}\beta^{-\frac{\alpha-1}{1+\alpha}}\int_0^{2\pi}\int_0^\infty b^{-\frac{\gamma}{1+\alpha}}b^{\frac{-2}{1+\alpha}}b\partial_{\overline{b}}h(b,\Phi)\partial_b\frac{1}{D(0)(\beta,\phi,b,\Phi)^{\frac{\gamma}{2}}}dbd\Phi.
\end{align*}
Taking into account the definition of $\varPsi$, we can integrate by parts there

Let us start with $\partial_\Phi$. Note that
\begin{align*}
\int_0^{2\pi}&b^{-\frac{\gamma}{1+\alpha}}b^{\frac{-2}{1+\alpha}}b\partial_{\overline{b}}h(b,\Phi)\partial_\Phi \frac{1}{\{\beta^{\frac{-2}{1+\alpha}}+b^{\frac{-2}{1+\alpha}}-2\beta^{\frac{-1}{1+\alpha}}b^{\frac{-1}{1+\alpha}}\cos(\phi+\beta-\Phi-b)\}^\frac{\gamma}{2}}d\Phi\\
&=-\int_0^{2\pi}\frac{b^{-\frac{\gamma}{1+\alpha}}b^{\frac{-2}{1+\alpha}}b\partial_\Phi\partial_{\overline{b}}h(b,\Phi)}{\{\beta^{\frac{-2}{1+\alpha}}+b^{\frac{-2}{1+\alpha}}-2\beta^{\frac{-1}{1+\alpha}}b^{\frac{-1}{1+\alpha}}\cos(\phi+\beta-\Phi-b)\}^\frac{\gamma}{2}}d\Phi,
\end{align*}
since the boundary terms vanish due to the periodicity of the function. However, the boundary terms with respect to the integration in $b$ will depend on the choice of the parameters:
\begin{align*}
\int_0^\infty& b^{-\frac{\gamma}{1+\alpha}}b^{\frac{-2}{1+\alpha}}b\partial_{\overline{b}}h(b,\Phi)\partial_b \frac{1}{\{\beta^{\frac{-2}{1+\alpha}}+b^{\frac{-2}{1+\alpha}}-2\beta^{\frac{-1}{1+\alpha}}b^{\frac{-1}{1+\alpha}}\cos(\phi+\beta-\Phi-b)\}^\frac{\gamma}{2}}db\\
=&\lim_{b\rightarrow +\infty} \frac{b^{-\frac{\gamma}{1+\alpha}}b^{\frac{-2}{1+\alpha}}b\partial_{\overline{b}}h(b,\Phi)}{\{\beta^{\frac{-2}{1+\alpha}}+b^{\frac{-2}{1+\alpha}}-2\beta^{\frac{-1}{1+\alpha}}b^{\frac{-1}{1+\alpha}}\cos(\phi+\beta-\Phi-b)\}^\frac{\gamma}{2}}\\
&-\lim_{b\rightarrow 0} \frac{b^{-\frac{\gamma}{1+\alpha}}b^{\frac{-2}{1+\alpha}}b\partial_{\overline{b}}h(b,\Phi)}{\{\beta^{\frac{-2}{1+\alpha}}+b^{\frac{-2}{1+\alpha}}-2\beta^{\frac{-1}{1+\alpha}}b^{\frac{-1}{1+\alpha}}\cos(\phi+\beta-\Phi-b)\}^\frac{\gamma}{2}}\\
&-\int_0^\infty \frac{\frac{-\gamma-1+\alpha}{1+\alpha} b^{-\frac{\gamma}{1+\alpha}}b^{\frac{-2}{1+\alpha}}\partial_{\overline{b}}h(b,\Phi)+b^{-\frac{\gamma}{1+\alpha}}b^{\frac{-2}{1+\alpha}}b\partial_b\partial_{\overline{b}}h(b,\Phi)}{\{\beta^{\frac{-2}{1+\alpha}}+b^{\frac{-2}{1+\alpha}}-2\beta^{\frac{-1}{1+\alpha}}b^{\frac{-1}{1+\alpha}}\cos(\phi+\beta-\Phi-b)\}^\frac{\gamma}{2}}db.
\end{align*}
Since $\partial_{\overline{b}} h\in\mathcal{X}^{\sigma,\delta}$, we have that
\begin{align*}
\lim_{b\rightarrow +\infty} \frac{b^{-\frac{\gamma}{1+\alpha}}b^{\frac{-2}{1+\alpha}}b\partial_{\overline{b}}h(b,\Phi)}{\{\beta^{\frac{-2}{1+\alpha}}+b^{\frac{-2}{1+\alpha}}-2\beta^{\frac{-1}{1+\alpha}}b^{\frac{-1}{1+\alpha}}\cos(\phi+\beta-\Phi-b)\}^\frac{\gamma}{2}}=\lim_{b\rightarrow +\infty} \beta^\frac{\gamma}{1+\alpha}b^\frac{-\gamma}{1+\alpha}b^\frac{\alpha-1}{1+\alpha}b^{-\sigma}=0,
\end{align*}
since $\gamma>\alpha-1$ by \eqref{sigma-rel}. On the other hand
\begin{align*}
\lim_{b\rightarrow 0} \frac{b^{-\frac{\gamma}{1+\alpha}}b^{\frac{-2}{1+\alpha}}b\partial_{\overline{b}}h(b,\Phi)}{\{\beta^{\frac{-2}{1+\alpha}}+b^{\frac{-2}{1+\alpha}}-2\beta^{\frac{-1}{1+\alpha}}b^{\frac{-1}{1+\alpha}}\cos(\phi+\beta-\Phi-b)\}^\frac{\gamma}{2}}=\lim_{b\rightarrow 0} b^\frac{\gamma}{1+\alpha}b^\frac{-\gamma}{1+\alpha}b^\frac{\alpha-1}{1+\alpha}b^{\sigma}=0,
\end{align*}
since $\alpha>1$. Hence, in this term the boundary terms also vanish obtaining
\begin{align*}
\int_0^\infty& b^{-\frac{\gamma}{1+\alpha}}b^{\frac{-2}{1+\alpha}}b\partial_{\overline{b}}h(b,\Phi)\partial_b \frac{1}{\{\beta^{\frac{-2}{1+\alpha}}+b^{\frac{-2}{1+\alpha}}-2\beta^{\frac{-1}{1+\alpha}}b^{\frac{-1}{1+\alpha}}\cos(\phi+\beta-\Phi-b)\}^\frac{\gamma}{2}}db\\
=&-\int_0^\infty \frac{\frac{-\gamma-2+\alpha}{1+\alpha} b^{-\frac{\gamma}{1+\alpha}}b^{\frac{-2}{1+\alpha}}\partial_{\overline{b}}h(b,\Phi)+b^{-\frac{\gamma}{1+\alpha}}b^{\frac{-2}{1+\alpha}}b\partial_b\partial_{\overline{b}}h(b,\Phi)}{\{\beta^{\frac{-2}{1+\alpha}}+b^{\frac{-2}{1+\alpha}}-2\beta^{\frac{-1}{1+\alpha}}b^{\frac{-1}{1+\alpha}}\cos(\phi+\beta-\Phi-b)\}^\frac{\gamma}{2}}db.
\end{align*}
By putting everything together we find
\begin{align*}
\mathcal{I}_3=&\frac{C_\gamma }{4\pi C_0}\left(\frac{1+\alpha}{\alpha-1}\right)\beta^{-\frac{\alpha-1}{1+\alpha}}\int_0^{2\pi}\int_0^\infty\frac{b^{-\frac{\gamma}{1+\alpha}}b^{\frac{-2}{1+\alpha}}b\partial_\Phi\partial_{\overline{b}}h(b,\Phi)}{\{\beta^{\frac{-2}{1+\alpha}}+b^{\frac{-2}{1+\alpha}}-2\beta^{\frac{-1}{1+\alpha}}b^{\frac{-1}{1+\alpha}}\cos(\phi+\beta-\Phi-b)\}^\frac{\gamma}{2}}dbd\Phi,\\
&-\frac{C_\gamma }{4\pi C_0}\left(\frac{1+\alpha}{\alpha-1}\right)\beta^{-\frac{\alpha-1}{1+\alpha}}\int_0^{2\pi}\int_0^\infty \frac{\frac{-\gamma-2+\alpha}{1+\alpha} b^{-\frac{\gamma}{1+\alpha}}b^{\frac{-2}{1+\alpha}}\partial_{\overline{b}}h(b,\Phi)+b^{-\frac{\gamma}{1+\alpha}}b^{\frac{-2}{1+\alpha}}b\partial_b\partial_{\overline{b}}h(b,\Phi)}{\{\beta^{\frac{-2}{1+\alpha}}+b^{\frac{-2}{1+\alpha}}-2\beta^{\frac{-1}{1+\alpha}}b^{\frac{-1}{1+\alpha}}\cos(\phi+\beta-\Phi-b)\}^\frac{\gamma}{2}}dbd\Phi\\
=&\frac{C_\gamma }{4\pi C_0}\left(\frac{1+\alpha}{\alpha-1}\right)\beta^{-\frac{\alpha-1}{1+\alpha}}\int_0^{2\pi}\int_0^\infty\frac{b^{-\frac{\gamma}{1+\alpha}}b^{\frac{-2}{1+\alpha}}b\partial_\varPsi\partial_{\overline{b}}h(b,\Phi)}{\{\beta^{\frac{-2}{1+\alpha}}+b^{\frac{-2}{1+\alpha}}-2\beta^{\frac{-1}{1+\alpha}}b^{\frac{-1}{1+\alpha}}\cos(\phi+\beta-\Phi-b)\}^\frac{\gamma}{2}}dbd\Phi,\\
&-\frac{1/\mu-2}{1+\alpha}\frac{C_\gamma }{4\pi C_0}\left(\frac{1+\alpha}{\alpha-1}\right)\beta^{-\frac{\alpha-1}{1+\alpha}}\int_0^{2\pi}\int_0^\infty \frac{ b^{-\frac{\gamma}{1+\alpha}}b^{\frac{-2}{1+\alpha}}\partial_{\overline{b}}h(b,\Phi)dbd\Phi}{\{\beta^{\frac{-2}{1+\alpha}}+b^{\frac{-2}{1+\alpha}}-2\beta^{\frac{-1}{1+\alpha}}b^{\frac{-1}{1+\alpha}}\cos(\phi+\beta-\Phi-b)\}^\frac{\gamma}{2}}\\
\end{align*}
Note now
$$
b\partial_\varPsi \partial_{\overline{b}}h=\partial_{\overline{b\varPsi}}h-\frac{2}{1+\alpha}\partial_{\overline{b}}h,
$$
implying
\begin{align*}
\mathcal{I}_3=&\frac{C_\gamma }{4\pi C_0}\left(\frac{1+\alpha}{\alpha-1}\right)\beta^{-\frac{\alpha-1}{1+\alpha}}\int_0^{2\pi}\int_0^\infty\frac{b^{-\frac{\gamma}{1+\alpha}}b^{\frac{-2}{1+\alpha}}\partial_{\overline{b\varPsi}}h(b,\Phi)}{\{\beta^{\frac{-2}{1+\alpha}}+b^{\frac{-2}{1+\alpha}}-2\beta^{\frac{-1}{1+\alpha}}b^{\frac{-1}{1+\alpha}}\cos(\phi+\beta-\Phi-b)\}^\frac{\gamma}{2}}dbd\Phi,\\
&-\frac{2}{1+\alpha}\frac{C_\gamma }{4\pi C_0}\left(\frac{1+\alpha}{\alpha-1}\right)\beta^{-\frac{\alpha-1}{1+\alpha}}\int_0^{2\pi}\int_0^\infty\frac{b^{-\frac{\gamma}{1+\alpha}}b^{\frac{-2}{1+\alpha}}\partial_{\overline{b}}h(b,\Phi)}{\{\beta^{\frac{-2}{1+\alpha}}+b^{\frac{-2}{1+\alpha}}-2\beta^{\frac{-1}{1+\alpha}}b^{\frac{-1}{1+\alpha}}\cos(\phi+\beta-\Phi-b)\}^\frac{\gamma}{2}}dbd\Phi,\\
&-\frac{1/\mu-2}{1+\alpha}\frac{C_\gamma }{4\pi C_0}\left(\frac{1+\alpha}{\alpha-1}\right)\beta^{-\frac{\alpha-1}{1+\alpha}}\int_0^{2\pi}\int_0^\infty \frac{ b^{-\frac{\gamma}{1+\alpha}}b^{\frac{-2}{1+\alpha}}\partial_{\overline{b}}h(b,\Phi)dbd\Phi}{\{\beta^{\frac{-2}{1+\alpha}}+b^{\frac{-2}{1+\alpha}}-2\beta^{\frac{-1}{1+\alpha}}b^{\frac{-1}{1+\alpha}}\cos(\phi+\beta-\Phi-b)\}^\frac{\gamma}{2}}\\
=&\frac{C_\gamma }{4\pi C_0}\left(\frac{1+\alpha}{\alpha-1}\right)\beta^{-\frac{\alpha-1}{1+\alpha}}\int_0^{2\pi}\int_0^\infty\frac{b^{-\frac{\gamma}{1+\alpha}}b^{\frac{-2}{1+\alpha}}\partial_{\overline{b\varPsi}}h(b,\Phi)}{\{\beta^{\frac{-2}{1+\alpha}}+b^{\frac{-2}{1+\alpha}}-2\beta^{\frac{-1}{1+\alpha}}b^{\frac{-1}{1+\alpha}}\cos(\phi+\beta-\Phi-b)\}^\frac{\gamma}{2}}dbd\Phi,\\
&-\frac{1/\mu}{1+\alpha}\frac{C_\gamma }{4\pi C_0}\left(\frac{1+\alpha}{\alpha-1}\right)\beta^{-\frac{\alpha-1}{1+\alpha}}\int_0^{2\pi}\int_0^\infty \frac{ b^{-\frac{\gamma}{1+\alpha}}b^{\frac{-2}{1+\alpha}}\partial_{\overline{b}}h(b,\Phi)dbd\Phi}{\{\beta^{\frac{-2}{1+\alpha}}+b^{\frac{-2}{1+\alpha}}-2\beta^{\frac{-1}{1+\alpha}}b^{\frac{-1}{1+\alpha}}\cos(\phi+\beta-\Phi-b)\}^\frac{\gamma}{2}}.
\end{align*}
Joining it with $I_1$ and $I_2$ we find
\begin{align*}
&\partial_f\tilde{F}(0,1)(\beta,\phi)=\frac{(1+\alpha)}{2}\partial_\beta(\beta h(\beta,\phi))\\
&-\frac{1}{\mu}\frac{C_\gamma }{4\pi C_0}\left(\frac{1}{\alpha-1}\right)\beta^{-\frac{\alpha-1}{1+\alpha}}\int_0^{2\pi}\int_0^\infty \frac{ b^{-\frac{\gamma}{1+\alpha}}b^{\frac{-2}{1+\alpha}}(\partial_{\overline{\varPsi}}h(b,\Phi)+\partial_{\overline{b}}h(b,\Phi))dbd\Phi}{\{\beta^{\frac{-2}{1+\alpha}}+b^{\frac{-2}{1+\alpha}}-2\beta^{\frac{-1}{1+\alpha}}b^{\frac{-1}{1+\alpha}}\cos(\phi+\beta-\Phi-b)\}^\frac{\gamma}{2}}.
\end{align*}
That concludes the proof.
\end{proof}

\subsection{Fredholmness at the trivial solution}

By using the expression from Proposition \ref{prop-lin-op-cancelation-1} we can prove that the linear operator is Fredholm of zero index in the following proposition. Since our goal is the use of the Implicit Function theorem, and then, proving that $\partial_f F(0,1)$ is an isomorphism we need to study its kernel and range. However, by the general theory on Fredholm operators we can restrict ourselves to check the kernel.

\begin{pro}\label{prop-fredholm}
Let $\gamma,\sigma\in(0,1)$ and $\alpha\in(1,2)$ satisfying \eqref{sigma-rel}. Then the linear operator $\partial_f \tilde{F}(0,1):X_1^{\sigma,\delta}\rightarrow Y^{\sigma,\delta}$ is Fredholm of zero index, for any $\delta<\min\{1-\gamma,\sigma\}$.
\end{pro}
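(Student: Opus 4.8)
The plan is to use the cancelled form of the linearized operator from Proposition~\ref{prop-lin-op-cancelation-1} to write $\partial_f\tilde{F}(0,1)$ as an explicit ``principal'' operator plus a compact remainder, and then to conclude from the stability of the Fredholm index under compact perturbations (an isomorphism being Fredholm of index zero). Set
\[
\partial_f\tilde{F}(0,1)=\mathcal{L}_0+\mathcal{K},\qquad \mathcal{L}_0h:=\frac{1+\alpha}{2}\,\partial_\beta\big(\beta h\big),
\]
with $\mathcal{K}$ the nonlocal integral term in \eqref{partialfF-1-2}. Using \eqref{partialfF-1-2} rather than \eqref{L-I} is essential: there the combination $\partial_{\overline{\varPsi}}h+\partial_{\overline{b}}h$ already lies in $\mathcal{X}^{\sigma,\delta}$ for $h\in X_1^{\sigma,\delta}$ (it is $\partial_{\overline{\varphi}}h+\partial_{\overline{\beta}}h$ written in the integration variables), and the remaining kernel is the only weakly singular $D(0)^{-\gamma/2}$, whose singularity has order $\gamma<1$ and is therefore integrable in the two-dimensional $(b,\Phi)$-integration, in contrast with the Calder\'on--Zygmund--type $D(0)^{-\gamma/2-1}$ acting on a first derivative in \eqref{L-I}. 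In particular, by \eqref{H-operator}, $\mathcal{K}=c\,\mathcal{H}_0\!\big(\partial_{\overline{\varPsi}}(\cdot)+\partial_{\overline{b}}(\cdot),\,1\big)$ for a constant $c$, where $\mathcal{H}_0$ denotes $\mathcal{H}_f$ evaluated at $f=0$.

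Next I would show that $\mathcal{L}_0$ is an isomorphism. Working Fourier mode by mode in $\phi$, that is, on the coefficient spaces $\mathcal{X}_0$, $\mathcal{Y}_0$ of Remark~\ref{rem-fourier-coef}, the equation $\mathcal{L}_0h=g$ is the first-order ODE $\partial_\beta(\beta h)=\tfrac{2}{1+\alpha}g$ in $\beta$, and boundedness at $\beta=0$ forces the primitive to be anchored at $0$, giving the unique candidate
\[
(\mathcal{L}_0^{-1}g)(\beta,\phi)=\frac{2}{(1+\alpha)\beta}\int_0^\beta g(s,\phi)\,ds .
\]
One then checks the required weighted estimates: the $L^\infty_\sigma$ bound uses $\sigma>0$ (convergence of the primitive at $0$ with the correct $\beta^\sigma$ rate) and $\sigma<1$ (so that $\int_0^\beta s^{-\sigma}ds\sim\beta^{1-\sigma}$ yields the correct $\beta^{-\sigma}$ decay at infinity); the $\beta h_\beta$ bound follows from $\beta h_\beta=\tfrac{2}{1+\alpha}g-h$; and the $\beta h_\varphi$ and $\beta^2 h_{\beta\varphi}$ bounds follow from the cancellation $\partial_\varphi\!\big(\int_0^\beta g\,ds\big)=\int_0^\beta\partial_\varphi g\,ds$ (the boundary term at $0$ vanishes since $g\sim_0\beta^\sigma$), which reduces them to the controlled quantity $\beta\partial_\varphi g$; the H\"older and Lipschitz seminorms are inherited directly. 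Injectivity is immediate from uniqueness of the primitive, so $\mathcal{L}_0$ is a Banach space isomorphism; one then reassembles the modes, uniformly in the frequency $k$ --- which is exactly why the Lipschitz-in-$\phi$ norms are built into $X_1^{\sigma,\delta}$ and $Y^{\sigma,\delta}$.

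The step I expect to be the main obstacle is the compactness of $\mathcal{K}:X_1^{\sigma,\delta}\to Y^{\sigma,\delta}$. Since $\mathcal{K}$ factors through the bounded inclusion $X_1^{\sigma,\delta}\hookrightarrow\mathcal{X}^{\sigma,\delta}$ (via $h\mapsto\partial_{\overline{\varPsi}}h+\partial_{\overline{b}}h$) followed by the potential-type operator $\mathcal{H}_0(\cdot,1)$ of Proposition~\ref{prop-operator-H}, it suffices to prove that $\mathcal{H}_0(\cdot,1):\mathcal{X}^{\sigma,\delta}\to Y^{\sigma,\delta}$ is compact. I would do this by approximating $\mathcal{H}_0(\cdot,1)$ in operator norm by operators with smooth, localized kernels: after the scaling $b=\beta z$, truncate the $z$-integration away from $z=0$, from $z=\infty$, and from the singular set $\{z=1\}\cap\{\phi+\beta-\Phi-b\in2\pi\Z\}$. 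The contribution of a shrinking neighbourhood of the singular set is small in operator norm because $\gamma<1$ makes the singularity integrable; the contributions of $z\in(0,\varepsilon)\cup(1/\varepsilon,\infty)$ are small because the integrals that bound $\mathcal{H}_0(\cdot,1)$ in the proof of Proposition~\ref{prop-operator-H} converge with room to spare --- this is precisely where the \emph{strict} inequalities \eqref{sigma-rel} and $\delta<\min\{1-\gamma,\sigma\}$ are used. The remaining truncated operator has a smooth kernel supported where $z$ stays away from $0,1,\infty$; one shows it maps bounded subsets of $\mathcal{X}^{\sigma,\delta}$ into families that are equicontinuous in $\beta$ on compact subintervals (Arzel\`a--Ascoli, using that the estimates of Proposition~\ref{prop-operator-H} in fact furnish a H\"older exponent strictly larger than $\delta$) and whose weighted norms are uniformly small as $\beta\to0$ and $\beta\to\infty$ (tightness at the two ends, again from the slack in \eqref{sigma-rel}, aided by the averaging/Riemann--Lebesgue decay produced by the rapid oscillation of $\cos(\phi+\beta-\Phi-b)$ for large $\beta$); such families are precompact in $Y^{\sigma,\delta}$. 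Hence $\mathcal{K}$ is a norm limit of compact operators, so compact, and $\partial_f\tilde{F}(0,1)=\mathcal{L}_0+\mathcal{K}$ is Fredholm of index zero. This reduces the isomorphism statement needed for the Implicit Function theorem to the triviality of the kernel, which is taken up in the next section.
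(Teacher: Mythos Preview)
Your decomposition $\partial_f\tilde{F}(0,1)=\mathcal{L}_0+\mathcal{K}$ and the isomorphism argument for $\mathcal{L}_0$ are essentially the paper's. One remark: the paper inverts $\mathcal{L}_0$ directly on $X_1^{\sigma,\delta}$, not mode by mode; your Fourier-mode framing and the worry about ``reassembling uniformly in $k$'' are an unnecessary detour, since the explicit formula $h(\beta,\phi)=\tfrac{2}{1+\alpha}\beta^{-1}\int_0^\beta g(s,\phi)\,ds$ already acts on the full function and the weighted and $\mathrm{Lip}_\phi$ bounds follow pointwise in $\phi$.

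The genuine difference is in the compactness of $\mathcal{K}$. You propose kernel truncation plus Arzel\`a--Ascoli with tightness at $\beta\to0,\infty$, invoking at one point a Riemann--Lebesgue-type oscillation argument. The paper takes a shorter route: it shows directly that $\mathcal{K}$ (their $\mathcal{L}_2$) maps $X_1^{\sigma,\delta}$ into $Y^{\sigma,\delta'}$ for some $\delta'>\delta$, and then invokes the compact embedding $Y^{\sigma,\delta'}\hookrightarrow Y^{\sigma,\delta}$ of weighted H\"older spaces. The slack in the inequalities $\delta<\min\{1-\gamma,\sigma\}$ is exactly what allows one to push $\delta$ to $\delta'$; no truncation or separate tightness argument is needed, and the decay at $0$ and $\infty$ comes for free from the same $L^\infty_\sigma$ estimates already established in Proposition~\ref{prop-operator-H}. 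You actually notice this smoothing effect in your parenthetical remark (``the estimates \ldots\ furnish a H\"older exponent strictly larger than $\delta$''), but only for the truncated piece; the paper's observation is that this gain holds for the full operator, which makes the approximation step superfluous. Your oscillation/Riemann--Lebesgue comment is not needed and is the vaguest part of your sketch; the weight already handles large $\beta$. In short, your plan is workable in principle but more laborious; the paper's regularity-gain-plus-compact-embedding argument is the cleaner mechanism.
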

\begin{proof}
We can decompose the linear operator as follows
\begin{align}
&\partial_f\tilde{F}(0,1)(\beta,\phi)=\frac{(1+\alpha)}{2}\partial_\beta(\beta h(\beta,\phi))\\
&-\frac{1}{\mu}\frac{C_\gamma }{4\pi C_0}\left(\frac{1}{\alpha-1}\right)\beta^{-\frac{\alpha-1}{1+\alpha}}\int_0^{2\pi}\int_0^\infty \frac{ b^{-\frac{\gamma}{1+\alpha}}b^{\frac{-2}{1+\alpha}}(\partial_{\overline{\varphi}}h(b,\Phi)+\partial_{\overline{b}}h(b,\Phi))dbd\Phi}{\{\beta^{\frac{-2}{1+\alpha}}+b^{\frac{-2}{1+\alpha}}-2\beta^{\frac{-1}{1+\alpha}}b^{\frac{-1}{1+\alpha}}\cos(\phi+\beta-\Phi-b)\}^\frac{\gamma}{2}}\\
=&\frac{(1+\alpha)}{2}\mathcal{L}_1-\frac{1}{\mu}\frac{C_\gamma }{4\pi C_0}\left(\frac{1}{\alpha-1}\right)\mathcal{L}_2,
\end{align}
where
$$
\mathcal{L}_1(h):=\partial_\beta(\beta h(\beta,\phi)),
$$
and
$$
\mathcal{L}_2(h):=\beta^{-\frac{\alpha-1}{1+\alpha}}\int_0^{2\pi}\int_0^\infty \frac{ b^{-\frac{\gamma}{1+\alpha}}b^{\frac{-2}{1+\alpha}}(\partial_{\overline{\varphi}}h(b,\Phi)+\partial_{\overline{b}}h(b,\Phi))dbd\Phi}{\{\beta^{\frac{-2}{1+\alpha}}+b^{\frac{-2}{1+\alpha}}-2\beta^{\frac{-1}{1+\alpha}}b^{\frac{-1}{1+\alpha}}\cos(\phi+\beta-\Phi-b)\}^\frac{\gamma}{2}}.
$$
The idea of the proof is to show that $\mathcal{L}_1:X_1^{\sigma,\delta}\rightarrow Y^{\sigma,\delta}$ is an isomorphism, whereas $\mathcal{L}_2:X_1^{\sigma,\delta}\rightarrow Y^{\sigma,\delta}$ is compact. Then, since compact perturbations of Fredholm operators remain being Fredholm of zero index, we can conclude the proof.

\noindent $\bullet$ $\mathcal{L}_1:X_1^{\sigma,\delta}\rightarrow Y^{\sigma,\delta}$ is an isomorphism.

Take $g\in Y^{\sigma,\delta}$, we wish to solve $\mathcal{L}_1(h)=g$. We can explicitly solve such differential equation as
$$
h(\beta,\phi)=\frac{K}{\beta}+\frac{1}{\beta}\int_0^\beta g(s,\phi)ds.
$$
for any $K$. However, $h$ must lie in $X_1^{\sigma,\delta}$. In particular, $h$ vanishes at $0$ and $K=0$:
\begin{equation}\label{h-iso-1}
h(\beta,\phi)=\frac{1}{\beta}\int_0^\beta g(s,\phi)ds=\int_0^1 g(s\beta,\phi).
\end{equation}
Let us now check that if $g\in Y^{\sigma,\delta}$, then $h\in X_1^{\sigma,\delta}$, where the norm of $X_1^{\sigma,\delta}$ is given in \eqref{norm-X1}. Let us start showing that
$$
||h||_{\mathcal{X}^{\sigma,\delta}}=||h||_{L^\infty_\sigma}+||h||_{C^\delta_\beta}<C.
$$
We find that
\begin{align*}
||h||_{L^\infty_\sigma}=\sup_{\beta\in[0,+\infty)}\frac{(1+\beta)^{2\sigma}}{\beta^\sigma} \int_0^1 g(s\beta,\phi)ds\leq ||g||_{L^\infty_\sigma} \sup_{\beta\in[0,+\infty)}(1+\beta)^{2\sigma} \int_0^1 \frac{s^\sigma}{(1+s\beta)^{2\sigma}}ds<C,
\end{align*}
since $\sigma<1$. Let us now show that $||h||_{C^\delta_\beta}<C$:
\begin{align*}
|h(\beta,\phi)-h(b,\phi)|\leq \int_0^1|g(s\beta,\phi)-g(sb,\phi)|ds\leq C||g||_{C^\delta_\beta}\int_0^1 \frac{|s\beta-sb|^\alpha}{|s\beta+sb|^\alpha}ds\leq C||g||_{C^\delta_\beta}\frac{|\beta-b|^\delta}{|\beta+b|^\delta},
\end{align*}
implying that $||h||_{C^\delta_\beta}<C||g||_{C^\delta_\beta}$. Let us show that $||h||_{\textnormal{Lip}_\phi}<C$:
$$
|h(\beta,\phi)-h(\beta,\Phi)|\leq \int_0^1 |g(s\beta,\phi)-g(s\beta,\Phi)|ds\leq ||g||_{\textnormal{Lip}_\phi} \sin((\Phi-\phi)/2),
$$
implying that 
$$
||h||_{\textnormal{Lip}_\phi}<||g||_{\textnormal{Lip}_\phi}.
$$
Let us move to $\beta h_\beta$ and note that
$$
\beta h_\beta (\beta,\phi)=-h(\beta,\phi)+g(\beta,\phi),
$$
and then
\begin{align*}
||\beta h_\beta||_{\mathcal{X}^{\sigma,\delta}}+||\beta h_\beta||_{\textnormal{Lip}_\phi}\leq& ||h||_{\mathcal{X}^{\sigma,\delta}}+||h||_{\textnormal{Lip}_\phi}\\
&+||g||_{\mathcal{X}^{\sigma,\delta}}+||g||_{\textnormal{Lip}_\phi}\\
\leq& C.
\end{align*}
Next, we will show that $\beta h_\varphi\in\mathcal{X}^{\sigma,\delta}$ and we will separate in $\beta\in[0,\beta_0)$ and $\beta\in[\beta_0,+\infty)$.  For the first case,  it can be written as
\begin{align*}
h_\varphi=&\frac{1}{\beta}\int_0^\beta g_\phi(s,\phi)ds+\frac{1}{\beta} h(\beta,\phi)-\frac{1}{\beta}g(\beta,\phi)\\
=&\frac{1}{\beta}\int_0^\beta g_\varphi(s,\phi)ds+\frac{1}{\beta}\int_0^\beta g_s(s,\phi)ds+\frac{1}{\beta}h(\beta,\phi)-\frac{1}{\beta}g(\beta,\phi)\\
=&\frac{1}{\beta}\int_0^\beta g_\varphi(s,\phi)ds+\frac{1}{\beta} g(\beta,\phi)ds+\frac{1}{\beta}h(\beta,\phi)-\frac{1}{\beta}g(\beta,\phi),
\end{align*}
and then
\begin{equation}\label{h-iso-2}
\beta h_\varphi=\int_0^\beta g_\varphi(s,\phi)ds+ h(\beta,\phi).
\end{equation}
Since we have showed that $h\in \mathcal{X}^{\sigma,\delta}$, it remains only to check the first term:
\begin{align*}
\sup_{\beta\in[0,\beta_0)}\frac{(1+\beta)^{2\sigma}}{\beta^\sigma}\int_0^\beta g_\varphi(s,\phi)ds\leq& ||\beta g_\varphi||_{L^\infty_\sigma}\sup_{\beta\in[0,\beta_0)}\frac{1}{\beta^\sigma}\int_0^\beta \frac{1}{s}\frac{s^\sigma}{(1+s)^{2\sigma}}ds\\
\leq& ||\beta g_\varphi||_{L^\infty_\sigma}\sup_{\beta\in[0,\beta_0)}\int_0^1 s^{\sigma-1}ds\\
\leq& ||\beta g_\varphi||_{L^\infty_\sigma}.
\end{align*}
In the case $\beta\in[\beta_0,+\infty)$ we write $h$ as
$$
h(\beta,\phi)=\frac{1}{\beta}\int_{\infty}^\beta g(s,\phi)ds,
$$
instead of \eqref{h-iso-1} by using that $h$ must vanish at $0$ and $+\infty$. In that case, we have
\begin{equation}\label{h-iso-3}
\beta h_{\varphi}=\int_{\infty}^\beta g_\varphi(s,\phi)ds+h(\beta,\phi),
\end{equation}
and hence it remains to check the first term:
\begin{align*}
\sup_{\beta\in[\beta_0,+\infty)}\frac{(1+\beta)^{2\sigma}}{\beta^\sigma}\int_\infty^\beta g_\varphi(s,\phi)ds\leq& ||\beta g_\varphi||_{L^\infty_\sigma}\sup_{\beta\in[\beta_0,+\infty)}\beta^\sigma\int_\infty^\beta \frac{1}{s^{1+\sigma}}ds\\
\lesssim& ||\beta g_\varphi||_{L^\infty_\sigma},
\end{align*}
since $\sigma\in(0,1)$. In that way, we have showed that
$$
||\beta h_\varphi||_{L^\infty_\sigma}\leq C ||\beta g_\varphi||.
$$
Let us now check that $||\beta h_{\varphi}||_{C^\delta_\beta}<C$, which amounts to check such property to the first term in \eqref{h-iso-2}, that is,
\begin{align*}
\left|\int_{\beta}^{b} g_\varphi(s,\phi)ds\right|\leq C\frac{|\beta-b|^\delta}{|\beta+b|^\delta}.
\end{align*}
Take first $|\beta+b|<\beta_0<1$, and then
\begin{align*}
\left|\int_{\beta}^{b} g_\varphi(s,\phi)ds\right|\leq C||g_\varphi||_{\mathcal{X}^{\sigma,\delta}}\int_\beta^b s^{\sigma-1}ds\leq C|\beta-b|^\sigma \leq C\frac{|\beta-b|^\delta}{|\beta+b|^\delta},
\end{align*}
if $\delta<\sigma<1$. In the case $||\beta+b||>\beta_0$ we use the expression of $\beta h_\varphi$ we find
\begin{align*}
\left|\int_{\beta}^{b} g_\varphi(s,\phi)ds\right|\leq C||g_\varphi||_{\mathcal{X}^{\sigma,\delta}}\int_\beta^b s^{-\sigma-1}ds\leq C\left|\frac{1}{\beta^\sigma}-\frac{1}{b^\sigma}\right| \leq C\frac{|\beta-b|^\sigma}{|\beta+b|^{2\sigma}}\leq C\frac{|\beta-b|^\sigma}{|\beta+b|^{\sigma}}\leq C\frac{|\beta-b|^\delta}{|\beta+b|^{\delta}}.
\end{align*}
Let us finish with $\beta^2 h_{\beta,\varphi}$ noting that
\begin{align*}
h_{\beta,\varphi}(\beta,\phi)=&-\frac{1}{\beta}\frac{1}{\beta}\int_0^\beta g_\varphi (s,\phi)ds+\frac{1}{\beta}g_{\varphi}(\beta,\phi)+\frac{1}{\beta}h_\beta (\beta,\phi)-\frac{1}{\beta^2}h(\beta,\phi)\\
&=-\frac{1}{\beta}(h_\varphi(\beta,\phi)-\frac{1}{\beta}h(\beta,\phi))+\frac{1}{\beta}g_{\varphi}(\beta,\phi)+\frac{1}{\beta}h_\beta (\beta,\phi)-\frac{1}{\beta^2}h(\beta,\phi),
\end{align*}
and then
$$
\beta^2 h_{\beta,\varphi}(\beta,\phi)=-\beta h_\varphi(\beta,\phi)+\beta g_{\varphi}(\beta,\phi)+\beta h_\beta (\beta,\phi),
$$
implying then that $\beta^2 h_{\beta,\varphi}\in \mathcal{X_{\sigma,\alpha}}.$

\noindent $\bullet$ $\mathcal{L}_2:X_1^{\sigma,\delta}\rightarrow Y^{\sigma,\delta}$ is compact.
From Proposition \ref{prop-well-def} we have that $\mathcal{L}_2\in Y^{\sigma,\delta}$. To prove that $\mathcal{L}_2$ is compact, we will prove that $\mathcal{L}_2\in Y^{\sigma,\delta'}$, with $\delta'>\delta$. Then, since the embedding $Y^{\sigma,\delta'}\subset Y^{\sigma,\delta}$ is compact (we refer to \cite[Lemma 2.2]{Pacard-Riviere:book-vortices}) we have that $\mathcal{L}_2:X_1^{\sigma,\delta}\rightarrow Y^{\sigma,\delta}$ is a compact operator.

Take $\mathcal{L}_2$ and let us prove that $\beta \partial_\varphi\mathcal{L}_2\in C^{\delta'}_\beta$ with $\delta'>\delta$. For that, we compute $\partial_\varphi \mathcal{L}_2$:
\begin{align*}
&\partial_\varphi \mathcal{L}_2(h)=C\beta^{-1}\mathcal{L}_2(h)\\
&+C\beta^{-1}\beta^{-\frac{\alpha-1}{1+\alpha}}\int_0^{2\pi}\int_0^\infty \frac{ b^{-\frac{\gamma}{1+\alpha}}b^{\frac{-2}{1+\alpha}}(\partial_{\overline{\varphi}}h(b,\Phi)+\partial_{\overline{b}}h(b,\Phi))\left\{\beta^\frac{-2}{1+\alpha}-\beta^\frac{-1}{1+\alpha}b^\frac{-1}{1+\alpha}\cos(\phi+\beta-\Phi-b)\right\}dbd\Phi}{\{\beta^{\frac{-2}{1+\alpha}}+b^{\frac{-2}{1+\alpha}}-2\beta^{\frac{-1}{1+\alpha}}b^{\frac{-1}{1+\alpha}}\cos(\phi+\beta-\Phi-b)\}^\frac{2+\gamma}{2}},
\end{align*}
for some constant $C$. Then
{\footnotesize\begin{align*}
&\beta \partial_\varphi\mathcal{L}_2(h)=C\mathcal{L}_2(h)\\
&+C\beta^{-\frac{\alpha-1}{1+\alpha}}\int_0^{2\pi}\int_0^\infty \frac{ b^{-\frac{\gamma}{1+\alpha}}b^{\frac{-2}{1+\alpha}}(\partial_{\overline{\varphi}}h(b,\Phi)+\partial_{\overline{b}}h(b,\Phi))\left\{\beta^\frac{-1}{1+\alpha}(\beta^\frac{-1}{1+\alpha}-b^\frac{-1}{1+\alpha})+\beta^\frac{-1}{1+\alpha}b^\frac{-1}{1+\alpha}(1-\cos(\phi+\beta-\Phi-b))\right\}dbd\Phi}{\{\beta^{\frac{-2}{1+\alpha}}+b^{\frac{-2}{1+\alpha}}-2\beta^{\frac{-1}{1+\alpha}}b^{\frac{-1}{1+\alpha}}\cos(\phi+\beta-\Phi-b)\}^\frac{2+\gamma}{2}}.
\end{align*}
}
Note that in the previous expression the singularity is critical. To solve this problem we should integrate by parts somehow but we can not differentiate $\partial_{\overline{\varphi}}h$ with respect to $\partial_\varphi$ and then we have to do the following decomposition:
{\footnotesize\begin{align*}
&\beta \partial_\varphi \mathcal{L}_2(h)=C\mathcal{L}_2(h)\\
&+C\beta^{-\frac{\alpha-1}{1+\alpha}}\int_0^{2\pi}\int_0^\infty \frac{ b^{-\frac{\gamma}{1+\alpha}}b^{\frac{-2}{1+\alpha}}(\partial_{\overline{\varphi}}h(b,\Phi)-\partial_{\overline{\varphi}}h(\beta,\phi))\left\{\beta^\frac{-1}{1+\alpha}(\beta^\frac{-1}{1+\alpha}-b^\frac{-1}{1+\alpha})+\beta^\frac{-1}{1+\alpha}b^\frac{-1}{1+\alpha}(1-\cos(\phi+\beta-\Phi-b))\right\}dbd\Phi}{\{\beta^{\frac{-2}{1+\alpha}}+b^{\frac{-2}{1+\alpha}}-2\beta^{\frac{-1}{1+\alpha}}b^{\frac{-1}{1+\alpha}}\cos(\phi+\beta-\Phi-b)\}^\frac{2+\gamma}{2}}\\
&+C\beta^{-\frac{\alpha-1}{1+\alpha}}\int_0^{2\pi}\int_0^\infty \frac{ b^{-\frac{\gamma}{1+\alpha}}b^{\frac{-2}{1+\alpha}}(\partial_{\overline{b}}h(b,\Phi))\left\{\beta^\frac{-1}{1+\alpha}(\beta^\frac{-1}{1+\alpha}-b^\frac{-1}{1+\alpha})+\beta^\frac{-1}{1+\alpha}b^\frac{-1}{1+\alpha}(1-\cos(\phi+\beta-\Phi-b))\right\}dbd\Phi}{\{\beta^{\frac{-2}{1+\alpha}}+b^{\frac{-2}{1+\alpha}}-2\beta^{\frac{-1}{1+\alpha}}b^{\frac{-1}{1+\alpha}}\cos(\phi+\beta-\Phi-b)\}^\frac{2+\gamma}{2}}\\
&+C\partial_{\overline{\varphi}}h(\beta,\phi)\beta^{-\frac{\alpha-1}{1+\alpha}}\int_0^{2\pi}\int_0^\infty \frac{ b^{-\frac{\gamma}{1+\alpha}}b^{\frac{-2}{1+\alpha}}\left\{\beta^\frac{-1}{1+\alpha}(\beta^\frac{-1}{1+\alpha}-b^\frac{-1}{1+\alpha})+\beta^\frac{-1}{1+\alpha}b^\frac{-1}{1+\alpha}(1-\cos(\phi+\beta-\Phi-b))\right\}dbd\Phi}{\{\beta^{\frac{-2}{1+\alpha}}+b^{\frac{-2}{1+\alpha}}-2\beta^{\frac{-1}{1+\alpha}}b^{\frac{-1}{1+\alpha}}\cos(\phi+\beta-\Phi-b)\}^\frac{2+\gamma}{2}}\\
=:&C\mathcal{L}_2(h)+L_{2,1}(h)+L_{2,2}(h)+L_{2,3}(h).
\end{align*}
}
Let us show the details for one of the terms and we have chosen $L_{2,2}$ due to its difficulty. Note that the singularity coming from the denominator is not a priori integrable and we need to use some extra cancellation. To achieve that, we need to integrate by parts and then we have to add and subtract some appropriate function in the denominator. Indeed, denote

$$
J(\beta,\phi,b,\Phi):=\{\beta_1^{\frac{-2}{1+\alpha}}+b^{\frac{-2}{1+\alpha}}-2\beta_1^{\frac{-1}{1+\alpha}}b^{\frac{-1}{1+\alpha}}\cos(\phi+\beta_1-\Phi-b)\}.
$$
Notice that $J$ is indeed the denominator at the trivial solution and then $J=C D(0)$, for some constant $C$, where $D(f)$ is defined in \eqref{exp-denominator-2}. In particular, following Proposition \ref{prop-denominator} and the particular expression for $J$ one finds
\begin{align}\label{J-bound}
|J(\beta,\phi,b,\Phi)|\geq C (\beta^{\frac{-2}{1+\alpha}}+b^\frac{-2}{1+\alpha})\left\{\frac{|\beta-b|^2}{|\beta+b|^2}+\sin^2((\beta-b+\phi-\Phi)/2)\right\}.
\end{align}
By using the expression of $J$ we also find
\begin{align*}
\partial_\varphi J(\beta,\phi,b,\Phi)=\frac{2}{1+\alpha}\beta^{-1}\left\{ \beta^\frac{-1}{1+\alpha}(\beta^\frac{-1}{1+\alpha}-b^\frac{-1}{1+\alpha})+\beta^\frac{-1}{1+\alpha}b^\frac{-1}{1+\alpha}(1-\cos(\beta+\phi-b-\Phi)\right\},
\end{align*}
and then
\begin{align*}
\beta \partial_\varphi J(\beta,\phi,b,\Phi)+b\partial_\varPsi J(\beta,\phi,b,\Phi)=&\frac{2}{1+\alpha}\left\{ (\beta^\frac{-1}{1+\alpha}-b^\frac{-1}{1+\alpha})^2+2\beta^\frac{-1}{1+\alpha}b^\frac{-1}{1+\alpha}(1-\cos(\beta+\phi-b-\Phi)\right\}\\
=&\frac{2}{1+\alpha} J(\beta,\phi,b,\Phi).
\end{align*}
Motivated by the previous computation, we decompose $L_{2,2}$ as follows
{\footnotesize\begin{align*}
L_{2,2}(h)(\beta)=&C \beta^{-\frac{\alpha-1}{1+\alpha}}\int_0^{2\pi}\int_0^\infty \frac{ b^{-\frac{\gamma}{1+\alpha}}b^{\frac{-2}{1+\alpha}}(\partial_{\overline{b}}h(b,\Phi))}{\{\beta^{\frac{-2}{1+\alpha}}+b^{\frac{-2}{1+\alpha}}-2\beta^{\frac{-1}{1+\alpha}}b^{\frac{-1}{1+\alpha}}\cos(\phi+\beta-\Phi-b)\}^\frac{\gamma}{2}}dbd\Phi\\
&+\frac{2}{\gamma} C \beta^{-\frac{\alpha-1}{1+\alpha}}\int_0^{2\pi}\int_0^\infty b b^{-\frac{\gamma}{1+\alpha}}b^{\frac{-2}{1+\alpha}}(\partial_{\overline{b}}h(b,\Phi))\\
&\partial_{\varPsi}\frac{1}{\{\beta^{\frac{-2}{1+\alpha}}+b^{\frac{-2}{1+\alpha}}-2\beta^{\frac{-1}{1+\alpha}}b^{\frac{-1}{1+\alpha}}\cos(\phi+\beta-\Phi-b)\}^\frac{\gamma}{2}}dbd\Phi.
\end{align*}
}
Integrating by parts in the last term we have that the boundary terms vanish since $h\in X_1^{\sigma,\delta}$ (we refer to the proof of Proposition \ref{prop-lin-op-cancelation-1}) and then
{\footnotesize\begin{align*}
L_{2,2}(h)(\beta)=&C \beta^{-\frac{\alpha-1}{1+\alpha}}\int_0^{2\pi}\int_0^\infty \frac{ b^{-\frac{\gamma}{1+\alpha}}b^{\frac{-2}{1+\alpha}}(\partial_{\overline{b}}h(b,\Phi))}{\{\beta^{\frac{-2}{1+\alpha}}+b^{\frac{-2}{1+\alpha}}-2\beta^{\frac{-1}{1+\alpha}}b^{\frac{-1}{1+\alpha}}\cos(\phi+\beta-\Phi-b)\}^\frac{\gamma}{2}}dbd\Phi\\
&-\frac{2}{\gamma} C \beta^{-\frac{\alpha-1}{1+\alpha}}\int_0^{2\pi}\int_0^\infty  \frac{bb^{-\frac{\gamma}{1+\alpha}}b^{\frac{-2}{1+\alpha}}(\partial_{\overline{b},\varPsi}h(b,\Phi))dbd\Phi}{\{\beta^{\frac{-2}{1+\alpha}}+b^{\frac{-2}{1+\alpha}}-2\beta^{\frac{-1}{1+\alpha}}b^{\frac{-1}{1+\alpha}}\cos(\phi+\beta-\Phi-b)\}^\frac{\gamma}{2}}\\
&-\frac{2}{\gamma} C\frac{1+\gamma-\alpha}{1+\alpha} \beta^{-\frac{\alpha-1}{1+\alpha}}\int_0^{2\pi}\int_0^\infty  \frac{b^{-\frac{\gamma}{1+\alpha}}b^{\frac{-2}{1+\alpha}}(\partial_{\overline{b}}h(b,\Phi))dbd\Phi}{\{\beta^{\frac{-2}{1+\alpha}}+b^{\frac{-2}{1+\alpha}}-2\beta^{\frac{-1}{1+\alpha}}b^{\frac{-1}{1+\alpha}}\cos(\phi+\beta-\Phi-b)\}^\frac{\gamma}{2}}\\
=:&L_{2,2,1}+L_{2,2,2}+L_{2,2,3}.
\end{align*}
}


Take now $\beta_1<\beta_2$ implying
\begin{equation}\label{beta1beta2-2}
\beta_2^{-\frac{\alpha-1}{1+\alpha}}<\beta_1^{-\frac{\alpha-1}{1+\alpha}},
\end{equation}
and let us show the H\"older regularity for $L_{2,2}$. By implementing the difference of $L_{2,2}$ in the two points we find
{\footnotesize\begin{align*}
|L_{2,2}(h)(\beta_1)-&L_{2,2}(h)(\beta_2)|\leq C\frac{|\beta_1-\beta_2|^{\delta'}}{|\beta_1+\beta_2|^{\delta'}}(\beta_1^{-\frac{\alpha-1}{1+\alpha}}+\beta_2^{-\frac{\alpha-1}{1+\alpha}})\Big\{\int_0^{2\pi}\int_0^\infty \frac{ b^{-\frac{\gamma}{1+\alpha}}b^{\frac{-2}{1+\alpha}}(\partial_{\overline{b}}h(b,\Phi))}{J(\beta_1,\phi,b,\Phi)^\frac{\gamma}{2}}dbd\Phi\\
&+\int_0^{2\pi}\int_0^\infty  \frac{bb^{-\frac{\gamma}{1+\alpha}}b^{\frac{-2}{1+\alpha}}(\partial_{\overline{b},\varPsi}h(b,\Phi))dbd\Phi}{J(\beta_1,\phi,b,\Phi)^\frac{\gamma}{2}}\\
&+\int_0^\infty  \frac{b^{-\frac{\gamma}{1+\alpha}}b^{\frac{-2}{1+\alpha}}(\partial_{\overline{b}}h(b,\Phi))dbd\Phi}{J(\beta_1,\phi,b,\Phi)^\frac{\gamma}{2}}\Big\}\\
&+\beta_2^{-\frac{\alpha-1}{1+\alpha}}\int_0^{2\pi}\int_0^\infty \frac{ b^{-\frac{\gamma}{1+\alpha}}b^{\frac{-2}{1+\alpha}}(\partial_{\overline{b}}h(b,\Phi))}{J(\beta_1,\phi,b,\Phi)^\frac{\gamma}{2}J(\beta_2,\phi,b,\Phi)^\frac{\gamma}{2}}\frac{|J(\beta_1,\phi,b,\Phi)-J(\beta_2,\phi,b,\Phi)|}{J(\beta_1,\phi,b,\Phi)^\frac{2-\gamma}{2}+J(\beta_2,\phi,b,\Phi)^\frac{2-\gamma}{2}}dbd\Phi \\
&+\beta_2^{-\frac{\alpha-1}{1+\alpha}}      \int_0^{2\pi}\int_0^\infty  \frac{bb^{-\frac{\gamma}{1+\alpha}}b^{\frac{-2}{1+\alpha}}(\partial_{\overline{b},\varPsi}h(b,\Phi))}{J(\beta_1,\phi,b,\Phi)^\frac{\gamma}{2}J(\beta_2,\phi,b,\Phi)^\frac{\gamma}{2}}\frac{|J(\beta_1,\phi,b,\Phi)-J(\beta_2,\phi,b,\Phi)|}{J(\beta_1,\phi,b,\Phi)^\frac{2-\gamma}{2}+J(\beta_2,\phi,b,\Phi)^\frac{2-\gamma}{2}}dbd\Phi \\
&+\beta_2^{-\frac{\alpha-1}{1+\alpha}}      \int_0^\infty  \frac{b^{-\frac{\gamma}{1+\alpha}}b^{\frac{-2}{1+\alpha}}(\partial_{\overline{b}}h(b,\Phi))}{J(\beta_1,\phi,b,\Phi)^\frac{\gamma}{2}J(\beta_2,\phi,b,\Phi)^\frac{\gamma}{2}}\frac{|J(\beta_1,\phi,b,\Phi)-J(\beta_2,\phi,b,\Phi)|}{J(\beta_1,\phi,b,\Phi)^\frac{2-\gamma}{2}+J(\beta_2,\phi,b,\Phi)^\frac{2-\gamma}{2}}dbd\Phi,
\end{align*}
}
where $\delta'$ is any number in $(0,1)$. Taking into account \eqref{beta1beta2-2} the first three terms can be related to the operator $\mathcal{H}$ defined in \eqref{H-operator}, and for the last three we use that $h\in X_1^{\sigma,\delta}$ obtaining
{\footnotesize\begin{align*}
&|L_{2,2}(h)(\beta_1)-L_{2,2}(h)(\beta_2)|\leq C\frac{|\beta_1-\beta_2|^{\delta'}}{|\beta_1+\beta_2|^{\delta'}}\Big\{||\mathcal{H}_0(\partial_{\overline{b}}h(b,\Phi),1)||_{L^\infty}+||\mathcal{H}_0(b(\partial_{\overline{b},\varPsi}h(b,\Phi)),1)||_{L^\infty}\Big\}\\
&+C||h||_{X_1^{\sigma,\delta}}\beta_2^{-\frac{\alpha-1}{1+\alpha}}\int_0^{2\pi}\int_0^\infty \frac{ b^{-\frac{\gamma}{1+\alpha}}b^{\frac{-2}{1+\alpha}}b^\sigma/(1+b)^{2\sigma}}{J(\beta_1,\phi,b,\Phi)^\frac{\gamma}{2}J(\beta_2,\phi,b,\Phi)^\frac{\gamma}{2}}\frac{|J(\beta_1,\phi,b,\Phi)-J(\beta_2,\phi,b,\Phi)|}{J(\beta_1,\phi,b,\Phi)^\frac{2-\gamma}{2}+J(\beta_2,\phi,b,\Phi)^\frac{2-\gamma}{2}}dbd\Phi.
\end{align*}
}
By Proposition \ref{prop-operator-H} we have that 
$$
||\mathcal{H}_0(\partial_{\overline{b}}h(b,\Phi),1)||_{L^\infty}+||\mathcal{H}_0(b(\partial_{\overline{b},\varPsi}h(b,\Phi)),1)||_{L^\infty}\leq C||h||_{X_1^{\sigma,\delta}}
$$

Next, we need to estimate the difference of $J$ in these two points:
\begin{align}\label{J-est}
&|J(\beta_1,\phi,b,\Phi)-J(\beta_2,\phi,b,\Phi)|\leq C\frac{|\beta_1-\beta_2|}{|\beta_1+\beta_2|}(\beta_1^\frac{-1}{1+\alpha}+\beta_2^\frac{-1}{1+\alpha})(J(\beta_1,\phi,b,\Phi)^\frac12+J(\beta_2,\phi,b,\Phi)^\frac12)\nonumber\\
&+C\frac{|\beta_1-\beta_2|}{|\beta_1+\beta_2|}\beta_2^\frac{-1}{1+\alpha}|\beta_1+\beta_2|(J(\beta_1,\phi,b,\Phi)^\frac12+J(\beta_2,\phi,b,\Phi)^\frac12)\\
\leq& C\frac{|\beta_1-\beta_2|}{|\beta_1+\beta_2|}(1+|\beta_1+\beta_2|)(\beta_1^\frac{-1}{1+\alpha}+\beta_2^\frac{-1}{1+\alpha})(J(\beta_1,\phi,b,\Phi)^\frac12+J(\beta_2,\phi,b,\Phi)^\frac12)\nonumber\\
\end{align}
and interpolating with
$$
|J(\beta_1,\phi,b,\Phi)-J(\beta_2,\phi,b,\Phi)|\leq C|J(\beta_1,\phi,b,\Phi)+J(\beta_2,\phi,b,\Phi)|,
$$
we achieve
\begin{align}\label{J-est-2}
|J(\beta_1,\phi,b,\Phi)-J(\beta_2,\phi,b,\Phi)|\leq& C\frac{|\beta_1-\beta_2|^{\delta'}}{|\beta_1+\beta_2|^{\delta'}}(1+|\beta_1+\beta_2|^{\delta'})(\beta_1^\frac{-1}{1+\alpha}+\beta_2^\frac{-1}{1+\alpha})^{\delta'}\\
&\times(J(\beta_1,\phi,b,\Phi)^{1-\frac{\delta'}{2}}+J(\beta_2,\phi,b,\Phi)^{1-\frac{\delta'}{2}}),
\end{align}
where $\delta'\in(0,1)$. Finally,
{\footnotesize\begin{align*}
&|L_{2,2}(h)(\beta_1)-L_{2,2}(h)(\beta_2)|\leq C||h||_{X_1^{\sigma,\delta}} \frac{|\beta_1-\beta_2|^{\delta'}}{|\beta_1+\beta_2|^{\delta'}}\Big[1\\
&+(1+|\beta_1+\beta_2|^{\delta'})(\beta_1^\frac{-1}{1+\alpha}+\beta_2^\frac{-1}{1+\alpha})^{\delta'}\beta_2^{-\frac{\alpha-1}{1+\alpha}}\\
&\times\int_0^{2\pi}\int_0^\infty \frac{ b^{-\frac{\gamma}{1+\alpha}}b^{\frac{-2}{1+\alpha}}b^\sigma/(1+b)^{2\sigma}}{J(\beta_1,\phi,b,\Phi)^\frac{\gamma}{2}J(\beta_2,\phi,b,\Phi)^\frac{\gamma}{2}}\frac{J(\beta_1,\phi,b,\Phi)^{-\frac{\delta'}{2}}+J(\beta_2,\phi,b,\Phi)^{-\frac{\delta'}{2}}}{J(\beta_1,\phi,b,\Phi)^\frac{-\gamma}{2}+J(\beta_2,\phi,b,\Phi)^\frac{-\gamma}{2}}dbd\Phi\Big]\\
&\leq C||h||_{X_1^{\sigma,\delta}} \frac{|\beta_1-\beta_2|^{\delta'}}{|\beta_1+\beta_2|^{\delta'}}\Big[1\\
&+(1+|\beta_1+\beta_2|^{\delta'})(\beta_1^\frac{-1}{1+\alpha}+\beta_2^\frac{-1}{1+\alpha})^{\delta'}\beta_2^{-\frac{\alpha-1}{1+\alpha}}\int_0^{2\pi}\int_0^\infty { b^{-\frac{\gamma}{1+\alpha}}b^{\frac{-2}{1+\alpha}}b^\sigma/(1+b)^{2\sigma}}\\
&\times\left(\frac{1}{J(\beta_1,\phi,b,\Phi)^\frac{\delta'}{2}J(\beta_2,\phi,b,\Phi)^\frac{\gamma}{2}}+\frac{1}{J(\beta_1,\phi,b,\Phi)^\frac{\delta'}{2}J(\beta_2,\phi,b,\Phi)^\frac{\gamma}{2}}\right)dbd\Phi\Big]
\end{align*}
}
The last integral is similar to the one in \eqref{Hholder-2} and then we find that it is bounded if $\delta'<\sigma$. Furthermore, since $\delta'$ is arbitrary we can choose $\delta'>\delta$ to have the compactness.
\end{proof}

\subsection{Expression of the linearized operator in Fourier series}
In the next proposition, we show that the linear operator can be computed in Fourier modes depending on the Fourier coefficients of $h$. Assume then that
$h$ takes the form  
\begin{equation}\label{h-fourier}
h(\beta,\phi)=\sum_{k\in\Z} h_k(\beta)e^{ik\phi}.
\end{equation}

\begin{pro}\label{prop-lin-op-cancelation-2}
Let $h$ be expressed as \eqref{h-fourier}. Then
\begin{align*}
&\partial_f\tilde{F}(0,1)(\beta,\phi)=\sum_{k\in \Z}e^{ik\phi}\left\{\frac{(1+\alpha)}{2}\partial_\beta(\beta h_k(\beta))\right.\\
&\left.-ik \frac{1}{\mu}\frac{C_\gamma }{4\pi C_0}\left(\frac{1}{\alpha-1}\right)e^{ik\beta}\beta^{-\frac{\alpha-1}{1+\alpha}}\int_0^{2\pi}\int_0^\infty \frac{ e^{-ikb}b^{-\frac{\gamma}{1+\alpha}}b^{\frac{-2}{1+\alpha}}b h_k(b)\cos(k\eta)dbd\eta}{\{\beta^{\frac{-2}{1+\alpha}}+b^{\frac{-2}{1+\alpha}}-2\beta^{\frac{-1}{1+\alpha}}b^{\frac{-1}{1+\alpha}}\cos(\eta)\}^\frac{\gamma}{2}}\right\}.\nonumber
\end{align*}
\end{pro}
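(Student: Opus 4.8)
The plan is to start from the compact expression of the linearized operator obtained in Proposition \ref{prop-lin-op-cancelation-1},
$$
\partial_f\tilde{F}(0,1)h(\beta,\phi)=\frac{(1+\alpha)}{2}\partial_\beta(\beta h(\beta,\phi))-\frac{1}{\mu}\frac{C_\gamma}{4\pi C_0}\left(\frac{1}{\alpha-1}\right)\beta^{-\frac{\alpha-1}{1+\alpha}}\int_0^{2\pi}\int_0^\infty \frac{b^{-\frac{\gamma}{1+\alpha}}b^{\frac{-2}{1+\alpha}}(\partial_{\overline{\varphi}}h+\partial_{\overline{b}}h)(b,\Phi)\,dbd\Phi}{\{\beta^{\frac{-2}{1+\alpha}}+b^{\frac{-2}{1+\alpha}}-2\beta^{\frac{-1}{1+\alpha}}b^{\frac{-1}{1+\alpha}}\cos(\phi+\beta-\Phi-b)\}^\frac{\gamma}{2}},
$$
and plug in the Fourier expansion \eqref{h-fourier}. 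The first term is trivially diagonal: $\frac{1+\alpha}{2}\partial_\beta(\beta h)=\sum_k e^{ik\phi}\frac{1+\alpha}{2}\partial_\beta(\beta h_k(\beta))$. For the integral term, I would first reduce $\partial_{\overline{\varphi}}h+\partial_{\overline{b}}h$ using the definitions \eqref{lin-op-1}--\eqref{lin-op-2}: since $\partial_{\overline{b}}f=\frac{\alpha-1}{1+\alpha}f+bf_b$ and $\partial_{\overline{\varphi}}f=-\frac{\alpha-1}{1+\alpha}f+bf_\varphi$ with $\partial_\varphi=\partial_\phi-\partial_\beta$, the two $\frac{\alpha-1}{1+\alpha}f$ contributions cancel, leaving $\partial_{\overline{\varphi}}h+\partial_{\overline{b}}h=b(h_b+h_\varphi)=b(h_b+h_\phi-h_b)=b\,h_\phi$. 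Applied to $h(b,\Phi)=\sum_k h_k(b)e^{ik\Phi}$ this gives $\partial_{\overline{\varphi}}h+\partial_{\overline{b}}h=\sum_k ik\,b\,h_k(b)e^{ik\Phi}$. This is the key algebraic simplification that collapses the numerator.

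Next I would perform the change of variables $\eta=\phi+\beta-\Phi-b$ (for fixed $\beta,b$, replacing the $\Phi$-integration over $[0,2\pi]$, using $2\pi$-periodicity), so $\Phi=\phi+\beta-b-\eta$ and $d\Phi=-d\eta$; the denominator becomes $\{\beta^{\frac{-2}{1+\alpha}}+b^{\frac{-2}{1+\alpha}}-2\beta^{\frac{-1}{1+\alpha}}b^{\frac{-1}{1+\alpha}}\cos\eta\}^{\gamma/2}$, independent of $\phi$. Summing over $k$ and pulling out $e^{ik\phi}$, the $\Phi$-dependent exponential $e^{ik\Phi}=e^{ik\phi}e^{ik\beta}e^{-ikb}e^{-ik\eta}$ produces a factor $e^{ik\phi}e^{ik\beta}e^{-ikb}$ outside the $\eta$-integral and $e^{-ik\eta}$ inside. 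Since the denominator is even in $\eta$ over the period, $\int_0^{2\pi}e^{-ik\eta}(\cdots)^{-\gamma/2}d\eta=\int_0^{2\pi}\cos(k\eta)(\cdots)^{-\gamma/2}d\eta$ (the odd part integrates to zero). Collecting the constants and the factor $ik$ from the numerator gives exactly the claimed formula, with the outer $\beta^{-\frac{\alpha-1}{1+\alpha}}$ and the $k$-th Fourier coefficient of the output equal to
$$
\frac{1+\alpha}{2}\partial_\beta(\beta h_k(\beta))-ik\frac{1}{\mu}\frac{C_\gamma}{4\pi C_0}\left(\frac{1}{\alpha-1}\right)e^{ik\beta}\beta^{-\frac{\alpha-1}{1+\alpha}}\int_0^{2\pi}\int_0^\infty\frac{e^{-ikb}b^{-\frac{\gamma}{1+\alpha}}b^{\frac{-2}{1+\alpha}}b\,h_k(b)\cos(k\eta)\,dbd\eta}{\{\beta^{\frac{-2}{1+\alpha}}+b^{\frac{-2}{1+\alpha}}-2\beta^{\frac{-1}{1+\alpha}}b^{\frac{-1}{1+\alpha}}\cos\eta\}^{\gamma/2}}.
$$

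The computation is essentially bookkeeping, so the only genuine point requiring care — and what I'd flag as the main (mild) obstacle — is justifying the interchange of the $\sum_k$ with the double integral, and the use of Fubini to separate the $b$ and $\eta$ integrations after the change of variables. Here one invokes the regularity already established: by Proposition \ref{prop-operator-H} and Proposition \ref{prop-well-def} the operator $\partial_f\tilde F(0,1)$ maps $X_1^{\sigma,\delta}$ into $Y^{\sigma,\delta}$ with the integral converging absolutely (the denominator lower bound from Proposition \ref{prop-denominator} controls the singularity at $\eta=0$, $b=\beta$, while the weights $b^\sigma/(1+b)^{2\sigma}$ coming from $h_k\in\mathcal{X}_0$ handle $b\to0$ and $b\to\infty$, with the exponent constraints \eqref{sigma-rel} and $p>\frac{1}{1-\gamma}$), so dominated convergence legitimizes term-by-term treatment of the Fourier series and Fubini applies. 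With that justification in place, the identities above assemble directly into the statement of Proposition \ref{prop-lin-op-cancelation-2}.
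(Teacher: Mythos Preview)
Your proposal is correct and follows essentially the same route as the paper's proof: reduce $\partial_{\overline{\varphi}}h+\partial_{\overline{b}}h$ to $b\,\partial_\Phi h$ via \eqref{lin-op-1}--\eqref{lin-op-2}, insert the Fourier expansion to produce the factor $ik\,b\,h_k(b)e^{ik\Phi}$, then perform the angular change of variable $\eta=\phi+\beta-\Phi-b$ and use evenness of the denominator in $\eta$ to replace $e^{-ik\eta}$ by $\cos(k\eta)$. The only addition you make beyond the paper is the explicit remark on justifying the interchange of $\sum_k$ with the integral via the regularity already established in Propositions~\ref{prop-denominator}, \ref{prop-operator-H} and \ref{prop-well-def}, which the paper leaves implicit.
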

\begin{proof}
Let us assume that $h(\beta,\phi)=\sum_{k\in\Z} h_k(\beta)e^{ik\phi}$ in the expression of Proposition \ref{prop-lin-op-cancelation-1}. Using the definition of $\partial_{\overline{\beta}}$ and $\partial_{\overline{\varphi}}$ we find
$$
\partial_{\overline{\beta}}h(\beta,\phi)=\frac{\alpha-1}{1+\alpha}h(\beta,\phi)+\beta \partial_\beta h(\beta,\phi)
$$
and
$$
\partial_{\overline{\varphi}}h(\beta,\phi)=-\frac{\alpha-1}{1+\alpha}h(\beta,\phi)+\beta \partial_\varphi h(\beta,\phi)
$$
obtaining
$$
\partial_{\overline{\varphi}}h(\beta,\phi)+\partial_{\overline{\beta}}h(\beta,\phi)=\beta (\partial_\varphi h(\beta,\phi)+\partial_\beta h(\beta,\phi))=\beta \partial_\phi h(\beta,\phi),
$$
taking into account that $\partial_\varphi=\partial_\phi-\partial_\beta$. Hence, we can compute the Fourier series of the above term as
\begin{equation}\label{partialpsi-fourier}
\partial_{\overline{\varphi}}h(\beta,\phi)+\partial_{\overline{\beta}}h(\beta,\phi)=\sum_{k\in \Z}ik\beta h_k(\beta)e^{ik\phi}.
\end{equation}
Moreover
\begin{equation}\label{partialbeta-fourier}
\partial_\beta (\beta h(\beta,\phi))=\sum_{k\in \Z}\partial_{\beta}(\beta h_k(\beta))e^{ik\phi}.
\end{equation}
Inserting \eqref{partialpsi-fourier} and \eqref{partialbeta-fourier} into \eqref{partialfF-1-2} we get

\begin{align}\label{partialfF-1-2-3}
&\partial_f\tilde{F}(0,1)(\beta,\phi)=\sum_{k\in \Z}e^{ik\phi}\left\{\frac{(1+\alpha)}{2}\partial_\beta(\beta h_k(\beta))\right.\\
&\left.-ik e^{-ik\phi}\frac{1}{\mu}\frac{C_\gamma }{4\pi C_0}\left(\frac{1}{\alpha-1}\right)\beta^{-\frac{\alpha-1}{1+\alpha}}\int_0^{2\pi}\int_0^\infty \frac{ b^{-\frac{\gamma}{1+\alpha}}b^{\frac{-2}{1+\alpha}}b h_k(b)e^{ik\Phi}dbd\Phi}{\{\beta^{\frac{-2}{1+\alpha}}+b^{\frac{-2}{1+\alpha}}-2\beta^{\frac{-1}{1+\alpha}}b^{\frac{-1}{1+\alpha}}\cos(\phi+\beta-\Phi-b)\}^\frac{\gamma}{2}}\right\}.\nonumber
\end{align}
Note that we can write the integral as follows
\begin{align*}
\int_0^{2\pi}\int_0^\infty& \frac{ b^{-\frac{\gamma}{1+\alpha}}b^{\frac{-2}{1+\alpha}}b h_k(b)e^{ik\Phi}dbd\Phi}{\{\beta^{\frac{-2}{1+\alpha}}+b^{\frac{-2}{1+\alpha}}-2\beta^{\frac{-1}{1+\alpha}}b^{\frac{-1}{1+\alpha}}\cos(\phi+\beta-\Phi-b)\}^\frac{\gamma}{2}}\\
&=e^{ik\phi}e^{ik\beta}\int_0^{2\pi}\int_0^\infty \frac{ e^{-ikb}b^{-\frac{\gamma}{1+\alpha}}b^{\frac{-2}{1+\alpha}}b h_k(b)e^{ik(\Phi+b-\phi-\beta)}dbd\Phi}{\{\beta^{\frac{-2}{1+\alpha}}+b^{\frac{-2}{1+\alpha}}-2\beta^{\frac{-1}{1+\alpha}}b^{\frac{-1}{1+\alpha}}\cos(\phi+\beta-\Phi-b)\}^\frac{\gamma}{2}}\\
&=e^{ik\phi}e^{ik\beta}\int_0^{2\pi}\int_0^\infty \frac{ e^{-ikb}b^{-\frac{\gamma}{1+\alpha}}b^{\frac{-2}{1+\alpha}}b h_k(b)\cos(k\eta)dbd\eta}{\{\beta^{\frac{-2}{1+\alpha}}+b^{\frac{-2}{1+\alpha}}-2\beta^{\frac{-1}{1+\alpha}}b^{\frac{-1}{1+\alpha}}\cos(\eta)\}^\frac{\gamma}{2}}
\end{align*}
Hence, coming back to \eqref{partialfF-1-2-3} we find
\begin{align*}
&\partial_f\tilde{F}(0,1)(\beta,\phi)=\sum_{k\in \Z}e^{ik\phi}\left\{\frac{(1+\alpha)}{2}\partial_\beta(\beta h_k(\beta))\right.\\
&\left.-ik \frac{1}{\mu}\frac{C_\gamma }{4\pi C_0}\left(\frac{1}{\alpha-1}\right)e^{ik\beta}\beta^{-\frac{\alpha-1}{1+\alpha}}\int_0^{2\pi}\int_0^\infty \frac{ e^{-ikb}b^{-\frac{\gamma}{1+\alpha}}b^{\frac{-2}{1+\alpha}}b h_k(b)\cos(k\eta)dbd\eta}{\{\beta^{\frac{-2}{1+\alpha}}+b^{\frac{-2}{1+\alpha}}-2\beta^{\frac{-1}{1+\alpha}}b^{\frac{-1}{1+\alpha}}\cos(\eta)\}^\frac{\gamma}{2}}\right\}.\nonumber
\end{align*}
\end{proof}

\section{Invertibility of the linear operator}\label{sec-spectral}
In this section, we shall study the invertibility of the linearized operator at the trivial solution in order to later apply the infinite dimensional Implicit Function theorem. From Proposition \ref{prop-fredholm} we know that the linear operator is a compact perturbation of a isomorphism and thus it is a Fredholm operator of zero index. In that way, it reduces to check that the kernel is trivial. In order to study the kernel equation, we relate this problem with the moment problem by using the Mellin and Laplace transform. More specifically, once we apply the Mellin transform to the kernel equation, we arrive to a recurrence equation with singular coefficients. However, we can overcome this by studying the special structure of such recurrence equation. Later, we shall relate the solution of the recurrence equation with the moment problem by using the Laplace transform, concluding that the solution is singular and thus it does not belong to our function space. That will amount to a trivial kernel.

\subsection{Kernel study}
Note that the kernel study reduces to the following problem.
\begin{equation}\label{kernel-eq}
\partial_f F(0,1)h=0, h\in X_1^{\sigma,\delta} \Rightarrow h=0.
\end{equation}
Since we can restrict ourselves to work with the linearized operator as a Fourier series, hence we can directly check that the linear operator at the mode $k$ is trivial, for any $k\in\Z$. The linearized operator is given by Proposition \ref{prop-lin-op-cancelation-2} and can be written as 
\begin{align*}
&\partial_f\tilde{F}(0,1)(\beta,\phi)=\sum_{k\in \Z}e^{ik\phi}\left\{\frac{(1+\alpha)}{2}\partial_\beta(\beta h_k(\beta))\right.\\
&\left.-ik \frac{1}{\mu}\frac{C_\gamma }{4\pi C_0}\left(\frac{1}{\alpha-1}\right)e^{ik\beta}\beta^{-\frac{\alpha-1}{1+\alpha}}\int_0^{2\pi}\int_0^\infty \frac{ e^{-ikb}b^{-\frac{\gamma}{1+\alpha}}b^{\frac{-2}{1+\alpha}}b h_k(b)\cos(k\eta)dbd\eta}{\{\beta^{\frac{-2}{1+\alpha}}+b^{\frac{-2}{1+\alpha}}-2\beta^{\frac{-1}{1+\alpha}}b^{\frac{-1}{1+\alpha}}\cos(\eta)\}^\frac{\gamma}{2}}\right\}.\nonumber\\
=&\sum_{k\in \Z}e^{ik\phi}e^{ik\beta}\left\{\frac{(1+\alpha)}{2}e^{-ik\beta}\partial_\beta(\beta h_k(\beta))-ik J_{k}(e^{-ikb}h_k)(\beta)\right\},\nonumber
\end{align*}
where
$$
J_{k}(g_k)(\beta):=\frac{1}{\mu}\frac{C_\gamma }{4\pi C_0}\left(\frac{1}{\alpha-1}\right)\beta^{-\frac{\alpha-1}{1+\alpha}}\int_0^{2\pi}\int_0^\infty \frac{ b^{-\frac{\gamma}{1+\alpha}}b^{\frac{-2}{1+\alpha}}b g_k(b)\cos(k\eta)dbd\Phi}{\{\beta^{\frac{-2}{1+\alpha}}+b^{\frac{-2}{1+\alpha}}-2\beta^{\frac{-1}{1+\alpha}}b^{\frac{-1}{1+\alpha}}\cos(\eta)\}^\frac{\gamma}{2}},
$$
and
$$
g_k(\beta):= e^{-ik\beta}h_k(\beta).
$$
Define also
\begin{equation}\label{Lkm}
\mathcal{L}_{k}(h)(\beta):=\frac{(1+\alpha)}{2}e^{-ik\beta}\partial_\beta(\beta h_k(\beta))-ik J_{k}(e^{-ikb}h_k)(\beta),
\end{equation}
and we wish to prove that $\mathcal{L}_{k}$ has a trivial kernel for each $k\in\Z$.  For $h\in X_1^{\sigma,\delta}$ one has that $h_k\in \mathcal{X}_0$ defined in \eqref{X01} where
\begin{align*}
\mathcal{X}_0=&\left\{f:\beta\in[0,\infty)\rightarrow \R,\quad f,\beta f'\in\mathcal{X}^{\sigma,\delta}\right\}.
\end{align*}
Hence, we will check that the kernel of $\mathcal{L}_{k}$ is trivial in the larger space $\mathcal{X}_0$ for any $k\in\Z$, which implies \eqref{kernel-eq}.

The case $k=0$ is straightforward since
\begin{equation*}
\mathcal{L}_{0}(h)(\beta):=\frac{(1+\alpha)}{2}e^{-ik\beta}\partial_\beta(\beta h_k(\beta)),
\end{equation*}
which has a trivial kernel.

In the following proposition we use an auxiliary function $g_k$ to work with \eqref{Lkm}.
\begin{pro}\label{prop-kernel-mellin-1}
Let $g_k(\beta)=e^{-ik\beta}h_k(\beta)$ then $\mathcal{L}_{k}=0$, defined in \eqref{Lkm}, agrees with
\begin{align*}
&\tilde{\mathcal{L}}_{k}(g_k):=\frac{1+\alpha}{2}g_k(\beta)+ik \frac{1+\alpha}{2}\beta g_k(\beta)+\frac{1+\alpha}{2}\beta g_k'(\beta)-ikJ_{k}(g_k)(\beta)=0,
\end{align*}
where
\begin{equation}\label{J-k}
J_{k}(\beta)=\int_0^\infty g_k(b)K_{k}(\beta/b)db,
\end{equation}
and
$$
K_{k}(z):=\frac{C_\gamma}{\mu 4\pi C_0(\alpha-1)}\int_0^{2\pi}\frac{z^{-\frac{\alpha-1}{1+\alpha}}\cos(k\eta)d\eta}{\{z^{\frac{-2}{1+\alpha}}+1-2z^{\frac{-1}{1+\alpha}}\cos(\eta)\}^\frac{\gamma}{2}}.
$$
\end{pro}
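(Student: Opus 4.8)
The plan is to substitute $h_k(\beta) = e^{ik\beta} g_k(\beta)$ directly into the definition \eqref{Lkm} of $\mathcal{L}_k$ and simplify. This is essentially a change-of-unknown computation designed to eliminate the oscillatory factors and turn the integral operator into a genuine convolution on the multiplicative group $(0,\infty)$, which is exactly the structure needed for the Mellin transform to be applied in the next step.

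First I would compute the derivative term. Since $h_k(\beta) = e^{ik\beta}g_k(\beta)$, we have $\partial_\beta(\beta h_k(\beta)) = \partial_\beta(\beta e^{ik\beta} g_k(\beta)) = e^{ik\beta}g_k(\beta) + ik\beta e^{ik\beta} g_k(\beta) + \beta e^{ik\beta} g_k'(\beta)$. Multiplying by the prefactor $\frac{1+\alpha}{2}e^{-ik\beta}$ kills the exponential and produces exactly the first three terms of $\tilde{\mathcal{L}}_k(g_k)$, namely $\frac{1+\alpha}{2}g_k(\beta) + ik\frac{1+\alpha}{2}\beta g_k(\beta) + \frac{1+\alpha}{2}\beta g_k'(\beta)$.

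Next I would treat the integral term. By definition $J_k(e^{-ikb}h_k)(\beta)$ is $J_k$ applied to the function $e^{-ikb}h_k(b) = g_k(b)$, so the cross term $-ik J_k(e^{-ikb}h_k)(\beta)$ in \eqref{Lkm} becomes simply $-ik J_k(g_k)(\beta)$ with $g_k$ in the slot. The remaining task is to rewrite $J_k(g_k)(\beta) = \frac{1}{\mu}\frac{C_\gamma}{4\pi C_0}\frac{1}{\alpha-1}\beta^{-\frac{\alpha-1}{1+\alpha}}\int_0^{2\pi}\int_0^\infty \frac{b^{-\frac{\gamma}{1+\alpha}}b^{\frac{-2}{1+\alpha}}b\, g_k(b)\cos(k\eta)\,db\,d\eta}{\{\beta^{-2/(1+\alpha)}+b^{-2/(1+\alpha)}-2\beta^{-1/(1+\alpha)}b^{-1/(1+\alpha)}\cos\eta\}^{\gamma/2}}$ in convolution form. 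Setting $z = \beta/b$ and factoring $b^{-2/(1+\alpha)}$ out of the denominator — note $\beta^{-2/(1+\alpha)}+b^{-2/(1+\alpha)}-2\beta^{-1/(1+\alpha)}b^{-1/(1+\alpha)}\cos\eta = b^{-2/(1+\alpha)}\{z^{-2/(1+\alpha)}+1-2z^{-1/(1+\alpha)}\cos\eta\}$ — the powers of $b$ and $\beta$ recombine so that, after carrying out the $\eta$-integral, $J_k(g_k)(\beta) = \int_0^\infty g_k(b) K_k(\beta/b)\,db$ with the stated kernel $K_k(z) = \frac{C_\gamma}{\mu 4\pi C_0(\alpha-1)}\int_0^{2\pi}\frac{z^{-(\alpha-1)/(1+\alpha)}\cos(k\eta)\,d\eta}{\{z^{-2/(1+\alpha)}+1-2z^{-1/(1+\alpha)}\cos\eta\}^{\gamma/2}}$; the bookkeeping of exponents ($b^{-\gamma/(1+\alpha)}\cdot b^{-2/(1+\alpha)}\cdot b \cdot b^{\gamma/(1+\alpha)}\cdot \beta^{-(\alpha-1)/(1+\alpha)} = db$-weight after the substitution) is the one place where a careful count is needed, but it is routine. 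Assembling the derivative part and the integral part gives $\mathcal{L}_k = 0 \iff \tilde{\mathcal{L}}_k(g_k) = 0$.

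The main obstacle, such as it is, is not conceptual but arithmetic: verifying that all the fractional powers of $\beta$ and $b$ conspire, under $b \mapsto \beta/b$, to leave a clean convolution kernel depending only on $\beta/b$ — in particular that the explicit $\beta^{-(\alpha-1)/(1+\alpha)}$ prefactor of $J_k$ is precisely absorbed so that no residual power of $\beta$ survives outside the convolution. One should also confirm that the change of variables does not introduce convergence issues beyond those already guaranteed by $g_k \in \mathcal{X}_0$ and the decay of $K_k$, so that Fubini applies and the $\eta$- and $b$-integrals may be separated as written. Once this identity is established, the subsequent sections can apply the Mellin transform to $\tilde{\mathcal{L}}_k(g_k) = 0$, turning the convolution into multiplication and the remaining $\beta g_k$, $\beta g_k'$ terms into shifts, yielding the advertised recurrence with singular coefficients.
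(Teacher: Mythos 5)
Your proposal is correct and follows the same substitution $h_k = e^{ik\beta}g_k$ that the paper uses; your exponent bookkeeping for the convolution form of $J_k$ is accurate ($b^{-\gamma/(1+\alpha)}b^{-2/(1+\alpha)}b$ combined with the $b^{\gamma/(1+\alpha)}$ extracted from the denominator yields $b^{(\alpha-1)/(1+\alpha)}$, which pairs with the $\beta^{-(\alpha-1)/(1+\alpha)}$ prefactor to give $(\beta/b)^{-(\alpha-1)/(1+\alpha)}$). The paper's proof is a one-line version of exactly this calculation, so there is nothing further to add.
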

\begin{proof}
Just note that 
$$
e^{ik\beta}\beta h_k'(\beta)=\beta g_k'(\beta)+ik\beta g_k(\beta),
$$
and use the expression of $J_{k}$.
\end{proof}

Note that $K_{-k}=K_k$ and hence although we will work with $k\in\N^\star$, we can always exchange $k$ by $-k$. In the following, we give the monotonicity of $K_k$ with respect to $k$.

\begin{pro}\label{pro-monotonicity-K}
For any $z\in (0,\infty)$, the sequence $k\in\N^\star \mapsto K_k(z)$ is strictly decreasing.
\end{pro}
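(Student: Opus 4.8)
The claim is that for each fixed $z\in(0,\infty)$, the map $k\mapsto K_k(z)$ is strictly decreasing in $k\in\N^\star$. Since the prefactor $\frac{C_\gamma}{\mu 4\pi C_0(\alpha-1)}$ and the factor $z^{-\frac{\alpha-1}{1+\alpha}}$ are positive and independent of $k$, it suffices to study the angular integral
\begin{equation*}
A_k(z):=\int_0^{2\pi}\frac{\cos(k\eta)}{\{z^{\frac{-2}{1+\alpha}}+1-2z^{\frac{-1}{1+\alpha}}\cos\eta\}^{\gamma/2}}\,d\eta.
\end{equation*}
Writing $a:=z^{\frac{-1}{1+\alpha}}>0$, the denominator is $(1+a^2-2a\cos\eta)^{\gamma/2}$, which is exactly $|1-ae^{i\eta}|^\gamma$. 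So the statement reduces to: for fixed $a>0$, the Fourier coefficients $A_k(a)=\int_0^{2\pi}\cos(k\eta)\,|1-ae^{i\eta}|^{-\gamma}\,d\eta$ form a strictly decreasing positive sequence in $k\ge 1$.

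\textbf{Key steps.} First I would reduce to showing positivity and strict log-type monotonicity of the Fourier coefficients of the function $\eta\mapsto (1+a^2-2a\cos\eta)^{-\gamma/2}$. The cleanest route is to expand in a Gegenbauer/binomial series: for $|a|<1$ one has the generating-function identity $(1+a^2-2a\cos\eta)^{-s}=\sum_{k\ge0}c_k(s)\,a^k\,\frac{C_k^{(s)}(\cos\eta)}{C_k^{(s)}(1)}\cdot(\dots)$, or more directly $(1-ae^{i\eta})^{-s}(1-ae^{-i\eta})^{-s}=\sum_{m,n\ge0}\binom{-s}{m}\binom{-s}{n}(-1)^{m+n}a^{m+n}e^{i(m-n)\eta}$, so that
\begin{equation*}
A_k(a)=2\pi\sum_{n\ge0}\frac{\Gamma(\gamma/2+n)\,\Gamma(\gamma/2+n+k)}{n!\,(n+k)!\,\Gamma(\gamma/2)^2}\,a^{2n+k},\qquad k\ge0.
\end{equation*}
Each term is strictly positive (since $\gamma/2>0$), so $A_k(a)>0$ and termwise one checks that the ratio of the coefficient of $a^{2n+k}$ in $A_{k+1}$ to that in $A_k$ equals $\frac{\gamma/2+n+k}{n+k+1}<1$ for all $n\ge0$; hence $A_{k+1}(a)<A_k(a)$ coefficient-by-coefficient, giving strict decrease for $a\in(0,1)$. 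For $a>1$ (i.e. $z<1$), I would use the symmetry $A_k(a)=a^{-\gamma}A_k(1/a)$, which follows from factoring $|1-ae^{i\eta}|=a|1-a^{-1}e^{i\eta}|$; this transfers the strict monotonicity from the $a\in(0,1)$ regime. The boundary case $a=1$ (i.e. $z=1$) requires care because the integrand is singular at $\eta=0$, but the integral still converges for $\gamma\in(0,1)$ and the strict monotonicity persists by monotone convergence in $a\uparrow1$, or by the explicit formula $A_k(1)=2\pi\,\frac{\Gamma(1-\gamma)}{\Gamma(1-\gamma/2)^2}\cdot\frac{\Gamma(\gamma/2+k)}{\Gamma(1-\gamma/2+k)}$ (a standard beta-integral evaluation), whose ratio $A_{k+1}(1)/A_k(1)=\frac{\gamma/2+k}{1-\gamma/2+k}<1$.

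\textbf{Main obstacle.} The only genuine subtlety is handling the non-absolutely-obvious interchange of sum and integral and the degenerate case $z=1$; everything else is bookkeeping with Gamma functions. An alternative that avoids the series entirely: differentiate the claim into showing $A_k-A_{k+1}>0$ directly, i.e. $\int_0^{2\pi}\frac{\cos(k\eta)-\cos((k+1)\eta)}{(1+a^2-2a\cos\eta)^{\gamma/2}}d\eta>0$; using $\cos(k\eta)-\cos((k+1)\eta)=2\sin(\eta/2)\sin((k+\tfrac12)\eta)$ and a positivity argument for the kernel $\eta\mapsto(1+a^2-2a\cos\eta)^{-\gamma/2}$ (which is positive, even, and decreasing on $[0,\pi]$ when $a\in(0,1)$) one can run a Sturm-type/rearrangement argument. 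I expect the paper to take the series route since the Gamma-ratio computation is short and also yields the explicit decay rate $A_{k+1}/A_k\to1^-$, which is presumably needed later; so I would present the binomial expansion, verify positivity and termwise domination of the coefficients, invoke the reflection identity for $a>1$, and close the $a=1$ case by the explicit evaluation.
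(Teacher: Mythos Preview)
Your approach is correct and takes a genuinely different route from the paper's. The paper does not expand $(1-ae^{i\eta})^{-\gamma/2}(1-ae^{-i\eta})^{-\gamma/2}$ as a Cauchy product; instead it uses the hypergeometric representation (Lemma~\ref{lem-integral}) to write $T_k(z)$ as a constant times $x^{k+\gamma/2}\,{}_2F_1(k+\tfrac{\gamma}{2},k+\tfrac12;2k+1;x)$ with $x=\tfrac{4z}{(1+z)^2}\in(0,1]$, then applies the Euler integral for ${}_2F_1$ to factor $T_k$ as a product of three positive pieces $T_k^1\,T_k^2(x)\,T_k^3(x)$, each shown to be nonincreasing in $k$ (strictly for $T_k^1=\Gamma(\tfrac12+k)/\Gamma(k+1-\tfrac{\gamma}{2})$ via digamma monotonicity, and strictly for $T_k^3$ since the base $\tfrac{t(1-t)}{1-xt}<1$). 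Your binomial-series argument is more elementary and gives the explicit termwise ratio $a\cdot\tfrac{\gamma/2+n+k}{n+k+1}<1$, which is arguably cleaner; the paper's substitution $x=\tfrac{4z}{(1+z)^2}$ buys a uniform treatment of all $z\in(0,\infty)$ without the reflection $A_k(a)=a^{-\gamma}A_k(1/a)$ or a separate $a=1$ case. One cosmetic slip in your write-up: ``the coefficient of $a^{2n+k}$ in $A_{k+1}$'' is literally zero (wrong parity); you mean the $n$-th summand of $A_{k+1}$, and the termwise ratio carries an extra factor of $a$---neither affects the conclusion since both factors are below $1$ for $a<1$, and for $a=1$ the ratio $\tfrac{\gamma/2+n+k}{n+k+1}<1$ already suffices.
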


\begin{proof}
Let us define the auxiliary function
\begin{equation}\label{T-k}
T_{k}(z):=\int_0^{2\pi}\frac{\cos(k\eta)d\eta}{\{z^2+1-2z\cos(\eta)\}^\frac{\gamma}{2}},
\end{equation}
and then $K_k$ can be written as
$$
K_{k}(z)=\frac{C_\gamma}{\mu 4\pi C_0(\alpha-1)}z^{-\frac{\alpha-1}{1+\alpha}} T_{k}(z^{\frac{-1}{1+\alpha}}).
$$
Hence, in order to study the monotonicity of $k\mapsto K_k(z)$ with $z\in(0,\infty)$, it reduces to study the monotonicity of $k\mapsto T_k(z)$ with $z\in(0,\infty)$. Moreover, by using Lemma \ref{lem-integral} we can write $T_k$ in terms of the Gauss Hypergeometric function as follows
\begin{equation}\label{T-k-hyp}
T_k(z)=\frac{2\pi \left(\frac{\gamma}{2}\right)_k 2^k \left(\frac12\right)_k}{(2k)!}\frac{1}{(2z)^\frac{\gamma}{2}}\left(\frac{2z}{(1+z)^2}\right)^{k+\frac{\gamma}{2}}{}_2F_1\left(k+\frac{\gamma}{2},k+\frac12,2k+1,\frac{4z}{(1+z)^2}\right).
\end{equation}
Denoting
$$
x:=\frac{4z}{(1+z)^2},
$$
where note that $x\in[0,1]$, and by virtue of the integral representation of the Gauss Hypergeometric function \eqref{hypergeom-integral} we obtain
\begin{align*}
T_k(z)=&\frac{2\pi \left(\frac{\gamma}{2}\right)_k 2^k \left(\frac12\right)_k}{(2k)!2^{k+\frac{\gamma}{2}}}\frac{1}{(2z)^\frac{\gamma}{2}}x^{k+\frac{\gamma}{2}}{}_2F_1\left(k+\frac{\gamma}{2},k+\frac12,2k+1,x\right)\\
=&\frac{2\pi \left(\frac{\gamma}{2}\right)_k 2^k \left(\frac12\right)_k}{(2k)!2^{k+\frac{\gamma}{2}}}\frac{1}{(2z)^\frac{\gamma}{2}}x^{k+\frac{\gamma}{2}}\frac{(2k)!}{\Gamma(k+\frac{\gamma}{2})\Gamma(k+1-\frac{\gamma}{2})}\int_0^1 t^{k+\frac{\gamma}{2}-1}(1-t)^{k-\frac{\gamma}{2}}(1-xt)^{-k-\frac12}dt\\
=&\frac{2\pi }{\Gamma(\frac{\gamma}{2})\Gamma(\frac12)2^\gamma z^\frac{\gamma}{2}}\frac{\Gamma(\frac12+k)}{\Gamma(k+1-\frac{\gamma}{2})}x^{k+\frac{\gamma}{2}}\int_0^1 t^{k+\frac{\gamma}{2}-1}(1-t)^{k-\frac{\gamma}{2}}(1-xt)^{-k-\frac12}dt\\
=:&\frac{2\pi }{\Gamma(\frac{\gamma}{2})\Gamma(\frac12)2^\gamma z^\frac{\gamma}{2}}T^1_kT^2_k(x)T^3_k(x),
\end{align*}
with
\begin{align*}
T^1_k:=&\frac{\Gamma(\frac12+k)}{\Gamma(k+1-\frac{\gamma}{2})},\\
T^2_k(x):=&x^{k+\frac{\gamma}{2}},\\
T^3_k(x):=&\int_0^1 t^{k+\frac{\gamma}{2}-1}(1-t)^{k-\frac{\gamma}{2}}(1-xt)^{-k-\frac12}dt.
\end{align*}
Note that $T^1_k, T^2_k, T^3_k$ are positive functions for $x\in[0,1]$. Moreover, $k\mapsto T^2_k(x)$ is strictly decreasing for $x\in(0,1)$. The last function $T^3_k$ can be written as
$$
T^3_k(x)=\int_0^1 t^{\frac{\gamma}{2}-1}(1-t)^{-\frac{\gamma}{2}}(1-xt)^{-\frac12}\left(\frac{t(1-t)}{1-xt}\right)^kdt,
$$
obtaining that $k\mapsto T^3_k(x)$ is strictly decreasing for $x\in(0,1)$ since $\frac{t(1-t)}{1-xt}\in(0,1)$. For the first term $T^1_k$ let us differentiate in $k$ obtaining
$$
\partial_k T^1_k=\frac{\Gamma'(\frac12+k)\Gamma(k+1-\frac{\gamma}{2})-\Gamma(\frac12+k)\Gamma'(k+1-\frac{\gamma}{2})}{\Gamma(k+1-\frac{\gamma}{2})^2}.
$$
We claim that 
\begin{equation}\label{claim-0}\partial_k T^1_k<0.\end{equation} Indeed, that condition agrees with
$$
\Gamma'(\frac12+k)\Gamma(k+1-\frac{\gamma}{2})<\Gamma(\frac12+k)\Gamma'(k+1-\frac{\gamma}{2}),
$$
which agrees with
$$
\psi^0(\frac12+k)=\frac{\Gamma'(\frac12+k)}{\Gamma(\frac12+k)}<\frac{\Gamma'(k+1-\frac{\gamma}{2})}{\Gamma(k+1-\frac{\gamma}{2})}=\psi^0(k+1-\frac{\gamma}{2}),
$$
where $\psi^0$ is the digamma function which is increasing. Hence, our claim \eqref{claim-0} holds true. As a consequence, we get that $k\mapsto T^1_kT^2_k(x)T^3_k(x)$ strictly decreases for $x\in(0,1)$, obtaining the announced result. 
\end{proof}

\subsection{Mellin transform}
Mellin transforms are very useful when dealing with integral differential equations where the integral operator is of the type \eqref{J-k} due to the product property \eqref{mellin-1}:
\begin{equation}\label{mellin-J}
M[J_k](s)=M[K](s)M[g_k](s+1).
\end{equation}
Hence, applying the Mellin transform to the expression given in Proposition \ref{prop-kernel-mellin-1} we obtain the following equivalent equation.

\begin{pro}
The equation $\tilde{\mathcal{L}}_{k}(g_k)=0$ agrees with 
\begin{align}\label{eq-mellin-2}
M[\tilde{\mathcal{L}}_{k}(g_k)](s)=\frac{1+\alpha}{2}\left(1-s\right)G_k(s)-ik\left(\tilde{K}_{k}(s)-\frac{1+\alpha}{2}\right) G_k(s+1)=0,
\end{align}
where $G_k=M[g_k]$ and $\tilde{K}_{k}=M[K_{k}]$.
\end{pro}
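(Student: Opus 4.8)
The plan is to apply the Mellin transform termwise to the identity $\tilde{\mathcal{L}}_k(g_k)=0$ from Proposition \ref{prop-kernel-mellin-1} and to identify the image of each of the four summands. Recall that
$$
\tilde{\mathcal{L}}_k(g_k)(\beta)=\frac{1+\alpha}{2}g_k(\beta)+ik\frac{1+\alpha}{2}\beta g_k(\beta)+\frac{1+\alpha}{2}\beta g_k'(\beta)-ikJ_k(g_k)(\beta),
$$
so it suffices to treat the terms $g_k$, $\beta g_k$, $\beta g_k'$, and $J_k$ separately, using linearity of $M$. First I would record the elementary Mellin rules: $M[g_k](s)=G_k(s)$ by definition; the multiplication-by-$\beta$ rule $M[\beta g_k](s)=G_k(s+1)$ (shift in the argument); and the differentiation rule $M[\beta g_k'](s)=-sG_k(s)$, which follows from integration by parts in $\int_0^\infty \beta g_k'(\beta)\beta^{s-1}d\beta = [\,\beta^s g_k(\beta)\,]_0^\infty - s\int_0^\infty g_k(\beta)\beta^{s-1}d\beta$, the boundary terms vanishing because $g_k=e^{-ik\beta}h_k$ with $h_k\in\mathcal{X}_0$ forces $g_k\sim\beta^{\sigma}$ near $0$ and $g_k\sim\beta^{-\sigma}$ near $\infty$, so that $\beta^s g_k(\beta)\to 0$ at both ends for $s$ in the relevant vertical strip. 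For the convolution term, $J_k$ is a multiplicative convolution against the kernel $K_k$: from \eqref{J-k}, $J_k(\beta)=\int_0^\infty g_k(b)K_k(\beta/b)\,db$, so the Mellin product property \eqref{mellin-J} gives $M[J_k](s)=M[K_k](s)\,M[g_k](s+1)=\tilde{K}_k(s)G_k(s+1)$.

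Assembling these four contributions yields
$$
M[\tilde{\mathcal{L}}_k(g_k)](s)=\frac{1+\alpha}{2}G_k(s)+ik\frac{1+\alpha}{2}G_k(s+1)-\frac{1+\alpha}{2}sG_k(s)-ik\tilde{K}_k(s)G_k(s+1),
$$
and grouping the $G_k(s)$ terms and the $G_k(s+1)$ terms separately gives exactly
$$
M[\tilde{\mathcal{L}}_k(g_k)](s)=\frac{1+\alpha}{2}(1-s)G_k(s)-ik\Big(\tilde{K}_k(s)-\frac{1+\alpha}{2}\Big)G_k(s+1),
$$
which is the claimed identity \eqref{eq-mellin-2}. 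Since $\tilde{\mathcal{L}}_k(g_k)=0$ if and only if its Mellin transform vanishes identically on the strip where everything is defined, this establishes the equivalence.

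The only genuine subtlety — and the step I would treat most carefully — is justifying that the Mellin transforms involved actually converge on a common nonempty vertical strip $\{a<\mathrm{Re}\,s<b\}$ and that the convolution/product property applies there. Concretely one needs: (i) from $g_k\in\mathcal{X}_0$ (equivalently $h_k\in\mathcal{X}_0$), the decay estimates $|g_k(\beta)|\lesssim \beta^\sigma(1+\beta)^{-2\sigma}$ ensure $G_k(s)$ is holomorphic for $-\sigma<\mathrm{Re}\,s<\sigma$, with the shift $G_k(s+1)$ holomorphic for $-1-\sigma<\mathrm{Re}\,s<\sigma-1$; (ii) a decay/growth estimate on $K_k(z)$ as $z\to 0$ and $z\to\infty$ — read off from its explicit expression $K_k(z)=\tfrac{C_\gamma}{\mu 4\pi C_0(\alpha-1)}z^{-\frac{\alpha-1}{1+\alpha}}T_k(z^{-1/(1+\alpha)})$ together with the hypergeometric representation \eqref{T-k-hyp} — to locate the strip of holomorphy of $\tilde{K}_k$; and (iii) Fubini/absolute convergence to justify $M[K_k\ast g_k]=\tilde{K}_k\cdot(\text{shifted }G_k)$ on the overlap. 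I expect the strips to overlap precisely because the exponent constraints \eqref{sigma-rel} relating $\sigma,\alpha,\gamma$ were imposed earlier exactly to make the various weighted integrals converge; the boundary-term vanishing in the integration by parts for $M[\beta g_k']$ uses the same decay. Once the common strip is pinned down, the computation above is purely formal and the proposition follows; the recurrence \eqref{eq-mellin-2} is then the object analyzed in the subsequent subsections.
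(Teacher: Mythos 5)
Your proof is correct and follows essentially the same route as the paper, which simply cites the Mellin transform properties \eqref{mellin-1}--\eqref{mellin-3} and \eqref{mellin-J} and leaves the termwise bookkeeping to the reader. Your explicit integration by parts for $M[\beta g_k'](s)=-sG_k(s)$ is just an unwound version of the paper's rules \eqref{mellin-2} and \eqref{mellin-3}, and your discussion of the common strip of convergence is a helpful (if implicit in the paper) justification, consistent with the remark in the paper that $M[g_k]$ is well defined for $-\sigma<\textnormal{Re}(s)<\sigma$.
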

\begin{proof}
The proof is based on the properties \eqref{mellin-1}--\eqref{mellin-3} together with \eqref{mellin-J}.
\end{proof}
\begin{rem}
Given $h_k\in \mathcal{X}_0^{\sigma,\delta}$, then $M[g_k]$ is well defined in $$\{s\in\C, \quad -\sigma<\textnormal{Re}(s)<\sigma\}.$$
\end{rem}

\begin{rem}
Let us again remark that the kernel $K_k$ is symmetric with respect to $k$: $K_{-k}=K_k$. Hence, in order to work with $k\in \Z/\N$ we can do the change $k\mapsto -k$. In that way, the kernel equation \eqref{eq-mellin-2} reads as
\begin{align*}
M[\tilde{\mathcal{L}}_{k}(g_k)](s)=\frac{1+\alpha}{2}\left(1-s\right)G_k(s)+ik\left(\tilde{K}_{k}(s)-\frac{1+\alpha}{2}\right) G_k(s+1)=0,
\end{align*}
with $k\in\N^\star$.
\end{rem}


In that way, Equation \eqref{eq-mellin-2} can be written as
\begin{equation}\label{kernel-eq-2}
G_k(s+1)=\frac{1}{k} F_{k}(s) G_k(s),
\end{equation}
where
$$
F_{k}(s):=i\frac{1-s}{(1-\frac{2}{1+\alpha}\tilde{K}_{k}(s))}.
$$
In the following, let us give a better expression for $F_{k}$ together with some properties. As a consequence, we shall look for the values of $s$ such that \eqref{kernel-eq-2} is well-defined.

\begin{pro}\label{prop-mellinK-2}
The Mellin transform of $K_k$ can be written as follows
$$
\tilde{K}_{k}(s)=\frac{C_\gamma(1+\alpha)}{\mu 4\pi C_0(\alpha-1)} M[T_{k}(z)](-(1+\alpha)s+\alpha-1),
$$
for 
$$
\frac{\alpha-1-k-\gamma}{1+\alpha}<s<\frac{\alpha-1+k}{1+\alpha},
$$
and where $T_k$ is defined in \eqref{T-k}. Moreover, 
$$
\lim_{s\rightarrow \frac{\alpha-1-k-\gamma}{1+\alpha}}\tilde{K}_{k}(s)=\lim_{s\rightarrow \frac{\alpha-1+k}{1+\alpha}}\tilde{K}_{k}(s)=+\infty,
$$
for $k\in\N^\star$.
\end{pro}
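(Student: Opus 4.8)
The plan is to reduce the Mellin transform of $K_k$ to that of the much simpler kernel $T_k$ introduced in \eqref{T-k}, exploiting the identity
\[
K_k(z)=\frac{C_\gamma}{\mu 4\pi C_0(\alpha-1)}\,z^{-\frac{\alpha-1}{1+\alpha}}\,T_k\!\left(z^{-\frac{1}{1+\alpha}}\right),
\]
which is already recorded in the proof of Proposition \ref{pro-monotonicity-K}. First I would write $\tilde K_k(s)=\int_0^\infty K_k(z)\,z^{s-1}\,dz$ and perform the substitution $w=z^{-\frac{1}{1+\alpha}}$, so that $z=w^{-(1+\alpha)}$, $dz=-(1+\alpha)\,w^{-(2+\alpha)}\,dw$, and the endpoints $z=0,+\infty$ correspond to $w=+\infty,0$. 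Collecting the powers of $w$ (the total exponent works out to $-(1+\alpha)s+\alpha-2$) gives $\tilde K_k(s)=\frac{C_\gamma(1+\alpha)}{\mu 4\pi C_0(\alpha-1)}\int_0^\infty T_k(w)\,w^{-(1+\alpha)s+\alpha-2}\,dw$, which, identifying the Mellin exponent $\xi-1=-(1+\alpha)s+\alpha-2$, is exactly $\frac{C_\gamma(1+\alpha)}{\mu 4\pi C_0(\alpha-1)}\,M[T_k]\big(-(1+\alpha)s+\alpha-1\big)$. This is a routine change of variables; the substance of the proposition lies in pinning down the convergence strip of $M[T_k]$ and its boundary behaviour.

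For the strip, I would extract the asymptotics of $T_k(w)$ at $0$ and $+\infty$ from the hypergeometric representation \eqref{T-k-hyp}. As $w\to 0$ the argument $\tfrac{4w}{(1+w)^2}$ of the ${}_2F_1$ tends to $0$ and the ${}_2F_1$ to $1$, yielding $T_k(w)\sim c_k\,w^{k}$ with $c_k>0$ (positivity being visible from the Pochhammer factors $(\gamma/2)_k,(1/2)_k$). The functional symmetry $T_k(1/w)=w^{\gamma}T_k(w)$ — immediate from factoring $w^{-2}$ out of $w^{-2}+1-2w^{-1}\cos\eta$ in \eqref{T-k} — then forces $T_k(w)\sim c_k\,w^{-\gamma-k}$ as $w\to+\infty$. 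Since $T_k$ is continuous and strictly positive on $(0,\infty)$ with no interior singularity (the $\eta$-integral stays finite even at $w=1$ because $\gamma<1$), the integral $\int_0^\infty T_k(w)\,w^{\xi-1}\,dw$ converges precisely when $-k<\xi<\gamma+k$; substituting $\xi=-(1+\alpha)s+\alpha-1$ turns this into $\frac{\alpha-1-k-\gamma}{1+\alpha}<s<\frac{\alpha-1+k}{1+\alpha}$, as claimed.

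The endpoint divergence is then a consequence of positivity together with monotone convergence: as $\xi\to -k$ (i.e. $s\to\frac{\alpha-1+k}{1+\alpha}$) the integrand near $w=0$ behaves like $c_k\,w^{-1}$, so $\int_0^1 T_k(w)w^{\xi-1}\,dw\to+\infty$ while $\int_1^\infty T_k(w)w^{\xi-1}\,dw$ stays finite (since $-k<\gamma+k$); symmetrically, at $\xi\to\gamma+k$ the divergence comes from the tail near $w=+\infty$. Hence $\tilde K_k(s)\to+\infty$ at both endpoints. The main point requiring care is the asymptotic step — one must verify that the leading coefficient $c_k$ in the $w\to 0$ expansion is genuinely nonzero (hence positive), which is exactly where \eqref{T-k-hyp}, or alternatively the orthogonality of $\cos(k\eta)$ against the lower trigonometric modes in the Gegenbauer expansion of the denominator, is used; the remainder is bookkeeping with the Mellin exponent and an application of the monotone convergence theorem.
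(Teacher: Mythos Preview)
Your proof is correct and follows essentially the same line as the paper's. The paper derives the Mellin identity by quoting the transform rules \eqref{mellin-2} and \eqref{mellin-5} rather than doing the substitution $w=z^{-1/(1+\alpha)}$ by hand, but this is the same computation. For the strip and the endpoint divergence, the paper also uses the hypergeometric representation \eqref{T-k-hyp}, but instead of extracting asymptotics at $0$ and $\infty$ it bounds ${}_2F_1\big(k+\tfrac\gamma2,k+\tfrac12;2k+1;\cdot\big)$ uniformly between $1$ and its (finite) value at $1$, thereby sandwiching $M[T_k](\xi)$ between two Beta-type integrals $\int_0^\infty \frac{z^{\xi-1+k}}{(1+z)^{\gamma+2k}}\,dz$; the strip $(-k,\gamma+k)$ and the blow-up at the endpoints then come directly from that explicit integral. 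Your route via $T_k(w)\sim c_k w^{k}$ at $0$ together with the functional symmetry $T_k(1/w)=w^{\gamma}T_k(w)$ is an equally clean way to reach the same conclusion, and the symmetry is in fact used elsewhere in the paper (Proposition~\ref{Ktilde-properties}).
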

\begin{proof}
Recall the expression for $K_k$:
$$
K_{k}(z)=\frac{C_\gamma}{\mu 4\pi C_0(\alpha-1)}\int_0^{2\pi}\frac{z^{-\frac{\alpha-1}{1+\alpha}}\cos(k\eta)d\eta}{\{z^{\frac{-2}{1+\alpha}}+1-2z^{\frac{-1}{1+\alpha}}\cos(\eta)\}^\frac{\gamma}{2}}.
$$
Using $T_k$ in \eqref{T-k}  we have
$$
K_{k}(z)=\frac{C_\gamma}{\mu 4\pi C_0(\alpha-1)}z^{-\frac{\alpha-1}{1+\alpha}} T_{k}(z^{\frac{-1}{1+\alpha}}).
$$
Now, let us use some properties about the Mellin transform described in Appendix \ref{ap-special}. By \eqref{mellin-2} we get
$$
M[K_{k}](s)=\tilde{K}_{k}(s)=\frac{C_\gamma}{\mu 4\pi C_0(\alpha-1)} M[T_{k}(z^{\frac{-1}{1+\alpha}})]\left(s-\frac{\alpha-1}{1+\alpha}\right).
$$
Moreover, by \eqref{mellin-5} one finds
$$
M[T_{k}(z^{\frac{-1}{1+\alpha}})](s)=(1+\alpha)M[T_{k}(z)](-(1+\alpha)s),
$$
and hence
$$
M[T_{k}(z^{\frac{-1}{1+\alpha}})](s-\frac{\alpha-1}{1+\alpha})=(1+\alpha)M[T_{k}(z)](-(1+\alpha)s+\alpha-1),
$$
which implies
\begin{equation}\label{Ktilde-3}
\tilde{K}_{k}(s)=\frac{C_\gamma(1+\alpha)}{\mu 4\pi C_0(\alpha-1)} M[T_{k}(z)](-(1+\alpha)s+\alpha-1).
\end{equation}

Let us check where $M[T_{k}(z)](s)$ is well-defined. Since $T_k$ can be related to the Gauss Hypergeometric function by using Lemma \ref{lem-integral}, we get
\begin{align*}
M[T_{k}](s)=&\int_0^\infty \int_0^{2\pi}\frac{z^{s-1}\cos(k\eta)d\eta dz}{\left\{z^2+1-2z\cos(\eta)\right\}^\frac{\gamma}{2}}\\
=&\frac{2\pi 2^{2k}\left(\frac{\gamma}{2}\right)_{k}\left(\frac{1}{2}\right)_{k}}{(2k)!}\int_0^\infty \frac{z^{s-1+k}}{(1+z)^{\gamma+2k}}F\left(k+\frac{\gamma}{2},k+\frac12,2k+1,\frac{4z}{(1+z)^2}\right)dz.
\end{align*}
Since the parameters of the Gauss Hypergeometric function are positive, this function increases and then  for any $k\in\N^\star$, $\gamma\in(0,1)$ and $z\in(0+\infty)$ one has
$$
1<F\left(k+\frac{\gamma}{2},k+\frac12,2k+1,\frac{4z}{(1+z)^2}\right)<F\left(k+\frac{\gamma}{2},k+\frac12,2k+1,1\right)<+\infty.
$$
Hence $M[T_{k}](s)$ is well-defined for $s\in (-k,k+\gamma)$, where that condition comes form the integrability of the integral. Moreover
\begin{align}\label{MT-bound}
M[T_{k}](s)\geq \frac{2\pi 2^{2k}\left(\frac{\gamma}{2}\right)_{k}\left(\frac{1}{2}\right)_{k}}{(2k)!}\int_0^\infty \frac{z^{s-1+k}}{(1+z)^{\gamma+2k}}dz,
\end{align}
implying
$$
\lim_{s\rightarrow -k}T_{k}(s)=\lim_{s\rightarrow k+\gamma}T_{k}(s)=+\infty.
$$

Coming back to $\tilde{K}$ through \eqref{Ktilde-3}, we get that $\tilde{K}$ is well-defined if
$$
\frac{\alpha-1-k-\gamma}{1+\alpha}<s<\frac{\alpha-1+k}{1+\alpha},
$$
and 
$$
\lim_{s\rightarrow \frac{\alpha-1-k-\gamma}{1+\alpha}}\tilde{K}(s)=\lim_{s\rightarrow \frac{\alpha-1+k}{1+\alpha}}\tilde{K}(s)=+\infty.
$$
\end{proof}

As a consequence of Proposition \ref{pro-monotonicity-K} we get the monotonicity of the Mellin transform of $K$, that is, $\tilde{K}$:
\begin{cor}\label{Cor-monotonicity-k}
For any $s\in[0,\frac{\alpha-1+k}{1+\alpha})$, the sequence $k\in \N^\star\mapsto \tilde{K}_k(s)$ is strictly decreasing.
\end{cor}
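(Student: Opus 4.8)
The plan is to transfer the pointwise monotonicity established in Proposition \ref{pro-monotonicity-K} through the Mellin integral. Recall from Proposition \ref{prop-mellinK-2} that for each $k\in\N^\star$ the Mellin transform is the absolutely convergent integral
\begin{equation*}
\tilde{K}_k(s)=\int_0^\infty z^{s-1}K_k(z)\,dz,\qquad \frac{\alpha-1-k-\gamma}{1+\alpha}<s<\frac{\alpha-1+k}{1+\alpha}.
\end{equation*}
First I would record two positivity facts. On the one hand, $K_k(z)>0$ for every $z\in(0,\infty)$: indeed $K_k(z)=\frac{C_\gamma}{\mu 4\pi C_0(\alpha-1)}z^{-\frac{\alpha-1}{1+\alpha}}T_k(z^{-\frac{1}{1+\alpha}})$, and the Gauss hypergeometric representation \eqref{T-k-hyp} of $T_k$ exhibits it as a product of strictly positive quantities. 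On the other hand, for $s\geq 0$ the integrand $z^{s-1}K_k(z)$ is nonnegative, so for $s$ in the strip above the integral defining $\tilde K_k(s)$ is a genuine Lebesgue integral of a nonnegative, not identically zero, function.

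Next I would note that the Mellin transforms to be compared are simultaneously finite. Fix $s$ with $0\le s<\frac{\alpha-1+k}{1+\alpha}$. Since $j\mapsto\frac{\alpha-1+j}{1+\alpha}$ is strictly increasing and $j\mapsto\frac{\alpha-1-j-\gamma}{1+\alpha}$ is strictly decreasing, the point $s$ also lies in the convergence strip of $\tilde K_j$ for every $j\ge k$; hence $\tilde K_k(s),\tilde K_{k+1}(s),\dots$ are all finite. By Proposition \ref{pro-monotonicity-K}, $K_{j+1}(z)<K_j(z)$ for all $z\in(0,\infty)$, so $K_j-K_{j+1}$ is a strictly positive continuous function on $(0,\infty)$. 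Multiplying by the nonnegative weight $z^{s-1}$ and integrating gives
\begin{equation*}
\tilde K_j(s)-\tilde K_{j+1}(s)=\int_0^\infty z^{s-1}\bigl(K_j(z)-K_{j+1}(z)\bigr)\,dz>0,
\end{equation*}
which is precisely the claimed strict monotonicity $\tilde K_{j+1}(s)<\tilde K_j(s)$ for every $j\ge k$, and hence that the sequence $k\mapsto\tilde K_k(s)$ is strictly decreasing on the range where it is defined.

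The argument is essentially immediate once Proposition \ref{pro-monotonicity-K} is available; the only points requiring a little care are the bookkeeping of the $k$-dependent strips of absolute convergence, so that the inequality is asserted between finite quantities, and the verification that $K_k$ (equivalently $T_k$) is pointwise strictly positive, which one reads off from the integral/hypergeometric representation of $T_k$. I do not expect any serious obstacle here, and no further estimates are needed.
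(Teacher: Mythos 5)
Your argument is correct and is exactly the implication the paper has in mind when it states the corollary as an immediate consequence of Proposition~\ref{pro-monotonicity-K}: integrate the pointwise strict inequality $K_{k+1}(z)<K_k(z)$ against the positive Mellin weight $z^{s-1}$ over $(0,\infty)$, after checking that both transforms are finite at $s$. (One small remark: the restriction ``for $s\geq 0$'' in your positivity observation is superfluous, since $z^{s-1}>0$ for all $z>0$ and all real $s$; the hypothesis $s\geq 0$ in the corollary is there only so that $s$ lies inside every convergence strip, as you note.)
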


In the following proposition, we give the monotonicity of $\tilde{K}_k$ with respect to $s$.

\begin{pro}\label{Ktilde-properties}
The function $s\in[0,\frac{\alpha-1+k}{1+\alpha})\mapsto \tilde{K}_{k}(s)$ increases, for any $k\in\N^\star$. Moreover, $\frac{2}{1+\alpha}\tilde{K}_{k}(0)<1$.
\end{pro}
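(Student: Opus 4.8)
The plan is to reduce both statements to properties of the Mellin transform $M[T_k]$, exploiting the representation of Proposition~\ref{prop-mellinK-2},
\[
\tilde K_{k}(s)=C_\star\,M[T_k]\big(\alpha-1-(1+\alpha)s\big),\qquad C_\star:=\frac{C_\gamma(1+\alpha)}{\mu\,4\pi C_0(\alpha-1)}>0,
\]
so that, since $s\mapsto u(s):=\alpha-1-(1+\alpha)s$ is affine and strictly decreasing, the monotonicity of $\tilde K_k$ is governed by the sign of $(M[T_k])'$, while $\tilde K_k(0)=C_\star\,M[T_k](\alpha-1)$. The first ingredient is the functional equation $T_k(1/z)=z^{\gamma}T_k(z)$ for $z>0$, which comes either directly from $(1/z)^2+1-2(1/z)\cos\eta=z^{-2}(z^2+1-2z\cos\eta)$ inside the integral defining $T_k$, or from \eqref{T-k-hyp} and the invariance of $\tfrac{4z}{(1+z)^2}$ under $z\mapsto1/z$. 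Performing $z\mapsto1/z$ in $M[T_k](u)=\int_0^\infty z^{u-1}T_k(z)\,dz$ then yields $M[T_k](u)=M[T_k](\gamma-u)$, so $M[T_k]$ — and its domain $(-k,k+\gamma)$ — is symmetric about $u=\gamma/2$. Since also $M[T_k]$ is strictly convex there, as $(M[T_k])''(u)=\int_0^\infty z^{u-1}(\log z)^2T_k(z)\,dz>0$, the symmetry forces $(M[T_k])'(\gamma/2)=0$, hence $M[T_k]$ is strictly decreasing on $(-k,\gamma/2]$ and strictly increasing on $[\gamma/2,k+\gamma)$. For $s\ge0$ the argument $u(s)\le\alpha-1$ stays in the branch on which $M[T_k]$ decreases, so $\tilde K_k=C_\star\,M[T_k]\circ u$ is strictly increasing on $[0,\tfrac{\alpha-1+k}{1+\alpha})$, tending to $+\infty$ at the right endpoint because, by Proposition~\ref{prop-mellinK-2}, $M[T_k]\to+\infty$ as its argument tends to $-k$.

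For the bound $\tfrac{2}{1+\alpha}\tilde K_k(0)<1$, the engine is an exact value of the $k=0$ transform (here $M[T_0]$ is finite on $(0,\gamma)$, which contains $\alpha-1$). Evaluating the radial identity of Corollary~\ref{cor-radial} at $|x|=1$ gives $C_0=\tfrac{C_\gamma}{2\pi}\int_{\R^2}|y|^{-1/\mu}|x-y|^{-\gamma}\,dA(y)$, and in polar coordinates this integral equals $\int_0^\infty\rho^{1-1/\mu}T_0(\rho)\,d\rho=M[T_0](2-1/\mu)$. As $2-\tfrac{1}{\mu}=\gamma-(\alpha-1)$, the reflection symmetry gives $M[T_0](2-1/\mu)=M[T_0](\alpha-1)$, hence $M[T_0](\alpha-1)=\tfrac{2\pi C_0}{C_\gamma}$ and
\[
\frac{2}{1+\alpha}\,\tilde K_0(0)=\frac{1}{\mu(\alpha-1)}=\frac{1+\alpha-\gamma}{\alpha-1}>1,
\]
because $2-\gamma>0$. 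By Corollary~\ref{Cor-monotonicity-k} the sequence $k\mapsto\tilde K_k(0)$ is strictly decreasing, so it remains to prove the single quantitative gap
\[
\frac{M[T_1](\alpha-1)}{M[T_0](\alpha-1)}<\mu(\alpha-1)=\frac{\alpha-1}{1+\alpha-\gamma}\,,
\]
which is exactly $\tfrac{2}{1+\alpha}\tilde K_1(0)<1$ and, by the $k$-monotonicity, gives the full claim. I would establish it by making $M[T_k](\alpha-1)$ explicit: inserting \eqref{T-k-hyp} and the Euler integral representation of ${}_2F_1$ and integrating in $z$ turns $M[T_k](\alpha-1)$ into a product of Beta functions (a balanced ${}_3F_2$ at $1$), whereupon the displayed inequality reduces to a comparison of products of Gamma functions, provable from log-convexity of $\Gamma$, i.e.\ monotonicity of the digamma $\psi^0$ already used in Proposition~\ref{pro-monotonicity-K}. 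The soft estimate $M[T_1]<M[T_0]$ (immediate from $T_0-T_1=\int_0^{2\pi}(1-\cos\eta)(z^2+1-2z\cos\eta)^{-\gamma/2}\,d\eta>0$) does not suffice, since one needs the \emph{specific} factor $\tfrac{\alpha-1}{1+\alpha-\gamma}<1$.

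The main obstacle is precisely this quantitative gap: turning the qualitative decrease $\tilde K_1(0)<\tilde K_0(0)$ into the sharp-enough inequality $\tfrac{2}{1+\alpha}\tilde K_1(0)<1$ uniformly over $\gamma\in(0,1)$ and $\alpha\in(1,1+\gamma)$, which forces one to compute the Gamma-function form of $M[T_k](\alpha-1)$ and verify a genuine inequality rather than a monotonicity. A secondary care point in the first part is tracking the position of $\gamma/2$ relative to $\alpha-1$ when pinning down the exact subinterval of $[0,\tfrac{\alpha-1+k}{1+\alpha})$ on which $\tilde K_k$ increases.
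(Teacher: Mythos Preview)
Your monotonicity argument for $s\mapsto\tilde K_k(s)$ is essentially the paper's: both rest on the functional equation $T_k(1/z)=z^\gamma T_k(z)$, which yields the reflection $M[T_k](u)=M[T_k](\gamma-u)$; the paper then splits $M[T_k](u)=\int_0^1 T_k(z)(z^{u-1}+z^{\gamma-u-1})\,dz$ and checks the sign of the derivative directly, while you phrase the same thing as convexity-plus-symmetry. The content and the range of validity are identical, and your flagged ``secondary care point'' about $\alpha-1$ versus $\gamma/2$ is exactly the condition $\gamma-2s>0$ the paper invokes.

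The genuine gap is in the bound $\tfrac{2}{1+\alpha}\tilde K_1(0)<1$. You correctly reduce to $k=1$, correctly compute $\tfrac{2}{1+\alpha}\tilde K_0(0)=\tfrac{1}{\mu(\alpha-1)}$ via Proposition~\ref{prop-C0-2} and the reflection, and correctly isolate the target $M[T_1](\alpha-1)/M[T_0](\alpha-1)<\mu(\alpha-1)$---but you do not prove it, only sketch a plan (explicit Gamma-function form of $M[T_1](\alpha-1)$ followed by a digamma comparison) and explicitly label it the ``main obstacle''. The idea you are missing, and which the paper supplies, is a further reduction \emph{in $\alpha$}: writing $\tfrac{2}{1+\alpha}\tilde K_1(0)=\tfrac{(1+\alpha-\gamma)}{(\alpha-1)M[T_0](1-\alpha+\gamma)}\,M[T_1](\alpha-1)$, the paper shows this is strictly decreasing in $\alpha$ on $(1,1+\gamma)$ (the first factor has negative $\alpha$-derivative by a direct computation using $M[T_0]'<0$, and $\alpha\mapsto M[T_1](\alpha-1)$ decreases by the first part), so it suffices to bound the limit as $\alpha\to1^+$. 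Only at that endpoint does $M[T_1](0)$ collapse to an explicit Gamma product, after which the claim becomes a one-variable inequality $p(\gamma)<1$ that is dispatched by a digamma computation showing $p$ is increasing with $p(1)=1$. Your proposed route---evaluating $M[T_1](\alpha-1)$ at a general $\alpha$---produces a ${}_3F_2(1)$ that does not reduce to a Gamma product, so the log-convexity comparison you envision has no obvious foothold; the $\alpha$-monotonicity step is precisely what makes the endgame tractable.
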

\begin{proof}

By the expression of $\tilde{K}$ in Proposition \ref{prop-mellinK-2}, we can relate $\tilde{K}_k$ with $M[T_k]$. Hence if $s\in(-k,\alpha-1]\mapsto M[T_k](s)$ decreases, we achieve that $s\in[0,\frac{\alpha-1+k}{1+\alpha})\mapsto \tilde{K}_{k}(s)$ increases. Moreover, using the explicit expression of $T_k$ we get that $T_{k}(z^{-1})=z^\gamma T_{k}(z)$. In that way, $M[T_k]$ can be written as follows

\begin{align*}
M[T_k](s)=&\int_0^\infty z^{s-1}T_k(z)dz\\
=&\int_0^1 z^{s-1}T_k(z)dz+\int_1^\infty z^{s-1}T_k(z)dz\\
=&\int_0^1 T_k(z)(z^{s-1}+z^{1-s}z^\gamma z^{-2})dz.
\end{align*}
Then, computing its derivative we observe
\begin{align*}
M[T_k]'(s)
=&\int_0^1 \ln(z) T_k(z)(z^{s-1}-z^{1-s}z^\gamma z^{-2})dz\\
=&\int_0^1 \ln(z) T_k(z)z^{s-1}(1-z^{\gamma-2s})dz.
\end{align*}
Since $s\in(-k,\alpha-1)$ and $\alpha$ satisfies \eqref{sigma-rel}, we get that $\gamma-2s>0$ and hence $M[T_k]'(s)<0$. As a consequence $M[T_k]$ is strictly decreasing.

About the second point of the statement and by virtue of Corollary \ref{Cor-monotonicity-k}, it is enough to prove that $\frac{2}{1+\alpha}\tilde{K}_1(0)<1$. Moreover, let us check that $\alpha\mapsto \frac{2}{1+\alpha}\tilde{K}_1(0)$ strictly decreases and hence we can restrict ourselves to $\alpha=1$. From Proposition \ref{prop-kernel-mellin-1} we get
\begin{align*}
\frac{2}{1+\alpha}\tilde{K}_1(0)=\frac{C_\gamma}{\mu 2\pi C_0(\alpha-1)}M[T_1](\alpha-1).
\end{align*}
Moreover, from Proposition \ref{prop-C0-2} and the expression of $\mu$ in \eqref{mu} we get 
\begin{align*}
\frac{2}{1+\alpha}\tilde{K}_1(0)=\frac{(1+\alpha-\gamma)}{(\alpha-1) M[T_0](1-\alpha+\gamma)}M[T_1](\alpha-1).
\end{align*}

By the previous estimates concerning the monotonicity of $s\mapsto M[T_k](s)$, we have that $\alpha\in(1,1+\gamma)\mapsto M[T_1](\alpha-1)$ is strictly decreasing. Moreover, one has that $M[T_0]'(1-\alpha+\gamma)<0$ by similar arguments. Hence, we claim that
$$
g(\alpha):=\frac{(1+\alpha-\gamma)}{(\alpha-1) M[T_0](1-\alpha+\gamma)},
$$
is positive and strictly decreasing. The positivity comes from the fact that $M[T_0]$ is positive. In order to prove the monotonicity, let us differentiate:
\begin{align*}
g'(\alpha)=&\frac{(\alpha-1) M[T_0](1-\alpha+\gamma)-(1+\alpha-\gamma)M[T_0](1-\alpha+\gamma)+(\alpha-1)(1+\alpha-\gamma)M[T_0]'(1-\alpha+\gamma)}{(\alpha-1) M[T_0](1-\alpha+\gamma)^2}\\
=&\frac{-(2-\gamma) M[T_0](1-\alpha+\gamma)+(\alpha-1)(1+\alpha-\gamma)M[T_0]'(1-\alpha+\gamma)}{(\alpha-1) M[T_0](1-\alpha+\gamma)^2},
\end{align*}
which is negative since $\gamma<1$. Hence, we have proved our claim. Then
$$
\frac{2}{1+\alpha}\tilde{K}_1(0)\leq \lim_{\alpha\rightarrow 1^+} \frac{(1+\alpha-\gamma)}{(\alpha-1) M[T_0](1-\alpha+\gamma)}M[T_1](\alpha-1)= \lim_{\alpha\rightarrow 1^+} \frac{(2-\gamma)}{(\alpha-1) M[T_0](1-\alpha+\gamma)}M[T_1](0).
$$
Note that 
$$
\lim_{\alpha\rightarrow 1^+}M[T_0](1-\alpha+\gamma)=+\infty,
$$
and hence there is a competition in the denominator that we will have to solve. Note that we can again relate $M[T_0]$ with the Gauss Hypergeometric function:
\begin{align*}
M[T_0](1-\alpha+\gamma)=&2\pi \int_0^\infty \frac{z^{\gamma-\alpha}}{(1+z)^{\gamma}}{}_2F_1\left(\frac{\gamma}{2},\frac12,1,\frac{4z}{(1+z)^2}\right)dz\\
> &2\pi \int_0^\infty \frac{z^{\gamma-\alpha}}{(1+z)^{\gamma}}dz\\
=&2\pi  \frac{\Gamma(1+\gamma-\alpha)\Gamma(\alpha-1)}{\Gamma(\gamma)}.
\end{align*}
Since $x\Gamma(x)=\Gamma(x+1)$ we get
$$
\lim_{\alpha\rightarrow 1}(\alpha-1)\Gamma(\alpha-1)=1,
$$
and hence
$$
\lim_{\alpha\rightarrow 1^+} \frac{(2-\gamma)}{(\alpha-1) M[T_0](1-\alpha+\gamma)}M[T_1](0)< \frac{(2-\gamma)}{2\pi}M[T_1](0).
$$
Using now the expression of $T_1$ in terms of the Gauss Hypergeometric function \eqref{T-k-hyp} we find
\begin{align*}
M[T_1](0)=\pi \gamma \int_0^\infty \frac{1}{(1+z)^{2+\gamma}}{}_2F_1\left(1+\frac{\gamma}{2},\frac32, 3, \frac{4z}{(1+z)^2}\right)dz.
\end{align*}
By writing the Gauss Hypergeometric function in series, integrating and summing again we find the following expression
\begin{align*}
M[T_1](0)=\pi \gamma \frac{\Gamma(-\frac{\gamma}{2})\Gamma(\frac{\gamma-1}{2})}{\Gamma(\frac{1-\gamma}{2})\Gamma(\frac{\gamma}{2})}.
\end{align*}
Using that
$$
\Gamma(z)=-\frac{\Gamma(-z)\Gamma(1+z)}{\Gamma(1-z)},
$$
we find
\begin{align*}
M[T_1](0)=\pi \gamma \frac{\Gamma(1-\frac{\gamma}{2})\Gamma(\frac{\gamma+1}{2})}{\Gamma(1+\frac{\gamma}{2})\Gamma(\frac32-\frac{\gamma}{2})},
\end{align*}
and hence
$$
\lim_{\alpha\rightarrow 1^+} \frac{(2-\gamma)}{(\alpha-1) M[T_0](1-\alpha+\gamma)}M[T_1](0)< \frac{(2-\gamma)\gamma}{2} \frac{\Gamma(1-\frac{\gamma}{2})\Gamma(\frac{\gamma+1}{2})}{\Gamma(1+\frac{\gamma}{2})\Gamma(\frac32-\frac{\gamma}{2})}=:p(\gamma).
$$
Note that $p(1)=1$ and if we check that $p$ is strictly increasing, we can arrive to the announced result given that:
$$
\lim_{\alpha\rightarrow 1^+} \frac{(2-\gamma)}{(\alpha-1) M[T_0](1-\alpha+\gamma)}M[T_1](0)< \frac{(2-\gamma)\gamma}{2} \frac{\Gamma(1-\frac{\gamma}{2})\Gamma(\frac{\gamma+1}{2})}{\Gamma(1+\frac{\gamma}{2})\Gamma(\frac32-\frac{\gamma}{2})}<p(1)=1,
$$
for any $\gamma\in(0,1)$.

Hence, it remains to check that $p$ is strictly increasing. Note that $p$ can be written as
$$
p(\gamma)=2\frac{\Gamma(2-\frac{\gamma}{2})\Gamma(\frac{1+\gamma}{2})}{\Gamma(\frac{3-\gamma}{2})\Gamma(\frac{\gamma}{2})},
$$
and whose derivative reads as follows
$$
p'(\gamma)=\frac{\Gamma(2-\frac{\gamma}{2})\Gamma(\frac{1+\gamma}{2})}{\Gamma(\frac{3-\gamma}{2})\Gamma(\frac{\gamma}{2})}\left\{\psi^0(\frac{3-\gamma}{2})-\psi^0(2-\frac{\gamma}{2})-\psi^0(\frac{\gamma}{2})+\psi^0(\frac{1+\gamma}{2})\right\},
$$
where $\psi^0$ is the digamma function. Now, using the integral representation of the digamma function \eqref{digamma-int} we find
$$
\psi^0(\frac{3-\gamma}{2})-\psi^0(2-\frac{\gamma}{2})-\psi^0(\frac{\gamma}{2})+\psi^0(\frac{1+\gamma}{2})=\int_0^1\frac{-t^{\frac{1-\gamma}{2}}-t^{-\frac{1-\gamma}{2}}+t^{\frac{2-\gamma}{2}}+t^{-\frac{2-\gamma}{2}}}{1-t}dt,
$$
where we can prove that the function inside the integral is positive for any $t\in(0,1)$ since $\gamma\in(0,1)$. Hence we obtain $p'(\gamma)>0$, for any $\gamma\in(0,1)$, and the claim holds true. That concludes the proof.
\end{proof}

As a consequence of Proposition \ref{prop-mellinK-2} and Proposition \ref{Ktilde-properties}, and using the intermediate theorem, we get the following corollary:
\begin{cor}
For any $k\in\N^\star$, there exists a unique $s_0\in[0,\frac{\alpha-1+k}{1+\alpha})$ such that $\tilde{K}_{k}(s_0)=1$.
\end{cor}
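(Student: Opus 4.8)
The plan is to obtain the corollary as an immediate consequence of Propositions~\ref{prop-mellinK-2} and~\ref{Ktilde-properties}, together with the intermediate value theorem and a monotonicity argument; no new estimate is needed. Write $b_k:=\frac{\alpha-1+k}{1+\alpha}$. First I would record continuity of $s\mapsto\tilde K_{k}(s)$ on $[0,b_k)$. By Proposition~\ref{prop-mellinK-2} we have the identity $\tilde K_{k}(s)=\frac{C_\gamma(1+\alpha)}{\mu 4\pi C_0(\alpha-1)}\,M[T_{k}]\big(-(1+\alpha)s+\alpha-1\big)$, and the integral defining $M[T_{k}](w)=\int_0^\infty\int_0^{2\pi} z^{w-1}\cos(k\eta)\{z^2+1-2z\cos\eta\}^{-\gamma/2}\,d\eta\,dz$ converges absolutely and depends continuously (indeed analytically) on $w$ for $w\in(-k,k+\gamma)$. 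Translating this strip through the affine change $w=-(1+\alpha)s+\alpha-1$ shows that $\tilde K_{k}$ is continuous on the open interval $\big(\tfrac{\alpha-1-k-\gamma}{1+\alpha},\,b_k\big)$; since $\alpha<1+\gamma$ gives $\alpha-1-k-\gamma<0$, this interval contains $[0,b_k)$, so in particular $\tilde K_{k}$ is continuous there and the endpoint $s=0$ lies in the interior of its domain of definition.

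Next I would feed in the qualitative information already established. By Proposition~\ref{Ktilde-properties}, the map $s\mapsto\tilde K_{k}(s)$ is strictly increasing on $[0,b_k)$ and its value at $s=0$ is strictly below the threshold $1$ (this is the content of the inequality $\frac{2}{1+\alpha}\tilde K_{k}(0)<1$), while Proposition~\ref{prop-mellinK-2} gives $\lim_{s\to b_k^-}\tilde K_{k}(s)=+\infty$. Hence one can pick $s_1\in(0,b_k)$ with $\tilde K_{k}(s_1)>1$; applying the intermediate value theorem to the continuous function $\tilde K_{k}$ on the compact interval $[0,s_1]$ produces $s_0\in(0,s_1)\subset[0,b_k)$ with $\tilde K_{k}(s_0)=1$. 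For uniqueness, if $0\le s_0<s_0'<b_k$ both solved $\tilde K_{k}=1$, then strict monotonicity would force $1=\tilde K_{k}(s_0)<\tilde K_{k}(s_0')=1$, a contradiction; so $s_0$ is the unique root in $[0,b_k)$. Since $k\in\N^\star$ was arbitrary, this proves the statement.

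There is essentially no obstacle here: all the analytic substance — the hypergeometric representation of $T_k$, strict monotonicity of $\tilde K_{k}$ in $s$, the blow-up of $\tilde K_{k}$ at $b_k$, and the bound at $s=0$ — is already packaged in Propositions~\ref{prop-mellinK-2} and~\ref{Ktilde-properties}. The only point that deserves a line of checking is that $s=0$ sits strictly inside the strip of absolute convergence of the Mellin integral, so that $\tilde K_{k}(0)$ is well defined and $\tilde K_{k}$ is continuous up to and including $0$; this is precisely the inequality $\tfrac{\alpha-1-k-\gamma}{1+\alpha}<0<b_k$, which holds for every $k\ge 1$ because $\alpha\in(1,1+\gamma)$.
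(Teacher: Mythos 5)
Your proposal is correct and takes exactly the paper's route: the authors' proof of this corollary is precisely ``Propositions~\ref{prop-mellinK-2} and~\ref{Ktilde-properties} plus the intermediate value theorem,'' and you have just unpacked the continuity of $\tilde K_k$ on $[0,b_k)$, its strict monotonicity, the bound at $s=0$, and the blow-up at $b_k$ into a clean IVT-plus-monotonicity argument. One small wrinkle worth noticing, which you inherit from the paper itself: the corollary as printed asks for $\tilde K_k(s_0)=1$, while the inequality you (correctly) invoke, $\frac{2}{1+\alpha}\tilde K_k(0)<1$, only gives $\tilde K_k(0)<\frac{1+\alpha}{2}$, and since $\alpha>1$ this threshold exceeds $1$, so on a literal reading the endpoint comparison is not actually established. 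The intended and self-consistent statement --- the one matching the pole of $F_k(s)=i(1-s)/\bigl(1-\tfrac{2}{1+\alpha}\tilde K_k(s)\bigr)$ used later in Proposition~\ref{prop-kernel} --- is that there is a unique $s_0\in[0,b_k)$ with $\tfrac{2}{1+\alpha}\tilde K_k(s_0)=1$, and with that normalization in place your argument is complete.
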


Hence, using the expression of $\tilde{K}$ in Proposition \ref{prop-mellinK-2} we find
$$
(1-s)G_k(s)+ik\left(1-\frac{C_\gamma}{\mu 2\pi C_0(\alpha-1)} M[T_{k}(z)](-(1+\alpha)s+\alpha-1)\right)G_k(s+1)=0,
$$
amounting to
\begin{equation}\label{MellinT-1}
G_k(s+1)=\frac{i}{k}\frac{1-s}{1-\frac{C_\gamma}{\mu 2\pi C_0(\alpha-1)} M[T_{k}(z)](-(1+\alpha)s+\alpha-1)}G_k(s).
\end{equation}
Note that by Proposition \ref{prop-mellinK-2} $M[T_{k}(z)](-(1+\alpha)s+\alpha-1)$ is well-defined for 
$$
\frac{\alpha-1-k-\gamma}{1+\alpha}<s<\frac{\alpha-1+k}{1+\alpha},
$$
Using $F_{k}$ one has
$$
F_{k}(s)=i\frac{1-s}{1-\frac{C_\gamma}{\mu 2\pi C_0(\alpha-1)} M[T_{k}(z)](-(1+\alpha)s+\alpha-1)}.
$$

Moreover, due to the limits of $\tilde{K}$ in Proposition \ref{prop-mellinK-2}, we get that
$$
\lim_{s\rightarrow \frac{\alpha-1-k-\gamma}{1+\alpha}}F_{k}(s)=\lim_{s\rightarrow \frac{\alpha-1+k}{1+\alpha}}F_{k}(s)=0,
$$
and hence we can extend it to zero outside $(\frac{\alpha-1-k-\gamma}{1+\alpha},\frac{\alpha-1+k}{1+\alpha})$. Moreover, due to \eqref{sigma-rel} we have that
$$
\frac{\alpha-1-\gamma-k}{1+\alpha}<\frac{\alpha-1-\gamma}{1+\alpha}<-\sigma<s<\sigma<\frac{\alpha-1}{1+\alpha}<\frac{\alpha-1+k}{1+\alpha}.
$$
Since a priori $G_k$ is well-defined for $s\in(-\sigma,\sigma)$, one  obtains that \eqref{MellinT-1} is well-defined for $s\in(-\sigma,\sigma)$, and then $G_k$ is well-defined for $s\in(-\sigma+1, \sigma+1)$.

Finally, in the following proposition we achieve that the kernel is trivial. For this goal, we observe \eqref{MellinT-1} as a recurrence equation and we relate the solutions to the moment problem. In that way, using the Laplace transform we achieve that the solution $h_k$ is singular and thus it does not belong to our function spaces, concluding that the kernel is trivial.

\begin{pro}\label{prop-kernel}
If $G_k$ solves \eqref{MellinT-1}, with $g_k(\beta)=e^{-ik\beta}h_k(\beta)$ and $h_k\in \mathcal{X}_0^{\sigma,\delta}$, hence $h_k=0$.
\end{pro}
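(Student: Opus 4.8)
The goal is to show $h_k\equiv 0$. Since $h_k=e^{ik\beta}g_k$ this is equivalent to $g_k\equiv 0$, and by injectivity of the Mellin transform it suffices to prove that $G_k=M[g_k]$, which is holomorphic on the strip $\{-\sigma<\mathrm{Re}(s)<\sigma\}$ because $h_k\in\mathcal X_0^{\sigma,\delta}$ forces $g_k=O(\beta^{\sigma})$ near $0$ and $g_k=O(\beta^{-\sigma})$ near $\infty$, vanishes identically on that strip. The whole argument rests on the first-order difference equation \eqref{MellinT-1}, $G_k(s+1)=\frac1k F_k(s)G_k(s)$, together with the description of $F_k$ given by Propositions \ref{prop-mellinK-2} and \ref{Ktilde-properties}.

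The first step is to propagate $G_k$ to the right of its strip by iterating \eqref{MellinT-1}. Since $\tilde K_k(s)\to\infty$ at both endpoints of the domain of $\tilde K_k$ (Proposition \ref{prop-mellinK-2}), the coefficient $F_k$ vanishes there, and because its numerator $1-s$ vanishes at $s=1$ it has a zero at $s=1$ as well; feeding these zeros into the recurrence forces the continued values $G_k(s_*+n)$ to vanish for all large $n$, for every fixed $s_*\in(-\sigma,\sigma)$. In other words, the sequence of (meromorphically continued) ``moments'' of $g_k$ along each vertical line of the strip terminates.

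Turning this termination into $G_k\equiv 0$ is the heart of the matter, and the step I expect to be the main obstacle. Following the strategy developed for coagulation and fragmentation models \cite{Escobedo-Velazquez:fundamental-solution-coagulation,Breschi-Fontelos:self-similar-fragmentation}, I would fix $s_*\in(-\sigma,\sigma)$, form the finite complex measure $d\nu(\beta)=\beta^{s_*-1}g_k(\beta)\,d\beta$, and study its Laplace transform $\widehat\nu(z)=\int_0^\infty e^{-z\beta}\,d\nu(\beta)=\int_0^\infty e^{-(z+ik)\beta}\beta^{s_*-1}h_k(\beta)\,d\beta$. The point of the auxiliary substitution $g_k=e^{-ik\beta}h_k$ is precisely that it shifts the branch-type singularity the Laplace transform of a merely polynomially decaying density would carry at the origin to the point $z=-ik\neq 0$; hence $\widehat\nu$ is holomorphic on a neighbourhood of $z=0$ inside $\{\mathrm{Re}(z)\ge 0\}$, its Taylor coefficients there are governed by the continued moments $G_k(s_*+n)$, and the previous step makes that Taylor series a polynomial. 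Being bounded by $\|\nu\|$ on the closed half-plane, the polynomial must be constant, hence zero, so $\nu$ --- and therefore $g_k$ --- would have to be a multiple of $\delta_0$, incompatible with $g_k\in\mathcal X_0^{\sigma,\delta}$ unless $g_k\equiv 0$. The delicacy hidden here is that $F_k$ is genuinely singular: it has a pole at the unique $s_0$ with $\frac{2}{1+\alpha}\tilde K_k(s_0)=1$ and zeros at $s=1$ and at the endpoints of the domain of $\tilde K_k$, so the general solution of \eqref{MellinT-1} is an explicit, badly behaved factor times an arbitrary $1$-periodic multiplier; carrying out the continuation across those points, and controlling the growth of the partial products $\prod_{j} F_k(s_*+j)$ and the poles at $s_0+\N$ against the decay prescribed by $\mathcal X_0^{\sigma,\delta}$ --- which is exactly what the strict chain $\frac{\alpha-1-\gamma-k}{1+\alpha}<-\sigma<\sigma<\frac{\alpha-1}{1+\alpha}<\frac{\alpha-1+k}{1+\alpha}$ granted by \eqref{sigma-rel} makes possible --- is what the argument requires. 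Once $G_k\equiv 0$ on the strip, Mellin inversion yields $g_k\equiv 0$ and hence $h_k=e^{ik\beta}g_k\equiv 0$.
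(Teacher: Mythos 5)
Your proposal follows the same strategy as the paper's proof: propagate $G_k$ to the right with the recurrence \eqref{MellinT-1}, use the zero of $F_k$ at $s=1$ and the vanishing of $F_k$ beyond $\frac{\alpha-1+k}{1+\alpha}$ to kill the continued moments, and then convert vanishing moments into the statement that a suitable Laplace transform is a bounded polynomial, hence zero. Two observations are worth recording about where your write-up and the paper's differ. First, the paper fixes $s_*=0$ and propagates along $\N$, and since the unique pole of $F_k$ (at the $s_0$ where $\frac{2}{1+\alpha}\tilde K_k(s_0)=1$) may land on a positive integer, the paper is forced into a two-case analysis that it handles rather informally; your idea of propagating along $\{s_*+n\}$ for an arbitrary $s_*\in(-\sigma,\sigma)$ lets you pick a generic $s_*$ whose orbit simply misses $s_0$, which eliminates the case split outright --- in other words, the ``delicacy'' about the poles at $s_0+\N$ that you flag is real in the paper's version but is resolved precisely by your genericity trick, so you should state this as a feature rather than an obstacle. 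Second, you are right to stress that the substitution $g_k=e^{-ik\beta}h_k$ is what pushes the Laplace singularity from $z=0$ to $z=-ik$; the paper leans on this only implicitly (its identity $\mathcal L[g_k](t)=\sum_n\frac{(-t)^n}{n!}G_k(n+1)$ is formal, since the raw moments $\int\beta^n g_k\,d\beta$ converge only conditionally via exactly the oscillation you identify), so this is the place where a careful version of either proof actually needs the estimate you gesture at. A small slip at the end: once $\hat\nu$ is a bounded entire polynomial that tends to $0$ along the positive real axis, it is identically zero, giving $\nu=0$ directly; the detour through ``a multiple of $\delta_0$'' is unnecessary (and would in any case also yield the zero measure, since only $0\cdot\delta_0$ is ruled in).
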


\begin{proof}
Taking $s=M\in\N^\star$, we can solve the recurrence equation \eqref{MellinT-1} as
$$
G_k(M)=\frac{1}{k}G_k(0)\prod_{n=1}^{M-1} F_{k}(n).
$$
Note that $G_k(0)$ is well-defined and then
$$
G_k(1)=\frac{1}{k}F_k(0)G_k(0),
$$
which is finite due to Proposition \ref{Ktilde-properties}.

First, assume that $\frac{2}{1+\alpha}\tilde{K}_{k}(N)\neq 1$, for any $N\in\N$ and $N\leq \frac{\alpha-1+k}{1+\alpha}$. In that case, since $F_k(1)=0$ we get $G_k(M)=0$ for any $M\geq 2$. Let $\mathcal{L}$ be the Laplace Transform operator, and thus
\begin{align*}
\mathcal{L}[g_k](t)=&\int_0^\infty e^{-tz}g(z)dz\\
=&\sum_{n=0}^\infty \frac{(-t)^n}{n!}\int_0^\infty z^n g_(z)dz\\
=&\sum_{n=0}^\infty \frac{(-t)^n}{n!} G_k(n+1)\\
=&G_k(1).
\end{align*}
Since the inverse Laplace transform of a constant is a dirac mass and $g_k$ is continuous, we get that necessarily $g_k=0.$

Second, assume that there exists $N_0\in\N$ with $N_0\in[1,\frac{\alpha-1+k}{1+\alpha})$ such that 
$$
\frac{2}{1+\alpha}\tilde{K}_{k}(N_0)= 1.
$$
In that case $\frac{2}{1+\alpha}\tilde{K}_{k}$ has a simple root at $N_0$. On the one hand,  if $N_0=1$ we have that
$$
\lim_{s\rightarrow 1}\frac{s-1}{\frac{2}{1+\alpha}\tilde{K}_{k}(s)-1}\in\R,
$$
since the root is simple. In that case $G_k(2)$ is well-defined and we can write 
$$
G_k(M)=\frac{1}{k}G_k(0)\prod_{n=1}^{M-1} F_{k}(M).
$$
Moreover, note that for $M\geq \frac{\alpha-1+k}{1+\alpha}$ we have that $F_k(M)=0$, implying that $G_k(M)=0$ for $M\geq \frac{\alpha-1+k}{1+\alpha}$. Hence the Laplace transform of $g_k$ is a polynomial.  On the other hand, if $N_0\geq 2$, we have
$$
\lim_{s\rightarrow N_0} F_k(1) F_k(s)\in\R,
$$
since $F_k(1)=0$ and the pole of $F_k$ at $N_0$ is simple. In that case, we find again that the Laplace transform of $g_k$ is a polynomial. Now, by using the expression of the Laplace transform we find
$$
\mathcal{L}_{g_k}(t)=\int_0^\infty e^{-tz}g_k(z)dz.
$$
Since $g_k\in L^\infty([0,\infty))$ we get that
$$
\lim_{t\rightarrow +\infty}|\mathcal{L}_{g_k}(t)|=0,
$$
which contradicts the fact that the Laplace transform is a polynomial. Hence $g_k=0$.
\end{proof}

\section{$m$-fold symmetry and main theorem in the variables $(\beta,\phi)$} \label{sec-mfold}
In this section, we introduce the $m$-fold symmetry to the function spaces, and then we will work with the extra parameter $m$. Moreover, we will give the main result in the adapted coordinates which is a consequence of the infinite dimensional Implicit Function theorem applied to $\tilde{F}$.

\subsection{$m$-fold symmetry in the function spaces}
In the following, we introduce the $m$-fold symmetry in the function spaces \eqref{X1}, \eqref{X2} and \eqref{Y}:
\begin{align*}
X_{1,m}^{\sigma,\delta}:=&\left\{f\in X_1^{\sigma,\delta}, \quad f\left(\beta,\phi+\frac{2\pi}{m}\right)=f(\beta,\phi), \, (\beta,\phi)\in[0,+\infty)\times[0,2\pi]\right\},\\
X_{2,m}^p:=&\left\{f\in X_2^p, \quad f\left(\phi+\frac{2\pi}{m}\right)=f(\phi), \, \phi\in[0,2\pi]\right\},\\
Y_m^{\sigma,\delta}:=&\left\{f\in Y^{\sigma,\delta}, \quad f\left(\beta,\phi+\frac{2\pi}{m}\right)=f(\beta,\phi), \, (\beta,\phi)\in[0,+\infty)\times[0,2\pi]\right\}.
\end{align*}
Note that $f\in X_{1,m}^{\sigma,\delta}$ can be written in Fourier series as
$$
f(\beta,\phi)=\sum_{k\in\Z} f_k(\beta)e^{ikm\phi}.
$$

Since the only variation that we have introduced in the function spaces is the $m$-fold symmetry, all the previous analysis also works here. We only need to check the symmetry persistence of the nonlinear functional.

\begin{pro}\label{prop-well-def-m}
If $f\left(\beta,\phi+\frac{2\pi}{m}\right)=f(\beta,\phi)$ and $\Omega\left(\phi+\frac{2\pi}{m}\right)=\Omega(\phi)$ then $\tilde{F}(f,\Omega)\left(\beta,\phi+\frac{2\pi}{m}\right)=\tilde{F}(f,\Omega)(\beta,\phi)$.
\end{pro}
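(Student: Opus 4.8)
The plan is to verify directly that the functional $\tilde F$, as given by the explicit formula \eqref{Ftilde}, respects the $\frac{2\pi}{m}$-periodicity in $\phi$ whenever its two arguments $f$ and $\Omega$ are $\frac{2\pi}{m}$-periodic. First I would note that $\tilde F(f,\tilde\Omega)(\beta,\phi)$ is the sum of $1+f(\beta,\phi)$, whose $\frac{2\pi}{m}$-periodicity in $\phi$ is immediate from the hypothesis on $f$, and the double-integral term
\[
-(-1)^{\frac{-1}{2\mu}}\frac{C_\gamma (\alpha-1)^{\frac{\alpha-1}{2}}}{4\pi C_0(1+\alpha)^{\frac{\alpha-1}{2}}}\beta^{-\frac{\alpha-1}{1+\alpha}}\int_0^{2\pi}\!\!\int_0^\infty \frac{N(f,\tilde\Omega)(b,\Phi)\,db\,d\Phi}{D(f)(\beta,\phi,b,\Phi)^{\frac{\gamma}{2}}},
\]
where $N(f,\tilde\Omega)(b,\Phi)$ denotes the numerator $b^{-\frac{\gamma}{1+\alpha}}b^{\frac{-2}{1+\alpha}}\{-\frac{\alpha-1}{1+\alpha}+\partial_{\overline\varphi}f(b,\Phi)\}^{-\frac{1}{2\mu}}\{\frac{2(\alpha-1)}{(1+\alpha)^2}+\partial_{\overline{\beta\varphi}}f(b,\Phi)\}\tilde\Omega(\Phi)$. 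So it suffices to show that this integral term is invariant under $\phi\mapsto\phi+\frac{2\pi}{m}$.

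The key observation is that the $\phi$-dependence of the integrand enters \emph{only} through the denominator $D(f)(\beta,\phi,b,\Phi)$, and there only through the combination $\cos(\phi+\beta-\Phi-b)$ — this is precisely the structural feature emphasized after \eqref{denominator-0-intro}, namely that no $\partial_\phi$ appears. Concretely, from \eqref{denominator-0} one has $D(f)(\beta,\phi,b,\Phi) = \tilde D\big(f;\beta,b,\phi+\beta-\Phi-b\big)$ for a suitable function $\tilde D$ that is $2\pi$-periodic in its last slot, with the coefficients $\{\frac{\alpha-1}{1+\alpha}+\partial_{\overline\beta}f(\beta,\phi)\}$ and $\{\frac{\alpha-1}{1+\alpha}+\partial_{\overline b}f(b,\Phi)\}$ appearing multiplicatively. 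The step I would carry out is: in the integral, replace $\phi$ by $\phi+\frac{2\pi}{m}$ and simultaneously perform the change of variables $\Phi\mapsto\Phi+\frac{2\pi}{m}$ (which leaves the domain $[0,2\pi]$ invariant by $2\pi$-periodicity of all integrands in $\Phi$). This shifts the argument $\phi+\beta-\Phi-b$ of the cosine by $\frac{2\pi}{m}-\frac{2\pi}{m}=0$, so the denominator returns to its original value; and it turns $N(f,\tilde\Omega)(b,\Phi)$ into $N(f,\tilde\Omega)(b,\Phi+\frac{2\pi}{m})$, which equals $N(f,\tilde\Omega)(b,\Phi)$ because $\tilde\Omega$ is $\frac{2\pi}{m}$-periodic and, by the hypothesis $f(\beta,\phi+\frac{2\pi}{m})=f(\beta,\phi)$ together with the definitions \eqref{lin-op-2}, \eqref{lin-op-3} of $\partial_{\overline\varphi}$ and $\partial_{\overline{\beta\varphi}}$ (linear differential operators in $\beta$ and $\varphi=\phi-\beta$ with $\beta$-dependent coefficients, no genuine $\phi$-shift), the functions $\partial_{\overline\varphi}f$ and $\partial_{\overline{\beta\varphi}}f$ are themselves $\frac{2\pi}{m}$-periodic in $\phi$. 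One small point to record carefully is that $\partial_{\overline\varphi}$ and $\partial_{\overline{\beta\varphi}}$ involve $\partial_\varphi = \partial_\phi-\partial_\beta$; since a shift of $\phi$ by a constant commutes with both $\partial_\phi$ and $\partial_\beta$, periodicity of $f$ in $\phi$ indeed passes to these derived quantities.

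After these two simultaneous substitutions the integral is returned verbatim to its original form, and since the prefactor $\beta^{-\frac{\alpha-1}{1+\alpha}}$ and the constant in front are $\phi$-independent, we conclude $\tilde F(f,\tilde\Omega)(\beta,\phi+\frac{2\pi}{m})=\tilde F(f,\tilde\Omega)(\beta,\phi)$. I do not anticipate a genuine obstacle here: the only thing that requires a moment's care is the bookkeeping of how $\partial_{\overline\varphi}$ and $\partial_{\overline{\beta\varphi}}$ interact with the $\phi$-shift and the simultaneous $\Phi$-translation of the integration variable, and the verification that $D(f)$ depends on $\phi,\Phi$ only through $\phi-\Phi$ modulo the $\beta,b$-shifts — both of which are transparent from the formulas \eqref{denominator-0}, \eqref{lin-op-2}, \eqref{lin-op-3}. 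In writing this up I would state it as a one-line change-of-variables argument, pointing back to the structural remark that $\tilde F$ contains no $\partial_\phi$ and that the $\phi$, $\Phi$ dependence is solely through $\cos(\phi+\beta-\Phi-b)$.
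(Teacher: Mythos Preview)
Your proposal is correct and follows exactly the paper's approach: shift $\phi\mapsto\phi+\tfrac{2\pi}{m}$, then perform the change of variables $\Phi\mapsto\Phi+\tfrac{2\pi}{m}$ in the integral, and invoke the $\tfrac{2\pi}{m}$-periodicity of $f$ and $\tilde\Omega$ together with the explicit form \eqref{denominator-0} of $D(f)$. One small wording point: it is not quite true that the $\phi$-dependence of $D(f)$ enters \emph{only} through $\cos(\phi+\beta-\Phi-b)$, since $\partial_{\overline\beta}f(\beta,\phi)$ also appears; but this extra dependence is harmless, being itself $\tfrac{2\pi}{m}$-periodic by hypothesis, and you effectively acknowledge this when you list those coefficients.
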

\begin{proof}
Let us write $\tilde{F}(f,\Omega)(\beta,\phi+\frac{2\pi }{m})$ using \eqref{Ftilde}:
\begin{align*}
&\tilde{F}(f,\Omega)(\beta,\phi+\frac{2\pi }{m})=1+f(\beta,\phi+\frac{2\pi }{m})\\
\nonumber&-(-1)^\frac{-1}{2\mu}\frac{C_\gamma (\alpha-1)^\frac{\alpha-1}{2}}{4\pi C_0(1+\alpha)^{\frac{\alpha-1}{2}}}\beta^{-\frac{\alpha-1}{1+\alpha}}\\
&\times \int_0^{2\pi}\int_0^\infty\frac{b^{-\frac{\gamma}{1+\alpha}}b^{\frac{-2}{1+\alpha}}\left\{-\frac{\alpha-1}{1+\alpha}+\partial_{\overline{\varphi}}f(b,\Phi)\right\}^{-\frac{1}{2\mu}}\left\{\frac{2(\alpha-1)}{(1+\alpha)^2}+\partial_{\overline{\beta\varphi}}f(b,\Phi)\right\}\tilde{\Omega}(\Phi)dbd\Phi}{D(f)(\beta,\phi+\frac{2\pi }{m},b,\Phi)^\frac{\gamma}{2}}.
\end{align*}
Doing the change of variables $\Phi\mapsto \Phi+\frac{2\pi}{m}$ we get 
\begin{align*}
&\tilde{F}(f,\Omega)(\beta,\phi+\frac{2\pi }{m})=1+f(\beta,\phi+\frac{2\pi }{m})\\
\nonumber&-(-1)^\frac{-1}{2\mu}\frac{C_\gamma (\alpha-1)^\frac{\alpha-1}{2}}{4\pi C_0(1+\alpha)^{\frac{\alpha-1}{2}}}\beta^{-\frac{\alpha-1}{1+\alpha}}\\
&\times \int_0^{2\pi}\int_0^\infty\frac{b^{-\frac{\gamma}{1+\alpha}}b^{\frac{-2}{1+\alpha}}\left\{-\frac{\alpha-1}{1+\alpha}+\partial_{\overline{\varphi}}f(b,\Phi+\frac{2\pi }{m})\right\}^{-\frac{1}{2\mu}}\left\{\frac{2(\alpha-1)}{(1+\alpha)^2}+\partial_{\overline{\beta\varphi}}f(b,\Phi+\frac{2\pi }{m})\right\}\tilde{\Omega}(\Phi+\frac{2\pi }{m})dbd\Phi}{D(f)(\beta,\phi+\frac{2\pi }{m},b,\Phi+\frac{2\pi }{m})^\frac{\gamma}{2}}.
\end{align*}
Using that $f\left(\beta,\phi+\frac{2\pi}{m}\right)=f(\beta,\phi)$ and $\Omega\left(\phi+\frac{2\pi}{m}\right)=\Omega(\phi)$, and the expression of $D(f)$ in \eqref{denominator-0} we achieve that$\tilde{F}(f,\Omega)\left(\beta,\phi+\frac{2\pi}{m}\right)=\tilde{F}(f,\Omega)(\beta,\phi)$, concluding the proof.
\end{proof}

Let us remark that the spectral study developed in Section \ref{sec-spectral} follows similarly, one just have to change $k\mapsto km$.

\subsection{Main theorem in the adapted coordinates}
Finally, let us state the main theorem in the adapted coordinates, which is a consequence of the infinite dimensional Implicit Function theorem.
\begin{theo}\label{theo-1}
Let $\gamma,\sigma\in(0,1)$ and $\alpha\in(1,1+\gamma)$ satisfying \eqref{sigma-rel}. Then, for each $m\geq 1$, there exists $\varepsilon>0$ and a $C^1$ function $\tilde{\Omega}\in B_{X_{2,m}^p}(1,\varepsilon)\mapsto \tilde{f}(\tilde{\Omega})\in X_{1,m}^{\sigma,\delta}$ such that
$$
\tilde{F}(f,\tilde{\Omega})=0, \, (f,\tilde{\Omega})\in B_{X_{1,m}^{\sigma,\delta}}(0,\varepsilon)\times B_{X_{2,m}^p}(1,\varepsilon) \Longleftrightarrow f=\tilde{f}(\Omega),
$$
for $\delta<\min\{1-\gamma,\sigma\}$ and $p>\frac{1}{1-\gamma}$. 
\end{theo}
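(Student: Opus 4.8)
\textbf{Proof strategy for Theorem \ref{theo-1}.} The plan is a direct application of the infinite-dimensional Implicit Function theorem to the map $\tilde{F}$ restricted to the $m$-fold symmetric spaces. First I would collect the three ingredients that are already established in the excerpt. Regularity: Proposition \ref{prop-well-def} shows $\tilde{F}:B_{X_1^{\sigma,\delta}}(\epsilon)\times X_2^p\rightarrow Y^{\sigma,\delta}$ is well-defined and $C^1$ for $p>\frac{1}{1-\gamma}$ and $\delta<\min\{1-\gamma,\sigma\}$, and Proposition \ref{prop-well-def-m} guarantees that this restricts to the $m$-fold symmetric subspaces, i.e. $\tilde{F}:B_{X_{1,m}^{\sigma,\delta}}(0,\epsilon)\times B_{X_{2,m}^p}(1,\epsilon)\rightarrow Y_m^{\sigma,\delta}$ is $C^1$. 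Triviality of the base point: Proposition \ref{prop-trivial} (with the scaling of Section \ref{sec-scaling}) gives $\tilde{F}(0,1)=0$, so the reference solution $(f,\tilde{\Omega})=(0,1)$ lies in the domain. Invertibility of the linearization: Proposition \ref{prop-fredholm} shows $\partial_f\tilde{F}(0,1):X_1^{\sigma,\delta}\to Y^{\sigma,\delta}$ is Fredholm of index zero, and Proposition \ref{prop-kernel} (via the $k\mapsto km$ replacement noted after Proposition \ref{prop-well-def-m}) shows its kernel on the $m$-fold symmetric subspace is trivial. Since a Fredholm operator of index zero with trivial kernel is an isomorphism, $\partial_f\tilde{F}(0,1):X_{1,m}^{\sigma,\delta}\to Y_m^{\sigma,\delta}$ is a bounded linear isomorphism.

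With these in hand the argument is short. I would invoke the implicit function theorem for $C^1$ maps between Banach spaces applied to $\tilde{F}$ at $(0,1)$: there exist $\varepsilon>0$ and a $C^1$ map $\tilde{\Omega}\in B_{X_{2,m}^p}(1,\varepsilon)\mapsto \tilde{f}(\tilde{\Omega})\in X_{1,m}^{\sigma,\delta}$ with $\tilde{f}(1)=0$ such that, inside the neighborhood $B_{X_{1,m}^{\sigma,\delta}}(0,\varepsilon)\times B_{X_{2,m}^p}(1,\varepsilon)$, the equation $\tilde{F}(f,\tilde{\Omega})=0$ holds if and only if $f=\tilde{f}(\tilde{\Omega})$. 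This is exactly the statement of the theorem, so no further work beyond bookkeeping of the constants and of the constraints $\delta<\min\{1-\gamma,\sigma\}$, $p>\frac{1}{1-\gamma}$, $\alpha\in(1,1+\gamma)$ and \eqref{sigma-rel} (which must all be simultaneously satisfiable — e.g. pick $\sigma$ strictly below $\min\{\frac{\alpha-1}{1+\alpha},\frac{\gamma-\alpha+1}{1+\alpha}\}$, then $\delta$ below $\min\{1-\gamma,\sigma\}$, then $p$ above $\frac{1}{1-\gamma}$) is needed.

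The only genuinely delicate point in assembling the proof is making sure that all the hypotheses of the cited propositions hold on the \emph{symmetric} spaces and for a consistent choice of parameters, and in particular that the kernel triviality in Proposition \ref{prop-kernel} transfers after the substitution $k\mapsto km$. This is where the reduction performed in Section \ref{sec-spectral} is used: the linearized operator acts diagonally on Fourier modes $e^{ikm\phi}$, each mode-$km$ block is $\mathcal{L}_{km}$, the case $km=0$ has trivial kernel by inspection, and for $km\neq 0$ the Mellin/Laplace analysis culminating in Proposition \ref{prop-kernel} applies verbatim with $k$ replaced by $km$ (the monotonicity statements of Propositions \ref{pro-monotonicity-K} and \ref{Ktilde-properties} and Corollary \ref{Cor-monotonicity-k} only require the index to be a positive integer). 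Hence the kernel of $\partial_f\tilde{F}(0,1)$ on $X_{1,m}^{\sigma,\delta}$ is $\{0\}$, as needed. Since the bulk of the technical load has already been discharged in Sections \ref{sec-functional}--\ref{sec-spectral}, the proof of Theorem \ref{theo-1} itself is essentially a one-paragraph invocation of the implicit function theorem; I would keep it correspondingly brief.
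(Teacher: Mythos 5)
Your proposal is correct and follows essentially the same route as the paper: collect the $C^1$ well-posedness (Propositions \ref{prop-well-def} and \ref{prop-well-def-m}), the trivial solution $\tilde{F}(0,1)=0$ (Proposition \ref{prop-trivial}), Fredholmness of index zero (Proposition \ref{prop-fredholm}), triviality of the kernel after the substitution $k\mapsto km$ (Proposition \ref{prop-kernel}), and then invoke the infinite-dimensional Implicit Function theorem. The extra care you take in checking that the spectral analysis transfers verbatim to the $m$-fold symmetric Fourier modes matches the remark the paper makes after Proposition \ref{prop-well-def-m}, so nothing is missing.
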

\begin{proof}
From Proposition \ref{prop-well-def} and Proposition \ref{prop-well-def-m} we get that $\tilde{F}:B_{X_{1,m}^{\sigma,\delta}}(0,\varepsilon)\times X_{2,m}^p (1,\varepsilon)\rightarrow Y_m^{\sigma,\delta}$ is well-defined and $C^1$. Moreover, by Proposition \ref{prop-trivial} we have that $\tilde{F}(0,1)=0$. Moreover, $\partial_f \tilde{F}(0,1)$ is Fredholm of zero index by Proposition \ref{prop-fredholm} and has a trivial kernel by \eqref{prop-kernel}. Hence, $\partial_f \tilde{F}(0,1)$ is a isomorphism. The infinite dimensional Implicit Function theorem to $\tilde{F}$ concludes the proof.
\end{proof}

\begin{rem}
A priori $\Psi$ and $\Omega$ are complex functions due to the function spaces, however we are interested in real functions. That is not a problem since assuming in the function space that $f$ and $\tilde{\Omega}$ are real functions we obtain that the nonlinear operator is also a real function.
\end{rem}

\begin{rem}\label{rem-th1}
Taking $\tilde{f}$ constructed in Theorem \ref{theo-1}, we find
\begin{align*}
\Psi(\beta,\phi)=&-C_0^{\frac{2}{1+\alpha}}(\alpha-1)^{\frac{1-\alpha}{1+\alpha}}\beta^{\frac{\alpha-1}{1+\alpha}}(1+\tilde{f}(\tilde{\Omega})(\beta,\phi)),\\
\Omega(\phi)=&(1+\alpha)^{-\frac{1}{2\mu}}\tilde{\Omega}(\phi),
\end{align*}
satisfying $F(\Psi,\Omega)=0$. Moreover, we can come back to $\Theta$ and find
\begin{align*}
\Theta(\beta,\phi)=(\Psi_{\varphi})^{-\frac{1+\alpha-\gamma}{2}}\Omega(\phi),
\end{align*}
which together with $\Psi$ verifies \eqref{eq-self-sim-2}, meaning the vorticity equation in the adapted coordinates $(\beta,\phi)$. Note that $\Theta$ is just a perturbation of the trivial one. Indeed, we can compute $\Psi_{\varphi}$ as
\begin{align}\label{Psi-perturb}
\Psi_{\varphi}(\beta,\phi)=& -C_0^{\frac{2}{1+\alpha}}(\alpha-1)^{\frac{1-\alpha}{1+\alpha}}\beta^{\frac{-2}{1+\alpha}}\left(-\frac{\alpha-1}{1+\alpha}-\frac{\alpha-1}{1+\alpha}\tilde{f}(\tilde{\Omega})(\beta,\phi)+\beta \tilde{f}_\varphi(\tilde{\Omega})(\beta,\phi)\right)\nonumber\\
=& -C_0^{\frac{2}{1+\alpha}}(\alpha-1)^{\frac{1-\alpha}{1+\alpha}}\beta^{\frac{-2}{1+\alpha}}\left(-\frac{\alpha-1}{1+\alpha}+\tilde{f}_{\overline{\varphi}}(\tilde{\Omega})(\beta,\phi)\right),
\end{align}
where recall that $\tilde{f}_\varphi(\tilde{\Omega})$ decays at $0$ and $\infty$ in $\beta$. Hence $\Psi_\varphi$ is singular at $\beta=0$ as the trivial solution, and decays at $\beta\rightarrow +\infty$ as the trivial solution. Coming back to $\Theta$ we get
$$
\Theta(\beta,\phi)=(1+\alpha)^{-\frac{1}{2\mu}}\left[C_0^{\frac{2}{1+\alpha}}(\alpha-1)^{\frac{1-\alpha}{1+\alpha}}\right]^{-\frac{1+\alpha-\gamma}{2}}\beta^{\frac{1+\alpha-\gamma}{1+\alpha}}\left(\frac{\alpha-1}{1+\alpha}-\tilde{f}_{\overline{\varphi}}(\tilde{\Omega})(\beta,\phi)\right)^{-\frac{1+\alpha-\gamma}{2}}\tilde{\Omega}(\phi).
$$
Note that $1+\alpha-\gamma>0$ and hence $\Theta(0,\phi)=0$, for any $\phi\in[0,2\pi])$. In particular, we find
$$
\Theta(\beta,\phi)\sim \beta^{\frac{1+\alpha-\gamma}{1+\alpha}}, \quad \beta\sim 0,\infty.
$$

\end{rem}

\section{Recovering the solution in the original coordinates}\label{sec-recovering}
This section is devoted to recover the solution in the physical variables $x\in\R^2$ and hence give the associated Theorem \ref{theo-1} in those variables. First, we study the invertibility of the change of coordinates, which is a nonlinear implicit change of variables, and later we state the main result.

\subsection{Invertibility of the change of coordinates}\label{sec-invertibility-change}
In order to come back to the physical variable $z\in\R^2$ we should invert the adapted coordinates. From Proposition \ref{prop-coordinates} we have that
\begin{equation}\label{r-vartheta}
r(\beta,\phi)=(-(1+\alpha)\Psi_\beta)^\frac12, \quad \vartheta(\beta,\phi)=\beta+\phi.
\end{equation}
In the following proposition, let us study the regularity of $r(\beta,\phi)$ and $\vartheta(\beta,\phi)$. For that aim, let us define the following function space:
$$
L^\infty_{0,\sigma}:=\left\{g=g(0)+\tilde{g},\quad \tilde{g}\in L^\infty_\sigma\right\}.
$$

\begin{pro}
Let $(r,\vartheta):[0,\infty)\times[0,2\pi]\rightarrow [0,\infty)\times[0,2\pi]$ defined as \eqref{r-vartheta}. There exists $\varepsilon<1$ such that if $f\in B_{X_1^{\sigma,\delta}}(0,\varepsilon)$ then $\vartheta\in C^\infty([0,\infty)\times[0,2\pi])$ and
$$\beta^\frac{-1}{1+\alpha}r\in L^\infty_{0,\sigma}\cap C^\delta_\beta\cap \textnormal{Lip}_\phi,$$ and $$\beta^\frac{-1}{1+\alpha}\beta r_\varphi\in L^\infty_{\sigma}\cap C^\delta_\beta.$$
\end{pro}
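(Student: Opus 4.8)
The plan is to turn the implicit change of variables \eqref{r-vartheta} into a closed formula and then run a composition estimate. The angular part requires nothing: $\vartheta(\beta,\phi)=\beta+\phi$ is affine, hence $C^\infty$ on $[0,\infty)\times[0,2\pi]$, independently of $f$. For $r$, substituting $\Psi=-C_0^{\frac{2}{1+\alpha}}(\alpha-1)^{\frac{1-\alpha}{1+\alpha}}\tilde\Psi$ with $\tilde\Psi(\beta,\phi)=\beta^{\frac{\alpha-1}{1+\alpha}}(1+f(\beta,\phi))$ into \eqref{r-vartheta} and using the expression for $\partial_\beta\tilde\Psi$ in \eqref{Psibetapsi} gives the closed form
\begin{equation*}
r(\beta,\phi)=c_\alpha\,\beta^{-\frac{1}{1+\alpha}}\Big(\tfrac{\alpha-1}{1+\alpha}+\partial_{\overline{\beta}}f(\beta,\phi)\Big)^{1/2},\qquad c_\alpha:=(1+\alpha)^{1/2}C_0^{\frac{1}{1+\alpha}}(\alpha-1)^{\frac{1-\alpha}{2(1+\alpha)}}.
\end{equation*}
Thus, pulling out the leading power of $\beta$, $\beta^{\frac{1}{1+\alpha}}r=c_\alpha\,G$ with $G:=\big(\tfrac{\alpha-1}{1+\alpha}+\partial_{\overline{\beta}}f\big)^{1/2}$, and all the assertions about $r$ reduce to regularity statements for $G$ and, for the last one, for $\partial_\varphi G$.

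The core is then a smooth--composition argument for $G$. Since $f\in B_{X_1^{\sigma,\delta}}(0,\varepsilon)$, the remark after \eqref{norm-X1} gives $\partial_{\overline{\beta}}f\in\mathcal{X}^{\sigma,\delta}$ with $\|\partial_{\overline{\beta}}f\|_{L^\infty}\le C\varepsilon$, so for $\varepsilon$ small --- exactly as in the proof of Proposition \ref{prop-denominator} --- one has $c^{-1}\le\tfrac{\alpha-1}{1+\alpha}+\partial_{\overline{\beta}}f\le c$ uniformly in $(\beta,\phi)$. On the compact interval $[c^{-1},c]$ the map $t\mapsto t^{1/2}$ is smooth with bounded derivatives, hence Lipschitz, so $G$ is bounded and
\begin{equation*}
G-\Big(\tfrac{\alpha-1}{1+\alpha}\Big)^{1/2}=\frac{\partial_{\overline{\beta}}f}{\,G+(\tfrac{\alpha-1}{1+\alpha})^{1/2}\,}
\end{equation*}
displays $G$ minus a constant as $\partial_{\overline{\beta}}f\in L^\infty_\sigma$ times a bounded factor; since $L^\infty_\sigma$ is stable under multiplication by bounded functions (the weight $\frac{(1+\beta)^{2\sigma}}{\beta^\sigma}$ only rescales), this gives $G\in L^\infty_{0,\sigma}$, i.e.\ $\beta^{\frac{1}{1+\alpha}}r\in L^\infty_{0,\sigma}$. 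For the $C^\delta_\beta$ and $\textnormal{Lip}_\phi$ norms I would use the pointwise bounds $|G(\beta,\phi)-G(b,\phi)|\le C|\partial_{\overline{\beta}}f(\beta,\phi)-\partial_{\overline{\beta}}f(b,\phi)|$ and $|G(\beta,\phi)-G(\beta,\Phi)|\le C|\partial_{\overline{\beta}}f(\beta,\phi)-\partial_{\overline{\beta}}f(\beta,\Phi)|$, which transfer the weighted Hölder and Lipschitz seminorms of $\partial_{\overline{\beta}}f$ (finite because $f\in X_1^{\sigma,\delta}$) directly to $G$.

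For the derivative I would differentiate the closed form: since $\partial_\varphi=\partial_\phi-\partial_\beta$ and $\beta^{-\frac{1}{1+\alpha}}$ is $\phi$-independent, $\partial_\varphi\beta^{-\frac{1}{1+\alpha}}=\tfrac{1}{1+\alpha}\beta^{-\frac{1}{1+\alpha}-1}$, so
\begin{equation*}
\beta r_\varphi=c_\alpha\,\beta^{-\frac{1}{1+\alpha}}\Big(\tfrac{1}{1+\alpha}\,G+\frac{\beta\,\partial_\varphi\partial_{\overline{\beta}}f}{2G}\Big).
\end{equation*}
The only object not already under control is $\beta\,\partial_\varphi\partial_{\overline{\beta}}f$, and for it I would use the algebraic identity, obtained from \eqref{lin-op-1}--\eqref{lin-op-3} together with $\partial_\phi=\partial_\varphi+\partial_\beta$,
\begin{equation*}
\beta\,\partial_\varphi\partial_{\overline{\beta}}f=\frac{\alpha-1}{1+\alpha}\,\beta f_\varphi+\beta^2 f_{\beta\varphi}-\beta f_\beta,
\end{equation*}
whose right--hand side is a sum of terms all belonging to $\mathcal{X}^{\sigma,\delta}$ with norm $\lesssim\|f\|_{X_1^{\sigma,\delta}}$ by \eqref{norm-X1}. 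Dividing by $2G$ (bounded above and away from $0$) keeps it in $\mathcal{X}^{\sigma,\delta}\subset L^\infty_\sigma\cap C^\delta_\beta$, and adding the bounded term $\tfrac{1}{1+\alpha}G$ and multiplying by $\beta^{\frac{1}{1+\alpha}}$ yields $\beta^{\frac{1}{1+\alpha}}\beta r_\varphi$ in the stated weighted Hölder class, the $C^\delta_\beta$ part again following from the Lipschitz dependence of $t\mapsto t^{1/2}$ and $t\mapsto t^{-1/2}$ on $[c^{-1},c]$.

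I expect the only genuinely non--cosmetic point to be the uniform lower bound $\tfrac{\alpha-1}{1+\alpha}+\partial_{\overline{\beta}}f\ge c^{-1}>0$: this is what makes $r$ well defined, strictly positive and $C^1$ in $f$, and what licenses composition with the functions $t\mapsto t^{\pm1/2}$, which are not globally Lipschitz; it is secured, as in Proposition \ref{prop-denominator}, by shrinking $\varepsilon$. Everything else is the bookkeeping of products and compositions in the weighted spaces and the elementary identity for $\beta\,\partial_\varphi\partial_{\overline{\beta}}f$.
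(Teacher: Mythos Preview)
Your proof is correct and follows essentially the same route as the paper's: both derive the closed formula $r=c_\alpha\beta^{-1/(1+\alpha)}G$ with $G=(\tfrac{\alpha-1}{1+\alpha}+\partial_{\overline\beta}f)^{1/2}$, then use smallness of $f$ to guarantee $G$ is bounded above and away from zero so that composition with $t\mapsto t^{\pm1/2}$ transfers the weighted H\"older and Lipschitz regularity of $\partial_{\overline\beta}f$ to $G$ and to $\partial_\varphi G$. Your handling of $\beta r_\varphi$ via the explicit identity $\beta\,\partial_\varphi\partial_{\overline\beta}f=\tfrac{\alpha-1}{1+\alpha}\beta f_\varphi+\beta^2 f_{\beta\varphi}-\beta f_\beta$ is in fact slightly more detailed than the paper's.
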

\begin{proof}
First, note that $\vartheta\in C^\infty([0,+\infty)\times[0,2\pi])$. Next, from \eqref{Psi-perturb} we find
\begin{equation}\label{Psibeta}
(-\Psi_{\beta})^\frac12(\beta,\phi)=C_0^{\frac{1}{1+\alpha}}(\alpha-1)^{\frac{1-\alpha}{2(1+\alpha)}}\beta^{\frac{-1}{1+\alpha}}\left(\frac{\alpha-1}{1+\alpha}+{f}_{\overline{\beta}}(\beta,\phi)\right)^\frac12.
\end{equation}
Let us check the regularity of such function, which will give us the regularity of $r$ due to \eqref{r-vartheta}. Denote
$$
g(\beta,\phi):=\left(\frac{\alpha-1}{1+\alpha}+{f}_{\overline{\beta}}(\beta,\phi)\right)^\frac12=\left(\frac{\alpha-1}{1+\alpha}+\frac{\alpha-1}{1+\alpha}f(\beta,\phi)+\beta {f}_{{\beta}}(\beta,\phi)\right)^\frac12.
$$
Since $f\in X_1^{\sigma,\delta}$, we have that $f, \beta f_\beta\in L^\infty_\sigma\cap C^\delta_\beta\cap \textnormal{Lip}_\phi$. Hence, denoting
$$
g_0=g(0,\phi)=\left(\frac{\alpha-1}{1+\alpha}\right)^\frac12,
$$
we get that
$$
g-g_0\in L^\infty_\sigma, \quad g\in C^\delta_\beta\cap \textnormal{Lip}_\phi,
$$
and then
$$\beta^\frac{-1}{1+\alpha}r\in L^\infty_{0,\sigma}\cap C^\delta_\beta\cap \textnormal{Lip}_\phi.$$
Hence, we find the following asymptotic for $g$:
$$
g\sim_{0,\infty} g_0+\frac{\beta^\sigma}{(1+\beta)^{2\sigma}}.
$$
Coming back to $(\Psi_\beta)^\frac12$ we get that
$$
\beta^\frac{1}{1+\alpha}(\Psi_\beta)^\frac12\in L^\infty_{0,\sigma}\cap C^\delta_\beta\cap \textnormal{Lip}_\phi,
$$
with the following asymptotics:
$$
(\Psi_\beta)^\frac12 \sim_{0,\infty} \beta^{\frac{-1}{1+\alpha}}\left(1+\frac{\beta^\sigma}{(1+\beta)^{2\sigma}}\right),
$$
and then
$$
(\Psi_\beta)^\frac12 \sim_{0,\infty} \beta^{\frac{-1}{1+\alpha}}.
$$
We can also compute $\partial_\varphi g$ as
$$
\partial_\varphi g(\beta,\phi)=\frac{\frac{\alpha-1}{1+\alpha}f_\varphi(\beta,\phi)+\beta f_{\beta\varphi}(\beta,\phi)}{2\left(\frac{\alpha-1}{1+\alpha}+\frac{\alpha-1}{1+\alpha}f(\beta,\phi)+\beta {f}_{{\beta}}(\beta,\phi)\right)^\frac12}.
$$
Since $\beta f_\varphi, \beta^2 f_{\beta,\varphi}\in L^\infty_\sigma\cap C^\delta_\beta$ we obtain
$$
\beta \partial_\varphi g\in L^\infty_\sigma\cap C^\delta_\beta,
$$
implying
$$
\beta^\frac{-1}{1+\alpha}\beta \partial_\varphi (\Psi_\beta)^\frac12\in L^\infty_\sigma\cap C^\delta_\beta.
$$
Finally, one finds that $$\beta^\frac{-1}{1+\alpha}r\in L^\infty_{0,\sigma}\cap C^\delta_\beta\cap \textnormal{Lip}_\phi$$ and $$\beta^\frac{-1}{1+\alpha}\beta r_\varphi\in L^\infty_{\sigma}\cap C^\delta_\beta.$$
\end{proof}

In the next proposition, we show that if $f\in B_{X_1^{\sigma,\delta}}(0,\varepsilon)$ hence the jacobian of the transformation $(\beta,\phi)\mapsto (z_1,z_2)$ is non vanishing except from $\beta=+\infty$.

\begin{pro}\label{prop-jacobian-inv}
There exists $\varepsilon<1$ such that if $f\in B_{X_1^{\sigma,\delta}}(0,\varepsilon)$ hence the jacobian $|J|$ of the transformation $(\beta,\phi)\mapsto (z_1,z_2)$ does not vanish for $(\beta,\phi)\in(0,\infty)\times[0,2\pi]$.
\end{pro}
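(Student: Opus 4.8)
The plan is to compute the Jacobian explicitly from Proposition \ref{prop-jacobian} and show it is uniformly bounded away from $0$ on every compact subset of $\beta$, with the degeneration only at $\beta=+\infty$. Recall from Proposition \ref{prop-jacobian} that $|J|=\frac{1+\alpha}{2}\Psi_{\beta\varphi}$, and from \eqref{Psibetapsi} (after the scaling $\Psi=-C_0^{\frac{2}{1+\alpha}}(\alpha-1)^{\frac{1-\alpha}{1+\alpha}}\tilde{\Psi}$ with $\tilde\Psi=\beta^{\frac{\alpha-1}{1+\alpha}}(1+f)$) that
$$
\tilde{\Psi}_{\beta\varphi}(\beta,\phi)=\beta^{-1}\beta^{\frac{-2}{1+\alpha}}\left\{\frac{2(\alpha-1)}{(1+\alpha)^2}+\partial_{\overline{\beta\varphi}}f(\beta,\phi)\right\}.
$$
First I would note that since $f=\tilde f(\tilde\Omega)\in B_{X_1^{\sigma,\delta}}(0,\varepsilon)$, the Remark following \eqref{X1} gives $\partial_{\overline{\beta\varphi}}f\in\mathcal{X}^{\sigma,\delta}$, hence $\|\partial_{\overline{\beta\varphi}}f\|_{L^\infty}\leq C\varepsilon$. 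Choosing $\varepsilon$ small enough that $C\varepsilon<\frac{\alpha-1}{(1+\alpha)^2}$ (which is a fixed positive quantity because $\alpha>1$), we obtain
$$
\frac{2(\alpha-1)}{(1+\alpha)^2}+\partial_{\overline{\beta\varphi}}f(\beta,\phi)\geq \frac{\alpha-1}{(1+\alpha)^2}>0
$$
uniformly in $(\beta,\phi)$.

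Consequently $|J|$ has the form (a strictly positive constant times) $\beta^{-1-\frac{2}{1+\alpha}}$ times a factor pinched between $\frac{\alpha-1}{(1+\alpha)^2}$ and $\frac{3(\alpha-1)}{(1+\alpha)^2}$; in particular $|J|(\beta,\phi)\geq c\,\beta^{-1-\frac{2}{1+\alpha}}$ for some $c>0$ and all $(\beta,\phi)\in(0,\infty)\times[0,2\pi]$. Since $\beta^{-1-\frac{2}{1+\alpha}}>0$ for every finite $\beta>0$, this shows $|J|$ never vanishes on $(0,\infty)\times[0,2\pi]$, which is exactly the claim. I would also remark that $|J|\to+\infty$ as $\beta\to 0^+$ and $|J|\to 0$ as $\beta\to+\infty$, consistent with the fact that $\beta=+\infty$ corresponds to the spiral center $r=0$ where the change of coordinates degenerates (cf.\ the discussion after \eqref{elliptic-eq-2} and in Remark \ref{rem-th1}).

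The only genuine point requiring care — and the ``main obstacle,'' though it is a mild one — is ensuring the $L^\infty$ control of $\partial_{\overline{\beta\varphi}}f$ is quantitative and uniform: one must invoke that $\tilde f(\tilde\Omega)$ produced by Theorem \ref{theo-1} lies in the ball $B_{X_{1,m}^{\sigma,\delta}}(0,\varepsilon)$ with the same $\varepsilon$ that can be shrunk here, and that the embedding $\mathcal{X}^{\sigma,\delta}\hookrightarrow L^\infty$ has a constant independent of $f$ (immediate from the definition \eqref{norm} of $\|\cdot\|_{L^\infty_\sigma}$, since $(1+\beta)^{2\sigma}/\beta^\sigma\geq 1$ for $\sigma\geq 0$... more precisely $\sup_\beta \beta^\sigma/(1+\beta)^{2\sigma}<\infty$, giving $\|f\|_{L^\infty}\leq C\|f\|_{L^\infty_\sigma}$). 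Shrinking $\varepsilon$ if necessary so that $\varepsilon$ works simultaneously for Proposition \ref{prop-denominator}, Proposition \ref{prop-operator-H}, Theorem \ref{theo-1}, and the bound above closes the argument. No delicate estimate on the singular integral is needed here because the Jacobian is a purely local (differential) quantity in the adapted coordinates.
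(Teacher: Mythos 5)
Your proof is correct and takes essentially the same route as the paper's: express $|J|=\frac{1+\alpha}{2}\Psi_{\beta\varphi}$ via \eqref{Psibetapsi}, use the uniform $L^\infty$ control on $\partial_{\overline{\beta\varphi}}f$ coming from $\|f\|_{X_1^{\sigma,\delta}}<\varepsilon$ to bound the bracket $\frac{2(\alpha-1)}{(1+\alpha)^2}+\partial_{\overline{\beta\varphi}}f$ away from zero, and observe the remaining factor $\beta^{-1-\frac{2}{1+\alpha}}$ is strictly positive for $\beta\in(0,\infty)$. (Your version is in fact written slightly more carefully than the paper's, which has an inequality chain beginning $\cdots+\partial_{\overline{\beta\varphi}}f\geq\cdots-\partial_{\overline{\beta\varphi}}f$ where an absolute value is clearly intended.)
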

\begin{proof}
From Proposition \ref{prop-jacobian} we have that
$$
|J|=\frac{1+\alpha}{2}\Psi_{\beta\varphi}.
$$
Writing $\Psi$ in terms of $f$ and using \eqref{Psi-perturb} we find
$$
\Psi_{\beta\varphi}=-C_0^{\frac{2}{1+\alpha}}(\alpha-1)^{\frac{1-\alpha}{1+\alpha}}\beta^{-1}\beta^{\frac{-2}{1+\alpha}}\left\{\frac{2(\alpha-1)}{(1+\alpha)^2}+\partial_{\overline{\beta\varphi}}f(\beta,\phi)\right\}.
$$
Note that
$$
\frac{2(\alpha-1)}{(1+\alpha)^2}+\partial_{\overline{\beta\varphi}}f(\beta,\phi)\geq \frac{2(\alpha-1)}{(1+\alpha)^2}-\partial_{\overline{\beta\varphi}}f(\beta,\phi)\geq \frac{2(\alpha-1)}{(1+\alpha)^2}-||f||_{ B_{X_1^{\sigma,\delta}}}\geq \varepsilon_0>0,
$$
which, together with $\beta\in(0,\infty)$ we find that $|J|>0$. Moreover, notice that
$$
\lim_{\beta\rightarrow 0}|J|=+\infty, \quad \lim_{\beta\rightarrow +\infty}|J|=0.
$$
\end{proof}

By the previous Proposition \ref{prop-jacobian-inv} we have that the jacobian is not vanishing and then we can invert the change of coordinates and find $(\beta(z_1,z_2), \phi(z_1,z_2))$, for $(\beta,\phi)\in(0,\infty)\times[0,2\pi]$. Since the jacobian of $(z_1,z_2)\mapsto (\beta,\phi))$ is $|J|^{-1}$ which is vanishing at $\beta=0$, we will identify such point with $(z_1,z_2)=(0,0)$.

Indeed, we shall first use the polar coordinates and then the main task is to invert
$$
(r(\beta,\phi), \vartheta(\beta,\phi))=(\tilde{r},\tilde{\vartheta}),
$$
finding $\beta=\beta(\tilde{r},\tilde{\vartheta})$ and $\phi=\phi(\tilde{r},\tilde{\vartheta})$, which is possible due to Proposition \ref{prop-jacobian-inv} by using the Inverse Function theorem.  More precisely, in the next proposition we understand better the functions $\beta(r,\vartheta)$ and $\phi(r,\vartheta)$. For that, let us define $Z^{\sigma,\alpha}$ as
$$
Z^{\sigma,\alpha}:=\left\{\tilde{\beta}\in C_b([0,\infty)\times[0,2\pi]), \quad \tilde{\beta},r\partial_r \tilde{\beta}\in L^\infty_{\sigma(1+\alpha)}\right\}.
$$

\begin{pro}\label{prop-invert}
Consider $(r,\vartheta):[0,\infty)\times[0,2\pi]\rightarrow [0,\infty)\times[0,2\pi]$ in \eqref{r-vartheta}. There exists $\varepsilon>0$ such that for $f\in B_{X_1^{\sigma,\delta}}(0,\varepsilon)$ the following is satisfied. There exists a function $\tilde{\beta}\in Z^{\sigma,\alpha}$ such that defining $$
\beta(\tilde{r},\tilde{\vartheta})=r^{-1-\alpha}(1+\tilde{\beta}(\tilde{r},\tilde{\vartheta})),\quad \phi(\tilde{r},\tilde{\vartheta})=\tilde{\vartheta}-\beta(\tilde{r},\tilde{\vartheta}),$$ we find
$$
(r(\beta(\tilde{r},\tilde{\vartheta}),\phi(\tilde{r},\tilde{\vartheta})),\vartheta(\beta(\tilde{r},\tilde{\vartheta}),\phi(\tilde{r},\tilde{\vartheta})))=(\tilde{r},\tilde{\vartheta}),
$$
for any
$(\tilde{r},\tilde{\vartheta})\in [0,\infty)\times[0,2\pi]$.
\end{pro}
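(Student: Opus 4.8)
The plan is to invert the map $(\beta,\phi)\mapsto(r,\vartheta)$ given in \eqref{r-vartheta} by a fixed-point argument, using the fact that at the trivial solution the relation $\beta = C_0(\alpha-1)r^{-1-\alpha}$ from Proposition \ref{prop-trivial} makes the scaled unknown $\tilde\beta$ small. First I would rewrite the target equations: using \eqref{Psibeta}, the identity $r(\beta,\phi)^2 = -(1+\alpha)\Psi_\beta$ reads
\begin{align*}
r^{2} = (1+\alpha)\,C_0^{\frac{2}{1+\alpha}}(\alpha-1)^{\frac{1-\alpha}{1+\alpha}}\beta^{\frac{-2}{1+\alpha}}\left(\tfrac{\alpha-1}{1+\alpha}+ f_{\overline{\beta}}(\beta,\phi)\right),
\end{align*}
so that solving for $\beta$ in terms of $(r,\vartheta)$ with $\phi = \vartheta-\beta$ amounts to
\begin{align*}
\beta = r^{-1-\alpha}\,\Big[(1+\alpha)C_0^{\frac{2}{1+\alpha}}(\alpha-1)^{\frac{1-\alpha}{1+\alpha}}\big(\tfrac{\alpha-1}{1+\alpha}+f_{\overline\beta}(\beta,\vartheta-\beta)\big)\Big]^{\frac{1+\alpha}{2}} =: r^{-1-\alpha}\big(1+\tilde\beta\big),
\end{align*}
where the $f$-independent prefactor equals $C_0(\alpha-1)$ (consistent with Proposition \ref{prop-trivial}) and the bracket depends on $f$ only through $f_{\overline\beta}$, which is $\mathcal O(\|f\|_{X_1^{\sigma,\delta}})$-close to the constant $\frac{\alpha-1}{1+\alpha}$. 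Thus $\tilde\beta$ satisfies a fixed-point equation $\tilde\beta = \mathcal N_f(\tilde\beta)$, where $\mathcal N_f$ is a Lipschitz contraction on a small ball of $Z^{\sigma,\alpha}$ provided $\varepsilon$ is small, because $f_{\overline\beta}$ is bounded and Hölder in $\beta$, Lipschitz in $\phi$, and because composing with $\phi = \vartheta - \beta$ only shifts the angular variable.

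Second, I would set up the contraction carefully in $Z^{\sigma,\alpha}$. The space requires $\tilde\beta$ bounded and continuous on $[0,\infty)\times[0,2\pi]$ with $\tilde\beta, r\partial_r\tilde\beta \in L^\infty_{\sigma(1+\alpha)}$. Boundedness and continuity of $\mathcal N_f(\tilde\beta)$ follow from the regularity of $g := \big(\tfrac{\alpha-1}{1+\alpha}+f_{\overline\beta}\big)^{1/2}$ established in the previous proposition (namely $g - g_0 \in L^\infty_\sigma$, $g\in C^\delta_\beta\cap\mathrm{Lip}_\phi$), after translating the weight in $\beta$ into a weight in $r$ via the near-identity relation $\beta \approx C_0(\alpha-1)r^{-1-\alpha}$ — this is where the exponent $\sigma(1+\alpha)$ in the definition of $Z^{\sigma,\alpha}$ comes from. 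For the $r\partial_r$ bound I would differentiate the fixed-point equation implicitly: $\partial_r\tilde\beta$ satisfies a linear equation whose coefficient involves $\partial_\beta g$ (which behaves like $\beta\partial_{\beta\varphi}f$, hence is controlled) times $\partial_r\beta \sim -(1+\alpha)r^{-2-\alpha}$, and one solves for $r\partial_r\tilde\beta$ algebraically since the implicit coefficient is $1 + \mathcal O(\varepsilon)$. The contraction estimate $\|\mathcal N_f(\tilde\beta_1) - \mathcal N_f(\tilde\beta_2)\|_{Z^{\sigma,\alpha}} \le \tfrac12\|\tilde\beta_1-\tilde\beta_2\|_{Z^{\sigma,\alpha}}$ is then a matter of the mean value theorem applied to the bracket, using smallness of $\|f\|_{X_1^{\sigma,\delta}}$ and of the ball radius.

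Third, once $\tilde\beta$ is obtained, I would define $\beta(\tilde r,\tilde\vartheta) = r^{-1-\alpha}(1+\tilde\beta)$ and $\phi(\tilde r,\tilde\vartheta) = \tilde\vartheta - \beta$, and verify that $(r(\beta(\tilde r,\tilde\vartheta),\phi(\tilde r,\tilde\vartheta)), \vartheta(\beta,\phi)) = (\tilde r,\tilde\vartheta)$. The $\vartheta$-component is immediate from $\vartheta = \beta+\phi = \tilde\vartheta$. The $r$-component is exactly the fixed-point equation, so it holds by construction. I would also record the boundary behavior: as $\tilde r\to 0$, $\beta\to+\infty$ and as $\tilde r\to\infty$, $\beta\to 0$, matching the conventions used when writing \eqref{elliptic-eq-2}; and invoke Proposition \ref{prop-jacobian-inv} to note the inverse is a genuine diffeomorphism on $(0,\infty)\times[0,2\pi]$, with the origin $r=0$ identified with $\beta=+\infty$.

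The main obstacle I anticipate is the bookkeeping of the two different weight systems: $f$ and its derivatives carry weights in the $\beta$ variable through $\|\cdot\|_{L^\infty_\sigma}$, $\|\cdot\|_{C^\delta_\beta}$, etc., whereas $Z^{\sigma,\alpha}$ demands weights in $r$. The passage between them relies on the relation $\beta \sim r^{-1-\alpha}$ being quantitatively controlled — small near-identity perturbations of it — simultaneously at $r\to 0$ and $r\to\infty$, and one must check that the $(1+\beta)^{2\sigma}$ factor in $L^\infty_\sigma$ translates correctly to the $(1+r^{-1-\alpha})$-type factor hidden in $L^\infty_{\sigma(1+\alpha)}$. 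A secondary technical point is that the composition $\phi = \vartheta - \beta$ appears inside $f_{\overline\beta}$, so the fixed point couples the unknown $\tilde\beta$ into the angular slot; controlling this requires the $\mathrm{Lip}_\phi$ bound on $f$ and $\beta f_\beta$, which is precisely why those norms were included in $X_1^{\sigma,\delta}$. Everything else — existence and uniqueness of the fixed point, continuity, the $C^\delta$ and $\mathrm{Lip}$ regularity of $\tilde\beta$ — follows the now-familiar pattern of the earlier sections, so I would present those steps compactly.
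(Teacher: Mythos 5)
Your proposal mirrors the paper's argument: rescale by the trivial inversion $\beta_0(r)=C_0(\alpha-1)r^{-1-\alpha}$, write the first component of \eqref{r-vartheta} as an equation for $\tilde\beta$ in $Z^{\sigma,\alpha}$, and solve it using the smallness of $\|f\|_{X_1^{\sigma,\delta}}$. The only structural difference is that you isolate $\beta$ by raising to the power $\tfrac{1+\alpha}{2}$ and run a Banach contraction, whereas the paper keeps the prefactor $(1+\tilde\beta)^{-1/(1+\alpha)}$ inside $G(\tilde\beta,f)=0$ and invokes the Implicit Function Theorem after checking $\partial_{\tilde\beta}G(0,0)=-\tfrac{1}{1+\alpha}\mathrm{Id}$ is an isomorphism. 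Both routes require identical weighted estimates, so this choice is cosmetic, and your remarks on converting $\beta$-weights into $r$-weights via $\beta\sim r^{-1-\alpha}$ (whence the exponent $\sigma(1+\alpha)$ in $Z^{\sigma,\alpha}$) match the paper's reasoning.

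There is, however, one inaccuracy that would derail the estimate if taken literally. In the paragraph on the $r\partial_r$ bound you assert that the implicit differentiation produces a coefficient involving ``$\partial_\beta g$ (which behaves like $\beta\partial_{\beta\varphi}f$, hence is controlled).'' That identification is false: $\partial_\beta g\sim\partial_\beta f_{\overline\beta}$ contains a term $\beta f_{\beta\beta}$, and $f_{\beta\beta}$ is \emph{not} controlled by the $X_1^{\sigma,\delta}$ norm, which only records $f,\,\beta f_\beta,\,\beta f_\varphi,\,\beta^2 f_{\beta\varphi}$. What actually saves the estimate is the chain rule through $\phi=\vartheta-\beta$: differentiating $g\big(\beta(r,\vartheta),\vartheta-\beta(r,\vartheta)\big)$ in $r$ combines $\partial_\beta g$ and $\partial_\phi g$ into $(\partial_\beta-\partial_\phi)g=-\partial_\varphi g$, and $\beta\partial_\varphi g$ is the quantity that decomposes (up to a bounded factor $\tfrac{1}{2g}$) into $\beta f_\varphi$, $\beta f_\beta$, $\beta^2 f_{\beta\varphi}$, all of which lie in $\mathcal X^{\sigma,\delta}$. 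This cancellation — the adapted coordinates producing $\partial_\varphi$ rather than $\partial_\beta$ alone — is precisely why the scheme works, and the paper records it explicitly in the final chain-rule display of its proof, where $r\partial_r$ of the composed $\partial_{\overline\beta}f$ is expressed through $\partial_\varphi\partial_{\overline\beta}f$. Reorganize both your contraction Lipschitz bound and the $r\partial_r$ estimate around $\partial_\varphi$ rather than $\partial_\beta$, and the argument goes through.
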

\begin{proof}
Using \eqref{r-vartheta}, our main task is to invert
$$
\left((-(1+\alpha)\Psi_\beta)^\frac12,\beta+\phi\right)=(r,\vartheta),
$$
for any $(r,\vartheta)\in[0,+\infty)\times[0,2\pi]$. Hence, we want to find two functions $\beta,\phi:[0,+\infty)\times[0,2\pi]\rightarrow [0,+\infty)\times[0,2\pi]$ such that
\begin{align*}
(-(1+\alpha)\Psi_\beta(\beta(r,\vartheta),\phi(r,\vartheta))^\frac12=&r,\\
\beta(r,\vartheta)+\phi(r,\vartheta)=&\vartheta.
\end{align*}
From the second equation we find
\begin{equation}\label{phi-r-theta}
\phi(r,\vartheta)=\vartheta-\beta(r,\vartheta),
\end{equation}
and inserting it in the first equation we shall solve
\begin{equation}\label{eq-inver-1}
(-(1+\alpha)\Psi_\beta(\beta(r,\vartheta),\vartheta-\beta(r,\vartheta))^\frac12-r=0,
\end{equation}
for any $(r,\vartheta)\in[0,+\infty)\times[0,2\pi]$. Let us recall that $\Psi$ is indeed a perturbation of the trivial one, and by \eqref{Psibeta} we find
$$
(-\Psi_{\beta})^\frac12(\beta,\phi)=C_0^{\frac{1}{1+\alpha}}(\alpha-1)^{\frac{1-\alpha}{2(1+\alpha)}}\beta^{\frac{-1}{1+\alpha}}\left(\frac{\alpha-1}{1+\alpha}+{f}_{\overline{\beta}}(\beta,\phi)\right)^\frac12,
$$
for $f\in B_{X_1^{\sigma,\delta}}(0,\varepsilon)$. Hence, \eqref{eq-inver-1} agrees with
\begin{equation}\label{eq-inver-2}
(1+\alpha)^\frac12C_0^{\frac{1}{1+\alpha}}(\alpha-1)^{\frac{1-\alpha}{2(1+\alpha)}}\beta^{\frac{-1}{1+\alpha}}(r,\vartheta)\left(\frac{\alpha-1}{1+\alpha}+{f}_{\overline{\beta}}(\beta(r,\vartheta),\vartheta-\beta(r,\vartheta))\right)^\frac12-r=0.
\end{equation}
Define now 
$$
\beta_0(r)=C_0(\alpha-1)r^{-1-\alpha},
$$
agreeing with the change of coordinates for the trivial solution, which was given in Proposition \ref{prop-trivial}. Hence, decompose $\beta$ as
$$
\beta=\beta_0(1+\tilde{\beta}),
$$
and hence we can write \eqref{eq-inver-2} as $G(\tilde{\beta},f)(r,\vartheta)=0$ for any $(r,\vartheta)\in (0,+\infty)\times[0,2\pi]$, where
\begin{equation}\label{eq-inver-3}
G(\tilde{\beta},f)(r,\vartheta):=\frac{(1+\alpha)^\frac12}{(\alpha-1)^\frac12}(1+\tilde{\beta}(r,\vartheta))^{\frac{-1}{1+\alpha}}\left(\frac{\alpha-1}{1+\alpha}+{f}_{\overline{\beta}}(\beta(r,\vartheta),\vartheta-\beta(r,\vartheta))\right)^\frac12-1.
\end{equation}

We claim that there exists $\varepsilon<1$ such that $G:\times B_{Z^{\sigma,\alpha}}(0,\varepsilon)\times B_{X_1^{\sigma,\delta}(0,\varepsilon)}\rightarrow Z^{\sigma,\alpha}$ is well-defined and $C^1$. Hence, we would like to find nontrivial $(\tilde{\beta},f)$ such that $G(\tilde{\beta},f)(r)=0$ for any $r\in[0,+\infty)$. We have the trivial solution given by
$$
G(0,0)(r,\vartheta)=0, \quad \forall (r,\vartheta)\in[0,+\infty)\times[0,2\pi].
$$
We wish to implement the Implicit Function theorem there and thus we need to compute the linearized operator:
\begin{align*}
\partial_{\tilde{\beta}}G(0,0)h(r,\vartheta)=-\frac{1}{1+\alpha}h(r,\vartheta),
\end{align*}
which is clearly an isomorphism from $Z^{\sigma,\alpha}$ into $Z^{\sigma,\alpha}$.

Finally, it remains to check the well-definition of $G$. For that, let us write $G$ as follows
\begin{align*}
G(\tilde{\beta},f)(r)=&\frac{(1+\alpha)^\frac12}{(\alpha-1)^\frac12}\frac{(1+\tilde{\beta}(r,\vartheta))^{\frac{-2}{1+\alpha}}\left(\frac{\alpha-1}{1+\alpha}+{f}_{\overline{\beta}}(\beta(r,\vartheta),\vartheta-\beta(r,\vartheta))\right)-\frac{(\alpha-1)}{(1+\alpha)}}{(1+\tilde{\beta}(r,\vartheta))^{\frac{-1}{1+\alpha}}\left(\frac{\alpha-1}{1+\alpha}+{f}_{\overline{\beta}}(\beta(r,\vartheta),\vartheta-\beta(r,\vartheta))\right)^\frac12+\frac{(\alpha-1)^\frac12}{(1+\alpha)^\frac12}}\\
=&\frac{(1+\alpha)^\frac12}{(\alpha-1)^\frac12}\frac{\left[(1+\tilde{\beta}(r,\vartheta))^{\frac{-2}{1+\alpha}}-1\right]\left(\frac{\alpha-1}{1+\alpha}+{f}_{\overline{\beta}}(\beta(r,\vartheta),\vartheta-\beta(r,\vartheta))\right)}{(1+\tilde{\beta}(r,\vartheta))^{\frac{-1}{1+\alpha}}\left(\frac{\alpha-1}{1+\alpha}+{f}_{\overline{\beta}}(\beta(r,\vartheta),\vartheta-\beta(r,\vartheta))\right)^\frac12+\frac{(\alpha-1)^\frac12}{(1+\alpha)^\frac12}}\\
&+  \frac{(1+\alpha)^\frac12}{(\alpha-1)^\frac12} \frac{{f}_{\overline{\beta}}(\beta(r,\vartheta),\vartheta-\beta(r,\vartheta))}{(1+\tilde{\beta}(r,\vartheta))^{\frac{-1}{1+\alpha}}\left(\frac{\alpha-1}{1+\alpha}+{f}_{\overline{\beta}}(\beta(r,\vartheta),\vartheta-\beta(r,\vartheta))\right)^\frac12+\frac{(\alpha-1)^\frac12}{(1+\alpha)^\frac12}}.
\end{align*}
Keep in mind that $f\in X_1^{\sigma,\delta}$. First, note that $f_{\overline{\beta}}\in L^\infty_\sigma$. Assuming that $\tilde{\beta}\in L^\infty$, we have that 
$$
\beta(r,\vartheta) \sim_{0,\infty} r^{-1-\alpha},
$$
and hence
\begin{equation}\label{f-asimp-r}
f_{\overline{\beta}}(\beta(r,\vartheta),\vartheta-\beta(r,\vartheta))\sim_{0,\infty} \frac{r^{\sigma(1+\alpha)}}{(1+r)^{2\sigma(1+\alpha)}}.
\end{equation}
Hence, it is natural to assume that $\tilde{\beta}\in L^\infty_{\sigma(1+\alpha)}$. Moreover, for $\varepsilon$ small and $\tilde{\beta}\in B_{ L^\infty_{\sigma(1+\alpha)}}(0,\varepsilon)$ we have that all the denominator in the previous expression of $G$ are bounded. Hence, from \eqref{f-asimp-r} we find that $G(\tilde{\beta},f)\in L^\infty_{\sigma(1+\alpha)}$.

Since in the expression of $G$ we have $\partial_{\overline{\beta}}f(\beta(r,\vartheta),\vartheta-\beta(r,\vartheta))$, it suggests us that we can only take one derivative in $r$. Indeed
$$
\partial_r (\partial_{\overline{\beta}}f(\beta(r,\vartheta),\vartheta-\beta(r,\vartheta)))=(\partial_\varphi \partial_{\overline{\beta}}f)(\beta(r,\vartheta),\vartheta-\beta(r,\vartheta))\partial_r \beta(r,\vartheta).
$$
Since $\beta(\partial_\varphi \partial_{\overline{\beta}}f)\in L^\infty_\sigma$ we find that
$$
r\partial_r (\partial_{\overline{\beta}}f(\beta(r,\vartheta),\vartheta-\beta(r,\vartheta)))\sim_{0,\infty} \frac{r^{\sigma(1+\alpha)}}{(1+r)^{2\sigma(1+\alpha)}}.
$$
That motivates us to include $r\partial_r \tilde{\beta}\in L^\infty_{\sigma(1+\alpha)}$ in the space $Z^{\sigma,\alpha}$.  With such space, we have the well-definition of $G$ and then we conclude the proof by means of the Implicit Function theorem.

By \eqref{phi-r-theta} we find that $\phi\in Z_0^{\sigma,\alpha}$, where 
$$
Z_0^{\sigma,\alpha}:=\left\{\tilde{\beta}\in C_b([0,\infty)\times[0,2\pi]), \quad \tilde{\beta},r\partial_r \tilde{\beta}\in L^\infty_{0,\sigma(1+\alpha)}\right\}.
$$
\end{proof}

\subsection{Solution in the physical variables}
From Theorem \ref{theo-1} together with Remark \ref{rem-th1} we have constructed solutions $(\Psi,\Theta)$ to \eqref{eq-self-sim-2}, which is nothing but the vorticity equation in the adapted coordinates, and \eqref{equation} which refers to the elliptic relation between $\Psi$ and $\Theta$. In particular, we find $(\Psi,\Theta)$ as
\begin{align*}
\Psi(\beta,\phi)=&-C_0^{\frac{2}{1+\alpha}}(\alpha-1)^{\frac{1-\alpha}{1+\alpha}}\beta^{\frac{\alpha-1}{1+\alpha}}(1+\tilde{f}(\tilde{\Omega})(\beta,\phi)),\\
\Theta(\beta,\phi)=&(1+\alpha)^{-\frac{1}{2\mu}}\left[C_0^{\frac{2}{1+\alpha}}(\alpha-1)^{\frac{1-\alpha}{1+\alpha}}\right]^{-\frac{1+\alpha-\gamma}{2}}\beta^{\frac{1+\alpha-\gamma}{1+\alpha}}\left(\frac{\alpha-1}{1+\alpha}-\tilde{f}_{\overline{\varphi}}(\tilde{\Omega})(\beta,\phi)\right)^{-\frac{1+\alpha-\gamma}{2}}\tilde{\Omega}(\phi),
\end{align*}
where $\tilde{\Omega}\in B_{X_2^p}(1,\varepsilon)$ and $f(\tilde{\Omega})\in B_{X_1^{\sigma,\delta}}(0,\varepsilon)$.

Let us now come back to $\hat{\theta}$ by inverting the adapted coordinates using Proposition \ref{prop-invert}. Denote $\tilde{\theta}(r,\vartheta)=\hat{\theta}(re^{i\vartheta})$ and then
\begin{align*}
\tilde{\theta}(r,\vartheta)=&\Theta(\beta(r,\vartheta),\phi(r,\vartheta))\\
=&\Theta\Big(C_0(\alpha-1)r^{-1-\alpha}(1+\tilde{\beta}(r,\vartheta)),\vartheta-C_0(\alpha-1)r^{-1-\alpha}(1+\tilde{\beta}(r,\vartheta))\Big)\\
=&(1+\alpha)^{-\frac{1}{2\mu}}(\alpha-1)^{\frac{1+\alpha-\gamma}{2}}r^{-(1+\alpha-\gamma)}\\
&\times \left(\frac{\alpha-1}{1+\alpha}-\tilde{f}_{\overline{\varphi}}(\tilde{\Omega})\Big(C_0(\alpha-1)r^{-1-\alpha}(1+\tilde{\beta}(r,\vartheta)),\vartheta-C_0(\alpha-1)r^{-1-\alpha}(1+\tilde{\beta}(r,\vartheta))\Big)\right)^{-\frac{1+\alpha-\gamma}{2}}\\
&\times\tilde{\Omega}\Big(\vartheta-C_0(\alpha-1)r^{-1-\alpha}(1+\tilde{\beta}(r,\vartheta))\Big),
\end{align*}
with $\tilde{\beta}\in Z^{\sigma,\alpha}$. From \eqref{theta-self-similar} we have that
$$
\theta(t,x)=\frac{1}{t^\frac{1+\alpha-\gamma}{1+\alpha}}\hat{\theta}\left(\frac{x}{t^{\frac{1}{1+\alpha}}}\right)=\frac{1}{t^\frac{1+\alpha-\gamma}{1+\alpha}}\tilde{\theta}\left(\frac{r}{t^{\frac{1}{1+\alpha}}},\vartheta\right),
$$
where $x=re^{i\vartheta}$. Hence, using that $f\in X_1^{\sigma,\delta}$ we get that the initial condition agrees with
$$
\theta_0(x)=r^{-(1+\alpha-\gamma)}\tilde{\Omega}(\vartheta),
$$
where $\tilde{\Omega}\in B_{X_2^p}(1,\varepsilon)$. 

Moreover, $\theta_0$ is locally integrable:
$$
||\theta_0||_{L_{\textnormal{loc}}^1(\R^2)}=||\tilde{\Omega}||_{L^1([0,2\pi])} \int_0^R r^{\gamma-\alpha}dr=\frac{1}{1-\alpha+\gamma}||\tilde{\Omega}||_{L^1([0,2\pi])} R^{1-\alpha+\gamma},
$$
for any $R<\infty$, where $1-\alpha+\gamma>0$. Note that $\tilde{\Omega}\in L^1$ since $\tilde{\Omega}\in L^p$, for some $p>\frac{1}{1-\gamma}$.

\subsection{Main result}
Our main result reads as follows.
\begin{theo}
Let $\gamma\in(0,1)$ and $\alpha\in(1,1+\gamma)$. Then, there exists $\varepsilon>0$ such that for any $\frac{2\pi}{m}$-periodic $\tilde{\Omega}\in B_{L^p([0,2\pi])}(1,\varepsilon)$ and $p>\frac{1}{1-\gamma}$, the initial condition
$$
\theta_0(r e^{i\vartheta})=r^{-(1+\alpha-\gamma)}\tilde{\Omega}(\vartheta),
$$
describes a self-similar solution of the type \eqref{theta-self-similar} for $\textnormal{(SQG)}_\gamma$, for any $m\geq 1$.
\end{theo}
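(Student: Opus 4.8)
The plan is to assemble the pieces established in Sections~\ref{sec-selfsimilar}--\ref{sec-recovering}. Given $\gamma\in(0,1)$ and $\alpha\in(1,1+\gamma)$, the first point to settle is that the auxiliary parameters $\sigma,\delta,p$ can be chosen simultaneously: since $1<\alpha<1+\gamma$ both $\tfrac{\alpha-1}{1+\alpha}$ and $\tfrac{\gamma-\alpha+1}{1+\alpha}$ are strictly positive, so \eqref{sigma-rel} admits some $\sigma\in(0,1)$; then pick $\delta<\min\{1-\gamma,\sigma\}$ and $p>\tfrac{1}{1-\gamma}$. With these choices fixed, for each $m\geq1$ Theorem~\ref{theo-1} produces $\varepsilon>0$ and a $C^1$ map $\tilde\Omega\in B_{X_{2,m}^p}(1,\varepsilon)\mapsto \tilde f(\tilde\Omega)\in B_{X_{1,m}^{\sigma,\delta}}(0,\varepsilon)$ with $\tilde F(\tilde f(\tilde\Omega),\tilde\Omega)=0$.

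Next I would translate this solution back into the geometric quantities. By Remark~\ref{rem-th1}, setting $\Psi$ and $\Theta$ as there yields a solution of the vorticity equation \eqref{eq-self-sim-2} together with the elliptic relation \eqref{equation} in the adapted coordinates $(\beta,\phi)$, with $\Theta(\beta,\phi)\sim\beta^{\frac{1+\alpha-\gamma}{1+\alpha}}$ as $\beta\to0$ and $\beta\to\infty$. Then, using Proposition~\ref{prop-jacobian-inv} (non-vanishing Jacobian for $\beta\in(0,\infty)$) and Proposition~\ref{prop-invert}, I would invert the implicit, solution-dependent change of variables $(\beta,\phi)\mapsto(r,\vartheta)$, obtaining $\beta(r,\vartheta)=C_0(\alpha-1)r^{-1-\alpha}(1+\tilde\beta(r,\vartheta))$ and $\phi(r,\vartheta)=\vartheta-\beta(r,\vartheta)$ with $\tilde\beta\in Z^{\sigma,\alpha}$, and hence $\hat\theta(re^{i\vartheta})=\Theta(\beta(r,\vartheta),\phi(r,\vartheta))$. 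Plugging $\hat\theta$ into the self-similar ansatz \eqref{theta-self-similar} gives a genuine solution $\theta(t,x)$ of $\mathrm{(SQG)}_\gamma$ of the required type, for any $m\geq1$, where the $\tfrac{2\pi}{m}$-periodicity of $\tilde\Omega$ is inherited throughout by the reductions of Section~\ref{sec-mfold}.

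Finally I would identify the initial datum and check local integrability. Evaluating $\theta(t,x)=t^{-\frac{1+\alpha-\gamma}{1+\alpha}}\tilde\theta\big(r t^{-\frac{1}{1+\alpha}},\vartheta\big)$ and letting $t\to0^+$ (equivalently letting the first argument of $\tilde\theta$ go to $\infty$), the decay of $\tilde f_{\overline\varphi}$ at infinity and of $\tilde\beta$ forces $\tilde\theta(s,\vartheta)\to s^{-(1+\alpha-\gamma)}\tilde\Omega(\vartheta)$ once the explicit constants are simplified using $\tfrac1\mu=1+\alpha-\gamma$; this yields $\theta_0(re^{i\vartheta})=r^{-(1+\alpha-\gamma)}\tilde\Omega(\vartheta)$. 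Local integrability then follows from $\|\theta_0\|_{L^1_{\mathrm{loc}}(\R^2)}=\|\tilde\Omega\|_{L^1([0,2\pi])}\int_0^R r^{\gamma-\alpha}\,dr<\infty$ since $\gamma-\alpha>-1$ by \eqref{alpha-condition}, and $\tilde\Omega\in L^1$ because $p>\tfrac{1}{1-\gamma}>1$.

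I expect the main obstacle in this assembly step to be the invertibility of the adapted coordinates, Proposition~\ref{prop-invert}: it is itself an implicit function theorem argument in the weighted spaces $Z^{\sigma,\alpha}$, and one must carefully match the decay of the new unknown $\tilde\beta$ (and of $r\partial_r\tilde\beta$) at $r\to0$ and $r\to\infty$ with the decay of $\tilde f_{\overline\beta}$ dictated by $X_1^{\sigma,\delta}$, while keeping $\|f\|$ small enough that the Jacobian stays bounded away from zero. The second delicate point, worth stating precisely rather than computing, is the exact sense in which $\theta_0$ is the initial condition — namely as the $t\to0^+$ limit of the self-similar profile — together with the bookkeeping of the multiplicative constants that collapse to $1$.
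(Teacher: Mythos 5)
Your proposal is correct and follows the same route as the paper's Section~\ref{sec-recovering}: fix admissible $\sigma,\delta,p$ from \eqref{sigma-rel}, apply Theorem~\ref{theo-1} and Remark~\ref{rem-th1} to get $(\Psi,\Theta)$ in adapted coordinates, invert the coordinates via Propositions~\ref{prop-jacobian-inv}--\ref{prop-invert}, substitute into the ansatz \eqref{theta-self-similar}, and read off $\theta_0$ after the constants collapse via $\frac1\mu=1+\alpha-\gamma$. Your treatment of the $t\to0^+$ limit (tracking the simultaneous decay of $\tilde f_{\overline\varphi}$ and $\tilde\beta$, including the $(1+\tilde\beta)^{\frac{1+\alpha-\gamma}{1+\alpha}}$ factor) is in fact a bit more explicit than the paper's one-line assertion, but it is the same argument.
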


\appendix

\section{Radial solutions}\label{ap-radial}
In this section, we review the expression of the stream function $\psi$ associated to a radial vorticity $\theta$.

\begin{pro}
If $f=(-\Delta)^\beta g$, with $\beta\in(-1,0)$ and $g(x)=g(|x|)$ radial, then
$$
f(x)=2\pi C_\beta\int_0^\infty G_0(r,|x|)g(r)rdr,
$$
where
$$
G_0(r,|x|)=\left\{
\begin{array}{ll}
\frac{1}{\Gamma(-\beta)}\left(\frac{2}{r}\right)^{2+2\beta}\Gamma(1+\beta){}_2F_1\left(1+\beta,1+\beta,1,\frac{|x|^2}{r^2}\right),& |x|<r,\\
\frac{1}{\Gamma(-\beta)}\left(\frac{2}{|x|}\right)^{2+2\beta}\Gamma(1+\beta){}_2F_1\left(1+\beta,1+\beta,1,\frac{r^2}{|x|^2}\right),& |x|>r.\\
\end{array}
\right.
$$
\end{pro}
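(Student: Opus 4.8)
The statement is a classical computation: for a radial function $g(x)=g(|x|)$ the action of the fractional operator $(-\Delta)^\beta$ with $\beta\in(-1,0)$ is given by convolution against the Riesz kernel $|x-y|^{-(2+2\beta)}$ (up to the constant $C_\beta$), and averaging this kernel over the angular variable produces a Gegenbauer/hypergeometric expression. The plan is to start from the Riesz representation
\[
\left((-\Delta)^\beta g\right)(x)= C_\beta\int_{\R^2}\frac{g(y)}{|x-y|^{2+2\beta}}\,dA(y),
\]
which is valid for $\beta\in(-1,0)$ and $g$ with enough decay (this is exactly the kernel $K_\gamma$ appearing in \eqref{psi-theta} with the identification $\beta=-1+\tfrac{\gamma}{2}$, so $2+2\beta=\gamma$). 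Passing to polar coordinates $y=re^{i\varphi}$ and using that $g$ is radial, one gets
\[
f(x)=C_\beta\int_0^\infty g(r)\,r\left(\int_0^{2\pi}\frac{d\varphi}{(r^2+|x|^2-2r|x|\cos\varphi)^{1+\beta}}\right)dr,
\]
so the whole problem reduces to evaluating the angular integral
\[
I(r,|x|):=\int_0^{2\pi}\frac{d\varphi}{(r^2+|x|^2-2r|x|\cos\varphi)^{1+\beta}}.
\]

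The main step is to show $I(r,|x|)=2\pi\,G_0(r,|x|)/C_\beta$ with $G_0$ as stated. First I would factor out the larger of $r,|x|$: if $|x|<r$ write the integrand as $r^{-(2+2\beta)}\big(1+t^2-2t\cos\varphi\big)^{-(1+\beta)}$ with $t=|x|/r\in(0,1)$ (and symmetrically with $t=r/|x|$ in the other case). Then I would expand $\big(1+t^2-2t\cos\varphi\big)^{-(1+\beta)}$ using the generating function for Gegenbauer polynomials,
\[
(1-2t\cos\varphi+t^2)^{-\lambda}=\sum_{n\ge 0}C_n^{(\lambda)}(\cos\varphi)\,t^n,\qquad \lambda=1+\beta,
\]
valid for $|t|<1$. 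Integrating term by term against $d\varphi$ on $[0,2\pi]$ and using the orthogonality-type identity $\int_0^{2\pi}C_n^{(\lambda)}(\cos\varphi)\,d\varphi = 2\pi\,\frac{(\lambda)_{n/2}}{(n/2)!}$ for $n$ even (and $0$ for $n$ odd) — equivalently $\int_0^{2\pi}C_{2k}^{(\lambda)}(\cos\varphi)\,d\varphi=2\pi\binom{\lambda+k-1}{k}$ — collapses the sum to
\[
I= \frac{2\pi}{r^{2+2\beta}}\sum_{k\ge 0}\frac{(\lambda)_k}{k!}\cdot\frac{(\lambda)_k}{k!}\,t^{2k}
= \frac{2\pi}{r^{2+2\beta}}\,{}_2F_1(\lambda,\lambda,1,t^2),
\]
since $(\lambda)_k/k!$ is exactly the coefficient extracted, and $\sum_k \frac{(\lambda)_k(\lambda)_k}{(1)_k k!}x^k={}_2F_1(\lambda,\lambda,1,x)$. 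Substituting $\lambda=1+\beta$, $t^2=|x|^2/r^2$ and collecting the constant $\frac{1}{\Gamma(-\beta)}2^{2+2\beta}\Gamma(1+\beta)$ (which is how $C_\beta$ is normalized, cf. the formula for $C_\gamma$ in the introduction) gives precisely the claimed $G_0$ in the region $|x|<r$; the region $|x|>r$ is identical after swapping $r\leftrightarrow|x|$, which is legitimate because $I(r,|x|)$ is symmetric in its two arguments.

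The delicate points are convergence and constants rather than ideas. I would (i) justify interchanging the sum and the integral over $\varphi$ by the absolute convergence of the Gegenbauer series for $|t|<1$ (uniform on $\varphi\in[0,2\pi]$ once $t$ is fixed below $1$), and note that the borderline $|x|=r$ contributes a measure-zero set in the outer $dr$-integral when $2+2\beta<2$, i.e. $\beta<0$, so it can be ignored; (ii) verify the Gegenbauer integral identity $\int_0^{2\pi}C_{2k}^{(\lambda)}(\cos\varphi)\,d\varphi=2\pi\frac{(\lambda)_k}{k!}$, for instance by integrating the generating function directly: $\int_0^{2\pi}(1-2t\cos\varphi+t^2)^{-\lambda}d\varphi$ is a standard integral equal to $2\pi\,{}_2F_1(\lambda,\lambda,1,t^2)$ by the beta-integral representation of ${}_2F_1$, which in fact short-circuits step (i) entirely and is probably the cleanest route; and (iii) match the multiplicative constant: the prefactor $2\pi C_\beta$ in the statement together with $G_0$'s internal constant $\frac{2^{2+2\beta}\Gamma(1+\beta)}{\Gamma(-\beta)}$ must reproduce the raw coefficient $C_\beta\cdot 2\pi\cdot r^{-(2+2\beta)}$ coming out of the angular integral, which pins down the normalization of $C_\beta$ consistently with \eqref{psi-theta}. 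I expect (iii), bookkeeping the $\Gamma$-factors and the choice of normalization of $(-\Delta)^\beta$, to be the only real friction; the analytic content is the elementary identity $\int_0^{2\pi}(a-b\cos\varphi)^{-\lambda}d\varphi = \frac{2\pi}{a^\lambda}\,{}_2F_1\!\big(\tfrac{\lambda}{2},\tfrac{\lambda+1}{2};1;(b/a)^2\big)$ followed by a quadratic transformation of ${}_2F_1$ to bring it to the $(\lambda,\lambda,1,\cdot)$ form, or the direct Gegenbauer expansion above.
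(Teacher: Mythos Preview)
Your approach is correct and genuinely different from the paper's. The paper proves a more general statement (for $g(r,\theta)=g_k(r)\cos(k\theta)$) via the Fourier side: it writes $f(x)=\int \hat g(\xi)|\xi|^{2\beta}e^{ix\cdot\xi}d\xi$, computes $\hat g$ in polar coordinates to produce a Bessel function $J_k$, and then evaluates the Weber--Schafheitlin integral $\int_0^\infty J_k(|\xi|r)J_k(|\xi||x|)|\xi|^{2\beta+1}d|\xi|$ to obtain the ${}_2F_1$. You instead stay on the physical side, starting from the Riesz convolution kernel and reducing to the angular integral $\int_0^{2\pi}(r^2+|x|^2-2r|x|\cos\varphi)^{-(1+\beta)}d\varphi$, which is a classical hypergeometric integral. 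Your route is more elementary for the radial case $k=0$ and avoids the Bessel machinery; the paper's route is the natural one if one wants all Fourier modes $k$ at once (which they do compute, though only $k=0$ is used downstream).

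One small slip to flag: your stated identity $\int_0^{2\pi}C_{2k}^{(\lambda)}(\cos\varphi)\,d\varphi=2\pi\,(\lambda)_k/k!$ is off by a square; the correct value is $2\pi\,\big((\lambda)_k/k!\big)^2$, as one sees immediately from the factorization $(1-2t\cos\varphi+t^2)^{-\lambda}=|1-te^{i\varphi}|^{-2\lambda}$ and Parseval. Your final sum $\sum_k\big((\lambda)_k/k!\big)^2 t^{2k}={}_2F_1(\lambda,\lambda,1,t^2)$ is nonetheless correct, so the error is purely in the intermediate bookkeeping, and in any case your own remark that the direct evaluation of $\int_0^{2\pi}(1-2t\cos\varphi+t^2)^{-\lambda}d\varphi=2\pi\,{}_2F_1(\lambda,\lambda,1,t^2)$ ``short-circuits'' the Gegenbauer step is the right fix. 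You are also right that the only real friction is the constant $C_\beta$; the paper is itself somewhat loose here (its proof produces a prefactor $(2\pi)^2$ together with an extra $\tfrac12$ inside its $G_k$, not quite matching the $2\pi C_\beta$ in the statement), so do not expect a clean reconciliation without fixing a normalization convention for $(-\Delta)^\beta$.
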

\begin{proof}
We want to calculate the explicit kernels of
$$ f = (-\Delta)^{\beta}g$$
whenever $-1 < \beta < 0$ and for general $g$ of the type
$$ g(r,\theta) = g_k(r) \cos(k\theta), \quad g(r,\theta) = g_k(r) \sin(k\theta). $$
Note that we are indeed interested in radial functions $g$ corresponding to $k=0$, but let us compute it in general. To do so, we will calculate the Fourier transform:

$$f(x) = \int \hat{g}(\xi) |\xi|^{2\beta} e^{i x \cdot \xi} d\xi $$

We start calculating $\hat{g}(\xi)$. In the first case:
\begin{align*}
\hat{g}(\xi) = \int_{\R^2} e^{-i x \cdot \xi} g(x) dx
 = \int_0^{\infty} g_k(r)r  \int_0^{2\pi} \cos(k \theta) e^{-ir(\cos(\theta) \xi_1  + \sin(\theta)\xi_2)} d\theta dr
\end{align*}
Writing 

$$\cos(\psi) = \frac{\xi_1}{\sqrt{\xi_1^2 + \xi_2^2}}, \quad \sin(\psi) = \frac{\xi_2}{\sqrt{\xi_1^2 + \xi_2^2}}, \quad z = \sqrt{\xi_1^2+\xi_2^2}r, $$
the inner integral becomes

\begin{align*}
\int_0^{2\pi} e^{-iz \cos(\theta - \psi)}\cos(k\theta) d\theta
=& \cos(k\psi) \int_0^{2\pi} e^{-i z \cos(u)}\cos(ku) du\\
=& 2 \pi T_k\left(\frac{\xi_1}{\sqrt{\xi_1^2+\xi_2^2}}\right) (-i)^k J_k(\sqrt{\xi_1^2+\xi_2^2}r),
\end{align*}
where $J_k$ is the Bessel function of order $k$, $T_k$ is the $k$-th Chebychev polynomial of the first kind. In the second case:

\begin{align*}
\hat{g}(\xi) = \int_{\R^2} e^{-i x \cdot \xi} g(x) dx
 = \int_0^{\infty} g_k(r)r  \int_0^{2\pi} \sin(k \theta) e^{-ir(\cos(\theta) \xi_1  + \sin(\theta)\xi_2)} d\theta dr
\end{align*}
Writing 
$$\cos(\psi) = \frac{\xi_1}{\sqrt{\xi_1^2 + \xi_2^2}}, \quad \sin(\psi) = \frac{\xi_2}{\sqrt{\xi_1^2 + \xi_2^2}},\quad  z = \sqrt{\xi_1^2+\xi_2^2}r $$
the inner integral becomes
\begin{align*}
\int_0^{2\pi} e^{-iz \cos(\theta - \psi)}\sin(k\theta) d\theta
=& \sin(k\psi) \int_0^{2\pi} e^{-i z \cos(u)}\sin(ku) du\\
=& 2 \pi \frac{\xi_2}{\sqrt{\xi_1^2+\xi_2^2}} U_{k-1}\left(\frac{\xi_1}{\sqrt{\xi_1^2+\xi_2^2}}\right) (-i)^k J_k(\sqrt{\xi_1^2+\xi_2^2}r),
\end{align*}
where $U_k$ is the $k$-th Chebychev polynomial of the second kind.

We now move on to the computation of the outer integral. Switching the integrals in $r$ and in $\xi$, and integrating first in $\xi$, in the first case:
\begin{align*}
& \int_0^{\infty} (-i)^k 2\pi g_k(r)r  \int_{\R^2} T_k\left(\frac{\xi_1}{\sqrt{\xi_1^2+\xi_2^2}}\right) J_k(\sqrt{\xi_1^2+\xi_2^2}r) |\xi|^{2\beta} e^{i x \cdot \xi} d\xi dr \\
& = \int_0^\infty  (-i)^k 2\pi g_k(r)r  \int_0^{\infty}  J_k(|\xi|r)|\xi|^{2\beta+1}\int_0^{2\pi} \cos(k\psi) e^{i |\xi|(x_1 \cos(\psi) + x_2 \sin(\psi))} d\psi d|\xi| dr\\
& = (-i)^{2k} (2\pi)^2 \cos(k\theta) \int_0^{\infty}  g_k(r)r  \int_0^{\infty}  J_k(|\xi|r)|\xi|^{2\beta+1}  J_k(|x||\xi|) d|\xi| dr
\end{align*}
Using that $-1 < \beta < 0$, we get
\begin{align*}
 & \int_0^\infty  J_k(|\xi|r)|\xi|^{2\beta+1}  J_k(|x||\xi|) d|\xi| \equiv G_{k}(r,|x|) \\
& = 
\left\{
\begin{array}{cc}
\frac{1}{2\Gamma(-\beta)}\left(\frac{2}{r}\right)^{2+2\beta} \left(\frac{|x|}{r}\right)^k \Gamma(1+\beta+k) \Gamma(k+1) _2 F_1(1+\beta,1+\beta+k,1+k, \frac{|x|^2}{r^2})  & \text{ if } |x| < r  \\
\frac{1}{2\Gamma(-\beta)}\left(\frac{2}{x}\right)^{2+2\beta} \left(\frac{r}{|x|}\right)^k \Gamma(1+\beta+k) \Gamma(k+1) _2 F_1(1+\beta,1+\beta+k,1+k, \frac{r^2}{|x|^2})  & \text{ if } |x| > r  \\
\end{array}
\right.
\end{align*}
where $_2 F_1$ is the Gauss Hypergeometric function. This yields:
\begin{align*}
 f(x) = (-i)^{2k} (2\pi)^2 \cos(k\theta)\int  g_k(r) G_k(r,|x|) r dr.
\end{align*}
The computation in the second case is similar, and we obtain:
\begin{align*}
 f(x) = (-i)^{2k} (2\pi)^2 \sin(k\theta)\int  g_k(r) G_k(r,|x|) r dr.
\end{align*}
Now, taking $k=0$ we arrive to the announced result.
\end{proof}
In the following, we apply the previous proposition to our trivial solution.
\begin{cor}\label{cor-radial}
If $\theta(x)=|x|^{-\frac{1}{\mu}}$, then $\psi(x)=-(-\Delta)^{-1+\frac{\gamma}{2}}\theta(x)=-C_0|x|^{2-\gamma-\frac{1}{\mu}}$, where
\begin{align}\label{C0}
\nonumber C_0=&\frac{ \pi 2^\gamma \Gamma\left(\frac{\gamma}{2}\right)\mu}{\Gamma\left(1-\frac{\gamma}{2}\right)}\left\{\frac{1}{2\mu-1}{}_3F_2\left(\left\{\frac{\gamma}{2},\frac{\gamma}{2},1-\frac{1}{2\mu}\right\},\left\{1, 2-\frac{1}{2\mu}\right\},1\right)\right.\\
&\left.+\frac{1}{1+\mu(-2+\gamma)}{}_3F_2\left(\left\{\frac{\gamma}{2},\frac{\gamma}{2},-1+\frac{\gamma}{2}+\frac{1}{2\mu}\right\},\left\{1, \frac{\gamma}{2}+\frac{1}{2\mu}\right\},1\right)\right\}.
\end{align}
\end{cor}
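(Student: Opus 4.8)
The plan is to invoke the preceding proposition with $\beta=\tfrac{\gamma}{2}-1$ and $g$ the radial function $g(r)=r^{-1/\mu}$, i.e. in the case $k=0$; this is legitimate since $\beta=\tfrac{\gamma}{2}-1\in(-1,0)$ for $\gamma\in(0,1)\subset(0,2)$. Before computing, observe that the corollary reduces to the evaluation of a single constant: the operator $(-\Delta)^{-1+\frac{\gamma}{2}}$ is rotationally invariant and homogeneous of degree $2-\gamma$ (it is the Fourier multiplier $|\xi|^{\gamma-2}$), while $\theta(x)=|x|^{-1/\mu}$ is radial and homogeneous of degree $-1/\mu$. Hence, provided the convolution defining $\psi=-(-\Delta)^{-1+\frac{\gamma}{2}}\theta$ makes sense --- which is exactly what \eqref{mu-condition} guarantees, $\tfrac12<\mu$ yielding integrability of $K_\gamma(x-\cdot)\theta$ near the origin and $\mu<\tfrac1{2-\gamma}$ at infinity --- the function $\psi$ is radial and homogeneous of degree $2-\gamma-\tfrac1\mu$, i.e. $\psi(x)=-C_0|x|^{2-\gamma-1/\mu}$ for some constant $C_0\geq0$, and it only remains to identify $C_0$, for instance by setting $|x|=1$.

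With $\beta=\tfrac{\gamma}{2}-1$ one has $1+\beta=\tfrac{\gamma}{2}$, $2+2\beta=\gamma$, $\Gamma(-\beta)=\Gamma(1-\tfrac{\gamma}{2})$ and $\Gamma(1+\beta)=\Gamma(\tfrac{\gamma}{2})$, so the kernel $G_0$ of the preceding proposition becomes a fixed multiple of $r^{-\gamma}{}_2F_1(\tfrac{\gamma}{2},\tfrac{\gamma}{2},1,r^2/|x|^2)$ on the branch $r<|x|$ and of $|x|^{-\gamma}{}_2F_1(\tfrac{\gamma}{2},\tfrac{\gamma}{2},1,r^2/|x|^2)$ on the branch $r>|x|$. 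Specializing to $|x|=1$ gives
\[
C_0=c_\gamma\Big(\int_0^{1}r^{1-1/\mu}\,{}_2F_1\big(\tfrac{\gamma}{2},\tfrac{\gamma}{2},1,r^2\big)\,dr+\int_1^{\infty}r^{1-1/\mu-\gamma}\,{}_2F_1\big(\tfrac{\gamma}{2},\tfrac{\gamma}{2},1,\tfrac1{r^2}\big)\,dr\Big),
\]
where $c_\gamma$ is the product of the explicit constants $2^\gamma$, $\Gamma(\tfrac\gamma2)$, $\Gamma(1-\tfrac\gamma2)^{-1}$ and the overall normalization of $(-\Delta)^{-1+\frac\gamma2}$ coming from the proposition, and the two integrals correspond to the branches $r<|x|$ and $r>|x|$. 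In the second integral I substitute $r\mapsto 1/r$ to bring it onto $[0,1]$, turning it into $\int_0^1 r^{\gamma-3+1/\mu}{}_2F_1(\tfrac\gamma2,\tfrac\gamma2,1,r^2)\,dr$. Then I expand ${}_2F_1(\tfrac\gamma2,\tfrac\gamma2,1,r^2)=\sum_{n\geq0}\frac{(\gamma/2)_n^2}{(n!)^2}r^{2n}$ and integrate term by term; the first integral contributes $\sum_n\frac{(\gamma/2)_n^2}{(n!)^2}\frac1{2n+c}$ with $c=2-\tfrac1\mu$, the second with $c=\gamma-2+\tfrac1\mu$. Using the identity $\frac1{2n+c}=\frac1c\frac{(c/2)_n}{(1+c/2)_n}$ I recognize each sum as a Gauss ${}_3F_2$ at argument $1$, namely $\frac1c{}_3F_2(\{\tfrac\gamma2,\tfrac\gamma2,\tfrac c2\},\{1,1+\tfrac c2\},1)$: with $c/2=1-\tfrac1{2\mu}$ this is the first ${}_3F_2$ of \eqref{C0} with prefactor $\tfrac{\mu}{2\mu-1}$, and with $c/2=-1+\tfrac\gamma2+\tfrac1{2\mu}$ the second, with prefactor $\tfrac{\mu}{1+\mu(\gamma-2)}$. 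Both series converge because the excess of the denominator parameters over the numerator parameters equals $2-\gamma>0$ in each case, which also legitimizes the term-by-term integration (all summands are nonnegative, so monotone convergence applies near the endpoint $r=1$ where ${}_2F_1$ has a mild logarithmic singularity). Collecting the constants produces the announced prefactor $\frac{\pi 2^\gamma\Gamma(\gamma/2)\mu}{\Gamma(1-\gamma/2)}$ and hence \eqref{C0}.

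The step I expect to be the main obstacle is not the series manipulation but the bookkeeping of the overall normalization $c_\gamma$, since the preceding proposition is phrased in terms of a Fourier-analytic constant whereas \eqref{C0} must be consistent with the explicit kernel $K_\gamma(x)=-\frac{C_\gamma}{2\pi}|x|^{-\gamma}$, $C_\gamma=\frac{\Gamma(\gamma/2)}{2^{1-\gamma}\Gamma((1-\gamma)/2)}$, fixed in the introduction. The cleanest way to pin the constant down --- and a good consistency check --- is to bypass the Fourier kernel altogether: write $\psi(x)=-\frac{C_\gamma}{2\pi}\int_{\R^2}\frac{|y|^{-1/\mu}}{|x-y|^\gamma}\,dy$, pass to polar coordinates $y=\rho e^{i\eta}$ with $x=|x|e^{i0}$, and evaluate the angular integral $\int_0^{2\pi}(1+\rho^2-2\rho\cos\eta)^{-\gamma/2}\,d\eta$ through Lemma \ref{lem-integral} (the $T_0$ case), in either of its hypergeometric forms. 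The remaining radial integral is exactly the one analyzed above, and the value $\frac{\pi 2^\gamma\Gamma(\gamma/2)\mu}{\Gamma(1-\gamma/2)}$ then emerges transparently from $\frac{C_\gamma}{2\pi}$ times the explicit hypergeometric normalization, using the reflection formula $\Gamma(\tfrac\gamma2)\Gamma(1-\tfrac\gamma2)=\pi/\sin(\pi\gamma/2)$ if needed for simplification. Either route gives the same $C_0$.
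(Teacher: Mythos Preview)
Your proposal is correct and follows the same route as the paper: apply the preceding proposition with $\beta=\tfrac{\gamma}{2}-1$ and $g(r)=r^{-1/\mu}$ in the $k=0$ case. The paper's proof is the single sentence ``Use the previous proposition with $\beta=-1+\frac{\gamma}{2}$ and $g(x)=|x|^{-\frac{1}{\mu}}$,'' so you have in fact supplied the detailed computation the paper leaves implicit---the splitting into the two branches, the term-by-term integration of the ${}_2F_1$ series, and the recognition of the resulting sums as ${}_3F_2(1)$ via $\tfrac{1}{2n+c}=\tfrac{1}{c}\tfrac{(c/2)_n}{(1+c/2)_n}$. Your alternative bookkeeping check through the convolution kernel $K_\gamma$ and $T_0$ is precisely the content of the paper's Proposition~\ref{prop-C0-2}, so that consistency route is also anticipated.
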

\begin{proof}
Use the previous proposition with $\beta=-1+\frac{\gamma}{2}$ and $g(x)=|x|^{-\frac{1}{\mu}}$.
\end{proof}

The constant $C_0$ will be important in the kernel study of the linearized operator. For that reason, let us give its expression in terms of the integral operator $T_k$ defined in \eqref{T-k}.
\begin{pro}\label{prop-C0-2}
The constant $C_0$ can be also written as
\begin{align*}
C_0=&\frac{C_\gamma}{2\pi}\int_0^\infty\int_0^{2\pi}\frac{z^{\gamma-\alpha}}{\left\{1+z^2-2z\cos(\eta)\right\}^\frac{\gamma}{2}}\\
=&\frac{C_\gamma}{2\pi}M[T_0](1-\alpha+\gamma).
\end{align*}
\end{pro}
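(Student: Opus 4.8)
The plan is to avoid the hypergeometric formula \eqref{C0} altogether and instead read off the integral representation of $C_0$ directly from the potential-theoretic definition of the trivial solution. By Corollary \ref{cor-radial}, the stream function associated with $\theta(x)=|x|^{-1/\mu}$ is $\psi(x)=-(-\Delta)^{-1+\gamma/2}\theta(x)=-C_0|x|^{2-\gamma-1/\mu}=-C_0|x|^{1-\alpha}$; in particular $\psi(x_0)=-C_0$ whenever $|x_0|=1$. On the other hand, by the convolution formula \eqref{psi-theta}, $\psi(x_0)=\int_{\R^2}K_\gamma(x_0-y)|y|^{-1/\mu}\,dy=-\frac{C_\gamma}{2\pi}\int_{\R^2}\frac{|y|^{-1/\mu}}{|x_0-y|^\gamma}\,dy$. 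Equating the two expressions for $\psi(x_0)$ will produce the claimed formula once the right-hand integral is put into polar form.

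Concretely, I would take $x_0=(1,0)$, write $y=z(\cos\eta,\sin\eta)$ so that $|x_0-y|^2=1+z^2-2z\cos\eta$ and $dy=z\,dz\,d\eta$, and obtain $C_0=\frac{C_\gamma}{2\pi}\int_0^\infty\int_0^{2\pi}\frac{z^{1-1/\mu}}{(1+z^2-2z\cos\eta)^{\gamma/2}}\,d\eta\,dz$. Using the self-similarity constraint $1/\mu=1+\alpha-\gamma$ from \eqref{mu} gives $1-1/\mu=\gamma-\alpha$, which is exactly the first asserted identity. Then I would recognize the inner integral as $T_0(z)$ in the notation of \eqref{T-k} (the case $k=0$, where $\cos(k\eta)\equiv1$), and the outer integral as $\int_0^\infty z^{s-1}T_0(z)\,dz$ with $s-1=\gamma-\alpha$, i.e. $s=1-\alpha+\gamma$; this is precisely $M[T_0](1-\alpha+\gamma)$, giving the second identity.

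The main work is in justifying the analytic manipulations rather than in any computation. First, the integral $\int_{\R^2}|y|^{-1/\mu}|x_0-y|^{-\gamma}\,dy$ must be absolutely convergent: this is exactly where condition \eqref{mu-condition} (equivalently \eqref{alpha-condition}) is used, ensuring integrability near the origin (since $1/\mu<2\Leftrightarrow\alpha<1+\gamma$), near $y=x_0$ (since $\gamma<2$), and at infinity (since $1/\mu+\gamma>2\Leftrightarrow\alpha>1$). Absolute convergence then legitimizes Fubini and the passage to iterated polar integration, and the resulting one-dimensional integral $\int_0^\infty z^{\gamma-\alpha}T_0(z)\,dz$ converges because $T_0$ is bounded near $z=0$ and decays like $z^{-\gamma}$ as $z\to\infty$, so the integrand is integrable precisely when $1-\alpha+\gamma\in(0,\gamma)$, which holds by \eqref{alpha-condition} and places $s=1-\alpha+\gamma$ in the strip of validity of $M[T_0]$ from Proposition \ref{prop-mellinK-2}. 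The only genuinely delicate point is the identification with \eqref{C0}: rather than checking a ${}_3F_2$ identity directly, I would note that both \eqref{C0} and the integral expression compute the same scalar $-\psi(x_0)$ — the former via the Gauss hypergeometric Green's function of Appendix \ref{ap-radial}, the latter via the convolution kernel $K_\gamma$ — and therefore must coincide.
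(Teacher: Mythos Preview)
Your proof is correct and follows essentially the same approach as the paper's: both start from the convolution representation of $\psi$ for the radial profile $\theta(x)=|x|^{-1/\mu}$, pass to polar coordinates, and identify the resulting integral with $M[T_0](1-\alpha+\gamma)$. The only cosmetic difference is that you evaluate directly at $|x_0|=1$, whereas the paper keeps a general radius $r$ and then rescales $z\mapsto rz$ to eliminate it; your additional discussion of absolute convergence is a welcome justification that the paper omits.
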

\begin{proof}
From the definition of the $(-\Delta)^{-1+\frac{\gamma}{2}}$ operator one has
$$
C_0=r^{-2+\gamma+1/\mu}\frac{C_\gamma}{2\pi}\frac{C_\gamma}{2\pi}\int_0^\infty\int_0^{2\pi}\frac{z^{\gamma-\alpha}}{\left\{r^2+z^2-2rz\cos(\eta)\right\}^\frac{\gamma}{2}}.
$$
By performing the change of variable $z\mapsto rz$ and using the expression of $\mu$ in \eqref{mu} and $T_k$ in \eqref{T-k} we arrive to the announced identity.
\end{proof}


\section{Special functions}\label{ap-special}
This section aims to give an overview of some special functions such as the Gauss Hypergeometric function, the Gamma function or the Digamma function. At the end, we will also introduce the Mellin transform and some of its properties.

Define for any real numbers $a,b\in \mathbb{R},\, c\in \mathbb{R}\backslash(-\mathbb{N})$ the Gauss hypergeometric function $z\mapsto F(a,b;c;z)$ on the open unit disc $\mathbb{D}$ by the power series
\begin{equation}\label{GaussF}
F(a,b;c;z)=\sum_{n=0}^{\infty}\frac{(a)_n(b)_n}{(c)_n}\frac{z^n}{n!}, \quad \forall z\in \mathbb{D},
\end{equation}
where the Pochhammer's  symbol $(x)_n$ is defined by
$$
(x)_n = \begin{cases}   1,   & n = 0, \\
 x(x+1) \cdots (x+n-1), & n \geq1,
\end{cases}
$$
and verifies
\begin{equation*}
(x)_n=x\,(1+x)_{n-1},\quad (x)_{n+1}=(x+n)\,(x)_n.
\end{equation*}
Moreover, let us recall the integral representation of the hypergeometric function, see for instance  \cite[p. 47]{Rainville:special-functions}. Assume that  $ \textnormal{Re}(c) > \textnormal{Re}(b) > 0,$ then 
\begin{equation}\label{hypergeom-integral}
\hspace{1cm}F(a,b;c;z)=\frac{\Gamma(c)}{\Gamma(b)\Gamma(c-b)}\int_0^1 x^{b-1} (1-x)^{c-b-1}(1-zx)^{-a}~ dx,\quad \forall{z\in \C\backslash[1,+\infty)}.
\end{equation}

The function $\Gamma: \C\backslash\{-\N\} \to \C$ refers to the gamma function, which is the analytic continuation to the negative half plane of the usual gamma function defined on the positive half-plane $\{\textnormal{Re} z > 0\}$, and given by
$$
\Gamma(z)=\int_0^{+\infty}\tau^{z-1}e^{-\tau} d\tau,
$$
and satisfies the relation
$
\Gamma(z+1)=z\,\Gamma(z), \ \forall z\in \C \backslash(-\N).
$
Moreover, the Digamma function is defined through the logarithmic of the Gamma function as:
$$
\psi^0(z)=\frac{\Gamma'(z)}{\Gamma(z)},
$$
and has the following integral representation:
\begin{equation}\label{digamma-int}
\psi^0(z)=\int_0^1\frac{1-t^{z-1}}{1-t}dt-\gamma_0,
\end{equation}
for $\textnormal{Re}(z)>0$ and where $\gamma_0$ stands for the Euler constant.

In the following lemma, we give a representation of the Gauss Hypergeometric function in terms of some special trigonometric integrals, the proof can be found in \cite{Garcia-Hmidi-Mateu:time-periodic-3d-qg}.
\begin{lem}\label{lem-integral}
Let $k\in\N$, $\gamma\geq0$ and $A>1$, then
$$
\bigintsss_0^{2\pi}\frac{\cos(k\eta)}{(A-\cos(\eta))^{\frac{\gamma}{2}}}d\eta=\frac{2\pi}{(1+A)^{\frac{\gamma}{2}+k}}\frac{\left(\frac{\gamma}{2}\right)_k 2^k\left(\frac12\right)_k}{(2k)!}{}_2F_1\left(k+\frac{\gamma}{2}, k+\frac12; 2k+1; \frac{2}{1+A}\right ).
$$

\end{lem}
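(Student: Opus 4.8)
The final statement is the lemma asserting
$$
\int_0^{2\pi}\frac{\cos(k\eta)}{(A-\cos\eta)^{\gamma/2}}\,d\eta
=\frac{2\pi}{(1+A)^{\gamma/2+k}}\,\frac{(\gamma/2)_k\,2^k\,(1/2)_k}{(2k)!}\,
{}_2F_1\!\left(k+\tfrac{\gamma}{2},\,k+\tfrac12;\,2k+1;\,\tfrac{2}{1+A}\right),
$$
for $k\in\N$, $\gamma\ge 0$, $A>1$. Since the paper says the proof can be found in \cite{Garcia-Hmidi-Mateu:time-periodic-3d-qg}, I only need a self-contained sketch. The natural route is to expand $(A-\cos\eta)^{-\gamma/2}$ by the binomial series in the variable $\cos\eta/A$ (legitimate because $A>1$ so $|\cos\eta/A|<1$), interchange sum and integral, evaluate the resulting elementary trigonometric moments $\int_0^{2\pi}\cos(k\eta)\cos^{n}\eta\,d\eta$ in closed form, and then recognize the residual series as a Gauss hypergeometric function. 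An alternative — cleaner bookkeeping — is to first make the substitution $A-\cos\eta=(1+A)\bigl(1-\tfrac{2}{1+A}\cos^2(\eta/2)\bigr)$, which already isolates the combination $\tfrac{2}{1+A}$ appearing as the hypergeometric argument, and use the Beta-integral representation \eqref{hypergeom-integral}.

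\textbf{Key steps, in order.} First I would record the half-angle identity $A-\cos\eta=(1+A)\bigl(1-\tfrac{2}{1+A}\cos^2(\eta/2)\bigr)$ and set $x:=\tfrac{2}{1+A}\in(0,1)$, so the integrand becomes $(1+A)^{-\gamma/2}\cos(k\eta)\bigl(1-x\cos^2(\eta/2)\bigr)^{-\gamma/2}$. Second, I would expand $\bigl(1-x\cos^2(\eta/2)\bigr)^{-\gamma/2}=\sum_{n\ge0}\frac{(\gamma/2)_n}{n!}x^{n}\cos^{2n}(\eta/2)$ and write $\cos(k\eta)$ in terms of $\cos(\eta/2)$ via $\cos(k\eta)=T_{2k}(\cos(\eta/2))$ (the $2k$-th Chebyshev polynomial), or, more directly, use the standard integral $\int_0^{2\pi}\cos(k\eta)\cos^{2n}(\eta/2)\,d\eta$, which vanishes unless $2n\ge 2k$ and equals $\frac{2\pi}{2^{2n}}\binom{2n}{n-k}$ for $n\ge k$. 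Third, substituting this evaluation and shifting the summation index $n=k+m$, the integral becomes
$$
\frac{2\pi}{(1+A)^{\gamma/2}}\sum_{m\ge0}\frac{(\gamma/2)_{k+m}}{(k+m)!}\,\frac{x^{k+m}}{2^{2(k+m)}}\binom{2k+2m}{m}.
$$
Fourth, I would convert every factorial/binomial/Pochhammer into Pochhammer symbols, extract the $m=0$ prefactor $\frac{(\gamma/2)_k}{(2k)!}\,2^{k}(1/2)_k\,x^{k}$ (using the duplication-type identity $\frac{(2k+2m)!}{(k+m)!}=2^{k+m}(1/2)_{k+m}/(1/2)_k\cdot(\text{const})$, i.e. Legendre duplication on $\Gamma$), and check that the ratio of the general term to the first term is $\frac{(k+m+\gamma/2)_{\,}}{\,}$-type, giving precisely $\frac{(k+\gamma/2)_m(k+1/2)_m}{(2k+1)_m}\frac{x^m}{m!}$. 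Summing over $m$ produces ${}_2F_1(k+\gamma/2,\,k+1/2;\,2k+1;\,x)$ with $x=\tfrac{2}{1+A}$, and recalling $x^k=(2/(1+A))^k$ absorbs into $(1+A)^{-(\gamma/2+k)}\cdot 2^k$, matching the claimed formula exactly. Finally I would note that both sides are analytic in $A$ on $\{A>1\}$ (and in $\gamma$), so the identity, once established on a subinterval where all manipulations converge absolutely, extends by analytic continuation; for $\gamma=0$ it reduces to the trivial $\int_0^{2\pi}\cos(k\eta)\,d\eta=2\pi\delta_{k0}$, consistent with the right-hand side.

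\textbf{Main obstacle.} The only real work is bookkeeping: correctly evaluating the trigonometric moment $\int_0^{2\pi}\cos(k\eta)\cos^{2n}(\eta/2)\,d\eta$ and then massaging the double appearance of factorials of $2k+2m$ into Pochhammer symbols via the Legendre duplication formula $\Gamma(2z)=\frac{2^{2z-1}}{\sqrt\pi}\Gamma(z)\Gamma(z+\tfrac12)$, so that the series is manifestly a ${}_2F_1$ with the stated parameters. There is no analytic difficulty — convergence of the binomial expansion is guaranteed by $A>1$ — so the proof is elementary once the index arithmetic is done carefully; I would present it compactly and refer to \cite{Garcia-Hmidi-Mateu:time-periodic-3d-qg} for the details of that arithmetic.
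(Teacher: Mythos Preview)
Your proof is correct. The paper itself does not give a proof of this lemma --- it simply states that the proof can be found in \cite{Garcia-Hmidi-Mateu:time-periodic-3d-qg} --- so there is no argument in the paper to compare yours against. Your route (half-angle substitution $A-\cos\eta=(1+A)\bigl(1-\tfrac{2}{1+A}\cos^2(\eta/2)\bigr)$, binomial expansion, evaluation of $\int_0^{2\pi}\cos(k\eta)\cos^{2n}(\eta/2)\,d\eta=\tfrac{2\pi}{2^{2n}}\binom{2n}{n-k}$, reindexing, and Legendre duplication to recognize the ${}_2F_1$) is the standard and natural one, and the bookkeeping you outline checks out step by step. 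The only place to be slightly careful is the phrasing around the duplication identity (your parenthetical ``$\tfrac{(2k+2m)!}{(k+m)!}=\ldots$'' is garbled as written), but the correct version $(2k+1)_{2m}=2^{2m}(k+\tfrac12)_m(k+1)_m$ does exactly what you need and yields the claimed ratio $\tfrac{(k+\gamma/2)_m(k+1/2)_m}{(2k+1)_m}\tfrac{x^m}{m!}$.
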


Finally, let us introduce the Mellin transform of $f$ defined as 
$$
M[f](s)=\int_0^\infty x^{s-1}f(x)dx.
$$
In the following lemma, we give some properties of the Mellin transform, we refer to \cite{Paris-Kaminski:book-mellin} for the proof and for more information about this transform.
\begin{lem}\label{lem-mellin}
The Mellin transform of $f$ has the following properties.
\begin{itemize}
\item If 
$$
(f\wedge g)(x)=\int_0^\infty f(y)g\left(\frac{x}{y}\right)\frac{dy}{y},
$$
then 
\begin{equation}\label{mellin-1}
M[f\wedge g](s)=M[f]M[g].
\end{equation}
\item 
\begin{equation}\label{mellin-2}
M[x^\alpha f](s)=M[f](s+\alpha).
\end{equation}
\item
\begin{equation}\label{mellin-3}
M[f'](s)=-(s-1)M[f](s-1).
\end{equation}
\item
\begin{equation}\label{mellin-4}
M[x^\alpha f](s)=M[f](s+\alpha).
\end{equation}
\item 
\begin{equation}\label{mellin-5}
M[f(x^a)]=|a|^{-1}M[f](s/a).
\end{equation}
\end{itemize}
\end{lem}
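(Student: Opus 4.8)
The plan is to verify each of the five identities by a direct computation from the definition $M[f](s)=\int_0^\infty x^{s-1}f(x)\,dx$, working on the fundamental strip of convergence appropriate to each transform; since in the body of the paper the lemma is only applied to functions (the coefficients $g_k$, or $T_k$, $K_k$) whose behaviour at $0$ and $+\infty$ is explicitly controlled, I will treat the existence of these strips and the vanishing of the relevant tails as granted rather than tracking them line by line. For the multiplicative convolution formula \eqref{mellin-1} I would start from
$$
M[f\wedge g](s)=\int_0^\infty x^{s-1}\left(\int_0^\infty f(y)\,g\!\left(\tfrac{x}{y}\right)\frac{dy}{y}\right)dx,
$$
apply Fubini (justified by absolute integrability on the common strip), and in the inner $x$-integral substitute $u=x/y$, so $x=uy$, $dx=y\,du$ and $x^{s-1}=u^{s-1}y^{s-1}$. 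The inner integral becomes $y^{s}M[g](s)$, and the remaining $y$-integral collapses to $M[f](s)$, giving $M[f\wedge g](s)=M[f](s)M[g](s)$.

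For \eqref{mellin-2} and \eqref{mellin-4}, which are the same statement, I would simply observe $x^{s-1}\cdot x^{\alpha}f(x)=x^{(s+\alpha)-1}f(x)$ and integrate, so that $M[x^\alpha f](s)=M[f](s+\alpha)$ immediately. For the dilation identity \eqref{mellin-5} I would change variables $u=x^a$: when $a>0$ one has $x=u^{1/a}$, $dx=\tfrac1a u^{1/a-1}\,du$, whence
$$
M[f(x^a)](s)=\int_0^\infty x^{s-1}f(x^a)\,dx=\frac1a\int_0^\infty u^{s/a-1}f(u)\,du=\frac1a\,M[f](s/a),
$$
and for $a<0$ the same substitution reverses the orientation of the interval of integration, reinstating a positive factor $|a|^{-1}$; combining the two cases yields $M[f(x^a)](s)=|a|^{-1}M[f](s/a)$.

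Finally, for the derivative rule \eqref{mellin-3} I would integrate by parts:
$$
M[f'](s)=\int_0^\infty x^{s-1}f'(x)\,dx=\Big[x^{s-1}f(x)\Big]_0^\infty-(s-1)\int_0^\infty x^{s-2}f(x)\,dx,
$$
and recognise the remaining integral as $M[f](s-1)$, so that $M[f'](s)=-(s-1)M[f](s-1)$ once the boundary terms are discarded. The one place that genuinely needs care — and which I regard as the only real obstacle in an otherwise routine argument — is precisely the justification that these boundary contributions vanish and that all the integrals above converge simultaneously: this forces one to restrict $s$ to the overlap of the fundamental strips of $f$, $f'$, etc., and to use the prescribed decay of $f$ at $0$ and $+\infty$. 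For the functions to which the lemma is applied in Sections \ref{sec-spectral} and Appendix \ref{ap-radial} this overlap is nonempty and the decay is available, so the identities hold on the relevant range of $s$.
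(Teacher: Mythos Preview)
Your proposal is correct and follows the standard direct-computation approach one would expect for these elementary Mellin transform identities. The paper itself does not supply a proof of this lemma but simply refers the reader to \cite{Paris-Kaminski:book-mellin}, so your verification is in fact more detailed than what appears in the paper.
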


\bibliography{references}
\bibliographystyle{plain}

\end{document}